\newtheorem{thm}{Theorem}
\newtheorem{lem}[thm]{Lemma}
\newtheorem{cor}[thm]{Corollary}
\newtheorem{prop}[thm]{Proposition}
\newtheorem*{statement}{Theorem}
\newtheorem{conj}[thm]{Conjecture}
\theoremstyle{definition}
\newtheorem{definition}[thm]{Definition}
\theoremstyle{remark}
\newtheorem{rem}{Remark}
\newtheorem{ex}[rem]{Example}
\numberwithin{rem}{section} \numberwithin{equation}{section}
\numberwithin{thm}{section}
\newcommand{\A}{{\mathbb A}}
\newcommand{\Ad}{\mathrm{Ad}}
\newcommand{\af}{\mathrm{af}}
\newcommand{\al}{\alpha}
\newcommand{\Aut}{\mathrm{Aut}}
\newcommand{\B}{\mathbb{B}}
\newcommand{\bb}{\mathfrak{b}}
\newcommand{\C}{\mathbb{C}}
\newcommand{\cd}{\mathrm{cd}}
\newcommand{\Coh}{\mathrm{Coh}}
\newcommand{\Frac}{\mathrm{Frac}}
\newcommand{\F}{\mathrm{Fun}}
\newcommand{\For}{\mathrm{For}}
\newcommand{\geh}{\mathfrak{g}}
\newcommand{\Gr}{\mathrm{Gr}}
\newcommand{\Hom}{\mathrm{Hom}}
\newcommand{\id}{\mathrm{id}}
\newcommand{\Img}{\mathrm{Im}}
\newcommand{\Inv}{\mathrm{Inv}}
\newcommand{\ip}[2]{\langle #1\,,\,#2\rangle}
\newcommand{\jd}{\varpi}
\newcommand{\jh}{j}
\renewcommand{\k}{\mathbf{k}}
\newcommand{\K}{{\mathbb K}}
\newcommand{\KKS}{g}
\newcommand{\la}{\lambda}
\newcommand{\La}{\Lambda}
\newcommand{\Lie}{\mathrm{Lie}}
\newcommand{\LL}{{\mathbb L}}
\newcommand{\nn}{\mathfrak{n}}
\newcommand{\OO}{\mathcal{O}}
\newcommand{\pKK}{\psi_{KK}}
\newcommand{\Po}{\mathbb{P}^1}
\newcommand{\pnt}{\mathrm{pt}}
\newcommand{\Q}{\mathbb{Q}}
\newcommand{\R}{\mathbb{R}}
\newcommand{\re}{\mathrm{re}}
\newcommand{\res}{\mathrm{res}}
\newcommand{\slh}{\widehat{\mathfrak{sl}}}
\newcommand{\Sym}{\mathrm{Sym}}
\newcommand{\Supp}{\mathrm{Supp}}
\newcommand{\ttt}{\mathfrak{t}}
\newcommand{\ve}{\varepsilon}
\newcommand{\Wz}{W_\af^I}
\newcommand{\Z}{{\mathbb Z}}
\begin{document}

\title{$K$-theory Schubert calculus of the affine Grassmannian}

\author{Thomas Lam}
\address{Department of Mathematics, Harvard University, Cambridge MA
02138 USA}
\email{tfylam@math.harvard.edu}

\author{Anne Schilling}
\address{Department of Mathematics, University of California, One
Shields Ave. Davis, CA 95616-8633 USA}
\email{anne@math.ucdavis.edu}

\author{Mark Shimozono}
\address{Department of Mathematics, Virginia Polytechnic Institute
and State University, Blacksburg, VA 24061-0123 USA}
\email{mshimo@vt.edu}

\date{April 2009}

\thanks{MSC classification: 05E05; 14N15}
\thanks{Key words: affine Grassmannian, $K$ theory, Schubert calculus, symmetric functions,
GKM condition}

\maketitle

\begin{abstract} We construct the Schubert basis of the torus-equivariant
$K$-homology of the affine Grassmannian of a simple algebraic group
$G$, using the $K$-theoretic NilHecke ring of Kostant and Kumar. This is the
$K$-theoretic analogue of a construction of Peterson in equivariant homology.

For the case $G= SL_n$, the $K$-homology of the affine Grassmannian
is identified with a sub-Hopf algebra of the ring of symmetric
functions.  The Schubert basis is represented by inhomogeneous
symmetric functions, called $K$-$k$-Schur functions, whose highest
degree term is a $k$-Schur function.  The dual basis in
$K$-cohomology is given by the affine stable Grothendieck
polynomials, verifying a conjecture of Lam.  In addition, we give a
Pieri rule in $K$-homology.

Many of our constructions have geometric interpretations using
Kashiwara's thick affine flag manifold.
\end{abstract}

\tableofcontents

\section{Introduction}
Let $G$ be a simple simply-connected complex algebraic group and $T
\subset G$ the maximal torus.  Let $\Gr_G$ denote the affine
Grassmannian of $G$.  The $T$-equivariant $K$-cohomology
$K^T(\Gr_G)$ and $K$-homology $K_T(\Gr_G)$ are equipped with
distinguished $K^T(\pnt)$-bases (denoted $\{[\OO_{X^I_w}]\}$
and $\{\xi_w\}$), called Schubert bases.  Our first main result is a
description of the $K$-homology $K_T(\Gr_G)$ as a subalgebra $\LL$
of the affine $K$-NilHecke algebra of Kostant and Kumar \cite{KK:K}.
This generalizes work of Peterson \cite{P} in homology.  Our second
main result is the identification, in the case $G = SL_n(\C)$, of
the Schubert bases of the non-equivariant $K$-(co)homology
$K_*(\Gr_G)$ and $K^*(\Gr_G)$ with explicit symmetric functions
called {\it $K$-$k$-Schur functions} and {\it affine stable
Grothendieck polynomials} \cite{Lam:affStan}. This generalizes work
of Lam \cite{Lam:Schub} in (co)homology.

\subsection{Kostant and Kumar's $K$-NilHecke ring}
Let $\geh$ be a Kac-Moody algebra and $X$ be the flag variety of
$\geh$.  Kostant and Kumar \cite{KK:K} studied the equivariant
$K$-theory $K^T(X)$ via a dual algebra $\K$ called the $K$-NilHecke
ring.  The ring $\K$ acts on $K^T(X)$ by Demazure divided
difference operators and scalar multiplication by
$K^T(\pnt)$.  In particular, they used $\K$ to define a
``basis'' $\{\psi_{KK}^v\}$ of $K^T(X)$ (elements of $K^T(X)$ are
{\it infinite} $K^T(\pnt)$-linear combinations of the ``basis'').


Kostant and Kumar use the ind-scheme $X_{\rm ind}$, which is an
inductive limit of finite-dimensional schemes.  Because of this,
classes in $K^T(X_{\rm ind})$ do not have an immediate geometric interpretation,
but are defined via duality in terms of geometric classes in
$K_T(X_{\rm ind})$.  We use instead the ``thick'' flag variety $X$ of
Kashiwara \cite{Kash}, which is an infinite-dimensional scheme.
This allows us to interpret the $K$-NilHecke ring operations
geometrically, and to describe (Theorem~\ref{T:psiloc}) the Schubert ``basis''
of $K^T(X)$, representing coherent sheaves $\OO_{X_w}$ of finite
codimensional Schubert varieties.  Our basis is different to that of
Kostant and Kumar.  On the other hand, in our treatment the
$K$-homology $K_T(X)$ is now defined via duality.

\subsection{The affine Grassmannian and the small torus GKM condition}
Let $\geh$ be a finite-dimensional simple Lie algebra, and
$\geh_\af$ the untwisted affine algebra.  Instead of using the
affine torus $T_\af$, we use the torus $T \subset G$ of the
finite-dimensional algebraic group, and study the equivariant
$K$-cohomology $K^T(X_\af)$ and $K^T(\Gr_G)$ of the affine flag
variety and affine Grassmannian.  We use the {\it affine
$K$-NilHecke ring for $\geh$}, still denoted $\K$, rather than the
slightly larger Kostant-Kumar $K$-NilHecke ring for $\geh_\af$.  The
corresponding affine NilHecke ring in cohomology was considered by
Peterson \cite{P}.

We describe (Theorem \ref{T:GKMPet}) the image of $K^T(X_\af)$ and
$K^T(\Gr_G)$ in $\prod_{w \in W_\af} K^T(\pnt)$ under localization
at the fixed points, where $W_\af$ denotes the affine Weyl group.
This is the $K$-theoretic analogue of a result of Goresky, Kottwitz, and MacPherson
\cite{GKM:2004} in homology.  We call the corresponding condition
the {\it small torus GKM condition}.  It is significantly more
complicated than the usual condition for GKM spaces \cite{GKM},
which would apply if we used the larger torus $T_\af$.  This
description gives an algebraic proof of the existence of a crucial
``wrong way'' map $K^T(X_\af) \to K^T(\Gr_G)$, which corresponds in
the topological category to $\Omega K \hookrightarrow LK \to
LK/T_{\R}$, where $K \subset G$ is a maximal compact subgroup,
$T_{\R}=T\cap K$, and $\Omega K$ and $LK$ denote the spaces of based and
unbased loops. The space of based loops $\Omega K$ is a topological model
for the affine Grassmannian~\cite{PS}.

Another description of the $K$-homology of the affine Grassmannian
is given by Bezrukavnikov, Finkelberg, and Mirkovi\'{c}~\cite{BFM},
though the methods there do not appear to be particularly suited to
the study of Schubert calculus.

\subsection{$K$-theoretic Peterson subalgebra and affine Fomin-Stanley subalgebra}
We let $\LL = Z_\K(R(T))$ denote the centralizer in $\K$ of the
scalars $R(T) = K^T(\pnt)$, and call it the {\it $K$-Peterson
subalgebra}. (This centralizer would be uninteresting if we had used
$T_\af$ instead of $T$.)  We generalize (Theorem \ref{T:kmap}) a
result of Peterson \cite{P} (see also \cite{Lam:Schub}) in homology:
\begin{statement}
There is a Hopf isomorphism $k: K_T(\Gr_G) \longrightarrow \LL$.
\end{statement}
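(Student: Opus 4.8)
The plan is to follow the strategy of Peterson \cite{P} and Lam \cite{Lam:Schub} in homology, adapting each step to the idempotent $K$-theoretic NilHecke operators. The central object is the $K$-Peterson subalgebra $\LL = Z_\K(R(T))$, and the first task is to pin down its $R(T)$-module structure. Using the small torus GKM description of Theorem~\ref{T:GKMPet}, I would show that an element of $\K$, viewed through its localizations at the torus-fixed points indexed by $W_\af$, centralizes $R(T)$ precisely when these localizations are invariant under the finite Weyl group $W$, together with the GKM divisibility conditions. This identifies $\LL$ with the functions indexed by the minimal coset representatives $\Wz = W_\af^I$ and produces a distinguished $R(T)$-basis $\{k_w : w \in \Wz\}$ whose localization is triangular with respect to the Bruhat order.

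Next I would construct the map $k$ via duality. The NilHecke ring $\K$ is dual to $K^T(X_\af)$ under the natural pairing, and the projection $\pi : X_\af \to \Gr_G$ furnishes the pullback $\pi^* : K^T(\Gr_G) \hookrightarrow K^T(X_\af)$ together with the dual map on $K$-homology. Composing the inclusion $K_T(\Gr_G) \hookrightarrow K_T(X_\af)$ with the identification of $K_T(X_\af)$ inside $\K$, I would define $k$ and verify that its image lies in $\LL$: the $W$-invariance forced by the Grassmannian quotient is exactly the centralizer condition from the first step. The localization formula of Theorem~\ref{T:psiloc} then shows that $k$ carries the Schubert basis $\{\xi_w\}_{w \in \Wz}$ to $\{k_w\}_{w \in \Wz}$ up to a triangular change of basis, so $k$ is an $R(T)$-module isomorphism.

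It remains to match the Hopf structures. The product on $\LL$ is the ring multiplication of $\K$ (composition of operators), which preserves $\LL$ precisely because elements there commute with scalars; the coproduct is inherited from the coproduct $\Delta$ on $\K$, which restricts to an honest coalgebra structure on the centralizer since the left and right $R(T)$-actions agree on $\LL$. On the geometric side, $K_T(\Gr_G)$ carries the Pontryagin product and the diagonal coproduct coming from the group structure on $\Omega K$. I would check compatibility by dualizing through the Kostant--Kumar pairing: the Pontryagin product corresponds to operator composition in $\LL$, while the diagonal coproduct matches $\Delta|_\LL$, yielding the full Hopf isomorphism.

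The hard part will be the product compatibility in the inhomogeneous setting. In homology the leading-term argument by lowest degree cleanly separates the $k$-Schur part from its corrections, but here the Grothendieck-type inhomogeneity means that the triangularity of the first step and the duality computation of the third must be controlled simultaneously. Verifying that $k$ intertwines the Pontryagin product with operator composition — rather than merely matching bases — is the crux, and rests on the precise form of $\Delta$ on $\K$ and the localization identities of Theorem~\ref{T:psiloc}.
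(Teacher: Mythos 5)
Your overall architecture matches the paper's: define $k$ by duality, identify its image with $\LL=Z_\K(R(T))$, and then check compatibility of products and coproducts. But there are two genuine problems. First, the map you dualize goes the wrong way. The dual of $\pi^*\colon K^T(\Gr_G)\hookrightarrow K^T(X_\af)$ is a \emph{surjection} $K_T(X_\af)\to K_T(\Gr_G)$, which cannot embed $K_T(\Gr_G)$ into $\K$. What is needed is the dual of a surjection $K^T(X_\af)\to K^T(\Gr_G)$, namely the wrong-way map $\jd$ of Lemma~\ref{L:jd} (the algebraic avatar of $\Omega K\hookrightarrow LK\to LK/\TR$), and the existence of $\jd$ is itself a nontrivial consequence of the small-torus GKM description in Theorem~\ref{T:GKMPet} --- it is special to the affine Grassmannian with the small torus $T$ and has no analogue for a general parabolic quotient. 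Your proposal treats this inclusion as if it came for free from $\pi^*$.

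Second, and more seriously, you explicitly flag the product compatibility --- that $k$ intertwines the Pontryagin product with multiplication in $\K$ --- as ``the crux,'' but you do not supply the idea that makes it work, and the route you sketch (controlling $\Delta$ on $\K$ together with triangularity of the Schubert basis) is not how it goes; an inhomogeneous leading-term argument on the basis $\{k_w\}$ would be painful and is not what the paper does. The resolution is to change basis: over $Q(T)$ the fixed-point classes $i^*_\lambda$, $\lambda\in Q^\vee$, span $K_T(\Gr_G)$, and one computes directly that $k(i^*_\lambda)=t_\lambda\in\K$ (this is also how one sees $\Img(k)=\bigoplus_\lambda Q(T)t_\lambda\cap\K=\LL$, via the level-zero action). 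The Pontryagin product is then evaluated geometrically on these classes: pointwise multiplication of the loops $t_\lambda,t_\mu\colon S^1\to K$ gives $t_{\lambda+\mu}$, so $i^*_\lambda\, i^*_\mu=i^*_{\lambda+\mu}$ (Lemma~\ref{L:Hmult}), which matches $t_\lambda t_\mu=t_{\lambda+\mu}$ in $\K$ on the nose. Without this reduction to translation elements your proof of multiplicativity does not close, and the Hopf isomorphism is not established.
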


The Hopf-structure of $K_T(\Gr_G)$ is derived from $\Omega K$.  We
also give a description (Theorem \ref{T:LL}) of the images
$k(\xi_w)$ of the Schubert bases under this isomorphism.

Next we consider a subalgebra $\LL_0 \subset \K_0$, called the {\it
$K$-affine Fomin-Stanley subalgebra}, of the affine 0-Hecke algebra.
We show that $\LL_0$ is the evaluation of $\LL$ at 0, and that it is
a model for the non-equivariant homology $K_*(\Gr_G)$.

\subsection{$G = SL_n$ and Grothendieck polynomials for the affine Grassmannian}
We now focus on $G = SL_n$.  In \cite{Lam:affStan}, the {\it affine
stable Grothendieck polynomials} $G_w(x)$ were introduced, where $w
\in W_\af$ is an affine permutation.  The symmetric functions
$G_w(x)$ lie in a completion $\hat{\La}^{(n)}$ of a quotient of the
ring of symmetric functions. A subset of the $\{G_w(x)\}$ form a
basis of $\hat{\La}^{(n)}$, and the dual basis $\{g_w(x)\}$, called
{\it $K$-theoretic $k$-Schur functions}, form a basis of a
subalgebra $\La_{(n)}$ of the ring of symmetric functions.

The symmetric functions $G_w(x)$ are $K$-theoretic analogues of the
affine Stanley symmetric functions in \cite{Lam:affStan}, and on the
other hand affine analogues of the stable Grothendieck polynomials
in \cite{FK, B}. The symmetric functions $g_w(x)$ are $K$-theoretic
analogues of the $k$-Schur functions $s_w(x)$ \cite{LLM, LM1, LM2},
and on the other hand the affine (or $k$-) analogues of the dual
stable Grothendieck polynomials \cite{L,LP}.

Using the technology of the $K$-affine Fomin-Stanley subalgebra, we
confirm a conjecture of Lam \cite{Lam:affStan}, by showing
(Theorem~\ref{T:symmfunc})

\begin{statement}
There are Hopf isomorphisms $K_*(\Gr_G) \cong \La_{(n)}$ and
$K^*(\Gr_G) \cong \hat{\La}^{(n)}$, identifying the homology
Schubert basis with the $K$-$k$-Schur functions $g_w(x)$, and the
cohomology Schubert basis with the affine stable Grothendieck
polynomials $G_w(x)$.
\end{statement}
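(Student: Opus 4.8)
The plan is to derive the statement from the $K$-Peterson isomorphism of Theorem~\ref{T:kmap} by specializing to the non-equivariant setting and then making the resulting algebra explicit as a ring of symmetric functions. Evaluating the Hopf isomorphism $k\colon K_T(\Gr_G)\to\LL$ at $0$ (base change along the augmentation $R(T)\to\Z$) identifies the non-equivariant $K$-homology $K_*(\Gr_G)$ with the $K$-affine Fomin-Stanley subalgebra $\LL_0\subset\K_0$, and carries the Schubert basis $\{\xi_w\}$, whose images are described by Theorem~\ref{T:LL}, to a distinguished basis of $\LL_0$ indexed by $w\in\Wz$. The homology half of the theorem thus reduces to exhibiting a Hopf algebra isomorphism $\LL_0\xrightarrow{\sim}\La_{(n)}$ sending this geometric basis to the $K$-$k$-Schur functions $g_w(x)$. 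The cohomology half will then follow by duality: $K^*(\Gr_G)$ pairs perfectly with $K_*(\Gr_G)$ with $\{[\OO_{X^I_w}]\}$ dual to $\{\xi_w\}$, and $\{G_w(x)\}$ is by construction the basis of $\hat{\La}^{(n)}$ dual to $\{g_w(x)\}\subset\La_{(n)}$ under the completed Hopf pairing.

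To build the isomorphism $\LL_0\cong\La_{(n)}$ I would work inside the affine $0$-Hecke algebra $\K_0$ attached to $\slh_n$ and produce explicit algebra generators of $\LL_0$. Following the homology model of Peterson and Lam~\cite{P,Lam:Schub}, one expects $\LL_0$ to be commutative and generated by $K$-theoretic noncommutative homogeneous elements $\mathbf{h}_1,\dots,\mathbf{h}_{n-1}$, assembled from cyclically decreasing products of the $0$-Hecke generators. The first computation is that these $\mathbf{h}_i$ pairwise commute and are algebraically independent, so that $h_i\mapsto\mathbf{h}_i$ defines an algebra isomorphism $\La_{(n)}=\Z[h_1,\dots,h_{n-1}]\xrightarrow{\sim}\LL_0$. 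Compatibility with the coproducts is constrained by geometry: the Hopf structure on $K_*(\Gr_G)$ comes from the group structure on $\Omega K$, and since $k$ is a Hopf map I would verify that the coproduct transported onto $\LL_0$ agrees with the standard coproduct of $\La_{(n)}$ on the generators $\mathbf{h}_i$, upgrading the algebra isomorphism to a Hopf isomorphism.

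The crux is to prove that this Hopf isomorphism carries the geometric Schubert class $\xi_w$ to the combinatorially defined $g_w(x)$. On the combinatorial side the affine stable Grothendieck polynomials $G_w(x)$ are, by their definition in \cite{Lam:affStan}, the expansion coefficients of a distinguished noncommutative generating series over the affine $0$-Hecke generators; I would realize the same series inside $\LL_0\,\widehat{\otimes}\,\La$ using the $\mathbf{h}_i$ and show that expanding it in the Schubert basis of $\LL_0$ produces exactly the $G_w(x)$, so that dually $\xi_w\mapsto g_w(x)$. Equivalently, one must check that pairing the Schubert classes against monomials in the generators $\mathbf{h}_i$ reproduces the coefficients prescribed by the combinatorics of \cite{Lam:affStan}, the essential geometric input being the $K$-homology Pieri rule. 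I expect this matching to be the main obstacle, precisely because $g_w$ and $G_w$ are inhomogeneous: beyond the top-degree term, which is the $k$-Schur/affine Stanley identity already established in homology~\cite{Lam:Schub}, one must simultaneously control every lower-order correction produced by the $K$-theoretic (as opposed to nilCoxeter) multiplication in $\LL_0$ and show that these agree term by term with the lower-order structure of the $G_w(x)$. Once the homology identification $\xi_w\mapsto g_w(x)$ is in place, transposing the perfect pairing yields $K^*(\Gr_G)\cong\hat{\La}^{(n)}$ with $[\OO_{X^I_w}]\mapsto G_w(x)$, completing the proof.
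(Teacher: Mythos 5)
Your proposal follows essentially the same route as the paper: specialize the $K$-Peterson isomorphism at $0$ to get $K_*(\Gr_G)\cong\LL_0$ (Theorem~\ref{T:k0}), define $\varphi:\La_{(n)}\to\LL_0$ by $h_i\mapsto\k_i$ using the cyclically decreasing elements, check the coproduct on these generators (Proposition~\ref{P:kcoprod}), and match $\varphi(g_v)$ with the Schubert basis via the generating series~\eqref{E:Gdef} and the Cauchy identity of Lemma~\ref{L:Cauchy}. The ``lower-order corrections'' you flag as the main obstacle are handled in the paper not by a term-by-term comparison but by the uniqueness statement of Theorem~\ref{T:L0}: an element of $\LL_0$ is determined by its Grassmannian support, so once $\varphi(g_v)\in\LL_0$ is known to have unique Grassmannian term $T_v$ (via Propositions~\ref{P:Gbasis} and~\ref{P:noncomm}), the identity $\varphi(g_v)=\phi_0(k_v)$ is automatic.
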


This generalizes the main result of \cite{Lam:Schub}, and the
general idea of the proof is the same.

We also obtain a Pieri rule (Corollary \ref{C:Pieri}) for
$K_*(\Gr_G)$.  We give in Theorem \ref{T:ASG} a geometric
interpretation of $G_w(x)$ for any $w \in W_\af$ as a pullback of
a Schubert class from the affine flag variety to the affine
Grassmannian.  We conjecture that the symmetric functions
$G_w(x)$ and $g_w(x)$ satisfy many positivity properties
(Conjectures~\ref{C:g} and~\ref{C:G}).

\subsection{Related work}
Morse~\cite{M} gives a combinatorial definition of the affine stable
Grothendieck polynomials $G_w(x)$ in terms of affine set-valued tableaux
and also proves the Pieri rule for $G_w$.
The original $k$-Schur functions $s_w(x;t)$~\cite{LLM,LM1}, which arose in the study of Macdonald
polynomials, involve a parameter $t$.  It appears that a $t$-analogue
$g_w(x;t)$ of $g_w(x)$ exists , defined in a similar manner to
\cite[Conjecture 9.11]{LLMS}. The connection of $g_w(x;t)$ and Macdonald theory is
explored in~\cite{BM}.

Kashiwara and Shimozono~\cite{KS} constructed polynomials, called {\it affine Grothendieck
polynomials}, which represent Schubert classes in the $K$-theory of the affine flag manifold.  It is
unclear how affine Grothendieck polynomials compare with our symmetric functions.

\subsection*{Organization}
In Section~\ref{S:KK}, we review the constructions of the
$K$-NilHecke ring $\K$ and define our function ``basis''
$\{\psi^v\}$.
In Section~\ref{S:KM} we introduce Kashiwara's geometry of ``thick"
Kac-Moody flag varieties $X$ and the corresponding equivariant
$K$-cohomologies; we show how $\K$ corresponds to the geometry of
$X$.  Section~\ref{S:P} is devoted to equivariant $K$-theory for
affine flags and affine Grassmannians with the small torus $T$
acting using the level-zero action of the affine Weyl group.  In
Section~\ref{S:Palgebra} we introduce the affine $K$-NilHecke ring
and the $K$-Peterson subalgebra $\LL$, and prove that the latter is
isomorphic to $K_T(\Gr_G)$.  In Section~\ref{S:FS} we study the
$K$-affine Fomin-Stanley algebra.  In Section~\ref{S:G} we restrict
to $G = SL_n$ and describe the Hopf algebra isomorphisms between
$K_*(\Gr_G)$ and $K^*(\Gr_G)$ and symmetric functions explicitly.

A review of the cohomological NilHecke ring of Kostant-Kumar, the
affine NilHecke algebra $\A$, and some tables of the symmetric
functions $g_w$ and $G_w$ are provided in Appendix~\ref{S:A}.

\subsection*{Acknowledgements} We thank Masaki Kashiwara and Jennifer Morse for helpful
discussions and Bob Kottwitz for pointing us to~\cite{GKM:2004}.  Many thanks also to
Nicolas Thi\'ery, Florent Hivert, and Mike Hansen for their help with the open source
Mathematics Software Sage and $*$-Combinat~\cite{HT,Sage}.
This work was partially supported by the NSF grants DMS--0600677,
DMS--0501101, DMS--0652641, DMS--0652648, and DMS--0652652.

\section{Kostant-Kumar $K$-NilHecke ring}
\label{S:KK} One of the themes of \cite{KK:K} is that the Schubert
calculus of the torus-equivariant $K$-theory $K^T(X)$ of a Kac-Moody
flag manifold $X$, is encoded by the $K$-NilHecke ring $\K$, which
acts on $K^T(X)$ as {\it Demazure operators}.  We review the
constructions of \cite{KK:K} but use a different ``basis" of
$K^T(X)$, namely, the classes of equivariant structure sheaves of
finite codimensional Schubert varieties in the thick flag manifold
of \cite{Kash}.

For a statement $S$, we let $\chi(S)=1$ if $S$ is true and $0$ if
$S$ is false.

\subsection{Kac-Moody algebras}
Let $\geh$ be the Kac-Moody algebra over $\C$ associated with the
following data: a Dynkin node set $I$, symmetrizable generalized
Cartan matrix $(a_{ij})_{i,j\in I}$, free $\Z$-module $P$, linearly
independent simple roots $\{\al_i\mid i\in I\}\subset P$, the dual
lattice $P^*=\Hom_\Z(P,\Z)$, with simple coroots $\{\al_i^\vee\mid
i\in I\}\subset P^*$, such that $\ip{\al_i^\vee}{\al_j}=a_{ij}$
where $\ip{\cdot}{\cdot}:P^*\times P\to\Z$ is the pairing, with the
additional property that there exist fundamental weights
$\{\La_i\mid i\in I\}\subset P$ satisfying
$\ip{\al_i^\vee}{\La_j}=\delta_{ij}$.  Let $Q = \bigoplus_{i\in I}
\Z \al_i \subset P$ be the root lattice and $Q^\vee =
\bigoplus_{i\in I} \Z \al_i^\vee \subset P^*$ the coroot lattice.
Let $\geh = \nn_+\oplus \ttt\oplus \nn_-$ be the triangular
decomposition, with $\ttt \supset P^* \otimes_\Z \C$. Let $\Phi$ be
the set of roots and $\Phi^\pm$ the sets of positive and negative
roots, and let $\geh = \bigoplus_{\al\in\Phi} \geh_\al$ be the root
space decomposition. Let $W\subset\Aut(\ttt^*)$ be the Weyl group,
with involutive generators $r_i$ for $i\in I$ defined by
$r_i\cdot\la =\la-\ip{\al_i^\vee}{\la}\al_i$. For $i,j\in I$ with
$i\ne j$ let $m_{ij}$ be $2,3,4,6,\infty$ according as
$a_{ij}a_{ji}$ is $0, 1, 2, 3, \ge4$. Then $W$ has involutive
generators $\{r_i\mid i\in I\}$ which satisfy the braid relations
$(r_ir_j)^{m_{ij}} = \id$.  Let $\Phi^\re = \{w\al_i\mid w\in W,
i\in I\}\subset Q$ be the set of real roots and for $\al=w\al_i$ let
$r_\al = w r_i w^{-1}$ be the associated reflection and $\al^\vee =
w \al_i^\vee$ the associated coroot. Let $\Phi^{+\re} = \Phi^\re
\cap \Phi^+$ be the set of positive real roots.

\subsection{Rational form}
Let $T$ be the algebraic torus with character group $P$. The Weyl
group $W$ acts on $P$ and therefore on $R(T)$ and
$Q(T)=\Frac(R(T))$, where
$$R(T)\cong\Z[P]=\bigoplus_{\la\in P} \Z e^\la$$
is the Grothendieck group of the category of finite-dimensional
$T$-modules, and for $\la\in P$, $e^\la$ is the class of the
one-dimensional $T$-module with character $\la$.

Let $\K_{Q(T)}$ be the smash product of the group algebra $\Q[W]$
and $Q(T)$, defined by $\K_{Q(T)} = Q(T) \otimes_\Q \Q[W]$
with multiplication $$(q\otimes w)(p\otimes v) = q (w\cdot p) \otimes wv$$
for $p,q\in Q(T)$ and $v,w\in W$. We write $qw$ instead of $q\otimes w$.
For $i\in I$ define the Demazure operator \cite{Dem} $y_i\in \K_{Q(T)}$ by
\begin{equation*}
  y_i = (1-e^{-\al_i})^{-1} (1 - e^{-\al_i} r_i).
\end{equation*}
The $y_i$ are idempotent and satisfy the braid relations:
\begin{equation*}
 y_i^2 = y_i  \ \ \ \ \text{and}\ \ \ \
\underbrace{y_iy_j\dotsm}_{\text{$m_{ij}$ times}} =
\underbrace{y_jy_i\dotsm}_{\text{$m_{ij}$ times}}\; .
\end{equation*}
Define the elements $T_i\in \K_{Q(T)}$ by
\begin{equation} \label{E:Tdef}
  T_i = y_i - 1 = (1-e^{\al_i})^{-1} (r_i-1).
\end{equation}
We have
\begin{equation} \label{E:r}
  r_i = 1 + (1-e^{\al_i}) T_i.
\end{equation}
The $T_i$ satisfy
\begin{equation} \label{E:Tbraid}
T_i^2 = -T_i \ \ \ \ \text{and}\ \ \ \
\underbrace{T_iT_j\dotsm}_{\text{$m_{ij}$ times}} =
\underbrace{T_jT_i\dotsm}_{\text{$m_{ij}$ times}}\; .
\end{equation}
Let $T_w = T_{i_1} T_{i_2}\dotsm T_{i_N}\in \K_{Q(T)}$
where $w=r_{i_1}r_{i_2}\dotsm r_{i_N}$ is a reduced decomposition; it is well-defined
by \eqref{E:Tbraid}. It is easily verified that
\begin{equation*}
  T_i T_w = \begin{cases}
  T_{r_i w} &\text{if $r_iw>w$} \\
  -T_w &\text{if $r_iw<w$}
  \end{cases}\qquad \text{and} \qquad
  T_w T_i = \begin{cases}
  T_{wr_i} &\text{if $wr_i>w$} \\
  -T_w &\text{if $wr_i<w$}
  \end{cases}
\end{equation*}
where $<$ denotes the Bruhat order on $W$.
For $\al\in\Phi^{+\re}$ define $T_\al = (1-e^\al)^{-1}(r_\al-1)$.
 Let $w\in W$ and $i\in I$ be such that $\al=w\al_i$. Then
\begin{equation} \label{E:Talconj}
  T_\al = w T_i w^{-1}.
\end{equation}
$\K_{Q(T)}$ acts naturally on $Q(T)$.  In particular, one has
\begin{equation}\label{E:deriv}
  T_i \cdot (qq') = (T_i\cdot q) q' + (r_i\cdot q) T_i \cdot q'\qquad\text{for $q,q\in Q(T)$.}
\end{equation}
Therefore in $\K_{Q(T)}$ we have
\begin{equation}\label{E:Tcomm}
T_i \,q = (T_i\cdot q) + (r_i \cdot q) T_i\qquad\text{for $q\in Q(T)$.}
\end{equation}

\subsection{$0$-Hecke ring and integral form}
The $0$-Hecke ring $\K_0$ is the subring of $\K_{Q(T)}$ generated by
the $T_i$. It can also be defined by generators $\{T_i\mid i\in I\}$
and relations \eqref{E:Tbraid}. We have $\K_0 = \bigoplus_{w\in W}
\Z T_w$.

\begin{lem} \label{L:0Heckeacts} $\K_0$ acts on $R(T)$.
\end{lem}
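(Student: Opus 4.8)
The plan is to reduce the assertion to a single computation on the generators. Since $\K_0$ is generated as a ring by the operators $T_i$, and indeed $\K_0 = \bigoplus_{w\in W}\Z T_w$ with each $T_w$ a product of the $T_i$, the subset $R(T) \subset Q(T)$ will be stable under all of $\K_0$ as soon as it is stable under each individual $T_i$: a composition of operators preserving $R(T)$ again preserves $R(T)$, and so does any $\Z$-linear combination of such operators. Thus the entire content of the lemma is the inclusion $T_i \cdot R(T) \subseteq R(T)$ for every $i\in I$.

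Because $R(T) = \bigoplus_{\la\in P}\Z e^\la$, it suffices to evaluate $T_i$ on a single basis element $e^\la$. Using that $r_i$ acts by $r_i\cdot e^\la = e^{r_i\la}$ and that $r_i\la = \la - m\al_i$ with $m = \ip{\al_i^\vee}{\la}\in\Z$, I would compute
\begin{equation*}
T_i \cdot e^\la = (1-e^{\al_i})^{-1}\bigl(e^{r_i\la}-e^\la\bigr) = e^\la\,\frac{e^{-m\al_i}-1}{1-e^{\al_i}}.
\end{equation*}
The prefactor $(1-e^{\al_i})^{-1}$ lives a priori only in $Q(T)$, so the real point is that this quotient is in fact a genuine Laurent polynomial in $e^{\al_i}$, hence an element of $\Z[P] = R(T)$ once multiplied by $e^\la$ (note $\al_i\in P$, so $e^{\pm\al_i}\in R(T)$).

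The remaining step is the elementary finite geometric series identity. Writing $x = e^{\al_i}$, one checks that $\frac{x^{-m}-1}{1-x}$ equals $x^{-1}+x^{-2}+\cdots+x^{-m}$ when $m>0$, equals $0$ when $m=0$, and equals $-(1+x+\cdots+x^{-m-1})$ when $m<0$. Multiplying by $e^\la$ then exhibits $T_i\cdot e^\la$ as an explicit $\Z$-linear combination of characters $e^{\la-\al_i},\dots,e^{\la-m\al_i}$ (respectively $-e^\la,\dots,-e^{\la+(-m-1)\al_i}$), each indexed by an element of $P$. This gives $T_i\cdot e^\la \in R(T)$ and completes the argument.

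I do not anticipate a genuine obstacle here; the only care required is the sign-and-case bookkeeping in the geometric series, together with the observation that the a priori denominator $1-e^{\al_i}$ always cancels because $e^{r_i\la}-e^\la = e^\la(e^{-m\al_i}-1)$ is divisible by it in $\Z[P]$. As an alternative one could bypass the case analysis by reducing to a set of ring generators of $R(T)$ via the twisted Leibniz rule \eqref{E:deriv}, but verifying the claim directly on the basis $\{e^\la\}$ is cleaner and self-contained.
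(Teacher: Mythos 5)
Your proposal is correct and follows essentially the same route as the paper: both reduce to checking that each generator $T_i$ preserves $R(T)$ and verify this on the basis $\{e^\la\}$ by an explicit geometric-series computation of $(1-e^{\al_i})^{-1}(e^{r_i\la}-e^\la)$, with the three cases according to the sign of $\ip{\al_i^\vee}{\la}$. (Your explicit formula $T_i\cdot e^\la = e^{\la-\al_i}+\cdots+e^{\la-m\al_i}$ for $m=\ip{\al_i^\vee}{\la}>0$ is in fact the correct one; the paper's displayed version appears to have the exponents shifted, but this does not affect the conclusion.)
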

\begin{proof} $\K_0$ acts on $Q(T)$, and $T_i$ preserves $R(T)$ by \eqref{E:deriv}
and the following formulae for $\la\in P$:
\begin{equation*}
  T_i \cdot e^\la = \begin{cases}
  e^\la(1+e^{\al_i}+\dotsm+e^{(\ip{\al_i^\vee}{\la}-1)\al_i}) & \text{if $\ip{\al_i^\vee}{\la} > 0$} \\
  0 & \text{if $\ip{\al_i^\vee}{\la}=0$}\\
-e^\la(1+e^{\al_i}+\dotsm+e^{(-\ip{\al_i^\vee}{\la}-1)\al_i}) & \text{if $\ip{\al_i^\vee}{\la} < 0$.}
  \end{cases}
\end{equation*}
\end{proof}

Define the {\it $K$-NilHecke ring} $\K$ to be the subring of
$\K_{Q(T)}$ generated by $\K_0$ and $R(T)$.  We have $\K_{Q(T)}
\cong Q(T) \otimes_{R(T)} \K$.  By \eqref{E:Tcomm}
\begin{equation} \label{E:Tbasis}
 \K = \bigoplus_{w\in W} R(T) T_w.
\end{equation}

\subsection{Duality and function ``basis"}
Let $\F(W,Q(T))$ be the right $Q(T)$-algebra of functions from $W$ to $Q(T)$
under pointwise multiplication and scalar multiplication $(\psi \cdot q)(w)=q \psi(w)$
for $q\in Q(T)$, $\psi\in \F(W,Q(T))$, and $w\in W$.
By linearity we identify $\F(W,Q(T))$ with left $Q(T)$-linear
maps $\K_{Q(T)} \to Q(T)$:
\begin{align*}
  \psi(\sum_{w\in W} a_w w) = \sum_{w\in W} a_w \psi(w).
\end{align*}
$\F(W,Q(T))$ is a $\K_{Q(T)}-Q(T)$-bimodule via
\begin{align}\label{E:Fbi}
  (a\cdot \psi\cdot q)(b) = \psi(q b a) = q \psi(ba)
\end{align}
for $\psi\in \F(W,Q(T))$, $q\in Q(T)$ and $a,b\in \K_{Q(T)}$.

Evaluation gives a perfect pairing $\ip{\cdot}{\cdot}: \K_{Q(T)} \times
\F(W,Q(T)) \longrightarrow Q(T)$ given by
\begin{equation*}
   \ip{a}{\psi} = \psi(a).
\end{equation*}
It is $Q(T)$-bilinear in the sense that
\begin{align*}
  \ip{q a}{\psi} = q \ip{a}{\psi} = \ip{a}{\psi\cdot q}
\end{align*}

Define the subring $\Psi\subset\F(W,Q(T))$ by
\begin{align}
\label{E:Psidef}
  \Psi &= \{ \psi\in \F(W,Q(T))\mid \psi(\K) \subset R(T)\} \\
\notag
  &= \{ \psi\in \F(W,Q(T))\mid \psi(T_w)\in R(T)\text{ for all $w\in W$}\}.
\end{align}
Clearly $\Psi$ is a $\K-R(T)$-bimodule.  By \eqref{E:Tbasis}, for
$v\in W$, there are unique elements $\psi^v\in \Psi$ such that
\begin{equation} \label{E:psidef}
  \psi^v(T_w) = \delta_{v,w}.
\end{equation}
for all $w\in W$.  We have $\Psi = \prod_{v\in W} R(T) \psi^v$.

\begin{rem} In Section \ref{S:KM} we show that
$\psi^v(w)$ is the restriction of the equivariant structure sheaf
$[\OO_{X_v}]$ of the finite-codimensional Schubert variety $X_v\subset
X$ of the thick Kac-Moody flag manifold $X$, to the $T$-fixed point
$w$.  See Section \ref{SS:KK} for the relationship between our
functions $\psi^v$ and the functions of \cite{KK:K}.
\end{rem}

Letting $w=\id$ we have
\begin{equation} \label{E:psiid}
  \delta_{v,\id} = \psi^v(T_\id) = \psi^v(\id).
\end{equation}

\begin{lem} \label{L:psiy}
For $v\in W$ and $i\in I$,
\begin{equation*}
  y_i \cdot \psi^v = \begin{cases}
  \psi^{vr_i} & \text{if $vr_i<v$}\\
  \psi^v &\text{if $vr_i>v$.}
  \end{cases}
\end{equation*}
\end{lem}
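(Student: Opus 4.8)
The plan is to prove the identity by checking equality of the two sides as elements of $\Psi$. Since $\Psi = \prod_{u\in W} R(T)\,\psi^u$, an element of $\Psi$ is determined by its values on the $T_w$, so it suffices to verify that $y_i\cdot\psi^v$ and the claimed right-hand side agree at every $T_w$. First I would note that $y_i = 1 + T_i \in \K$ by \eqref{E:Tdef}, so $y_i\cdot\psi^v$ indeed lies in $\Psi$. Applying the bimodule structure \eqref{E:Fbi} with $q=1$ and $a=y_i$, and then using $Q(T)$-additivity, gives
\begin{equation*}
  (y_i\cdot\psi^v)(T_w) = \psi^v(T_w\,y_i) = \psi^v(T_w) + \psi^v(T_wT_i).
\end{equation*}

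Next I would invoke the displayed right-multiplication rule for $T_wT_i$. If $wr_i>w$ then $T_wT_i=T_{wr_i}$, so using \eqref{E:psidef} the expression becomes $\delta_{v,w}+\delta_{v,wr_i}$; if $wr_i<w$ then $T_wT_i=-T_w$, and the two terms cancel to give $0$. Thus
\begin{equation*}
  (y_i\cdot\psi^v)(T_w) =
  \begin{cases}
  \delta_{v,w}+\delta_{v,wr_i} & \text{if $wr_i>w$,}\\
  0 & \text{if $wr_i<w$.}
  \end{cases}
\end{equation*}
It then remains to compare this with the values of $\psi^{vr_i}$ (resp.\ $\psi^v$) on $T_w$, namely $\delta_{vr_i,w}$ (resp.\ $\delta_{v,w}$), in the two cases $vr_i<v$ and $vr_i>v$.

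The only real obstacle is the Bruhat-order bookkeeping, since the multiplication rule is indexed by the comparison of $wr_i$ with the running index $w$, whereas the statement is phrased via the comparison of $vr_i$ with the fixed element $v$; care is needed so that the surviving Kronecker delta lands on the correct basis function. Concretely, when $vr_i<v$ one checks that the only $w$ producing a nonzero value is $w=vr_i$, via the term $\delta_{v,wr_i}$ with $wr_i=v$ (the condition $wr_i>w$ then reading $v>vr_i$, which holds), giving $y_i\cdot\psi^v=\psi^{vr_i}$; the $\delta_{v,w}$ term cannot contribute since $w=v$ would require $wr_i=vr_i>v$, contrary to the case. When $vr_i>v$ the surviving contribution is $w=v$ via $\delta_{v,w}$ (now $wr_i=vr_i>v=w$), while the $\delta_{v,wr_i}$ contribution is excluded because $wr_i=v$ forces $w=vr_i$ and hence $wr_i=v<w$, contradicting $wr_i>w$. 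This yields $y_i\cdot\psi^v=\psi^v$ and completes the verification.
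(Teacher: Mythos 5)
Your proof is correct and follows essentially the same route as the paper: both compute $T_w y_i = \chi(wr_i>w)(T_w+T_{wr_i})$, evaluate against $\psi^v$ via the bimodule structure \eqref{E:Fbi}, and then translate the condition on $wr_i$ versus $w$ into the condition on $vr_i$ versus $v$. The Bruhat-order bookkeeping in your final paragraph is exactly the content of the paper's last displayed equality $\chi(wr_i>w)(\delta_{v,w}+\delta_{v,wr_i}) = \chi(vr_i>v)\delta_{v,w} + \chi(v>vr_i)\delta_{vr_i,w}$, spelled out in more detail.
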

\begin{proof} For $w\in W$ we have
$T_w y_i = T_w (1+T_i)
  = \chi(wr_i>w)(T_w+T_{wr_i})$.
Therefore by~\eqref{E:Fbi}
\begin{align*}
  \ip{T_w}{y_i\cdot \psi^v} &=
  \ip{T_w y_i}{\psi^v} \\
  &= \chi(wr_i>w) \ip{T_w+T_{wr_i}}{\psi^v} \\
  &= \chi(wr_i>w) (\delta_{v,w}+\delta_{v,wr_i}) \\
  &= \chi(vr_i>v)\delta_{v,w} + \chi(v>vr_i) \delta_{vr_i,w}
\end{align*}
from which the Lemma follows.
\end{proof}

\begin{rem}\label{R:psirec} By \eqref{E:psiid} and Lemma \ref{L:psiy} we obtain the
following ``right hand" recurrence for $\psi^v(w)$:
\begin{enumerate}
\item If $w=\id$ then $\psi^v(\id) = \delta_{v,\id}$.
\item Otherwise let $i\in I$ be such that $wr_i <w$. Then
\begin{equation} \label{E:psirec}
  \psi^v(w) = \begin{cases}
  \psi^v(wr_i) &\text{if $v<vr_i$} \\
  (1-e^{-w(\al_i)}) \psi^{vr_i}(w) + e^{-w(\al_i)} \psi^v(wr_i)&\text{if $vr_i<v$.}
  \end{cases}
\end{equation}
This rewrites $\psi^v(w)$ in terms of $\psi^{v'}(w')$ for $(v',w')$ such that
either $w'<w$ or both $w'=w$ and $v'<v$.
\end{enumerate}
\end{rem}

\begin{lem} \label{L:psisupp} We have $\psi^v(w) = 0$ unless $v\le w$.
\end{lem}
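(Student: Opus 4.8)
The plan is to prove the statement by induction using the ``right hand'' recurrence for $\psi^v(w)$ recorded in Remark~\ref{R:psirec}. Since that recurrence rewrites $\psi^v(w)$ in terms of values $\psi^{v'}(w')$ with either $w' < w$ or both $w' = w$ and $v' < v$, the natural induction is on the pair $(\ell(w), \ell(v))$ ordered lexicographically. The base case $w = \id$ is immediate from part (1) of the recurrence: $\psi^v(\id) = \delta_{v,\id}$ vanishes unless $v = \id$, and $v \le \id$ forces $v = \id$.

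For the inductive step I fix $w \ne \id$, choose $i \in I$ with $wr_i < w$, assume $v \not\le w$, and show $\psi^v(w) = 0$ by running the two cases of \eqref{E:psirec}. In the case $v < vr_i$ we have $\psi^v(w) = \psi^v(wr_i)$; since $wr_i < w$, the relation $v \le wr_i$ would give $v \le w$, contradicting $v \not\le w$, so $v \not\le wr_i$ and the inductive hypothesis yields $\psi^v(wr_i) = 0$. In the case $vr_i < v$ we have $\psi^v(w) = (1-e^{-w(\al_i)})\psi^{vr_i}(w) + e^{-w(\al_i)}\psi^v(wr_i)$; the second summand vanishes exactly as before, and for the first I must show $\psi^{vr_i}(w) = 0$, which by the inductive hypothesis (same $w$, smaller length $\ell(vr_i) < \ell(v)$) follows once I know $vr_i \not\le w$.

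The crux is therefore the Bruhat-order bookkeeping in this last case: I need the implication $vr_i \le w \Rightarrow v \le w$ under the hypotheses $wr_i < w$ and $vr_i < v$, which is exactly the lifting property of the Bruhat order. Writing $u = vr_i$, we have $ur_i = v > u$ and $wr_i < w$; if $u < w$ the lifting property gives $ur_i = v \le w$, while if $u = w$ then $v = wr_i < w$, so in either subcase $v \le w$. Contrapositively $v \not\le w$ forces $vr_i \not\le w$, completing the step. I expect this lifting lemma to be the only genuinely nontrivial input; everything else is the formal unwinding of the recurrence. (Alternatively one could bypass the recurrence by observing that $\psi^v(w)$ equals the coefficient of $T_v$ in the expansion of the group element $w$ in the $Q(T)$-basis $\{T_u\}$, and then proving that this change of basis is triangular with respect to Bruhat order using \eqref{E:r} and the multiplication rule for $T_iT_u$; that route rests on the same lifting property.)
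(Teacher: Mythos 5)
Your proof is correct and takes essentially the same route as the paper: the paper also inducts via the recurrence \eqref{E:psirec} from the base case \eqref{E:psiid}, citing \cite{Hum} for precisely the two Bruhat-order facts ($v\not\le w$ implies $v\not\le wr_i$ and $vr_i\not\le w$ when $wr_i<w$) that you derive from the lifting property. Your write-up merely makes explicit the lifting-property bookkeeping that the paper delegates to the reference.
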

\begin{proof} The statement is true for $w=\id$ by \eqref{E:psiid}.
Otherwise let $i\in I$ be such that $wr_i<w$ and suppose $v\not\le
w$. Then $v\not\le wr_i$ and $vr_i \not\le w$ (see \cite{Hum}). The
Lemma follows by induction using \eqref{E:psirec}.
\end{proof}

The next result follows from the definitions.
\begin{prop} \label{P:Kloc}
For all $v,w\in W$ we have $w = \sum_{v\in W} \psi^v(w) T_v$.
\end{prop}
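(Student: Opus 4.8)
The plan is to expand the group element $w \in \K_{Q(T)}$ in the basis $\{T_v \mid v \in W\}$ and then extract the coefficients by pairing against the functions $\psi^v$.

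First I would observe that $\{T_v \mid v \in W\}$ is a basis of $\K_{Q(T)}$ as a left $Q(T)$-module. This follows from \eqref{E:Tbasis}, which gives $\K = \bigoplus_v R(T)T_v$, upon tensoring with $Q(T)$ over $R(T)$ (recall $\K_{Q(T)} \cong Q(T)\otimes_{R(T)}\K$). Hence there exist unique coefficients $c_v \in Q(T)$, almost all zero, with $w = \sum_v c_v T_v$. That this expansion is finite can be seen by induction on the length of $w$: using \eqref{E:r} to rewrite each simple reflection as $r_i = 1 + (1-e^{\al_i})T_i$ and then applying the product rules for $T_iT_v$, one expresses $w$ as a finite $Q(T)$-linear combination of the $T_v$; in fact the support is contained in $\{v \mid v \le w\}$, consistent with Lemma \ref{L:psisupp}.

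Next I would apply the function $\psi^u$, regarded via the identification preceding \eqref{E:Psidef} as a left $Q(T)$-linear map $\K_{Q(T)} \to Q(T)$. Since the $c_v$ multiply the $T_v$ on the left, left $Q(T)$-linearity together with additivity give $\psi^u(w) = \sum_v c_v\,\psi^u(T_v)$, and the defining property \eqref{E:psidef}, namely $\psi^u(T_v) = \delta_{u,v}$, collapses the sum to $\psi^u(w) = c_u$. Substituting $c_v = \psi^v(w)$ back into the expansion yields $w = \sum_v \psi^v(w) T_v$, as desired.

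There is no serious obstacle here; the only points demanding care are the direction of scalar multiplication—the coefficients sit on the left of the $T_v$, matching the left $Q(T)$-linearity of the functionals $\psi^u$—and the finiteness of the expansion, both of which are addressed above.
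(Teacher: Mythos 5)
Your argument is correct and is exactly the unwinding of definitions that the paper has in mind (the paper simply states that Proposition~\ref{P:Kloc} ``follows from the definitions''): expand $w$ in the left $Q(T)$-basis $\{T_v\}$ of $\K_{Q(T)}$ and extract coefficients using the left $Q(T)$-linearity of $\psi^u$ together with $\psi^u(T_v)=\delta_{u,v}$. Your additional remarks on finiteness of the expansion via \eqref{E:r} and on the support being contained in $\{v\mid v\le w\}$ are accurate and consistent with Lemma~\ref{L:psisupp}.
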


\begin{rem}\label{R:psirecleft}
Proposition \ref{P:Kloc} leads to a ``left hand" recurrence for
$\psi^v(w)$.
\begin{enumerate}
\item For $w=\id$ we have \eqref{E:psiid}.
\item Otherwise let $i\in I$ be such that $r_iw<w$.
By induction on length we have
\begin{equation*}
\begin{split}
  w &= r_i(r_iw) \\
  &= (1 + (1-e^{\al_i})T_i) (\sum_u \psi^{u}(r_iw) T_u) \\
  &= \sum_u \psi^{u}(r_iw) T_u + (1-e^{\al_i}) \sum_u T_i \psi^{u}(r_iw) T_u \\
  &= \sum_u \psi^{u}(r_iw) T_u + (1-e^{\al_i}) \sum_u ((T_i\cdot \psi^{u}(r_iw))T_u
    + (r_i\cdot \psi^{u}(r_iw))T_iT_u ).
\end{split}
\end{equation*}
Taking the coefficient of $T_v$, we see that
\begin{equation*}
  \psi^{v}(w) = \psi^{v}(r_iw) + (1-e^{\al_i})\Bigl( (T_i\cdot \psi^{v}(r_iw)) +
  \chi(r_iv<v)\, r_i\cdot (\psi^{r_iv}(r_iw)-\psi^{v}(r_iw))
  \Bigr).
\end{equation*}
Therefore for $r_iw<w$ we have
\begin{equation*}
  \psi^{v}(w) =
  \begin{cases}
    r_i \cdot \psi^{v}(r_iw) & \text{if $r_iv>v$} \\
    e^{\al_i}\, r_i \cdot \psi^{v}(r_iw) + (1-e^{\al_i})\, r_i\cdot \psi^{r_iv}(r_iw) &\text{if $r_iv<v$.}
  \end{cases}
\end{equation*}
\end{enumerate}
\end{rem}

Define the inversion set of $v\in W$ by
\begin{equation*}
  \Inv(v) = \{ \al\in \Phi^{+\re} \mid r_\al v < v\}.
\end{equation*}

\begin{lem} \label{L:psidiag} For all $v\in W$, we have $\psi^v(v) = \prod_{\al\in\Inv(v)} (1-e^\al)$.
\end{lem}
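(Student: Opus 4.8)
The plan is to prove the formula by induction on $\ell(v)$, using the ``left hand'' recurrence of Remark~\ref{R:psirecleft}. For the base case $v=\id$ the inversion set $\Inv(\id)$ is empty, so the right hand side is the empty product $1$, matching $\psi^\id(\id)=1$ from \eqref{E:psiid}.

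For the inductive step I would choose $i\in I$ with $r_iv<v$ and set $v'=r_iv$, so that $\ell(v')=\ell(v)-1$ and $r_iv'=v>v'$. Applying Remark~\ref{R:psirecleft} with $w=v$ (so that $r_iw<w$), the hypothesis $r_iv<v$ places us in the second case and gives
\[
  \psi^v(v)=e^{\al_i}\,r_i\cdot\psi^v(v')+(1-e^{\al_i})\,r_i\cdot\psi^{v'}(v').
\]
Since $v'=r_iv<v$ we have $v\not\le v'$, so Lemma~\ref{L:psisupp} forces $\psi^v(v')=0$ and the first term drops out, leaving $\psi^v(v)=(1-e^{\al_i})\,r_i\cdot\psi^{v'}(v')$. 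Substituting the inductive hypothesis $\psi^{v'}(v')=\prod_{\beta\in\Inv(v')}(1-e^\beta)$ and using that $r_i$ acts on $R(T)$ as the ring automorphism $e^\beta\mapsto e^{r_i\beta}$, I obtain
\[
  \psi^v(v)=(1-e^{\al_i})\prod_{\beta\in\Inv(v')}(1-e^{r_i\beta})
  =(1-e^{\al_i})\prod_{\gamma\in r_i\Inv(v')}(1-e^\gamma).
\]

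It then remains to match this with $\prod_{\al\in\Inv(v)}(1-e^\al)$, i.e.\ to prove the purely combinatorial identity
\[
  \Inv(v)=\{\al_i\}\sqcup r_i\,\Inv(r_iv)
\]
as a disjoint union inside $\Phi^{+\re}$. I expect this inversion-set bookkeeping to be the only step requiring genuine care; the rest is direct substitution into the recurrence. To prove it I would use the standard characterization $r_\al v<v\iff v^{-1}\al\in\Phi^-$ (see \cite{Hum}) together with the fact that $r_i$ permutes $\Phi^{+\re}\setminus\{\al_i\}$. Since $r_iv'>v'$ one checks $v^{-1}\al_i<0$ (so $\al_i\in\Inv(v)$) while $\al_i\notin\Inv(v')$; and for $\al\ne\al_i$ the identity $v^{-1}\al=(v')^{-1}(r_i\al)$ shows that $\al\in\Inv(v)$ if and only if $r_i\al\in\Inv(v')$, exhibiting $\al\mapsto r_i\al$ as a bijection $\Inv(v)\setminus\{\al_i\}\to\Inv(v')$ that keeps all roots positive. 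This yields the displayed identity and hence the claim.
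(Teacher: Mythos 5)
Your argument is correct and is precisely the route the paper takes: its proof of Lemma~\ref{L:psidiag} simply says the result ``follows directly from Remark~\ref{R:psirecleft} and Lemma~\ref{L:psisupp},'' and your induction (second case of the left-hand recurrence at $w=v$, vanishing of $\psi^v(r_iv)$ by the support lemma, and the standard identity $\Inv(v)=\{\al_i\}\sqcup r_i\Inv(r_iv)$) is exactly the intended expansion of that one-line proof.
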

\begin{proof} Follows directly from Remark \ref{R:psirecleft} and Lemma \ref{L:psisupp}.
\end{proof}

\begin{rem}\label{R:eta}
We have $\psi^v(w) = \eta(w \cdot \psi^{v^{-1}}(w^{-1}))$, where
$\eta:\Z[P] \to \Z[P]$ is given by $\eta(e^\la) = e^{-\la}$.
\end{rem}

\subsection{The GKM condition}
We recall the $K$-theoretic GKM (Goresky-Kottwitz-Macpherson)
condition as a criterion for membership in $\Psi$. This condition
and the associated geometry is discussed in
Subsection~\ref{SS:restriction}.

\begin{prop} \label{P:GKM} $\Psi$ is the set of $\psi\in \F(W,Q(T))$ such that
\begin{equation}\label{E:KGKM}
  \psi(r_\al w) - \psi(w) \in (1-e^\al) R(T)\qquad\text{for all $\al\in \Phi^{+\re}$ and $w\in W$.}
\end{equation}
\end{prop}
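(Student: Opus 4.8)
The plan is to prove the two inclusions separately: that every $\psi \in \Psi$ satisfies the GKM condition \eqref{E:KGKM}, and conversely that any $\psi \in \F(W,Q(T))$ satisfying \eqref{E:KGKM} actually lands in $R(T)$ on all $T_w$, hence lies in $\Psi$.

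For the forward inclusion, the natural move is to rewrite the difference $\psi(r_\al w) - \psi(w)$ in terms of the pairing with $\K$. Using the definition $T_\al = (1-e^\al)^{-1}(r_\al - 1)$, we have $r_\al = 1 + (1-e^\al)T_\al$, so that for any $w$,
\begin{equation*}
  \psi(r_\al w) - \psi(w) = \ip{(r_\al - 1)w}{\psi} = (1-e^\al)\ip{T_\al w}{\psi}.
\end{equation*}
Now $T_\al w \in \K_{Q(T)}$, and by the conjugation formula \eqref{E:Talconj} together with the expansion of $w$ in the $T$-basis \eqref{E:Tbasis}, the element $T_\al w$ expands over $R(T)$ in the $T_v$; more carefully, since $T_\al = u T_i u^{-1}$ for $\al = u\al_i$, one checks $T_\al w$ lies in $\K = \bigoplus_v R(T)T_v$. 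Thus $\ip{T_\al w}{\psi} = \psi(T_\al w) \in R(T)$ by the defining property of $\Psi$, and the forward inclusion follows. I would double-check that $T_\al w \in \K$ rather than merely $\K_{Q(T)}$; this is the one point requiring care, and it is where I would spend attention — it should follow because multiplication by $w$ on the right permutes and rescales the $T_v$-basis in an $R(T)$-linear fashion, but the rescaling must stay in $R(T)$.

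For the reverse inclusion, I would argue by induction on $\ell(w)$ that \eqref{E:KGKM} forces $\psi(T_w) \in R(T)$. The base case $w = \id$ gives $\psi(T_\id) = \psi(\id)$, and one must extract from \eqref{E:KGKM} (applied suitably) that $\psi(\id) \in R(T)$. For the inductive step, choose $i$ with $r_i w < w$ and use the left-hand recurrence of Remark~\ref{R:psirecleft}, or more directly the relation $w = r_i(r_i w) = (1 + (1-e^{\al_i})T_i)(r_iw)$, to express $\psi(w) = \ip{w}{\psi}$ in terms of $\psi(r_iw)$ and $\psi(T_i r_i w)$. The GKM hypothesis with $\al = \al_i$ controls the divisibility of $\psi(r_iw) - \psi(w)$ by $(1-e^{\al_i})$, which is exactly what is needed to conclude that the Demazure-type operator applied to the already-controlled lower-length values produces an element of $R(T)$; the key identity is that $T_i \cdot f \in R(T)$ whenever $f \in R(T)$ and $f$ together with $r_i \cdot f$ satisfy the right divisibility, as in Lemma~\ref{L:0Heckeacts}.

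The main obstacle I anticipate is the reverse direction: verifying that the single-reflection conditions \eqref{E:KGKM} for \emph{all} positive real roots $\al$ (not just the simple ones) are enough to propagate integrality through the $T_w$-basis. The forward direction is essentially a formal computation, but the converse requires showing that the GKM conditions are not merely necessary but sufficient, which means carefully tracking how the Demazure operator $T_i$ interacts with the divisibility by $(1-e^{\al_i})$ at each inductive step, and confirming that the conditions for non-simple roots are automatically consistent with those generated by the simple-root recurrence. I would organize the induction so that at length $\ell(w)$ I only ever invoke the GKM condition for a single simple reflection relating $w$ to $r_iw$, and then check separately that this suffices to recover membership for the full collection of $T_w$.
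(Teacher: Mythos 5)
Your forward inclusion is fine and is essentially the paper's own argument. The paper conjugates the other way, writing the quotient as $\psi(wT_\beta)$ with $\beta=w^{-1}\al$, where you write $\psi(T_\al w)$; in either form the point you flag is handled identically: $W\subset\K$ because $r_j=1+(1-e^{\al_j})T_j$, and $T_\al=uT_iu^{-1}$ for $\al=u\al_i$, so the element in question lies in the ring $\K=\bigoplus_v R(T)T_v$ and $\psi$ takes it into $R(T)$ by \eqref{E:Psidef}.

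The reverse inclusion is where your route departs from the paper's, and as sketched it has a genuine gap. Your inductive step only ever produces quantities of the form $\psi(T_i u)=(1-e^{\al_i})^{-1}\bigl(\psi(r_{\al_i}u)-\psi(u)\bigr)$, whose membership in $R(T)$ is literally the condition \eqref{E:KGKM} for the simple root $\al_i$; it says nothing about $\psi(T_w)$ for $\ell(w)\ge 2$, since $T_w=T_{i_1}\dotsm T_{i_N}$ is not of the form $T_i$ times a group element. To continue the induction you would need to know that $T_i\cdot\psi$ (i.e.\ $b\mapsto\psi(bT_i)$) again satisfies \eqref{E:KGKM}; that is the real content of the converse and it is not addressed --- Lemma~\ref{L:0Heckeacts} concerns the action of $T_i$ on scalars in $R(T)$ and is not the relevant statement. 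Worse, your plan to invoke only the simple-root conditions and ``check separately'' that the non-simple ones are consistent cannot succeed, because the simple-root conditions are strictly weaker than membership in $\Psi$: for $W=S_3$, the function supported on $w_0=r_1r_2r_1$ with value $(1-e^{\al_1})(1-e^{\al_2})$ satisfies \eqref{E:KGKM} for $\al_1$ and $\al_2$ at every $w$, yet $\psi(T_{w_0})=(1-e^{\al_1+\al_2})^{-1}\notin R(T)$; the condition for the non-simple root $\al_1+\al_2$ is indispensable. The paper's proof is organized precisely so as to use all the roots at once: it picks a Bruhat-minimal $v\in\Supp(\psi)$, applies \eqref{E:KGKM} for every $\al\in\Inv(v)$ to get $\psi(v)\in(1-e^\al)R(T)$, invokes coprimality of the $1-e^\al$ together with Lemma~\ref{L:psidiag} to conclude $\psi(v)\in\psi^v(v)R(T)$, subtracts $(\psi(v)/\psi^v(v))\psi^v$, and iterates, with Lemma~\ref{L:psisupp} guaranteeing that no new support appears below $v$. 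Finally, your base case is also not extractable as stated: \eqref{E:KGKM} is unchanged by adding a constant $q\in Q(T)$ to $\psi$, so $\psi(\id)\in R(T)$ cannot follow from the difference conditions alone; one must assume (as the paper does implicitly at a minimal support element with empty inversion set) that the values $\psi(w)$ themselves lie in $R(T)$.
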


\begin{proof} Let $\beta=w^{-1}\al$ and $\psi\in\Psi$. Then $r_\al w=w r_\beta$ and
\begin{align*}
  \dfrac{\psi(r_\al w)-\psi(w)}{1-e^{\al}} &=
  \psi( (1-e^{\al})^{-1}(w r_\beta - w)) \\
  &= \psi(w T_\beta)
\end{align*}
which is in $R(T)$ since $wT_\beta\in \K$ using \eqref{E:Talconj}
and \eqref{E:r}.

For the converse, let $\psi\in \F(W,Q(T))$ satisfy \eqref{E:KGKM}
and suppose $\psi \neq 0$.
Let $v\in \Supp(\psi)$ be a minimal element. For every
$\al\in\Phi^{+\re}$ such that $r_\al v<v$ we have $\psi(v) \in
(1-e^\al)R(T)$ by \eqref{E:KGKM}, Lemma \ref{L:psisupp}, and the
minimality of $v$. Since the factors $(1-e^\al)$ are relatively
prime by~\cite[Proposition 6.3]{Kac}, $\psi(v) \in \psi^v(v)R(T)$ by
Lemma~\ref{L:psidiag}. Then $\psi'\in \Psi$ where
$\psi'(w)=\psi(w)-(\psi(v)/\psi^v(v)) \psi^v(w)$ for $w\in W$.
Moreover $v\not\in\Supp(\psi')$ and $\Supp(\psi')\setminus
\Supp(\psi)$ consists of elements strictly greater than $v$.
Repeating the argument for $\psi'$ and so on, we see that $\psi$ is
in $\prod_{v\in W} R(T)\psi^v$.
\end{proof}

\subsection{Structure constants and coproduct}
\label{SS:coprod}
The proof of the following result is straightforward but lengthy.

\begin{prop} \label{P:coprodaction} Let $M$ and $N$ be left $\K$-modules.
Define
\begin{equation*}
  M \otimes_{R(T)} N = (M\otimes_\Z N)/\langle qm\otimes n - m \otimes qn \mid
  \text{$q\in R(T)$, $m\in M$, $n\in N$} \rangle.
\end{equation*}
Then $\K$ acts on $M \otimes_{R(T)} N$ by
\begin{align*}
 q \cdot (m\otimes n) &= qm \otimes n \\
 T_i \cdot (m\otimes n) &= T_i \cdot m \otimes n + m \otimes T_i\cdot
  n + (1-e^{\al_i}) T_i\cdot m\otimes T_i\cdot n.
\end{align*}
Under this action we have
\begin{equation} \label{E:tensorWeyl}
  w \cdot (m\otimes n) = wm \otimes wn.
\end{equation}
\end{prop}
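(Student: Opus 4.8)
**The plan is to verify that the proposed formulas define an associative module action compatible with the relations, then deduce the Weyl-group formula.**

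The statement has two parts: first that the given formulas for $q\cdot(m\otimes n)$ and $T_i\cdot(m\otimes n)$ extend to a well-defined left $\K$-action on $M\otimes_{R(T)}N$, and second the consequence~\eqref{E:tensorWeyl}. I would begin with well-definedness. The tensor product is over $R(T)$, so I must check that the $T_i$-action respects the defining relation $qm\otimes n = m\otimes qn$. Using the commutation rule~\eqref{E:Tcomm}, namely $T_i\,q = (T_i\cdot q)+(r_i\cdot q)T_i$, together with the Leibniz-type identity~\eqref{E:deriv}, I would expand $T_i\cdot(qm\otimes n)$ and $T_i\cdot(m\otimes qn)$ and confirm they agree modulo the tensor relation. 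This is the kind of bookkeeping where the cross-term $(1-e^{\al_i})T_i\cdot m\otimes T_i\cdot n$ is precisely engineered to absorb the discrepancy coming from $r_i\cdot q$ versus $q$; I expect this to work out but it must be checked.

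Next I would verify that the formulas respect the defining relations~\eqref{E:Tbraid} of $\K_0$, i.e.\ $T_i^2=-T_i$ and the braid relations, since $\K$ is generated by $\K_0$ and $R(T)$ subject to~\eqref{E:Tcomm}. For $T_i^2=-T_i$ I would compute $T_i\cdot(T_i\cdot(m\otimes n))$ directly from the coproduct formula and check it equals $-T_i\cdot(m\otimes n)$, using $T_i^2=-T_i$ on each factor and the identity $T_i\cdot(1-e^{\al_i})\cdot(-) $ expanded via~\eqref{E:Tcomm}. The compatibility with scalar multiplication by $R(T)$ (that $q\cdot(m\otimes n)=qm\otimes n$ together with $T_i\,q=(T_i\cdot q)+(r_i\cdot q)T_i$ acts consistently) is what ties the two generator types together.

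For~\eqref{E:tensorWeyl}, I would use the relation~\eqref{E:r}, $r_i = 1+(1-e^{\al_i})T_i$, to express $r_i\cdot(m\otimes n)$ in terms of the $T_i$-action:
\begin{equation*}
  r_i\cdot(m\otimes n) = (m\otimes n) + (1-e^{\al_i})\,T_i\cdot(m\otimes n).
\end{equation*}
Substituting the coproduct formula for $T_i\cdot(m\otimes n)$ and regrouping, I expect the terms to assemble into $(m+(1-e^{\al_i})T_i\cdot m)\otimes(n+(1-e^{\al_i})T_i\cdot n)=r_im\otimes r_in$, again using~\eqref{E:r} on each factor; the cross-term in the coproduct is exactly the product term needed for this factorization. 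Since the $r_i$ generate $W$ and the action is multiplicative, this gives~\eqref{E:tensorWeyl} for all $w\in W$ by induction on length.

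\textbf{The main obstacle} will be confirming the braid relations for the $T_i$-action on $M\otimes_{R(T)}N$. Unlike $T_i^2=-T_i$, which is a short computation, the braid relation $\underbrace{T_iT_j\dotsm}_{m_{ij}}=\underbrace{T_jT_i\dotsm}_{m_{ij}}$ involves iterating the coproduct formula several times and tracking how the cross-terms interact across the two tensor factors. The cleanest route is probably to \emph{derive} the braid relations from~\eqref{E:tensorWeyl} rather than check them head-on: once I know $r_i$ acts diagonally and the $r_i$ satisfy the braid relations in $W$, I can recover the $T_i$-braid relations by inverting~\eqref{E:r}. This suggests establishing~\eqref{E:tensorWeyl} as a formal consequence of the $T_i$-formula being a coalgebra-type map, and then reading the braid relations off the $W$-action, which sidesteps the worst of the direct computation.
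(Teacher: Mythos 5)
The paper itself omits the proof of this proposition (it is declared ``straightforward but lengthy''), so I can only judge your plan on its merits. Most of it is sound: the well-definedness of the $T_i$-formula modulo the relation $qm\otimes n=m\otimes qn$ does reduce, via \eqref{E:Tcomm} and $(1-e^{\al_i})(T_i\cdot q)=r_i\cdot q-q$, to a cancellation in which the cross-term plays exactly the role you predict; the compatibility with $T_iq=(T_i\cdot q)+(r_i\cdot q)T_i$ is the same computation; $T_i^2=-T_i$ is a short direct check; and the identity $r_i\cdot(m\otimes n)=r_im\otimes r_in$ does follow by substituting the $T_i$-formula into $r_i=1+(1-e^{\al_i})T_i$ and factoring, since the scalar $(1-e^{\al_i})$ may be slid across the tensor sign.

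The gap is in your proposed treatment of the braid relations, which you correctly identify as the crux. Knowing that each $R_i:=1+(1-e^{\al_i})T_i$ acts diagonally, and hence that the $R_i$ satisfy the braid relations as operators on $M\otimes_{R(T)}N$, does not let you ``invert \eqref{E:r}'': already in the commuting case $m_{ij}=2$, expanding $R_iR_j=R_jR_i$ yields only $(1-e^{\al_i})(1-e^{\al_j})(T_iT_j-T_jT_i)=0$ as operators, and $M$ and $N$ are arbitrary $\K$-modules, so $M\otimes_{R(T)}N$ may have $(1-e^{\al})$-torsion and the factor cannot be cancelled. (Equally, one cannot pass to $Q(T)\otimes_{R(T)}(M\otimes_{R(T)}N)$, since that map need not be injective.) The argument can be repaired: first prove the proposition when $M$ and $N$ are free $\K$-modules, hence free $R(T)$-modules, so that $M\otimes_{R(T)}N$ embeds in its localization at $Q(T)$; there your inversion of \eqref{E:r} is legitimate and identifies the operators $T_i$ with the restriction of the honest $\K_{Q(T)}$-action given by $q\mapsto q$, $w\mapsto w\otimes w$, so all relations hold. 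Then deduce the general case by choosing surjections $M'\twoheadrightarrow M$, $N'\twoheadrightarrow N$ from free modules and using that your formulas are natural in $M$ and $N$, so any operator identity valid on $M'\otimes_{R(T)}N'$ descends along the induced surjection. Without some such device (or a head-on verification of the braid relations), the shortcut as stated does not close.
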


Consider the case $M=N=\K$. By Proposition \ref{P:coprodaction}
there is a left $R(T)$-module homomorphism $\Delta:\K \to \K
\otimes_{R(T)} \K$ defined by $\Delta(a) = a \cdot (1 \otimes 1)$. It satisfies
\begin{align}
  \Delta(q) &= q \otimes 1 &\qquad&\text{for $q\in R(T)$} \\
\label{E:DeltaT}
  \Delta(T_i) &= 1 \otimes T_i + T_i \otimes 1 + (1-e^{\al_i}) T_i \otimes T_i&\qquad&\text{for $i\in I$.}
\end{align}
Let $a\in\K$ and $\Delta(a)=\sum_{v,w} a_{v,w} T_v \otimes T_w$ with
$a_{v,w}\in R(T)$. It follows from Proposition \ref{P:coprodaction}
that the action of $a$ on $M \otimes_{R(T)} N$ can be computed in
the following simple ``componentwise" fashion: $a\cdot (m\otimes n)
= \sum_{v,w} a_{v,w} T_v m \otimes T_w n$.  In particular if
$b\in\K$ and $\Delta(b)=\sum_{v',w'} b_{v',w'} T_{v'} \otimes T_{w'}$ then
\begin{equation}\label{E:Deltamult}
\Delta(ab) =  \Delta(a)\cdot \Delta(b) := \sum_{v,w,v',w'} a_{v,w}
b_{v',w'} T_v T_{v'} \otimes T_w T_{w'}.
\end{equation}

\begin{rem} \label{R:Deltamult}
The naive componentwise product is ill-defined on all of $\K \otimes_{R(T)} \K$, for if
it were well-defined then $(T_i \otimes 1)(q\otimes 1)=(T_i\otimes1)(1\otimes q)$
or equivalently $T_iq\otimes 1 = T_i\otimes q = q(T_i\otimes 1)=qT_i \otimes 1$ which is false
for $q=e^{\al_i}$.
\end{rem}

There is a left $R(T)$-bilinear pairing $\ip{.}{.}:(\K \otimes_{R(T)}\K) \times (\Psi \otimes_{R(T)} \Psi)
\to R(T)$ given by
\begin{align*}
  \ip{a\otimes b}{\phi \otimes \psi} &= \ip{a}{\phi} \ip{b}{\psi}.
\end{align*}

\begin{lem}\label{L:pairprod}
For all $a\in\K$ and $\phi,\psi\in \Psi$, we have $\ip{a}{\phi\psi}
= \ip{\Delta(a)}{\phi\otimes \psi}$.
\end{lem}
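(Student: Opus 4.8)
The plan is to reduce the identity to two facts: the coproduct $\Delta$ is \emph{grouplike} on the group-algebra part of $\K$, while the product on $\Psi$ is \emph{pointwise} multiplication of functions on $W$. Once these are aligned, the identity becomes a one-line computation in the group-algebra basis, with no length induction or explicit recurrence.

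First I would record that $W \subset \K$: by \eqref{E:r} each $r_i = 1 + (1-e^{\al_i})T_i$ lies in $\K$, and since $\K$ is a ring, so does every $w = r_{i_1}\cdots r_{i_N}$. Consequently the $\K$-action of Proposition \ref{P:coprodaction} applies to group elements, and \eqref{E:tensorWeyl} with $m = n = 1$ gives $\Delta(w) = w\cdot(1\otimes 1) = w\otimes w$ for all $w \in W$; that is, $\Delta$ is grouplike on $W$. I would then extend this $Q(T)$-linearly: writing an arbitrary $a \in \K \subset \K_{Q(T)}$ in the group-algebra basis as $a = \sum_{w\in W} a_w\, w$ with $a_w \in Q(T)$, the relation $q\cdot(m\otimes n) = qm\otimes n$ of Proposition \ref{P:coprodaction} yields $\Delta(a) = \sum_{w} a_w\,(w\otimes w)$, the action extending to $\K_{Q(T)}$ through $\K_{Q(T)} \cong Q(T)\otimes_{R(T)}\K$.

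With this in hand the two sides are computed directly. The pairing is evaluation, $\ip{b}{\theta} = \theta(b)$, and functions in $\F(W,Q(T))$ are extended to $\K_{Q(T)}$ by $\theta(\sum_w a_w w) = \sum_w a_w \theta(w)$. Since $\Psi$ is a subring of $\F(W,Q(T))$ under pointwise multiplication, $(\phi\psi)(w) = \phi(w)\psi(w)$ for $w\in W$, whence
\begin{equation*}
  \ip{a}{\phi\psi} = (\phi\psi)(a) = \sum_{w} a_w\,\phi(w)\psi(w).
\end{equation*}
On the other side, left $Q(T)$-linearity of the pairing on the tensor product gives
\begin{equation*}
  \ip{\Delta(a)}{\phi\otimes\psi} = \sum_{w} a_w\,\ip{w}{\phi}\ip{w}{\psi} = \sum_{w} a_w\,\phi(w)\psi(w),
\end{equation*}
and the two expressions coincide.

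The routine parts, additivity and scalar behaviour of the pairing, are immediate; the only point that needs care, and which I regard as the main (though modest) obstacle, is the passage to the extended setting. One must check that the $\K$-action of Proposition \ref{P:coprodaction}, the coproduct $\Delta$, and the bilinear pairing all extend consistently from their $R(T)$-forms to the $Q(T)$-forms, so that expanding $a$ in the group-algebra basis, which genuinely requires $Q(T)$-coefficients, is legitimate on both sides simultaneously. Once this compatibility is in place, grouplikeness of $\Delta$ matches pointwise multiplication in $\Psi$ exactly, and the lemma follows.
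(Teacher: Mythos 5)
Your proof is correct and follows essentially the same route as the paper: both extend the pairing, the coproduct, and the $\K$-action to the $Q(T)$-forms, use left $Q(T)$-linearity to reduce to $a=w$ in the group-algebra basis, and then exploit the grouplike identity $\Delta(w)=w\otimes w$ (from \eqref{E:tensorWeyl}) together with pointwise multiplication in $\Psi$. The paper's proof is just a compressed version of yours, stating ``extend the definitions in the obvious manner'' where you spell out the compatibility check.
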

\begin{proof}
First extend the definitions in the obvious manner to $\K_{Q(T)}$
and $\F(W,Q(T))$. Using left $Q(T)$-linearity we may then take
$a=w$. Then $\ip{\Delta(w)}{\phi\otimes\psi} = \ip{w\otimes
w}{\phi\otimes\psi} = \phi(w)\psi(w) = \ip{w}{\phi\psi}$.
\end{proof}

Define the structure ``constants" $c^{uv}_w\in R(T)$ by $\psi^u
\psi^v = \sum_{w\in W} c^{uv}_w \psi^w$.  The structure constants of
$\Psi$ are recovered by the map $\Delta$.
\begin{prop} \label{P:struct}
We have $\Delta(T_w)=\sum_{u,v} c_w^{uv} \, T_u \otimes T_v$ for all
$w \in W$.
\end{prop}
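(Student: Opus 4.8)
The plan is to read off the coefficients of $\Delta(T_w)$ in the free $R(T)$-module $\K \otimes_{R(T)} \K$ by pairing against the dual elements $\psi^u \otimes \psi^v$, and then to identify those coefficients with the structure constants $c^{uv}_w$ using the compatibility of $\Delta$ with the product on $\Psi$ recorded in Lemma~\ref{L:pairprod}. First I would observe that $\K \otimes_{R(T)} \K = \bigoplus_{u,v} R(T)\,(T_u \otimes T_v)$ by \eqref{E:Tbasis}, so that $\Delta(T_w)$ can be written uniquely as $\Delta(T_w) = \sum_{u,v} d^{uv}_w\, T_u \otimes T_v$ with $d^{uv}_w \in R(T)$. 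I would also note that this is a \emph{finite} sum: $\Delta(T_i)$ is given explicitly by \eqref{E:DeltaT}, and since $\Delta(T_w) = \Delta(T_{i_1})\cdots \Delta(T_{i_N})$ by the multiplicativity \eqref{E:Deltamult}, it indeed lands in $\K \otimes_{R(T)} \K$. It then suffices to prove $d^{uv}_w = c^{uv}_w$ for all $u,v$.

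The core step is a two-way evaluation of the pairing $\ip{\Delta(T_w)}{\psi^u \otimes \psi^v}$. On the one hand, using the product formula $\ip{T_{u'} \otimes T_{v'}}{\psi^u \otimes \psi^v} = \psi^u(T_{u'})\,\psi^v(T_{v'}) = \delta_{u,u'}\delta_{v,v'}$ from \eqref{E:psidef}, and the left $R(T)$-linearity of the pairing to extract the coefficients $d^{u'v'}_w$, the sum collapses to $\ip{\Delta(T_w)}{\psi^u \otimes \psi^v} = d^{uv}_w$. On the other hand, Lemma~\ref{L:pairprod} gives $\ip{\Delta(T_w)}{\psi^u \otimes \psi^v} = \ip{T_w}{\psi^u \psi^v} = (\psi^u \psi^v)(T_w)$; expanding $\psi^u \psi^v = \sum_{w'} c^{uv}_{w'}\psi^{w'}$ by the definition of the structure constants and evaluating at $T_w$ using \eqref{E:psidef} leaves only the term $w'=w$, yielding $c^{uv}_w$. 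Comparing the two computations gives $d^{uv}_w = c^{uv}_w$, which is exactly the claim.

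Since everything reduces to this duality bookkeeping, the only genuine obstacle is keeping the two module structures straight. I must apply the left $R(T)$-linearity of the pairing consistently when pulling the coefficients $d^{uv}_w$ out of $\ip{\Delta(T_w)}{\,\cdot\,}$, and use the right $R(T)$-action on $\Psi$ (so that $(\psi\cdot q)(a) = q\,\psi(a)$) when evaluating $\sum_{w'} c^{uv}_{w'}\psi^{w'}$ at $T_w$. It is worth remarking that, although $\Psi = \prod_{v} R(T)\psi^v$ is only a product and $\psi^u\psi^v$ may a priori involve infinitely many $\psi^{w'}$, no convergence issue arises: the finiteness of $\Delta(T_w)$ makes the left-hand pairing well defined, and evaluation at the single element $T_w$ kills all but one summand on the right. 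Thus the proposition follows with no real analytic or combinatorial content beyond Lemma~\ref{L:pairprod} and the defining duality \eqref{E:psidef}.
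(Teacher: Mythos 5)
Your argument is correct and is exactly the paper's proof: the paper derives Proposition~\ref{P:struct} directly from Lemma~\ref{L:pairprod} together with the duality $\psi^u(T_v)=\delta_{uv}$, which is precisely your two-way evaluation of $\ip{\Delta(T_w)}{\psi^u\otimes\psi^v}$. The extra care you take with finiteness of $\Delta(T_w)$ and the left/right $R(T)$-module structures is sound but not a different method, just a fuller write-up of the same one-line deduction.
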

\begin{proof}
This follows from Lemma \ref{L:pairprod} and $\psi^u(T_v) = \delta_{uv}$.
\end{proof}

\subsection{Explicit localization formulae}
For the sake of completeness we give an explicit formula for the
values $\psi^v(w)$. It is a variant of a formula due independently
to Graham \cite{Gr} and Willems \cite{W}.  Let $\ve:\K_{Q(T)}\to
Q(T)$ be the left $Q(T)$-module homomorphism defined by $\ve(w)= 1$
for all $w\in W$.
\begin{prop} \label{P:psivalue} Let $v,w\in W$ and let $w=r_{i_1}r_{i_2}\dotsm r_{i_N}$
be any reduced decomposition of $w$. For $b_1b_2\dotsm b_N\in
\{0,1\}^N$, let $|b|=\sum_{i=1}^N b_i$. Then
\begin{equation} \label{E:psivalue}
  \psi^v(w) = \ve \sum_{b\in B(i_\bullet,v)} (-1)^{\ell(w)-|b|} \prod_{k=1}^N \begin{cases}
  (1-e^{\al_{i_k}}) r_{i_k} & \text{if $b_k=1$} \\
  r_{i_k} &\text{if $b_k=0$}
  \end{cases}
\end{equation}
where the sum runs over
\begin{equation}\label{E:sumset}
B(i_\bullet,v) = \left\{ b=(b_1,b_2,\dotsc,b_N)\in \{0,1\}^N \mid
\prod_{\substack{ k \\ b_k=1}} T_{i_k} = \pm T_v \right\}.
\end{equation}
\end{prop}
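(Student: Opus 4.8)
The plan is to read off $\psi^v(w)$ as an expansion coefficient and then induct on $N=\ell(w)$. By Proposition~\ref{P:Kloc}, $\psi^v(w)$ is the coefficient of $T_v$ when $w=r_{i_1}\cdots r_{i_N}$ is written in the basis $\{T_u\}$ of $\K$; substituting $r_{i_k}=1+(1-e^{\al_{i_k}})T_{i_k}$ from \eqref{E:r} and expanding the ordered product produces a sum over $b\in\{0,1\}^N$, in which the subword selected by $b$ is controlled precisely by the set $B(i_\bullet,v)$ through the multiplication rule $T_iT_u=T_{r_iu}$ (if $r_iu>u$) or $-T_u$ (if $r_iu<u$). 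Rather than track the reordering of scalars past the $T_{i_k}$ directly, I would verify that the right-hand side of \eqref{E:psivalue} satisfies the left-hand recurrence of Remark~\ref{R:psirecleft}. The one auxiliary fact needed is that $\ve(r_i\,a)=r_i\cdot\ve(a)$ for all $a\in\K_{Q(T)}$, which is immediate from $\ve$ being a left $Q(T)$-module map with $\ve(u)=1$ for $u\in W$: writing $a=\sum_u q_u\,u$ gives $r_ia=\sum_u(r_i\cdot q_u)(r_iu)$, whence $\ve(r_ia)=r_i\cdot\sum_u q_u=r_i\cdot\ve(a)$.

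For the inductive step I would strip the first letter $i=i_1$ and set $w'=r_iw=r_{i_2}\cdots r_{i_N}$, a reduced word of length $N-1$, so that $r_iw<w$ and Remark~\ref{R:psirecleft} applies. Splitting the defining sum according to the value of $b_1\in\{0,1\}$, the factor $A_1^{b_1}$ equals $r_i$ when $b_1=0$ and $(1-e^{\al_i})r_i$ when $b_1=1$; by the $\ve$-equivariance above this factor pulls out as $1$ or $(1-e^{\al_i})$ times $r_i\cdot(-)$ applied to the $\ve$ of the remaining product in the letters $i_2,\dots,i_N$. The behaviour of $B(i_\bullet,v)$ under prepending $i_1$ is governed by $T_iT_u=\pm T_{\cdot}$: the terms with $b_1=0$ restrict the tail $b'=(b_2,\dots,b_N)$ to $B((i_2,\dots,i_N),v)$, while the terms with $b_1=1$ occur only when $r_iv<v$ and then split the tail between $B((i_2,\dots,i_N),v)$ and $B((i_2,\dots,i_N),r_iv)$. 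Each such tail-sum is, by induction, an instance of \eqref{E:psivalue} for $w'$, reproducing $\psi^v(w')$ or $\psi^{r_iv}(w')$.

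Assembling the pieces, the $r_iv>v$ contribution collapses to a single multiple of $r_i\cdot\psi^v(w')$ and the $r_iv<v$ contribution to a combination of $r_i\cdot\psi^v(w')$ and $r_i\cdot\psi^{r_iv}(w')$, matching the shape of the two cases of Remark~\ref{R:psirecleft}; the base case $N=0$ gives $\psi^v(\id)=\delta_{v,\id}$ directly from \eqref{E:psiid}. I expect the main obstacle to be the sign bookkeeping. The explicit sign $(-1)^{\ell(w)-|b|}$ must be reconciled with two further sources of signs: those produced by $T_iT_u=-T_u$ at the non-reduced steps, namely the $b\in B(i_\bullet,v)$ with $|b|>\ell(v)$ whose subword has Demazure product $v$ yet is not reduced, and the length change $\ell(r_iv)=\ell(v)-1$ incurred when the tail sum switches between $v$ and $r_iv$. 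Verifying that these three contributions combine to yield exactly the coefficients $1$ in the case $r_iv>v$, and $e^{\al_i}$ and $1-e^{\al_i}$ in the case $r_iv<v$, of Remark~\ref{R:psirecleft} is the delicate heart of the argument; everything else is the routine induction sketched above.
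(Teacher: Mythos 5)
Your plan is the mirror image of the paper's proof: the paper peels off the \emph{last} letter $i_N$ of the reduced word and checks that the right-hand side of \eqref{E:psivalue} satisfies the right-hand recurrence \eqref{E:psirec}, whereas you peel off the \emph{first} letter $i_1$ and check the left-hand recurrence of Remark~\ref{R:psirecleft}. The two inductions have identical structure and difficulty; your auxiliary fact $\ve(r_ia)=r_i\cdot\ve(a)$ and your case analysis of how $B(i_\bullet,v)$ behaves under prepending $T_{i_1}$ (the $b_1=1$ terms exist only when $r_{i_1}v<v$, and then split between tails in $B((i_2,\dots,i_N),v)$ and in $B((i_2,\dots,i_N),r_{i_1}v)$) are both correct and are exactly the left-handed analogues of what the paper does on the right.

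The one genuine gap is the step you explicitly defer --- the sign bookkeeping --- and it is not a formality: carried out honestly, it shows the recurrence is \emph{not} satisfied with the sign as printed in \eqref{E:psivalue}. Write $\phi^v(w)$ for that right-hand side and $w'=r_{i_1}w$. The $b_1=0$ terms contribute $(-1)^{\ell(w)-\ell(w')}\,r_{i_1}\cdot\phi^v(w')=-\,r_{i_1}\cdot\phi^v(w')$, since $|b|$ is unchanged while $\ell(w)$ drops by one; so when $r_{i_1}v>v$ you get $\phi^v(w)=-r_{i_1}\cdot\phi^v(w')$ instead of the required $+r_{i_1}\cdot\phi^v(w')$. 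The normalization that works is $(-1)^{|b|-\ell(v)}$ in place of $(-1)^{\ell(w)-|b|}$: then the $b_1=0$ terms give $r_{i_1}\cdot\phi^v(w')$; the $b_1=1$ terms with tail in $B(\cdot,v)$ give $-(1-e^{\al_{i_1}})\,r_{i_1}\cdot\phi^v(w')$ (the extra minus from $T_{i_1}T_v=-T_v$, i.e.\ $|b|$ grows while $\ell(v)$ does not); and those with tail in $B(\cdot,r_{i_1}v)$ give $(1-e^{\al_{i_1}})\,r_{i_1}\cdot\phi^{r_{i_1}v}(w')$ ($|b|$ and $\ell(v)$ both shift by one); the sum is exactly Remark~\ref{R:psirecleft}. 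The two conventions differ by $(-1)^{\ell(w)-\ell(v)}$, which is why the worked example in the paper (where $\ell(w)-\ell(v)=2$) does not detect the discrepancy; the case $w=r_2r_1$, $v=r_1$ does, since the coefficient of $T_{r_1}$ in $r_2r_1$ is $1-e^{\al_1+\al_2}$ while \eqref{E:psivalue} as printed yields $-(1-e^{\al_1+\al_2})$. The paper's own proof makes the same silent correction (it asserts $\phi^v(w)=\phi^v(wr_i)$ for the $b_N=0$ part, which again presupposes the $(-1)^{|b|-\ell(v)}$ normalization). So your strategy is sound and does complete, but the ``delicate heart'' you postponed is precisely where the statement must be repaired, and a finished write-up has to contain that computation.
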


Formula~\eqref{E:psivalue} is the $K$-theoretic analogue of the
formula~\cite[Eq. (1.2)]{AJS,Billey} for the restriction of a
$T$-equivariant Schubert cohomology class $[X_v]$, to a $T$-fixed
point $w$.

\begin{ex} Let $G=SL_3$, $v=r_1$, and $w=r_1r_2r_1$.
Then there are three possible binary words $b$: $(1,0,0)$,
$(0,0,1)$, and $(1,0,1)$, yielding
\begin{align*}
  \psi^v(w) &= \ve \left((1-e^{\al_1})r_1 r_2 r_1 + r_1 r_2 (1-e^{\al_1}) r_1  -  (1-e^{\al_1})r_1 r_2 (1-e^{\al_1}) r_1 \right) \\
  &= (1-e^{\al_1})+(1-e^{r_1r_2 \al_1}) - (1-e^{\al_1})(1-e^{r_1r_2\al_1}) \\
  &= (1-e^{\al_1}) + (1-e^{\al_2}) - (1-e^{\al_1})(1-e^{\al_2}) \\
  &= 1 - e^{\al_1+\al_2}.
\end{align*}
Using instead the reduced decomposition $w=r_2r_1r_2$, there is only
one summand $b=(0,1,0)$ and we obtain
\begin{align*}
  \psi^v(w) = \ve (r_2 (1-e^{\al_1}) r_1 r_2) = 1-e^{r_2\al_1} = 1-e^{\al_1+\al_2}.
\end{align*}
\end{ex}

\begin{proof}[Proof of Proposition \ref{P:psivalue}] Let the right hand side of \eqref{E:psivalue} be denoted by $\phi^v(w)$.
We prove that $\psi^v(w)=\phi^v(w)$ by induction on $w$ and then on
$v$. Let $i=i_N$. If $v<vr_i$ then in any summand $b$, $b_N=0$, so
that $\phi^v(w)=\phi^v(wr_i)=\psi^v(wr_i)=\psi^v(w)$ by induction on the length
$\ell(w)$ of $w$ and \eqref{E:psirec}. Otherwise let $v>vr_i$. The part of
$\phi^v(w)$ with $b_N=0$, is given by $\phi^v(wr_i)=\psi^v(wr_i)$.
The rest of $\phi^v(w)$ consists of summands with $b_N=1$. Consider
the left-to-right product $\pm T_u$ of $T_{i_k}$ for which $b_k=1$
except that the term $b_N=1$ is omitted. For the $b$ such that
$u=v$, the last factor $T_{i_N}=T_i$ produces an additional negative
sign and we obtain $-(1-e^{wr_i(\al_i)})
\phi^v(wr_i)=(e^{-w(\al_i)}-1)\psi^v(wr_i)$ because the product of
the reflections $r_{i_1}\dotsm r_{i_{N-1}}$ is $wr_i$. For the $b$
with $u=vr_i$, we obtain $(wr_i\cdot (1-e^{\al_i})) \phi^{vr_i}(w)=
(1-e^{-w(\al_i)}) \psi^{vr_i}(w)$. In total we obtain the right hand
side of \eqref{E:psirec}, which equals $\psi^v(w)$.
\end{proof}

\section{Equivariant $K$-cohomology of Kac-Moody flag manifolds}
\label{S:KM}

In contrast to \cite{KK:K} which uses the ``thin" Kac-Moody flag manifold $X_{\rm ind}$
which is an ind-scheme with finite-dimensional Schubert varieties \cite{Ku},
we employ the larger ``thick" Kac-Moody flag manifold $X$ \cite{Kash},
which is a scheme of infinite type with finite codimensional Schubert varieties.
Using the thick Kac-Moody flag manifold,
we give natural geometric interpretations to the constructions in the $K$-NilHecke ring.

\subsection{Kac-Moody thick flags}\label{SS:flags}
For the following discussion see \cite{Kash}. Let $T$ be the
algebraic torus with character group $P$, $U_\pm$ the group scheme
with $\Lie(U_\pm)=\nn_\pm$, $B_\pm $ the Borel subgroups with
$\Lie(B_\pm)=\ttt\oplus \nn_\pm$, and for $i\in I$ let $P_i^\pm$ be
the parabolic group with $\Lie(P_i^\pm) = \Lie(B_\pm) \oplus
\geh_{\pm\al_i}$. These groups are all contained in an affine scheme
$G_\infty$ of infinite type, that contains a canonical ``identity"
point $e$. Let $G\subset G_\infty$ be the open subset defined by $G=
\bigcup P_{i_1} \dotsm P_{i_m} e P_{j_1}^- P_{j_2}^-\dotsm
P_{j_m}^-$. It is not a group but admits a free left action by each
$P_i$ and a free right action by each $P_j^-$.

Then $X=G/B_-$ is the thick Kac-Moody flag manifold; it
is a scheme of infinite type over $\C$.
For each subset $J\subset I$ let $P_J^-\subset G$
be the group generated by $B_-$ and $P_i^-$ for $i\in J$.

Write $X^J = G/P_J^-$. Let $W_J\subset W$ be the subgroup
generated by $r_i$ for $i\in J$ and let $W^J$ be the set of
minimal length coset representatives in $W/W_J$. For $w\in W^J$
let $\mathring{X}^J_w=B w P_J^-/P_J^-$ where $B=B_+$; it is locally closed in $X^J$.
We have the $B$-orbit decomposition
\begin{equation*}
  X^J = \bigsqcup_{w\in W^J} \mathring{X}^{J}_w.
\end{equation*}
Let $X_w^J = \overline{\mathring{X}^{J}_w}$ be the Schubert variety.
It has codimension $\ell(w)$ in $X^J$ and coherent structure sheaf
$\OO_{X_w^J}$. We have
\begin{equation} \label{E:Schubvarcell}
   X_w^J = \bigsqcup_{\substack{v\in W^J\\ v\ge w}} \mathring{X}^{J}_v.
\end{equation}

Let $S$ be a finite Bruhat order ideal of $W^J$ (a finite subset
$S\subset W^J$ such that if $v\in S$, $u\in W^J$, and $u\le v$, then
$u\in S$). Let $\Omega_S^J = \bigsqcup_{w\in S} \mathring{X}_w^{J} =
\bigcup_{w\in S} w B P_J^-/P_J^-$ be a $B$-stable finite union of
translations of the big cell $\mathring{X}^{J}:=BP_J^-/P_J^-$, which
is open in $X^J$. The big cell is an affine space of countable
dimension (finite if $\geh$ is finite-dimensional):
$\mathring{X}^{J}\cong\mathrm{Spec}(\C[x_1,x_2,\dotsc])$.

\subsection{Equivariant $K$-cohomology}
Denote by $\Coh^T(\Omega_S^J)$ the category of coherent
$T$-equivariant $\OO_{\Omega_S^J}$-modules and let $K^T(\Omega_S^J)$
be the Grothendieck group of $\Coh^T(\Omega_S^J)$. For each $w\in
S$, $\OO_{X_w^J}\in \Coh^T(\Omega_S^J)$ and therefore defines a
class $[\OO_{X_w^J}]\in K^T(\Omega_S^J)$. Define
\begin{equation*}
  K^T(X^J) = \varprojlim_S K^T(\Omega_S^J).
\end{equation*}
One may show (as in \cite{KS} for the case $J=\emptyset$) that
\begin{equation} \label{E:KTbasis}
  K^T(X^J) \cong \prod_{w\in W^J} K^T(\pnt) [\OO_{X_w^J}].
\end{equation}
Recall that $K^T(\pnt)\cong R(T)\cong\Z[P]=\bigoplus_{\la\in P} \Z
e^\la$.  Elements of $K^T(X^J)$ are possibly infinite
$K^T(\pnt)$-linear combinations of equivariant Schubert classes
$[\OO_{X_w^J}]$.

\subsection{Restriction to $T$-fixed points}
\label{SS:restriction}

For $w\in W^J$ let $i_w^J:\{\pnt\}\to W^J \cong (X^J)^T \subset X^J$ be the inclusion with image $\{w P_J^-/P_J^-\}$.
Restriction to the $T$-fixed points
induces an injective $R(T)$-algebra homomorphism \cite{HHH,KK:K}
\begin{equation} \label{E:Kloc}
\begin{split}
  K^T(X^J) &\overset{\res^J}\longrightarrow K^T((X^J)^T) \cong K^T(W^J) \cong \F(W^J,R(T)) \\
  c &\longmapsto (w\longmapsto i_w^{J*}(c))
\end{split}
\end{equation}
where $\F(W^J,R(T))$ is the $R(T)$-algebra of functions $W^J\to R(T)$
with pointwise multiplication and $R(T)$-action
\begin{equation*}
(q\psi)(w)=q \psi(w)
\end{equation*}
for $q\in R(T)$, $\psi\in \F(W^J,R(T))$, and $w\in W^J$.  Let
$\iota_J:\F(W^J,R(T))\to\F(W,R(T))$ be defined by extending
functions to be constant on cosets in $W/W_J$:
\begin{equation*}
  \iota_J(\psi)(w) = \psi(w')
\end{equation*}
for $\psi\in \F(W^J,R(T))$, where $w'\in W^J$ is such that
$w'W_J=wW_J$.

Define $\Psi^J\subset \F(W,R(T))$ by $\psi\in \Psi^J$ if and only if
$\psi$ is in the image of $\iota_J$, and
\begin{equation} \label{E:GKM}
  \psi(r_\al w)-\psi(w)\in (1-e^\al) R(T)\qquad\text{for all $w,r_\al w\in W$, $\al\in \Phi^\re$.}
\end{equation}
We call this the GKM condition\footnote{The corresponding criterion
was proved in \cite{GKM} for equivariant cohomology for more general
spaces, commonly called GKM spaces.  For Kac-Moody flag ind-schemes
the criterion follows directly from results in \cite{KK:K}.  See
also \cite{HHH} for more general cohomology theories and spaces. }
for $K^T(X^J)$.

\begin{thm}[\cite{KK:K,HHH}] \label{T:GKM} $K^T(X^J) \cong \Psi^J$.
\end{thm}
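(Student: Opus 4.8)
The plan is to realize the restriction map of \eqref{E:Kloc} followed by the extension $\iota_J$ as an injection $K^T(X^J)\hookrightarrow \F(W,R(T))$ and to identify its image with $\Psi^J$. Injectivity is the localization theorem for these spaces, which I would take from \cite{HHH,KK:K}; by construction the image already lands in $\iota_J(\F(W^J,R(T)))$, so the real content is to pin down the image inside the coset-constant functions. The cleanest route is to reduce the parabolic case to the case $J=\emptyset$, which is already governed by the combinatorics of the $\psi^v$, and then perform an algebraic descent.

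First I would treat $J=\emptyset$, where $X=G/B_-$, $W^J=W$, and $\iota_J$ is the identity. Here I claim the image of $\res=\res^\emptyset$ is exactly the set of GKM functions of \eqref{E:GKM}, which by Proposition~\ref{P:GKM} equals $\Psi=\prod_{v\in W}R(T)\psi^v$: membership in the GKM class for every $K$-theory class is the standard argument restricting a class to the $T$-invariant $\Po$'s (equivalently cited from \cite{HHH,KK:K}), while the identification $\res[\OO_{X_v}]=\psi^v$ follows by matching the recurrence of Remark~\ref{R:psirec} against the Demazure action on Schubert classes. This is the principal geometric input. I would then pass to general $J$ through the smooth projection $\pi:X=G/B_-\to G/P_J^-=X^J$. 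Its pullback $\pi^*:K^T(X^J)\to K^T(X)$ is injective and, on $T$-fixed points, intertwined with $\iota_J$; since $\pi^{-1}(X_v^J)=X_v$ for $v\in W^J$ and $\pi_*\OO_{X_v}=\OO_{X_v^J}$ (flatness together with the rational singularities of the flag-variety fibers $P_J^-/B_-$), one gets $\pi^*[\OO_{X_v^J}]=[\OO_{X_v}]$. Restricting to fixed points gives $\iota_J(\res^J[\OO_{X_v^J}])=\res[\OO_{X_v}]=\psi^v$, so by \eqref{E:KTbasis} the image of $\res^J$, viewed in $\F(W,R(T))$, is precisely $\prod_{v\in W^J}R(T)\psi^v$.

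It then remains to prove the purely algebraic identity $\Psi^J=\prod_{v\in W^J}R(T)\psi^v$. For $\supseteq$, each $\psi^v$ is a GKM function, hence lies in $\Psi$, and for $v\in W^J$ and $i\in J$ one has $vr_i>v$, so $y_i\cdot\psi^v=\psi^v$ by Lemma~\ref{L:psiy}; a short computation with \eqref{E:Fbi} shows that $y_i\cdot\psi=\psi$ is equivalent to $\psi(wr_i)=\psi(w)$ for all $w$, so $\psi^v$ is constant on $W_J$-cosets and therefore lies in $\iota_J(\F(W^J,R(T)))$, giving $\psi^v\in\Psi^J$. For $\subseteq$, take $\psi\in\Psi^J$ and write $\psi=\sum_{v\in W}a_v\psi^v$ with $a_v=\psi(T_v)\in R(T)$ by \eqref{E:psidef}. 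If $v\notin W^J$, choose $i\in J$ with $vr_i<v$; then $T_v=T_{vr_i}T_i$, and since coset-constancy of $\psi$ yields $y_i\cdot\psi=\psi$, hence $T_i\cdot\psi=0$ by \eqref{E:Tdef}, we obtain $a_v=\ip{T_{vr_i}T_i}{\psi}=\ip{T_{vr_i}}{T_i\cdot\psi}=0$. Thus $\psi\in\prod_{v\in W^J}R(T)\psi^v$, and the identification is complete.

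The main obstacle lies entirely in the first, geometric step: establishing that $\res^J$ is injective, that in the case $J=\emptyset$ its image is cut out exactly by the GKM congruences, and that $\res[\OO_{X_v}]=\psi^v$, together with the sheaf-theoretic statement $\pi^*[\OO_{X_v^J}]=[\OO_{X_v}]$. Once these facts are granted from \cite{HHH,KK:K,Kash}, the reduction from the parabolic $X^J$ to the full flag manifold $X$ and the extraction of the coefficients $a_v$ are a routine application of the Demazure idempotents $y_i$ and the triangularity of the basis $\{\psi^v\}$.
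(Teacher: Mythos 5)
Your proposal is correct and follows essentially the same route as the paper: the case $J=\emptyset$ via \eqref{E:KTbasis}, Theorem~\ref{T:psiloc} and Proposition~\ref{P:GKM}, then the reduction of general $J$ to the statement that a coset-constant element of $\Psi$ is supported on $\{\psi^v\mid v\in W^J\}$, using that $T_i\cdot\psi=0$ for $i\in J$. The only (harmless) difference is cosmetic: where the paper argues by contradiction with a minimal-length $w\notin W^J$ via Lemma~\ref{L:psiy}, you extract the coefficient directly as $a_v=\psi(T_{vr_i}T_i)=(T_i\cdot\psi)(T_{vr_i})=0$, which is if anything slightly cleaner.
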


For the sake of completeness we include a proof of Theorem
\ref{T:GKM}.  For $v\in W^J$ define $\psi^v_J\in \F(W^J,R(T))$ to be
the image of $[\OO_{X_v^J}]$:
\begin{equation} \label{E:psilocdef}
  \psi^v_J(w) = i_w^{J*}([\OO_{X_v^J}])\qquad\text{for $v,w\in W^J$.}
\end{equation}

When $J=\emptyset$ we shall write $X=X^{\emptyset}$,
$\Psi=\Psi^{\emptyset}$, and so on, suppressing $\emptyset$ in the
notation. Observe that the definition of $\Psi=\Psi^\emptyset$ in
this section agrees with the definition \eqref{E:Psidef} by
Proposition \ref{P:GKM}. Provisionally for $J=\emptyset$ we write
$\psi^v_\emptyset$ for the functions defined by \eqref{E:psilocdef},
and show they agree with the functions defined by \eqref{E:psidef}
using the $K$-NilHecke ring.

\begin{thm} \label{T:psiloc} For all $v\in W$, we have $\psi^v = \psi^v_\emptyset$.
\end{thm}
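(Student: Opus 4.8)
The plan is to prove that the \emph{geometric} functions $\psi^v_\emptyset(w) = i_w^*([\OO_{X_v}])$ satisfy exactly the same initial condition and recurrence as the \emph{algebraic} functions $\psi^v$, and then invoke the uniqueness built into Remark~\ref{R:psirec}: since \eqref{E:psirec} rewrites $\psi^v(w)$ in terms of values $\psi^{v'}(w')$ with $w'<w$, or $w'=w$ and $v'<v$, the recurrence together with the base case $w=\id$ determines the functions uniquely, so it suffices to match the two inputs.

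First I would settle the base case $w=\id$. The $T$-fixed point $\id = eB_-/B_-$ lies in the cell $\mathring{X}_\id$, so by \eqref{E:Schubvarcell} it lies in $X_v$ precisely when $\id\ge v$, i.e.\ $v=\id$. Hence for $v\ne\id$ the class $[\OO_{X_v}]$ restricts to $0$ at $\id$, while $X_\id=X$ gives $[\OO_{X}]=1$, which restricts to $1$. Thus $\psi^v_\emptyset(\id)=\delta_{v,\id}=\psi^v(\id)$, matching \eqref{E:psiid}.

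Next I would introduce the geometric Demazure operator. Let $\pi_i\colon X=G/B_-\to X^{\{i\}}=G/P_i^-$ be the natural $\Po$-bundle and set $D_i=\pi_i^*\pi_{i*}$ on $K^T(X)$. Using that Schubert varieties in Kashiwara's thick flag manifold have rational singularities, so that $R\pi_{i*}\OO_{X_v}=\OO_{\pi_i(X_v)}$ and $\pi_i^*\OO_{X^{\{i\}}_u}=\OO_{\pi_i^{-1}(X^{\{i\}}_u)}$, one computes
\[
  D_i[\OO_{X_v}] = \begin{cases} [\OO_{X_{vr_i}}] & \text{if } vr_i<v \\ [\OO_{X_v}] & \text{if } vr_i>v, \end{cases}
\]
mirroring Lemma~\ref{L:psiy}. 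The second geometric input is the standard $K$-theoretic push-pull localization formula along a $\Po$-bundle: the fibre of $\pi_i$ through the fixed point $w$ carries the two fixed points $w$ and $wr_i$, with tangent weights $\pm w\al_i$, so that for $\psi=\res(c)$,
\[
  (\res D_i\, c)(w) = \frac{\psi(w)}{1-e^{-w\al_i}} + \frac{\psi(wr_i)}{1-e^{w\al_i}}.
\]
A direct smash-product computation, using $wq=(w\cdot q)w$ and the definition $y_i=(1-e^{-\al_i})^{-1}(1-e^{-\al_i}r_i)$ to evaluate $\psi(wy_i)$ via \eqref{E:Fbi}, shows the right-hand side equals $(y_i\cdot\psi)(w)$. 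Hence $\res\circ D_i=y_i\circ\res$, and combining this with the display above yields the geometric analogue of Lemma~\ref{L:psiy}, namely $y_i\cdot\psi^v_\emptyset=\psi^{vr_i}_\emptyset$ if $vr_i<v$ and $\psi^v_\emptyset$ otherwise. The manipulation of Remark~\ref{R:psirec} then produces \eqref{E:psirec} for $\psi^v_\emptyset$, and induction on $\ell(w)$ and then $\ell(v)$ gives $\psi^v=\psi^v_\emptyset$.

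I expect the main obstacle to be the geometric step in the thick, infinite-type setting: verifying that $\pi_i$ is a genuine $\Po$-bundle, that the higher direct images $R^{>0}\pi_{i*}\OO_{X_v}$ vanish (rational singularities of thick Schubert varieties), and that the push-pull localization formula remains valid even though $X$ is of infinite type. These facts are classical for the thin ind-scheme in \cite{KK:K,Ku}, and I would transfer them using Kashiwara's geometry \cite{Kash}, working over each finite Bruhat ideal $\Omega_S$ where all computations take place in an honest finite-type situation, and then passing to the inverse limit via \eqref{E:KTbasis}.
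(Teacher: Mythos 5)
Your proposal is correct and follows essentially the same route as the paper: establish the base case at $w=\id$ from the cell decomposition, show that restriction to fixed points intertwines the push-pull operator $p_i^*p_{i\,*}$ with the action of $y_i$ (this is Proposition~\ref{P:pushpull}, which the paper proves by reducing to the fibre $\Po$ via equivariance rather than by quoting the localization formula directly), combine with the sheaf-level identity \eqref{E:Opushpull} (cited from \cite{KS}), and conclude by the uniqueness of the recurrence in Remark~\ref{R:psirec}.
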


\subsection{Push-pull and $y_i$}

Fix $i\in I$. For the singleton $J=\{i\}$ let $P_i^-=P_J^-$, $X^i=X^J$,
and let $p_i:X\to X^i$ be the projection, which is a $\Po$-bundle.
In \cite{KS} it is shown that
\begin{equation} \label{E:Opushpull}
  p_i^*p_{i\,*}([\OO_{X_v}]) =
  \begin{cases}
  [\OO_{X_{vr_i}}] & \text{if $vr_i<v$} \\
  [\OO_{X_v}] &\text{if $vr_i>v$.}
  \end{cases}
\end{equation}

\begin{prop} \label{P:pushpull}
The map $\psi\mapsto y_i \cdot \psi$, is an $R(T)$-module
endomorphism of  $\F(W,R(T))$ such that the diagram commutes:
\begin{equation*}
\begin{diagram}
\node{K^T(X)}  \arrow{e,t}{\res} \arrow{s,t}{p_i^*p_{i\,*}} \node{\F(W,R(T))} \arrow{s,b}{y_i\cdot-} \\
\node{K^T(X)} \arrow{e,b}{\res} \node{\F(W,R(T)).}
\end{diagram}
\end{equation*}
\end{prop}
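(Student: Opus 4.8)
The statement has two parts: that $\psi\mapsto y_i\cdot\psi$ is a well-defined $R(T)$-linear self-map, and that it is intertwined with $p_i^*p_{i\,*}$ by $\res$. The plan is to treat the linearity by a direct computation and to settle the commuting square by reducing to the Schubert classes, where \eqref{E:Opushpull} and Lemma~\ref{L:psiy} line up on the nose. First I would record the action explicitly. By the bimodule rule \eqref{E:Fbi} and $y_i=1+T_i$ we have $(y_i\cdot\psi)(w)=\psi(wy_i)$; expanding $wy_i$ in $\K_{Q(T)}$ (equivalently, commuting scalars past $T_i$ via \eqref{E:Tcomm}) yields
\[
  (y_i\cdot\psi)(w)=\frac{\psi(w)-e^{-w(\al_i)}\,\psi(wr_i)}{1-e^{-w(\al_i)}}=\psi(wr_i)+\frac{\psi(w)-\psi(wr_i)}{1-e^{-w(\al_i)}}.
\]
Two facts follow at once. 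Since the left $\K_{Q(T)}$-action commutes with the right $Q(T)$-action in \eqref{E:Fbi}, the map $\psi\mapsto y_i\cdot\psi$ is $R(T)$-linear; and the formula is manifestly unchanged under $w\mapsto wr_i$, so its values are constant on $W_{\{i\}}$-cosets. For these values to lie in $R(T)$ one uses $wr_i=r_{w(\al_i)}w$ together with the GKM divisibility \eqref{E:KGKM}: for $\psi$ in the image $\Psi=\res(K^T(X))$ the numerator $\psi(w)-\psi(wr_i)$ is divisible by $1-e^{-w(\al_i)}$ in $R(T)$ (the two units $\pm e^{\pm w(\al_i)}$ interchange the factors $1-e^{w(\al_i)}$ and $1-e^{-w(\al_i)}$), which is exactly the domain on which the operator is applied. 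This yields the $R(T)$-module endomorphism assertion.

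For the commuting square I would reduce to the Schubert basis. By \eqref{E:KTbasis} every class is an $R(T)$-linear combination $\sum_v a_v[\OO_{X_v}]$, possibly infinite, and by Lemma~\ref{L:psisupp} the value of $\res$ at any fixed $w$ involves only the finitely many $v\le w$; hence both $\res\circ p_i^*p_{i\,*}$ and $(y_i\cdot-)\circ\res$ are $R(T)$-linear and continuous for the product topology, so it suffices to check them on each $[\OO_{X_v}]$. There the two sides agree exactly: by Theorem~\ref{T:psiloc}, $\res[\OO_{X_v}]=\psi^v$; by \eqref{E:Opushpull}, $\res\bigl(p_i^*p_{i\,*}[\OO_{X_v}]\bigr)$ equals $\psi^{vr_i}$ if $vr_i<v$ and $\psi^v$ if $vr_i>v$; and by Lemma~\ref{L:psiy}, $y_i\cdot\psi^v$ obeys the identical case distinction. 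Since $\res$ is injective, the square commutes on all of $K^T(X)$.

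The one genuine point requiring care is the identification of the geometric operator $p_i^*p_{i\,*}$ with the algebraic $y_i$. Rather than compute the localization of $p_{i\,*}$ along the $\Po$-bundle $p_i$ directly---which would mean invoking Lefschetz--Riemann--Roch and tracking the tangent weights of the fibre at the two fixed points $w$ and $wr_i$---I would take \eqref{E:Opushpull} as the input, since it already encodes push-pull on the structure-sheaf classes and lets the comparison with Lemma~\ref{L:psiy} proceed formally. The only remaining subtlety, the passage from the basis to arbitrary infinite classes, is handled by the support finiteness of Lemma~\ref{L:psisupp} as above, so I expect no further obstruction.
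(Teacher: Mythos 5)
Your first paragraph (the explicit formula for $(y_i\cdot\psi)(w)$, its $r_i$-invariance, and the observation that $R(T)$-valuedness of the output rests on the GKM divisibility) is correct and is a reasonable reading of the slightly loose phrase ``endomorphism of $\F(W,R(T))$''. The problem is in the second part. Your verification of the commuting square takes as input that $\res[\OO_{X_v}]=\psi^v$, i.e.\ Theorem~\ref{T:psiloc}; but in the paper Theorem~\ref{T:psiloc} is \emph{deduced from} Proposition~\ref{P:pushpull} together with \eqref{E:Opushpull} (that is the entire content of its proof: the geometric functions $\psi^v_\emptyset$ satisfy the recurrence of Remark~\ref{R:psirec} precisely because $y_i\cdot{-}$ computes $p_i^*p_{i\,*}$ on localizations). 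So your argument is circular in the paper's logical order. The circularity is not cosmetic: \eqref{E:Opushpull} only tells you what $p_i^*p_{i\,*}$ does to the classes $[\OO_{X_v}]$, and to convert that into a statement about fixed-point restrictions you must already know those restrictions --- which is exactly Theorem~\ref{T:psiloc}. Without it, Lemma~\ref{L:psiy} (a statement about the algebraically defined $\psi^v$) cannot be matched against the geometric side.

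The step you explicitly decline to carry out is the one the paper actually performs, and it cannot be bypassed here. The paper's proof is local and makes no reference to Schubert classes: for $wr_i<w$ it factors the restrictions $i_w^*$ and $i_{wr_i}^*$ through the fibre $p_i^{-1}(wx_i^o)\cong\Po$, uses equivariance under $\Ad(w)G_i^+\subset B$ to reduce to $T$-equivariant $K$-theory of that $\Po$, and then invokes the standard computation \cite[Cor.~6.1.17]{CG} identifying push-pull along a $\Po$-bundle with the Demazure operator on the two fixed points. That computation (or an equivalent localization/Lefschetz--Riemann--Roch argument tracking the tangent weight $w(\al_i)$ of the fibre) is the irreducible geometric content of the proposition. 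To repair your proof you would either need to supply this local computation, or give an independent proof of Theorem~\ref{T:psiloc} (say via Proposition~\ref{P:psivalue} and a Bott--Samelson resolution argument as in \cite{Gr,W}), neither of which is present in your write-up.
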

\begin{proof}
Let $x_i^o\in X_i$ be the point $P_i^-/ P_i^-$.
Let $G_i\supset G_i^+\supset T$
be the subgroups with $\Lie(G_i)=\ttt\oplus\geh_{\al_i}\oplus
\geh_{-\al_i}$ and $\Lie(G_i^+)=\ttt\oplus\geh_{\al_i}$.

Now let $w\in W$ with $wr_i<w$.
Then $\Ad(w)G_i^+\subset B$
and it stabilizes $wx_i^o$ and $p_i^{-1}(wx_i^o)$.
Let $j:p_i^{-1}(wx_i^o)\to X$ be the inclusion.
Then for $\mathcal{F}\in \Coh^B(X)$, the first left derived functor
$L_1 j^*\mathcal{F}$ is $\Ad(w)G_i^+$-equivariant.
For $x\in \{w,wr_i\}$ let $i'_x$ be the inclusion
$\{\pnt\} \to x x_i^o\subset p_i^{-1}(wx_i^o)$.
Hence we have the commutative diagram
\begin{equation*}
\begin{diagram}
\node{K^B(X)} \arrow[2]{e,t}{i_x^*} \arrow{se,b}{j^*}\node[2]{K^T(\pnt)} \\
\node[2]{K^{\Ad(w)G_i^+}(p_i^{-1}(wx_i^o))} \arrow{ne,b}{i_x^{\prime*}}
\end{diagram}
\end{equation*}
and the isomorphisms that forget down to the Levi
\begin{align*}
K^B(X) &\cong K^T(X) \\
K^{\Ad(w)G_i^+}(p_i^{-1}(wx_i^o))&\cong K^T(p_i^{-1}(wx_i^o))
\end{align*}
which allows the reduction to the case of $p_i^{-1}(wx_i^o)\cong \Po$, where the result is standard;
see \cite[Cor. 6.1.17]{CG}.
\end{proof}

\begin{proof}[Proof of Theorem~\ref{T:psiloc}]
We show that the functions $\psi_\emptyset^v$ satisfy the recurrence in Remark \ref{R:psirec}.

Since $X_\id = X$, it follows that $\psi_\emptyset^\id(\id)=1$.

Let $v\in W$. By \eqref{E:Schubvarcell}, we have $w\in X_v$ if and only if $w\ge v$. Therefore
\begin{equation*}
\psi^v_\emptyset(w) = 0 \qquad\text{unless $v\le w$.}
\end{equation*}
In particular $\psi_\emptyset^v(\id)=0$ if $v\ne \id$. Therefore the values $\psi_\emptyset^v(w)$
satisfy the base case of the recurrence. To show that \eqref{E:psirec} holds for $\psi_\emptyset^v$,
it is equivalent to show
\begin{equation*}
  y_i \cdot \psi_\emptyset^v = \begin{cases}
  \psi_\emptyset^{vr_i} & \text{if $vr_i<v$,} \\
  \psi_\emptyset^v &\text{if $vr_i>v$.}
  \end{cases}
\end{equation*}
But this holds by Proposition \ref{P:pushpull} and \eqref{E:Opushpull}.
\end{proof}

Let $p_J:X\to X^J$ be the projection.  Then there is a commutative
diagram of injective $R(T)$-algebra maps, where the horizontal maps
are restriction maps as in \eqref{E:Kloc}
\begin{equation*}
\begin{diagram}
\node{K^T(X^J)}  \arrow{e,t}{\res^J} \arrow{s,t}{p_J^*} \node{\F(W^J,R(T))} \arrow{s,b}{\iota_J} \\
\node{K^T(X)} \arrow{e,b}{\res} \node{\F(W,R(T)).}
\end{diagram}
\end{equation*}
We have $p_J^*([\OO_{X_v^J}]) = [\OO_{X_v}]$ and
$\iota_J(\psi^v_J)=\psi^v$ for $v \in W^J$.

\begin{proof}[Proof of Theorem \ref{T:GKM}]
For the case $J = \emptyset$, the result follows from
\eqref{E:KTbasis} and Theorem~\ref{T:psiloc} and Proposition~\ref{P:GKM}.
For $J \neq \emptyset$, let $\psi = \sum_w a_w \psi^w$ be in the
image of $\iota_J$.  It suffices to show that $a_w = 0$ for $w
\notin W^J$. Since $T_i$ is a $Q(T)$-multiple of $r_i - 1$, we have
$T_i \cdot \psi = 0$ whenever $i \in J$.  Suppose $a_w \neq 0$ for
some $w \notin W^J$. Pick such a $w$ with minimal length, and let $i
\in J$ be such that $w r_i < w$.  By Lemma \ref{L:psiy}, and $T_i =
y_i - 1$, we deduce that the coefficient of $\psi^{wr_i}$ in $T_i
\cdot \psi$ is non-zero, a contradiction.
\end{proof}

\section{Affine flag manifold and affine Grassmannian}
\label{S:P} We now specialize our constructions to the case of an
affine root system, and consider the thick affine flag manifold
$X_\af$ and thick affine Grassmannian $\Gr_G$ and their equivariant
$K$-cohomology.  However, instead of using the full affine torus
$T_\af\subset G_\af$, we use the torus $T\subset G$ and consider
$K^T(\Gr_G)$.  We give a {\it small torus GKM condition}, which is the
$K$-theoretic analogue of a result of Goresky, Kottwitz, and MacPherson
\cite{GKM:2004} in cohomology.

\subsection{Affine flag manifold}
We fix notation special to affine root systems and the associated finite root system.

Let $\geh\supset\bb\supset \ttt$ be a simple Lie algebra over $\C$, Borel subalgebra,
and Cartan subalgebra, Dynkin node set $I$, finite Weyl group $W$, simple reflections $\{r_i\mid i\in I\}$,
weight lattice $P=\bigoplus_{i\in I} \Z \omega_i\subset \ttt^*$ with fundamental weights $\omega_i$,
root lattice $Q=\bigoplus_{i\in I} \Z \alpha_i\subset \ttt^*$ with simple roots $\al_i$,
coroot lattice $Q^\vee = \bigoplus_{i\in I} \Z \al_i^\vee\subset \ttt$.

Let $G\supset B\supset T$ be a simple and simply-connected algebraic group over $\C$
with $\Lie(G)=\geh$, with Borel subgroup $B$ and maximal algebraic torus $T$.

Let $\geh_\af = (\C[t,t^{-1}] \otimes \geh) \oplus \C c \oplus \C d$
be the untwisted affine Kac-Moody algebra with canonical simple
subalgebra $\geh$, canonical central element $c$ and degree
derivation $d$. Let $\geh_\af = \nn_\af^+ \oplus \ttt_\af \oplus
\nn_\af^-$ be the triangular decomposition with affine Cartan
subalgebra $\ttt_\af$. Let $I_\af=\{0\}\cup I$ be the affine Dynkin
node set. Let $\{a_i\in\Z_{>0}\mid i\in I_\af\}$ be the unique
collection of relatively prime positive integers giving a dependency
for the columns of the affine Cartan matrix $(a_{ij})_{i,j\in
I_\af}$. Then $\delta=\sum_{i\in I_\af} a_i \al_i$ is the null root.
The affine weight lattice is given by $P_\af = \Z \delta \oplus
\bigoplus_{i\in I_\af} \Z \La_i\subset\ttt_\af^*$ where $\{\La_i\mid
i\in I_\af\}$ are the affine fundamental weights. Let $Q_\af$ and
$Q_\af^\vee$ be the affine root and coroot lattices. Let $W_\af$ be
the affine Weyl group, with simple reflections $r_i$ for $i\in
I_\af$. Considering the subset $I$ of $I_\af$, $W=W_I\subset W_\af$
and $W_\af^I$ is the set of minimal length coset representatives in
$W_\af/W$. We have $W_\af \cong W \ltimes Q^\vee$ with $\la\in
Q^\vee$ written $t_\la\in W_\af$. There is a bijection $\Wz\to
Q^\vee$ sending $w\in\Wz$ to $\la\in Q^\vee$, where $\la$ is defined
by $wW = t_\la W$. Let $\la^-\in W\cdot \la$ be antidominant and
$u\in W$ shortest such that $u \la^- = \la$. Then $w=t_\la u$ and
$\ell(w)=\ell(t_\la)-\ell(u)$.

Let $\Phi_\af=\Z\delta\cup(\Z\delta+\Phi)$ be the set of affine roots.
The affine real roots are given by $\Phi_\af^\re = \Z\delta+\Phi$.
Let $\Phi_\af^{\pm}$ be the sets of positive and negative affine roots.
The set of positive affine real roots is defined by $\Phi_\af^{+\re} = \Phi_\af^+\cap \Phi_\af^\re=
\Phi^+ \cup (\Z_{>0}\delta + \Phi)$. A typical real root $\al+m\delta\in \Phi_\af^\re$
for $\al\in \Phi$ and $m\in\Z$, has associated reflection
$r_{\al+m\delta} = r_\al t_{m\al^\vee}\in W_\af$.

Let $G_\af\supset P_I^-\supset B_\af^-\supset T_\af$ be the schemes
of Subsection \ref{SS:flags} associated with $\geh_\af$, $P_I^-$ the
maximal parabolic group scheme for the subset of Dynkin nodes
$I\subset I_\af$, $B_\af^-$ the negative affine Borel group,
$X_\af=G_\af/B_\af^-$ the thick affine flag manifold,
$\Gr_G=X_\af^I=G_\af/P_I^-$ the thick affine Grassmannian.

\subsection{Equivariant $K$-theory for affine flags with small torus action}
Following Peterson \cite{P} and Goresky, Kottwitz, and MacPherson
\cite{GKM:2004} in the cohomology case, we consider the action of
the smaller torus $T=T_\af\cap G$. The goal is to formulate and
prove the analogue of Theorem \ref{T:GKM} for $K^T(X_\af)$ and
$K^T(\Gr_G)$.  We let $\Psi'_\af \subset \F(W_\af,R(T_\af))$ denote
the ring defined by \eqref{E:Psidef} for the affine Lie algebra
$\geh_\af$, so that $\Psi'_\af \cong K^{T_\af}(X_\af)$.

The natural projection of weight lattices $P_\af\to P$ is surjective
with kernel $\Z\delta \oplus \Z \Lambda_0$. It induces the
projections
\begin{equation*}
\begin{diagram}
\node{\Z[P_\af]} \arrow{e,t}{\phi} \node{\Z[P]} \arrow{e,t}{\phi_0} \node{\Z}
\end{diagram}
\end{equation*}
and a commutative diagram
\begin{equation}\label{E:forgetdiag}
\begin{diagram}
\node{K^{T_\af}(X_\af)} \arrow{e,t}{\res'} \arrow{s,t}{\For} \node{\Psi'_\af } \arrow{s,b}{\phi\circ -} \\
\node{K^T(X_\af)} \arrow{e,b}{\res} \node{\F(W_\af,R(T))}
\end{diagram}
\end{equation}
where the horizontal maps are restriction to
$W_\af=X_\af^{T_\af}\subset X_\af^T$ and the map $\For$ regards a
$T_\af$-equivariant $O_{X_\af}$-module as a $T$-equivariant one.  We
change notations slightly and denote the Schubert classes by
$\psi'^v \in \Psi'_\af$ and define $\psi^v := \phi \circ \psi'^v \in
\F(W_\af,R(T))$.

The following definition is inspired by the analogous cohomological condition in
\cite[Theorem 9.2]{GKM:2004}.  A function $\psi \in \F(W_\af,R(T))$, can be extended
by linearity to give a function $\psi' \in \F(\oplus_{w \in W_\af}R(T)\cdot w, R(T))$.  In the following definition
we abuse notation by identifying $\psi$ with $\psi'$.
\begin{definition} \label{D:smallGKM}
We say that $\psi \in  \F(W_\af,R(T))$ satisfies the {\it small
torus Grassmannian GKM condition} if
\begin{equation} \label{E:smallGrassGKM}
\psi((1-t_{\alpha^\vee})^d w) \in (1-e^\alpha)^d R(T)\quad\text{for all $d\in\Z_{>0}$, $w\in W_\af$, and $\alpha\in \Phi$.}
\end{equation}
We say that $\psi \in  \F(W_\af,R(T))$ satisfies the {\it small
torus GKM condition} if, in addition to~\eqref{E:smallGrassGKM}, we
have
\begin{equation} \label{E:smallGKM}
\psi((1-t_{\alpha^\vee})^{d-1} (1 - r_\alpha) w) \in (1-e^\alpha)^d R(T)\quad\text{for all $d\in\Z_{>0}$, $w\in W_\af$, and $\alpha\in\Phi$.}
\end{equation}
\end{definition}
Let $\Psi_\af$ be the set of $\psi\in \F(W_\af,R(T))$ that satisfy
the small torus GKM condition and $\Psi_\af^{I}$ the set of
$\psi\in\F(W_\af,R(T))$ that are constant on cosets $w W$ for $w\in
W_\af$ and satisfy the small torus Grassmannian GKM condition.

\begin{lem} \label{L:Jcongruent}
Suppose $\psi$ satisfies~\eqref{E:smallGrassGKM} and let $J:=(1-e^\al)^d R(T)$. Then
\begin{equation*}
    \psi((1-t_{\al^\vee})^{d-1}w) \quad \text{and} \quad \psi((1-t_{\al^\vee})^{d-1} t_{p\al^\vee} w)
\end{equation*}
are congruent modulo the ideal $J$ for all $p\in \Z$.
\end{lem}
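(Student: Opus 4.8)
The plan is to prove the congruence by exhibiting
$\psi\bigl((1-t_{\al^\vee})^{d-1}(t_{p\al^\vee}-1)w\bigr)$ as an element of $J=(1-e^\al)^d R(T)$, since the difference of the two displayed quantities is exactly this expression. First I would reduce to the case $p\in\Z_{>0}$: the operator $t_{p\al^\vee}-1$ factors through $t_{\al^\vee}$, and for negative $p$ one can conjugate or use the telescoping identity $t_{p\al^\vee}-1 = -(t_{p\al^\vee})(t_{-p\al^\vee}-1)$ together with the fact that the $R(T)$-module action is by a unit $t_{p\al^\vee}$ on the argument, so it suffices to handle positive powers.

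The key algebraic step is the factorization
\begin{equation*}
  t_{p\al^\vee}-1 = (1-t_{\al^\vee})\cdot\Bigl(-\sum_{j=0}^{p-1} t_{j\al^\vee}\Bigr),
\end{equation*}
valid in $\Z[W_\af]$ since $t_{\al^\vee}$ generates a free abelian group. Substituting this in, I obtain
\begin{equation*}
  \psi\bigl((1-t_{\al^\vee})^{d-1}(t_{p\al^\vee}-1)w\bigr)
  = -\sum_{j=0}^{p-1}\psi\bigl((1-t_{\al^\vee})^{d}\,t_{j\al^\vee}w\bigr).
\end{equation*}
Now each summand is of the form $\psi\bigl((1-t_{\al^\vee})^{d}\,w'\bigr)$ with $w'=t_{j\al^\vee}w\in W_\af$, and the hypothesis~\eqref{E:smallGrassGKM} applies directly: each such term lies in $(1-e^\al)^d R(T)=J$. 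Summing, the whole expression lies in $J$, which gives the claimed congruence.

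The main point to verify carefully is that the factorization of $t_{p\al^\vee}-1$ interacts correctly with the abused notation identifying $\psi$ with its $R(T)$-linear extension $\psi'$ to $\bigoplus_{w\in W_\af}R(T)\cdot w$, so that $\psi$ really is additive across the sum $-\sum_{j} t_{j\al^\vee}$ and across the binomial expansion of $(1-t_{\al^\vee})^{d-1}$; this is exactly the linearity built into the definition preceding Definition~\ref{D:smallGKM}, so no genuine obstacle arises there. The only mild subtlety is the sign and unit bookkeeping in the reduction to $p>0$, but since $t_{p\al^\vee}$ acts as a unit of $R(T)$ after applying $\phi$ and does not affect membership in the ideal $J$, this is routine. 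The heart of the argument is therefore the single-line telescoping factorization, which converts the $(d-1)$-st power plus the difference $t_{p\al^\vee}-1$ into a genuine $d$-th power to which the Grassmannian GKM hypothesis applies verbatim.
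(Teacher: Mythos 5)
Your proof is correct and is essentially the paper's argument in closed form: the paper proves the case $p=1$ by writing $\psi((1-t_{\al^\vee})^{d-1}w)-\psi((1-t_{\al^\vee})^{d-1}t_{\al^\vee}w)=\psi((1-t_{\al^\vee})^d w)\in J$ and then iterates and substitutes $w\mapsto t_{-p\al^\vee}w$ for negative $p$, which is exactly the unrolled version of your telescoping factorization $t_{p\al^\vee}-1=-(1-t_{\al^\vee})\sum_{j=0}^{p-1}t_{j\al^\vee}$. The only cosmetic blemish is your phrasing of $t_{p\al^\vee}$ as a ``unit of $R(T)$'' in the reduction to $p>0$ --- it is a group element of $W_\af$, and the reduction is simply the substitution $w\mapsto t_{p\al^\vee}w$ --- but the mathematics is sound.
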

\begin{proof}
By \eqref{E:smallGrassGKM}
\begin{equation*}
  J \ni \psi((1-t_{\al^\vee})^d w) = \psi((1-t_{\al^\vee})^{d-1}w)- \psi((1-t_{\al^\vee})^{d-1} t_{\al^\vee} w),
\end{equation*}
so that the Lemma holds for $p=1$. Repeating the same argument for
$\psi((1-t_{\al^\vee})^{d-1} t_{\al^\vee} w)$ yields the Lemma for all $p\in\Z_{\ge0}$.
Replacing $w$ by $t_{-p\al^\vee}w$ yields the statement for all $p\in \Z$.
\end{proof}

\begin{thm} \label{T:GKMPet} \mbox{}
\begin{enumerate}
\item $K^T(X_\af) \cong \Psi_\af = \prod_{v\in W_\af} R(T)\,\psi^v$.
\item $K^T(\Gr_G) \cong \Psi_\af^{ I} = \prod_{v\in \Wz} R(T)\,  \psi^v$.
\end{enumerate}
\end{thm}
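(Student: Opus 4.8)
The plan is to prove Theorem~\ref{T:GKMPet} by establishing the equality of two subsets of $\F(W_\af,R(T))$: the image $\res(K^T(X_\af))$ under the forgetful diagram~\eqref{E:forgetdiag}, and the set $\Psi_\af$ cut out by the small torus GKM conditions~\eqref{E:smallGrassGKM}--\eqref{E:smallGKM}. For part (1), I would first show the easy inclusion $\res(K^T(X_\af)) \subseteq \Psi_\af$: every class comes from a $T_\af$-equivariant class satisfying the full affine GKM condition~\eqref{E:GKM} of Theorem~\ref{T:GKM}, namely $\psi'(r_\gamma w) - \psi'(w) \in (1-e^\gamma)R(T_\af)$ for all affine real roots $\gamma \in \Phi_\af^\re$. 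The small torus conditions should then follow by applying the forgetful map $\phi: R(T_\af) \to R(T)$ and iterating the GKM congruence. Concretely, for a root $\gamma = \alpha + m\delta$ one has $r_\gamma = r_\alpha t_{m\alpha^\vee}$, and since $\phi(\delta) = 0$, the affine class $1 - e^\gamma$ maps to $1 - e^\alpha$; so repeatedly composing the degree-one affine GKM relations and telescoping (in the spirit of Lemma~\ref{L:Jcongruent}) should produce both the divisibility by $(1-e^\alpha)^d$ in~\eqref{E:smallGrassGKM} and its refinement~\eqref{E:smallGKM}.

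The substantive direction is the reverse inclusion $\Psi_\af \subseteq \res(K^T(X_\af))$, i.e.\ showing that any $\psi$ satisfying the small torus GKM condition lies in $\prod_{v \in W_\af} R(T)\,\psi^v$. Here I would mimic the converse argument in the proof of Proposition~\ref{P:GKM}: take $\psi \neq 0$, let $v \in \Supp(\psi)$ be Bruhat-minimal, and show that the minimality of $v$ together with the small torus conditions forces $\psi(v)$ to be divisible by $\psi^v(v) = \prod_{\al \in \Inv(v)}(1-e^\al)$. One then subtracts the correct multiple of $\psi^v$ to strictly shrink the support downward, and iterates. The subtracted functions $\psi^v$ themselves lie in $\Psi_\af$ (being forgetful images of genuine Schubert classes, by the forward inclusion already established), so the process stays inside $\Psi_\af$ and terminates by the well-foundedness of Bruhat order below any element.

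I expect the divisibility step at the minimal support element to be the main obstacle, because over the small torus the relevant factors $(1-e^\alpha)$ are no longer pairwise coprime in the way they were for the affine torus: distinct affine roots $\alpha + m\delta$ all collapse to the same finite root $\alpha$, so the clean coprimality argument invoking \cite[Proposition 6.3]{Kac} used in Proposition~\ref{P:GKM} does not transfer directly. This is precisely why the small torus condition carries the higher powers $(1-e^\alpha)^d$ and the two separate families~\eqref{E:smallGrassGKM} and~\eqref{E:smallGKM}: the conditions must encode, for each finite root $\alpha$, all the information that infinitely many affine reflections $r_{\alpha+m\delta}$ contribute after forgetting $\delta$. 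The technical heart is therefore a local analysis showing that~\eqref{E:smallGrassGKM} and~\eqref{E:smallGKM}, applied at the minimal $v$ and using that $\psi$ vanishes below $v$ (Lemma~\ref{L:psisupp}), pin down the exact power of each $(1-e^\alpha)$ dividing $\psi(v)$; Lemma~\ref{L:Jcongruent} is evidently designed to feed this computation by letting translates $t_{p\alpha^\vee}$ be moved freely modulo the relevant ideal. For part (2), I would then deduce the Grassmannian statement by restricting to functions constant on cosets $wW$: such a $\psi$ automatically satisfies the reflection condition~\eqref{E:smallGKM} trivially along directions in $W$, so only~\eqref{E:smallGrassGKM} survives, matching the definition of $\Psi_\af^I$, and the coset-invariance together with $K^T(\Gr_G) = K^T(X_\af^I)$ and the projection $p_I^*$ (as in the $J \neq \emptyset$ part of Theorem~\ref{T:GKM}) identifies the image with $\prod_{v \in \Wz} R(T)\,\psi^v$.
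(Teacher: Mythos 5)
Your overall architecture is the right one and matches the paper's: identify $\res(K^T(X_\af))$ with $\prod_{v\in W_\af}R(T)\psi^v$, prove the forward inclusion $\psi^v\in\Psi_\af$, run a minimal-support subtraction argument \`a la Proposition~\ref{P:GKM} for the converse, and deduce (2) from coset-constancy (your observation that constancy on $wW$ reduces \eqref{E:smallGKM} to \eqref{E:smallGrassGKM} is exactly the paper's, via divisibility of $1-t_{k\al^\vee}$ by $1-t_{\al^\vee}$). You have also correctly diagnosed why the converse is delicate: the factors $1-e^{\al+m\delta}$ all collapse to $1-e^\al$, so coprimality survives only across distinct finite roots, and the exact power $d=|\Inv_\al(x)|$ must be extracted from \eqref{E:smallGrassGKM}--\eqref{E:smallGKM} together with the vanishing of $\psi$ below the minimal support element. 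That local analysis is the content of the proof rather than a detail, but as a plan it is the correct one.

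The genuine gap is in the forward inclusion. You claim that ``repeatedly composing the degree-one affine GKM relations and telescoping'' produces the divisibility by $(1-e^\al)^d$. It does not, at least not in any direct way: the reflections $r_{\al+m\delta}$ for varying $m$ do not commute (they generate an infinite dihedral group), so when one expands $(1-t_{\al^\vee})^d w$ into products of operators $(1-r_\gamma)$ and tries to iterate the congruence $\psi'((1-r_\gamma)x)\in(1-e^\gamma)R(T_\af)$, the commutator correction terms are again first-order differences and only contribute a single factor of $1-e^{\pm\al}$ after applying $\phi$. One obtains membership in a sum of principal ideals rather than in their product, and the higher powers are lost. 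Lemma~\ref{L:Jcongruent} cannot rescue this, since it already assumes \eqref{E:smallGrassGKM}. This is precisely why the paper does not telescope: it restricts $\psi'^v$ to a coset of the subgroup $\langle r_\al,t_{\al^\vee}\rangle$, uses the big-torus GKM condition \eqref{E:GKM} to expand the restriction in $\slh_2$ Schubert classes, and then verifies the small torus conditions for those classes by brute force -- the closed localization formulas \eqref{E:psi explicit1}--\eqref{E:psi explicit2} and the binomial/derivative computations of Propositions~\ref{P:sl2 Grassmannian small GKM} and~\ref{P:sl2smallGKM}. Some such explicit rank-one verification (or a genuinely new argument) is unavoidable, and your proposal is missing it.
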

\begin{proof}
Due to Lemmata \ref{L:psisupp} and \ref{L:psidiag}, the set
$\{\psi^v \mid v \in W_\af\}$ is independent over $R(T)$.
Arguing as in \cite{KS} one may show that $K^T(X_\af)$ consists of
possibly infinite $R(T)$-linear combinations of the $[\OO_{X_v}]$.
By the commutativity of the diagram \eqref{E:forgetdiag}, we
conclude that
\begin{equation*}
  K^T(X_\af) \cong \prod_{v\in W_\af} R(T) [\OO_{X_v}],
\end{equation*}
the map $\For$ is surjective, and that $\res'$ is injective with
image $\prod_{v\in W_\af} R(T) \psi^v$. For (1) it remains to show
that $\Psi_\af = \prod_{v \in W_\af} R(T) \psi^v$.
Let $v\in W_\af$. We first show that $\psi^v\in \Psi_\af$. Let $w\in
W_\af$, $\al\in\Phi$, and $d\in\Z_{>0}$. Let $W'\subset W_\af$ be
the subgroup generated by $t_{\al^\vee}$ and $r_\al$; it is
isomorphic to the affine Weyl group of $SL_2$. Define the function
$f:W'\to R(T)$ by $f(x)=\psi'^v(x w)$. Since $\psi'^v$ satisfies the
big torus GKM condition \eqref{E:GKM} for $X_\af$, $f$ satisfies
\eqref{E:GKM} for a copy of the $SL_2$ affine flag variety $X'$.
Therefore $f$ is a possibly infinite $R(T)$-linear combination of
Schubert classes in $X'$. By Propositions \ref{P:sl2 Grassmannian
small GKM} and \ref{P:sl2smallGKM}, proved below, $\phi \circ f$
satisfies the small torus GKM condition for $X'$. It follows that
$\psi^v\in \Psi_\af$.

Conversely, suppose $\psi\in \Psi_\af$. We show that $\psi\in
\prod_{v\in W_\af} R(T) \psi^v$.
Let $x=t_\la u\in \Supp(\psi)$ be of minimal length, with $u\in W$
and $\la\in Q^\vee$. It suffices to show that
\begin{equation*}
    \psi(x) \in \psi^x(x) R(T),
\end{equation*}
for defining $\psi'\in\Psi_\af$ by
$\psi'=\psi-\dfrac{\psi(x)}{\psi^x(x)}\psi^x$,
we have $\Supp(\psi')\subsetneq \Supp(\psi)$, and repeating, we may
write $\psi$ as a $R(T)$-linear combination of the $\psi^x$.

The elements $\{1-e^\al\mid \al\in \Phi^+\}$ are relatively prime in
$R(T)$. Letting $\al\in\Phi^+$, by Lemma \ref{L:psidiag} it suffices
to show that
\begin{equation*}
  \psi(x)\in J := (1-e^\al)^d R(T)
\end{equation*}
where $d=|\Inv_\al(x)|$ where $\Inv_\al(x)$ is the set of roots in $\Inv(x)$ of the form $\pm \al + k \delta$
for some $k\in\Z_{\ge0}$.
Note that for $\beta\in\Phi_\af^{+\re}$, $\beta\in\Inv(x)$ if and only if
$x^{-1}\cdot \beta\in - \Phi_\af^{+\re}$. We have
\begin{align*}
  x^{-1} \cdot (\pm \al + k \delta) &= u^{-1} t_{-\la} \cdot (\pm \al + k\delta) = \pm u^{-1} \al
  + (k \pm \ip{\la}{\al}) \delta.
\end{align*}
Hence we have
\begin{align*}
  \Inv_\al(x) = \begin{cases}
\{\al,\al+\delta,\dotsc,\al-(\ip{\la}{\al} +\chi(\al\not\in \Inv(u)))\delta \} &\text{if $\ip{\la}{\al} \le 0$} \\
\{-\al+\delta,-\al+2\delta,\dotsc,-\al+(\ip{\la}{\al}-\chi(\al\in\Inv(u)))\delta \} &\text{if $\ip{\la}{\al} > 0$.}
\end{cases}
\end{align*}
Suppose first that $\ip{\la}{\al}>0$.  Then
$d=\ip{\la}{\al}-\chi(\al\in\Inv(u))$.  Applying~\eqref{E:smallGKM} to $y=t_{(1-d)\al^\vee}x$,
we have $Z_1\in J$ where
\begin{align*}
  Z_1&=\psi((1-t_{\al^\vee})^{d-1}(1-r_\al) y) \\
  &=(-1)^{d-1} \psi((1-t_{-\al^\vee})^{d-1}x) - \psi((1-t_{\al^\vee})^{d-1} r_\al y) \\
  &= (-1)^{d-1} \psi(x) - \psi((1-t_{\al^\vee})^{d-1} r_\al y)
\end{align*}
where the last equality holds by the assumption on $\Supp(\psi)$ and
a calculation of $\Inv_\al(r_{\alpha+d\delta} x)$, giving $x >
r_{\alpha+d\delta} x > t_{-k\al^\vee}x$ for all $k \in [1,d-1]$.
By Lemma~\ref{L:Jcongruent} we have $Z_2\in J$ where
\begin{align*}
Z_2 = \psi((1-t_{\al^\vee})^{d-1} r_\al y) - \psi((1-t_{\al^\vee})^{d-1} t_{p\al^\vee} r_\al y)
\end{align*}
for any $p\in\Z$. Thus
\begin{equation*}
 Z_1+Z_2 = (-1)^{d-1} \psi(x) -  \psi((1-t_{\al^\vee})^{d-1} t_{p\al^\vee}r_\al y) \in J.
\end{equation*}
By the assumption on $\Supp(x)$ and the calculation of
$\Inv_\al(x)$,
\begin{align*}
  \psi((1-t_{\al^\vee})^{d-1} t_{p\al^\vee} r_\al y) = 0
\end{align*}
for $p=2-d$. It follows that $\psi(x)\in J$.

Otherwise $\ip{\la}{\al}\le 0$. By the previous case we may assume
that $t_{d\al^\vee}x\not\in\Supp(\psi)$.  Thus
\begin{align*}
  \psi(x) = \psi((1-t_{\al^\vee})^d x) \in J
\end{align*}
by induction on $\Supp(\psi)$ and \eqref{E:smallGrassGKM}.
This proves (1).

For (2) it suffices to show that $\psi\in \Psi_\af^{I}$ if and only
if $\psi\in\prod_{v\in W_\af^I} R(T) \psi^v$. First let $\psi\in
\Psi_\af^{I}$. Let $w=t_\la u\in W_\af$ with $\la\in Q^\vee$ and
$u\in W$, $\al\in\Phi$. We verify that $\psi$ satisfies the small
torus GKM condition. We have
\begin{align*}
  r_\al w = r_\al t_\la u = t_{r_\al(\la)} r_\al u = t_{-\ip{\la}{\al}\al^\vee} t_\la r_\al u.
\end{align*}
Since by assumption $\psi$ is constant on cosets $W_\af/W$ we have
\begin{align*}
  \psi((1-t_{\al^\vee})^{d-1} (1-r_\al) w) &=
  \psi((1-t_{\al^\vee})^{d-1} (1 - t_{-\ip{\la}{\al}\al^\vee}) t_\la).
\end{align*}
But $1-t_{k \al^\vee}$ is divisible by $1-t_{\al^\vee}$ for any
$k\in \Z$. Therefore $\psi$ satisfies the small torus GKM condition
because it satisfies the Grassmannian one. Part (1) and the fact
that $\psi$ is constant on cosets $W_\af/W$ implies that
$\psi\in\prod_{v\in W_\af^I} R(T)\psi^v$.

Conversely, it suffices show that for every $v\in W_\af^I$,
$\psi^v\in \Psi_\af^{I}$. But this follows by part (1) and the fact
that for such $v$, $\psi^v$ is constant on cosets $W_\af/W$.
\end{proof}

\subsection{Small torus GKM condition for $\slh_2$} \label{SS:small GKM sl2}
In this section we prove that the Schubert classes $\psi^v$ for
$\slh_2$ satisfy the small torus GKM condition of
Definition~\ref{D:smallGKM}. To this end, we first derive explicit
expressions for $\psi^v$.

For $i\in \Z_{\ge 0}$ let
\begin{equation} \label{E:sl2gens}
\begin{aligned}
\sigma_{2i} &= (r_1r_0)^i &\qquad \sigma_{-2i}&=(r_0r_1)^i,\\
\sigma_{2i+1}&=r_0\sigma_{2i} & \sigma_{-(2i+1)}&=r_1 \sigma_{-2i}.
\end{aligned}
\end{equation}
Then we have $\ell(\sigma_j) = |j|$ for $j\in\Z$, $W_\af^I=\{\sigma_j\mid j\in\Z_{\ge0}\}$, and
\begin{equation*}
  \sigma_{2i} = t_{-i\al^\vee}\qquad\text{for $i\in\Z$.}
\end{equation*}

Let $\psi^i_j:=\psi^{\sigma_i}(\sigma_j)$ for $i,j\in\Z$, where we have set $\delta=0$.
We set $x=e^\alpha$ and let $S_{\le a}$ be the sum of homogeneous symmetric functions
$h_0+h_1+\dotsm+h_a$. Let $S_{\le a}^i(x)$ mean $S_{\le a}[x,x,\dotsc,x]$ where
there are $i$ copies of $x$. For $i,a \in \Z$ such that $a,m\ge 0$ we have
\begin{align}
\label{E:psi explicit1}
  \psi^m_{2i+2a} &= (1-x)^m S_{\le a}^m(x) = \psi^m_{-2i-2a-1}  && \text{for $m=2i$ or $2i-1$,}\\
  \label{E:psi explicit2}
  \psi^m_{2i+2a+1} &= (1-x^{-1})^m S_{\le a}^m(x^{-1}) =  \psi^m_{-2i-2a-2}
   && \text{for $m=2i$ or $2i+1$,}
\end{align}
and zero otherwise. Furthermore
\begin{equation*}
    \psi_{-i}^{-m}(x) = \psi_i^m(x^{-1}).
\end{equation*}
These are easily proved by induction using the left- and right-hand recurrence for the
localization of Schubert classes, together with the recurrence
\begin{equation*}
  S_{\le a}^i(x) = x S_{\le a-1}^i(x) + S_{\le a}^{i-1}(x).
\end{equation*}
We also have the explicit formula
\begin{equation} \label{E:Sle}
  S_{\le a}^i(x) = \sum_{j=0}^a x^j \binom{j+i-1}{i-1}.
\end{equation}

\begin{prop} \label{P:sl2 Grassmannian small GKM}
For all $d\ge 1$, $m\in \Z$, and $w\in W_\af$ we have
\begin{equation*}
\psi^m((1-t_{\alpha^\vee})^d w) \in (1-x)^d \Z[x^\pm].
\end{equation*}
\end{prop}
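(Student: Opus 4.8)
The plan is to prove the statement by reducing everything to the explicit formulas \eqref{E:psi explicit1}–\eqref{E:psi explicit2} for the $\slh_2$ localizations $\psi^m_j$, together with an analysis of how the operator $(1-t_{\al^\vee})^d$ acts on the index $j$. Since $\psi^m((1-t_{\al^\vee})^d w)$ is $R(T)$-linear in $w$ and $t_{\al^\vee}=\sigma_{-2}$ translates the index by $-2$ (as $\sigma_{2i}=t_{-i\al^\vee}$), I would first observe that evaluating $\psi^m$ against $(1-t_{\al^\vee})^d w$ produces, after writing $w=\sigma_j$, an alternating sum
\begin{equation*}
  \psi^m((1-t_{\al^\vee})^d \sigma_j) = \sum_{k=0}^d (-1)^k \binom{d}{k} \psi^m_{j-2k},
\end{equation*}
because $t_{\al^\vee}^k \sigma_j = \sigma_{j-2k}$. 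So the entire problem becomes: show that this finite difference of the explicit functions $\psi^m_{j}$ in the variable $j$ (stepping by $2$) is divisible by $(1-x)^d$ in $\Z[x^\pm]$.

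Next I would invoke the closed forms. By \eqref{E:psi explicit1}–\eqref{E:psi explicit2}, each nonzero $\psi^m_j$ equals $(1-x)^m$ (or $(1-x^{-1})^m$) times a polynomial $S^m_{\le a}$ in $x$ (resp.\ $x^{-1}$), where the truncation level $a$ depends linearly on $j$. Thus up to the fixed factor $(1-x)^m$, the sequence $j\mapsto \psi^m_j$ is, on each of the two relevant ranges of $j$, a polynomial in $x$ whose coefficients are the partial sums $S^m_{\le a}(x)$ given explicitly by \eqref{E:Sle}. The key algebraic fact I would isolate is that the $k$-th forward difference (in the truncation parameter $a$) of $S^m_{\le a}(x)$ is divisible by a high power of $(1-x)$: concretely, the single difference $S^m_{\le a}(x)-S^m_{\le a-1}(x)=x^a\binom{a+m-1}{m-1}$ is one monomial, and iterating the recurrence $S^i_{\le a}(x)=xS^i_{\le a-1}(x)+S^{i-1}_{\le a}(x)$ controls how differences lower the effective degree in $(1-x)$. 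Taking $d$ successive differences then supplies the extra factor $(1-x)^{d-m}$ (or, when $d\le m$, the factor $(1-x)^m$ already present suffices), so the product lands in $(1-x)^d\Z[x^\pm]$ as required.

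The step I expect to be the main obstacle is the bookkeeping at the boundary between the two formula regimes \eqref{E:psi explicit1} and \eqref{E:psi explicit2}, i.e.\ when the window $\{j, j-2, \dotsc, j-2d\}$ of indices that the difference operator touches straddles the symmetry point where $\psi^m_j$ switches from being expressed in $x$ to being expressed in $x^{-1}$ (and where some terms vanish). In that mixed range I cannot simply treat the sequence as a single polynomial family, so I would handle it by a direct case analysis on the position of $j$ relative to $m$, possibly splitting the alternating sum into a part covered by \eqref{E:psi explicit1} and a part covered by \eqref{E:psi explicit2} and checking divisibility of each piece separately, using the last symmetry relation $\psi^{-m}_{-i}(x)=\psi^m_i(x^{-1})$ to reduce the $x^{-1}$-side to the $x$-side. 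An alternative, cleaner route I would keep in reserve is to argue directly from the recurrences for $\psi^m_j$ (the left- and right-hand recurrences used to establish the explicit formulas) by induction on $d$ and on $|j|$, showing that applying $(1-t_{\al^\vee})$ once always raises the guaranteed power of $(1-x)$ by one; this avoids splitting into regimes at the cost of a more delicate induction hypothesis.
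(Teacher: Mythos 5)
Your opening reduction is the same as the paper's: writing $w=\sigma_j$, the quantity $\psi^m((1-t_{\al^\vee})^d w)$ becomes an alternating binomial sum of the explicit values $\psi^m_{j'}$ over a window of indices $j'$ of step $2$, into which one feeds \eqref{E:psi explicit1}--\eqref{E:psi explicit2}. (Minor slip: left multiplication by $t_{\al^\vee}$ sends $\sigma_j$ to $\sigma_{j-2}$ only for $j$ even; for $j$ odd it gives $\sigma_{j+2}$.) Your ``key algebraic fact'' is also correct so long as the whole window lies in a single regime: the $d$-th difference of $a\mapsto S^m_{\le a}(x)$ is the $(d-1)$-st difference of $x^a\binom{a+m-1}{m-1}$, and its first $d-m$ derivatives at $x=1$ are order-$(d-1)$ finite differences of polynomials in $a$ of degree less than $d-1$, hence vanish; combined with the prefactor $(1-x)^m$ this gives $(1-x)^d$.

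The genuine gap is exactly at the point you single out as the main obstacle, and neither of your two proposed resolutions closes it. When the window straddles the region where $\psi^m_{j'}$ vanishes and the formula switches from $x$ to $x^{-1}$, the two sub-sums are \emph{not} separately divisible by $(1-x)^d$ --- only their difference is. Already for $m=2$, $d=3$, $w=t_{-\al^\vee}$ one gets $\psi^2_2-3\psi^2_0+3\psi^2_{-2}-\psi^2_{-4}=(1-x)^2-(1-x^{-1})^2=-(1-x)^3(1+x)x^{-2}$: each piece carries only $(1-x)^2$, and the third factor of $(1-x)$ arises purely from cancellation \emph{between} the two regimes. Relatedly, the truncated sums $\sum_{k=0}^{b}(-1)^k\binom{d}{b-k}S^m_{\le k}$ that appear on each side are not finite-difference operators of order $d$ in their own right, so your key fact does not apply to them piecewise. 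This cross-regime cancellation is the real content of the paper's proof: after factoring out $(1-x)^{2i}$ it shows that the $r$-th derivatives at $x=1$ of the $x$-piece and the $x^{-1}$-piece \emph{agree} for $0\le r\le a+b$, reducing to the identity $\sum_{k=0}^{b}(-1)^k\binom{d}{b-k}\binom{2i+r+k}{k}=\sum_{k=0}^{a-r}(-1)^k\binom{d}{a-r-k}\binom{2i+r+k}{k}$, proved by a generating-function computation (both sides equal $\binom{a-r+b}{b}$). Your fallback induction on $d$ --- that each application of $(1-t_{\al^\vee})$ raises the guaranteed power of $(1-x)$ by one --- also fails as stated, since two quantities lying in $(1-x)^{d-1}\Z[x^\pm]$ tells you nothing about their difference lying in $(1-x)^{d}\Z[x^\pm]$ without a much stronger hypothesis. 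Some form of the two-regime comparison identity is unavoidable and is absent from your proposal.
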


\begin{proof}
We prove the claim for $m=2i$ and for the ranges from $t_{(-i-a)\alpha^\vee}$ to
$t_{(i+1+b)\alpha^\vee}$ for $a,b\in\Z_{\ge0}$. The other cases are similar.
Let $d=(i+a)+(i+1+b)=2i+a+b+1$. We must show that
\begin{equation*}
  Z := \sum_{k=0}^d (-1)^k \binom{d}{k} \psi^{2i}_{-2i-2-2b+2k} \in (1-x)^d \Z[x^\pm].
\end{equation*}
Since $\psi^{2i}_{2p}=0$ for $-2i-2<2p<2i$,
\begin{align*}
  Z &= \left(\sum_{k=0}^b + \sum_{k=2i+1+b}^d\right) (-1)^k \binom{d}{k} \psi^{2i}_{-2i-2-2b+2k} \\
  &=\sum_{k=0}^b (-1)^k \binom{d}{k} \psi^{2i}_{-2i-2-2b+2k}
   + \sum_{k=0}^a (-1)^{k+2i+1+b} \binom{d}{2i+1+b+k} \psi^{2i}_{2i+2k} \\
  &=(-1)^b \sum_{k=0}^b (-1)^k \binom{d}{b-k} \psi^{2i}_{-2i-2-2k}
   - (-1)^b \sum_{k=0}^a (-1)^k \binom{d}{a-k} \psi^{2i}_{2i+2k}.
\end{align*}
Inserting the definitions~\eqref{E:psi explicit1}, \eqref{E:psi explicit2}, and~\eqref{E:Sle} we obtain
\begin{align*}
  (-1)^b Z &= \sum_{k=0}^b (-1)^k \binom{d}{b-k} (1-x^{-1})^{2i} \sum_{j=0}^k x^{-j} \binom{j+2i-1}{2i-1} \\
  &- \sum_{k=0}^a (-1)^k \binom{d}{a-k} (1-x)^{2i} \sum_{j=0}^k x^j \binom{j+2i-1}{2i-1}.
\end{align*}
Therefore we must show that $Z':=(-1)^b Z(1-x)^{-2i}$ is divisible by $(1-x)^{a+b+1}$.
Regarding $Z'$ as a function of $x$, we need to show that its $r$-th derivative at $x=1$ vanishes,
for $0\le r \le a+b$. This yields the identities
\begin{equation*}
  (-1)^r \sum_{k=0}^b (-1)^k \binom{d}{b-k} \sum_{j=0}^k \dfrac{(j+2i+r-1)!}{j!}
  =\sum_{k=0}^a (-1)^k \binom{d}{a-k} \sum_{j=r}^k \dfrac{(j+2i-1)!}{(j-r)!}.
\end{equation*}
Shifting the sums on the right hand side using $j'=j-r$ and $k'=k-r$ and dividing both sides by
$(-1)^r (2i+r-1)!$, the inner sums simplify and we obtain
\begin{equation*}
    \sum_{k=0}^b (-1)^k \binom{d}{b-k} \binom{2i+r+k}{k}
     = \sum_{k=0}^{a-r} (-1)^k \binom{d}{a-r-k} \binom{2i+r+k}{k}.
\end{equation*}
Setting $a'=a-r$, we claim that this sum is equal to $\binom{a'+b}{b} = \binom{a'+b}{a'}$, which
is symmetric in $a'$ and $b$, and hence implies the equality of both sides. This can be seen as follows.
The coefficient of $x^b$ in $(1+x)^{a'+b}$ is $\binom{a'+b}{a'}$.  Alternatively, we can calculate
\begin{equation*}
[x^b] (1+x)^{a'} (1+x)^b (1+x)^c (1+x)^{-c},
\end{equation*}
where $c = 2i+r+1$ and $(1+x)^{-c}$ is meant to be expanded as a power series in $x$. Then
\begin{equation*}
[x^b] (1+x)^{a'} (1+x)^b (1+x)^c (1+x)^{-c}
= \sum_{k=0}^b [x^{b-k}] (1+x)^{a'+b+c} [x^k](1+x)^{-c}
\end{equation*}
which is exactly the sum we wanted to evaluate.
\end{proof}

\begin{prop} \label{P:sl2smallGKM}
For all $d\ge 1$, $m\in \Z$, and $w\in W_\af$ we have
\begin{equation*}
    \psi^m((1-t_{\alpha^\vee})^{d-1}(1-r_\alpha) w) \in (1-x)^d \Z[x^\pm].
\end{equation*}
\end{prop}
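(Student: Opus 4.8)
The plan is to follow the proof of Proposition~\ref{P:sl2 Grassmannian small GKM}: reduce the divisibility to a family of binomial identities and prove these by reading off a coefficient in a product of powers of $(1+x)$, but with one additional observation that absorbs the reflection $r_\alpha$ into the translation terms. First I would reduce the cases. The relation $\psi^{-m}_{-j}(x)=\psi^m_j(x^{-1})$ exchanges $x\leftrightarrow x^{-1}$ and carries $(1-x)^d\Z[x^\pm]$ to itself, so it suffices to treat $m=2i$ with $i\ge0$, the odd $m$ being entirely analogous. Moreover every element of $W_\af$ is either $t_{c\alpha^\vee}$ or $t_{c\alpha^\vee}r_\alpha$, and since $(1-r_\alpha)\,t_{c\alpha^\vee}r_\alpha=-(1-r_\alpha)\,t_{-c\alpha^\vee}$, it suffices to take $w=t_{c\alpha^\vee}$. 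The base case $d=1$ is immediate: $\psi^m((1-r_\alpha)w)=\psi^m(w)-\psi^m(r_\alpha w)$ lies in $(1-x)\Z[x^\pm]$ because it is the image under $\phi$ of the single-reflection GKM condition~\eqref{E:GKM} for the real root $\alpha$.

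For $d\ge2$, I would expand $(1-t_{\alpha^\vee})^{d-1}(1-r_\alpha)w$ in $\Z[W_\af]$. Using $t_{\alpha^\vee}r_\alpha=r_\alpha t_{-\alpha^\vee}$ together with $t_{k\alpha^\vee}=\sigma_{-2k}$ and $r_\alpha\sigma_{2k}=\sigma_{2k-1}$, one computes
\begin{equation*}
\psi^m\bigl((1-t_{\alpha^\vee})^{d-1}(1-r_\alpha)t_{c\alpha^\vee}\bigr)
=\sum_{k=0}^{d-1}(-1)^k\binom{d-1}{k}\bigl(\psi^m_{-2c-2k}-\psi^m_{2k-2c-1}\bigr).
\end{equation*}
The key step is to apply the symmetry $\psi^m_j=\psi^m_{-j-1}$ recorded in~\eqref{E:psi explicit1} and~\eqref{E:psi explicit2}, which turns the reflected term $\psi^m_{2k-2c-1}$ into $\psi^m_{2c-2k}$, giving
\begin{equation*}
\psi^m\bigl((1-t_{\alpha^\vee})^{d-1}(1-r_\alpha)t_{c\alpha^\vee}\bigr)
=\sum_{k=0}^{d-1}(-1)^k\binom{d-1}{k}\bigl(\psi^m_{-2c-2k}-\psi^m_{2c-2k}\bigr),
\end{equation*}
an alternating binomial sum in which only even (translation) indices occur.

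To finish I would substitute the closed forms~\eqref{E:psi explicit1},~\eqref{E:psi explicit2} and~\eqref{E:Sle}, using the vanishing of $\psi^{2i}_{2p}$ for $-i\le p\le i-1$ to truncate both sums; recall $\psi^{2i}_{2p}$ equals $(1-x)^{2i}S^{2i}_{\le p-i}(x)$ for $p\ge i$ and $(1-x^{-1})^{2i}S^{2i}_{\le -i-1-p}(x^{-1})$ for $p\le -i-1$. After factoring out the largest common power of $(1-x)$, divisibility by $(1-x)^d$ becomes the vanishing at $x=1$ of the derivatives up to the relevant order, which reduces to binomial identities of exactly the shape treated in Proposition~\ref{P:sl2 Grassmannian small GKM}, proved by reading off the coefficient of a monomial in a product $(1+x)^{a}(1+x)^{b}(1+x)^{c}(1+x)^{-c}$. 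A useful shortcut, which avoids the full divisibility directly, is to invoke Proposition~\ref{P:sl2 Grassmannian small GKM} with exponent $d-1$: it shows that each of $\psi^m((1-t_{\alpha^\vee})^{d-1}w)$ and $\psi^m((1-t_{\alpha^\vee})^{d-1}r_\alpha w)$ already lies in $(1-x)^{d-1}\Z[x^\pm]$, so their difference does as well, and it then remains only to verify the single scalar statement that the quotient by $(1-x)^{d-1}$ vanishes at $x=1$, i.e.\ that the leading coefficients of these two Grassmannian-type contributions coincide.

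The main obstacle is the parity bookkeeping: the two families of terms in the simplified sum are Laurent expressions, one naturally written in $x$ and one in $x^{-1}$, and the ranges on which $\psi^{2i}_{2p}$ is nonzero must be tracked precisely so that after clearing denominators everything assembles into a single Laurent polynomial. The one factor of $(1-x)$ gained beyond the $(1-x)^{d-1}$ supplied by Proposition~\ref{P:sl2 Grassmannian small GKM} comes entirely from the cancellation between the $\psi^m_{-2c-2k}$ and $\psi^m_{2c-2k}$ contributions, so the crux is verifying that their leading coefficients at $x=1$ agree---this is the one genuinely new combinatorial identity, and everything else is routine once the case reductions above are in place.
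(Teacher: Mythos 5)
Your proposal is correct and follows essentially the same route as the paper's proof: both hinge on the symmetry $\psi^m_j=\psi^m_{-j-1}$ built into \eqref{E:psi explicit1} and \eqref{E:psi explicit2} to convert the reflected terms into pure translation terms, together with the Grassmannian case (Proposition~\ref{P:sl2 Grassmannian small GKM} via Lemma~\ref{L:Jcongruent}). The only remark worth making is that the final ``crux'' you leave open is not a new combinatorial identity: your simplified sum is exactly $\psi^m\bigl((1-t_{\alpha^\vee})^{d-1}(t_{c\alpha^\vee}-t_{-c\alpha^\vee})\bigr)$, and Lemma~\ref{L:Jcongruent} says precisely that the two translation contributions are congruent modulo $(1-x)^d\Z[x^\pm]$, so no derivative computation or leading-coefficient check is needed --- this one-line finish is how the paper concludes.
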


\begin{proof}
Note that
\begin{equation} \label{E:rewriting}
    \psi^m((1-t_{\alpha^\vee})^{d-1}(1-r_\alpha) w)
      = \psi^m((1-t_{\alpha^\vee})^{d-1} w) - \psi^m((1-t_{\alpha^\vee})^{d-1} r_\alpha w).
\end{equation}
Furthermore, by Proposition~\ref{P:sl2 Grassmannian small GKM} $\psi^m$ satisfies
the small torus Grassmannian GKM condition.
Hence applying Lemma~\ref{L:Jcongruent} to $\psi^m((1-t_{\alpha^\vee})^{d-1} r_\alpha w)$, we
can shift the argument $r_\alpha w$ so that we can use the equalities~\eqref{E:psi explicit1}
and~\eqref{E:psi explicit2}. This implies that~\eqref{E:rewriting} is zero modulo the ideal
$(1-x)^d \Z[x^\pm]$.
\end{proof}

\subsection{Wrong-way map}
There is a natural inclusion map $\iota_I:\Psi_\af^{I} \to
\Psi_\af$. In the case at hand there is a map $\jd: \Psi_\af \to
\Psi_\af^I$, of which $\iota_I$ is a section.  This map is special
to the case of the affine Grassmannian, and also does not exist if
one uses the larger torus $T_\af$.

\begin{lem} \label{L:jd}
There is an $R(T)$-module homomorphism $\jd:\Psi_\af \to
\Psi_\af^{I}$ defined by $\jd(\psi)(w) = \psi(t_\la)$ for $w\in
W_\af$, where $\la\in Q^\vee$ is such that $w W = t_\la W$.
\end{lem}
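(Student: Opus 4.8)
The plan is to unpack the three assertions folded into the statement: that $\jd$ is well-defined, that it is $R(T)$-linear, and that $\jd(\psi)$ genuinely lands in $\Psi_\af^{I}$. The first two are essentially formal. Since each coset $wW$ equals $t_\la W$ for a \emph{unique} $\la\in Q^\vee$ (via the bijection $\Wz\cong W_\af/W\to Q^\vee$), the value $\jd(\psi)(w)=\psi(t_\la)$ depends only on the coset $wW$, so $\jd(\psi)$ is well-defined and tautologically constant on the cosets $wW$, which is half of the requirement for membership in $\Psi_\af^{I}$. Likewise, because the defining rule is evaluation of $\psi$ (composed with the coset-to-$\la$ assignment) and nothing else, $R(T)$-linearity of $\psi\mapsto\jd(\psi)$ is immediate from the pointwise $R(T)$-module structure on $\F(W_\af,R(T))$. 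So the only real content is to verify that $\jd(\psi)$ satisfies the small torus Grassmannian GKM condition~\eqref{E:smallGrassGKM}.

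To check~\eqref{E:smallGrassGKM}, I would fix $d\in\Z_{>0}$, $\al\in\Phi$, and $w\in W_\af$, and write $w=t_\mu u$ with $\mu\in Q^\vee$ and $u\in W$. Expanding the linear extension of $\jd(\psi)$ binomially, the key observation is that left multiplication by $t_{k\al^\vee}$ shifts only the translation part, $t_{k\al^\vee}w = t_{k\al^\vee+\mu}u$, so this element lies in the coset $t_{k\al^\vee+\mu}W$ and hence $\jd(\psi)(t_{k\al^\vee}w)=\psi(t_{k\al^\vee+\mu})$ by definition of $\jd$. This collapses the computation back onto $\psi$ itself:
\begin{equation*}
  \jd(\psi)((1-t_{\al^\vee})^d w) = \sum_{k=0}^d (-1)^k \binom{d}{k}\,\psi(t_{k\al^\vee+\mu}) = \psi\big((1-t_{\al^\vee})^d t_\mu\big).
\end{equation*}
Now I would invoke the hypothesis $\psi\in\Psi_\af$: by Definition~\ref{D:smallGKM} the small torus GKM condition includes~\eqref{E:smallGrassGKM}, which applied with the group element $t_\mu$ gives $\psi\big((1-t_{\al^\vee})^d t_\mu\big)\in (1-e^\al)^d R(T)$. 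Therefore $\jd(\psi)$ satisfies the Grassmannian condition, and being constant on cosets as well, we conclude $\jd(\psi)\in\Psi_\af^{I}$.

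I do not anticipate a serious obstacle here; the argument is the bookkeeping observation that $\jd$ intertwines the ``difference operator'' $(1-t_{\al^\vee})^d$ acting on $\Psi_\af$ with the corresponding one on $\Psi_\af^{I}$, so the Grassmannian GKM estimate for $\jd(\psi)$ is inherited verbatim from the one already imposed on $\psi$. The two points that merely deserve care are that the object tested in~\eqref{E:smallGrassGKM} is the correct linear extension of $\jd(\psi)$, and that the coset-to-$\la$ assignment behaves well under left multiplication by translations; both follow at once from the semidirect product structure $W_\af\cong W\ltimes Q^\vee$ together with the commutativity of the translations $t_\nu$.
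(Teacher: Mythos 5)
Your argument is correct and is essentially identical to the paper's proof: both write $w=t_\la u$, use the fact that left multiplication by $t_{k\al^\vee}$ only shifts the translation part of the coset to collapse $\jd(\psi)((1-t_{\al^\vee})^d w)$ to $\psi((1-t_{\al^\vee})^d t_\la)$, and then invoke condition~\eqref{E:smallGrassGKM} for $\psi\in\Psi_\af$. The only difference is that you spell out the well-definedness, linearity, and coset-constancy checks that the paper leaves implicit.
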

\begin{proof} Let $\psi\in \Psi_\af$, $w=t_\la u\in W_\af$ with $\la\in Q^\vee$ and $u\in W$,
$\al\in \Phi$, and $d\in \Z_{>0}$. We have
\begin{align*}
  \jd(\psi)((1-t_{\al^\vee})^d w) &= \jd(\psi)((1-t_{\al^\vee})^d t_\la u) \\
  &= \psi((1-t_{\al^\vee})^d t_\la) \in (1-e^\al)^d R(T).
\end{align*}
\end{proof}

\section{$K$-homology of affine Grassmannian and $K$-Peterson subalgebra}
\label{S:Palgebra} Let $\K'$ be the $K$-NilHecke ring for the affine
Lie algebra $\geh_\af$ defined via the general construction of
Section \ref{S:KK}. In this section we use the {\it affine
$K$-NilHecke ring} $\K$ which differs from $\K'$ in the use of
$R(T)$ instead of $R(T_\af)$. Our main result, generalizing work of
Peterson \cite{P}, gives a Hopf-isomorphism of $K_T(\Gr_G)$ with a
commutative subalgebra $\LL \subset \K$.

\subsection{$K$-homology of affine
Grassmannian}\label{SS:Hopfstructure} We define the equivariant
$K$-homology $K_T(\Gr_G)$ of the affine Grassmannian to be the
continuous dual $K_T(\Gr_G) = \Hom_{R(T)}(K^T(\Gr_G),R(T))$, so that
$K_T(\Gr_G)$ is a free $R(T)$-module with basis the Schubert classes
$\xi_w$ dual to $[\OO_{X^I_w}] \in K^T(\Gr_G)$.

The $K$-homology $K_T(\Gr_G)$ and $K$-cohomology $K^T(\Gr_G)$ are
equipped with dual Hopf structures, which we now explain, focusing
on $K^T(\Gr_G)$ first.  Let $K\subset G$ be the maximal compact
form, $LK$ the space of continuous loops $S^1\to K$, $\Omega K$ the
space of based loops $(S^1,1)\to (K,1)$, and $T_\R = T \cap K$.  We
denote by $K^{T_\R}(\Omega K)$ the equivariant topological
$K$-theory of $\Omega K$.  By an (unpublished) well known result of
Quillen (see \cite{HHH, PS}), the space $\Omega K$ is
(equivariantly) weak homotopy equivalent to the ind-scheme affine
Grassmannian $G(\C((t)))/G(\C[[t]])$.  Thus we have $K^{T_\R}(\Omega
K) = K^{T_\R}(G(\C((t)))/G(\C[[t]]))$, where $K^{T_\R}(G(\C((t)))/G(\C[[t]]))$ denotes
the topological $K$-theory of the topological space underlying the ind-scheme 
$G(\C((t))/G(\C[[t]])$.

The topological $K$-theory $K^{T_\R}(\Omega K) \cong
K^{T_\R}(G(\C((t)))/G(\C[[t]]))$ is studied in \cite{KK:K}, where it
is identified with the ring $\Psi^I_\af$.  More precisely, Kostant
and Kumar studied the equivariance with respect to the larger torus
$T_\af$, but the same argument as in our Theorem \ref{T:GKMPet}
gives $K^{T_\R}(G(\C((t)))/G(\C[[t]])) \cong \Psi^I_\af$.  Thus we
obtain the sequence of isomorphisms
$$
K^T(\Gr_G) \cong \Psi^I_\af \cong K^{T_\R}(G(\C((t)))/G(\C[[t]]))
\cong K^{T_\R}(\Omega K)
$$
and all the isomorphisms are compatible with restrictions to fixed
points.

The composite map $r$ given by
\begin{equation*}
  \Omega K \hookrightarrow LK \longrightarrow LK/T_\R
\end{equation*}
induces the map
\begin{equation*}
  K^{T}(X_\af) \cong K^{T_\R}(LK/T_\R) \overset{r^*}{\longrightarrow} K^{T_\R}(\Omega K) \cong
  K^{T}(\Gr_G).
\end{equation*}
One can check using a fixed point calculation, that the map $\jd$ of
Lemma \ref{L:jd} is related to $r^*$ via the isomorphisms of Theorem
\ref{T:GKMPet}.

The based loop group $\Omega K$ has a $T_\R$-equivariant
multiplication map $\Omega K \times \Omega K \to \Omega K$ given by
pointwise multiplication on $K$, and this induces the structure of a
commutative and co-commutative Hopf-algebra on $K^{T_\R}(\Omega K)
\cong K^T(\Gr_G)$. The co-commutativity of $K^{T_\R}(\Omega K)$
follows from the fact that it is a homotopy double-loop space ($K$
being already a homotopy loop space).  Via duality, we obtain a dual
Hopf-algebra structure on $K_T(\Gr_G)$.  For the next result, we
label the $T_\af$-fixed points of $\Gr_G$ by translation elements
$t_\lambda$.

\begin{lem}\label{L:Hmult}
Let $\lambda,\mu \in Q^\vee$, and consider the maps $i^*_\lambda,
i^*_\mu: K^T(\Gr_G) \to R(T)$ as elements of $K_T(\Gr_G)$.  Then in
$K_T(\Gr_G)$, we have
$$
i^*_\lambda \; i^*_\mu = i^*_{\lambda + \mu}.
$$
\end{lem}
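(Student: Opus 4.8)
The plan is to compute the product $i^*_\lambda \, i^*_\mu$ directly from the topological origin of the Hopf structure on $K_T(\Gr_G)$ described in Subsection~\ref{SS:Hopfstructure}, using the weak equivalence $\Gr_G \simeq \Omega K$ and the fact that the product on $K_T(\Gr_G)$ is, by construction, dual to the coproduct $\Delta = m^*$ on $K^T(\Gr_G) \cong K^{T_\R}(\Omega K)$ induced by pointwise multiplication $m:\Omega K \times \Omega K \to \Omega K$. First I would recall that under this identification the fixed point $t_\lambda$ of $\Gr_G$ corresponds to the $T_\R$-fixed point of $\Omega K$ given by the based loop $\gamma_\lambda:S^1\to T_\R\subset K$ attached to the cocharacter $\lambda\in Q^\vee$; this loop is fixed under the conjugation action of $T_\R$ since $T_\R$ is abelian. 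Accordingly, the functional $i^*_\lambda\in K_T(\Gr_G)$ is the pullback $i_{\gamma_\lambda}^*$ along the inclusion of the single point $\gamma_\lambda$ into $\Omega K$.

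The heart of the argument is then a one-line functoriality computation. Since the product on $K_T(\Gr_G)=\Hom_{R(T)}(K^T(\Gr_G),R(T))$ is dual to $\Delta=m^*$, for any class $c\in K^T(\Gr_G)$ we have
$$
(i^*_\lambda \, i^*_\mu)(c) = (i^*_\lambda \otimes i^*_\mu)(m^* c) = \big(i_{\gamma_\lambda}\times i_{\gamma_\mu}\big)^* m^* c = \big(m\circ (i_{\gamma_\lambda}\times i_{\gamma_\mu})\big)^* c,
$$
where the middle equality uses the K\"unneth identification $K^{T_\R}(\Omega K\times\Omega K)\cong K^{T_\R}(\Omega K)\otimes_{R(T)}K^{T_\R}(\Omega K)$, under which $i^*_\lambda\otimes i^*_\mu$ becomes $(i_{\gamma_\lambda}\times i_{\gamma_\mu})^*$. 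The key geometric input is that the composite $m\circ(i_{\gamma_\lambda}\times i_{\gamma_\mu})$ is the inclusion of the single point $\gamma_\lambda\cdot\gamma_\mu$, and that $\gamma_\lambda\cdot\gamma_\mu=\gamma_{\lambda+\mu}$: indeed $m$ is pointwise multiplication of loops, and for cocharacters of the abelian group $T_\R$ one has $\gamma_\lambda(z)\,\gamma_\mu(z)=\gamma_{\lambda+\mu}(z)$ for every $z\in S^1$. Hence the composite equals $i_{\gamma_{\lambda+\mu}}$, so $(i^*_\lambda \, i^*_\mu)(c)=i^*_{\lambda+\mu}(c)$ for all $c$, which is the claim.

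The main obstacle I anticipate is not the computation but the bookkeeping that makes it legitimate: one must confirm that the abstract Hopf coproduct on $K^T(\Gr_G)$ really is realized by $m^*$ under the chain of isomorphisms of Subsection~\ref{SS:Hopfstructure}, and that restriction to the algebraic fixed point $t_\lambda$ agrees with pullback to the topological fixed point $\gamma_\lambda$ compatibly with the $R(T)$-module and $R(T)$-algebra structures. Once these compatibilities are in place the group-like behavior of the fixed points makes the conclusion immediate. An alternative, purely algebraic route would be to expand $i^*_\lambda$ in the localization functions $\psi^v$ and evaluate the product through the coproduct formula \eqref{E:DeltaT}, but the topological argument is cleaner because the identity $\gamma_\lambda\cdot\gamma_\mu=\gamma_{\lambda+\mu}$ is manifest.
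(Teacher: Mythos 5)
Your argument is exactly the paper's: the product $i^*_\lambda\,i^*_\mu$ is induced by the composite $\pnt \to \Omega K\times\Omega K \to \Omega K$ sending the point to the pair of fixed loops followed by pointwise multiplication, and since $t_\lambda, t_\mu:S^1\to K$ are homomorphisms into the abelian torus their pointwise product is $t_{\lambda+\mu}$. Your write-up just makes the duality bookkeeping more explicit; the approach and key geometric input are identical to the paper's proof.
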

\begin{proof}
It suffices to argue in $K^{T_\R}(\Omega K)$.  The map $i^*_\lambda
\; i^*_\mu$ is induced by the map ${\rm pt} \to \Omega K \times
\Omega K \to \Omega K$ where the image of the first map is the pair
$(t_\lambda,t_\mu) \in \Omega K \times \Omega K$ of fixed points,
and the latter map is multiplication.  Treating $t_\lambda, t_\mu:S^1
\to K$ as homomorphisms into $K$, we see that pointwise
multiplication of $t_\lambda, t_\mu$ gives $t_{\lambda + \mu}$. Thus
$i^*_\lambda \; i^*_\mu = i^*_{\lambda + \mu}$.
\end{proof}

The antipode of $K_T(\Gr_G)$ is given by $S(i^*_\lambda) =
i^*_{-\lambda}$, since the fixed points satisfy $t^{-1}_\lambda =
t_{-\lambda}$ in $\Omega K$.

\subsection{Affine $K$-NilHecke ring and $K$-Peterson subalgebra}
Let $W_\af$ act on the finite weight lattice $P$ by the
(nonfaithful) level zero action  $(ut_\la\cdot \mu)=u\cdot\mu$ for
$u\in W$, $\la\in Q^\vee$ and $\mu\in P$.

Let $\K$ be the smash product of the affine 0-Hecke ring $\K_0$ with
$R(T)$ (rather than $R(T_\af)$) using the commutation relations
\eqref{E:Tcomm}. We call this the {\it affine $K$-NilHecke ring}.
The cohomological analogue of $\K$ was studied by Peterson \cite{P}.
We have $\K = \bigoplus_{w\in W_\af} R(T) \,T_w$.

We now define the map $k: K_T(\Gr_G) \to \K$ by the formula
\begin{equation}\label{E:k}
\ip{k(\xi)}{\psi} = \ip{\xi}{\jd(\psi)}
\end{equation}
where $\psi \in \Psi_\af$, and $\jd$ is the wrong-way map of Lemma
\ref{L:jd}.  We have used Theorem \ref{T:GKMPet}(2) to obtain the
pairing on the right hand side.  By letting $\psi$ vary over
$\{\psi^v \in \Psi_\af\}$, it is clear that \eqref{E:k} defines
$k(\xi)$ uniquely in $\K$.

We define the {\it $K$-Peterson subalgebra} $\LL := Z_\K(R(T))$ of
$\K$ to be the centralizer of $R(T)$ inside $\K$.

\begin{lem}\label{L:imgk}
We have $\Img(k) = \bigoplus_{\la\in Q^\vee} Q(T) t_\la \cap \K =
\LL$.
\end{lem}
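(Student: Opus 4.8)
The plan is to prove the two displayed equalities separately: first identify $\LL$ with the translation-supported part of $\K$, and then trap $\Img(k)$ between two matching inclusions. For the equality $\LL=\bigoplus_{\la\in Q^\vee}Q(T)\,t_\la\cap\K$, I would compute the centralizer inside the localization $\K_{Q(T)}=\bigoplus_{w\in W_\af}Q(T)\,w$. Writing $a=\sum_w q_w\,w$ and requiring $a\,e^\mu=e^\mu a$ for every $\mu\in P$, the smash relation $w\,e^\mu=e^{w\cdot\mu}w$ together with the level-zero action $ut_\la\cdot\mu=u\cdot\mu$ gives $q_w\bigl(e^{u\cdot\mu}-e^\mu\bigr)=0$ when $w=ut_\la$. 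Since $Q(T)$ is a domain and $W$ acts faithfully on $P$, this forces $q_w=0$ unless $w=t_\la$; hence $Z_{\K_{Q(T)}}(R(T))=\bigoplus_\la Q(T)\,t_\la$, and since multiplication in $\K$ is the restriction of that in $\K_{Q(T)}$, intersecting with $\K$ yields $\LL=Z_\K(R(T))=\bigoplus_\la Q(T)\,t_\la\cap\K$. This part is routine.

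For the inclusion $\Img(k)\subseteq\LL$, the key computation is $k(i^*_\la)=t_\la$: by Lemma~\ref{L:jd} and the defining formula~\eqref{E:k} one has $\ip{k(i^*_\la)}{\psi}=\ip{i^*_\la}{\jd(\psi)}=\jd(\psi)(t_\la)=\psi(t_\la)=\ip{t_\la}{\psi}$ for all $\psi\in\Psi_\af$, and since $t_\la\in\K$ (because each $r_i\in\K$ by~\eqref{E:r}) and the pairing $\K\times\Psi_\af\to R(T)$ is nondegenerate, equality of elements follows. I would then extend $k$ to a $Q(T)$-linear map $K_T(\Gr_G)\otimes_{R(T)}Q(T)\to\K_{Q(T)}$ and invoke equivariant localization, by which the fixed-point functionals $\{i^*_\la\mid\la\in Q^\vee\}$ form a $Q(T)$-basis of $K_T(\Gr_G)\otimes_{R(T)}Q(T)$. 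Then every $k(\xi)$ is a $Q(T)$-combination of the $t_\la$, so $\Img(k)\subseteq\bigoplus_\la Q(T)\,t_\la$; combined with $\Img(k)\subseteq\K$ this gives $\Img(k)\subseteq\LL$.

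For the reverse inclusion $\LL\subseteq\Img(k)$, which is the heart of the argument, I would take $b=\sum_{\la\in S}q_\la t_\la\in\LL$ with $S\subset Q^\vee$ finite and $q_\la\in Q(T)$, and define $F\colon\Psi_\af^I\to Q(T)$ by $F(\phi)=\sum_\la q_\la\,\phi(t_\la)$. Using that $\iota_I$ is a section of $\jd$ and that $\jd(\psi)(t_\la)=\psi(t_\la)$, for $\phi=\jd(\psi)$ one gets $F(\phi)=\psi(b)=\ip{b}{\psi}\in R(T)$, so $F$ is $R(T)$-valued; moreover $\psi^v(t_\la)=0$ unless $v\le t_\la$ (Lemma~\ref{L:psisupp}), so for finite $S$ only finitely many $v\in\Wz$ give a nonzero value $F(\psi^v)$, and hence $F$ lies in the continuous dual $K_T(\Gr_G)$. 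Finally $\ip{k(F)}{\psi}=F(\jd(\psi))=\sum_\la q_\la\psi(t_\la)=\ip{b}{\psi}$ for all $\psi\in\Psi_\af$, so $k(F)=b$ by nondegeneracy and $b\in\Img(k)$.

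The main obstacle is the input from equivariant localization used in the second step to force $\Img(k)$ into the translation-supported subspace; the delicate elementary point is the bookkeeping in the third step, namely verifying that $F$ is $R(T)$-valued and finitely supported so that it genuinely defines an element of the continuous dual $K_T(\Gr_G)$.
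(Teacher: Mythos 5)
Your proof is correct and follows essentially the same route as the paper: the identity $k(i^*_\la)=t_\la$ combined with localization (the $i^*_\la$ spanning $K_T(\Gr_G)$ over $Q(T)$) gives the first equality, and the commutator computation $0=e^\mu a-ae^\mu=\sum_w a_w(e^\mu-e^{w\mu})w$ with a regular (equivalently, faithfully-moved) weight gives the second. Your third paragraph simply makes explicit the inclusion $\bigoplus_{\la}Q(T)\,t_\la\cap\K\subseteq\Img(k)$, which the paper compresses into the single assertion that the $i^*_\la$ span over $Q(T)$; that is a useful amplification of the same argument, not a different one.
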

\begin{proof}
For $\lambda \in Q^\vee$, we have $\ip{i^*_\lambda}{\jd(\psi)} = \psi(t_\lambda)$,
so that $k(i^*_\lambda) = t_\lambda \in \K$. Since $i^*_\lambda$ spans
$K_T(\Gr_G)$ (over $Q(T)$), we thus have established the first equality.
For the second equality, $\bigoplus_{\la\in Q^\vee} Q(T) t_\la \cap \K \subseteq
Z_\K(R(T))$ holds because under the level zero action, $t_\la$ acts
on $P$ trivially for all $\la\in Q^\vee$.  For the other direction,
let $a=\sum_{w\in W_\af} a_w w\in Z_{\K}(R(T))$ for $a_w\in Q(T)$.
Then for all $\mu\in P$ we have
\begin{equation*}
  0 = e^\mu a - a e^\mu = \sum_{w\in W_\af} a_w(e^\mu-e^{w\mu})w.
\end{equation*}
Therefore for all $w\in W_\af$ either $a_w=0$ or $w\mu=\mu$ for all
$\mu \in P$. Taking $\mu$ to be $W$-regular, we see that the latter
only holds for $w=t_\la$ for some $\la\in Q^\vee$.
\end{proof}

The algebra $\LL$ inherits a coproduct $\Delta: \LL \to \LL
\otimes_{R(T)} \LL$ from the coproduct of $\K$.  (In Subsection
\ref{SS:coprod} the coproduct of $\K'$ is given, and it specializes
easily to a coproduct for $\K$.)  That $\Delta(\LL) \subset \LL
\otimes_{R(T)} \LL$ follows from \eqref{E:tensorWeyl} and the
quality $\LL = \bigoplus_{\la\in Q^\vee} Q(T) t_\la \cap \K$.  We
make $\LL$ into a Hopf-algebra by defining $S(t_\lambda) =
t_{-\lambda}$.

The following results generalize properties of Peterson's $j$-map in
the homology case; see \cite[Theorem 4.4]{Lam:Schub}.

\begin{thm}\label{T:kmap}
The map $k: K_T(\Gr_G) \to \LL$ is a Hopf-isomorphism.
\end{thm}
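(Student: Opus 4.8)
The plan is to show that $k$ is an isomorphism of $R(T)$-modules and then to verify the three structural compatibilities (product, coproduct, antipode) by reducing everything to the fixed-point classes $i^*_\lambda$, $\lambda\in Q^\vee$, which form a convenient spanning set.

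For the module isomorphism, I would first recall from the proof of Lemma~\ref{L:imgk} that $k(i^*_\lambda)=t_\lambda$ and that $\Img(k)=\LL$, so $k$ is surjective. For injectivity I would tensor with $Q(T)$. Since $K_T(\Gr_G)$ is free over $R(T)$ with basis the Schubert classes $\xi_w$, it is torsion-free and embeds into $K_T(\Gr_G)\otimes_{R(T)}Q(T)$. The classes $\{i^*_\lambda\}$ span the latter over $Q(T)$ and are carried by $k$ to the $Q(T)$-linearly independent set $\{t_\lambda\}\subset W_\af$; hence $k\otimes Q(T)$ is injective, so $k$ is injective and thus an $R(T)$-module isomorphism onto $\LL$.

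Both structure maps are $R(T)$-linear, so by the torsion-freeness just invoked it suffices to check the algebra and coalgebra compatibilities after tensoring with $Q(T)$, where $\{i^*_\lambda\}$ is a basis. For the product, Lemma~\ref{L:Hmult} gives $i^*_\lambda\,i^*_\mu=i^*_{\lambda+\mu}$, while $t_\lambda t_\mu=t_{\lambda+\mu}$ in $\LL$; since $k(i^*_\lambda)=t_\lambda$, the map $k$ is multiplicative on the spanning set and hence an algebra map. For the coproduct, I would show each $i^*_\lambda$ is grouplike. On the $K_T(\Gr_G)$ side this is dual to the fact that the ring product on $K^T(\Gr_G)=\Psi_\af^I$ is pointwise: for $\phi,\psi\in\Psi_\af^I$,
\begin{equation*}
  \ip{\Delta(i^*_\lambda)}{\phi\otimes\psi}=\ip{i^*_\lambda}{\phi\psi}=(\phi\psi)(t_\lambda)=\phi(t_\lambda)\,\psi(t_\lambda)=\ip{i^*_\lambda\otimes i^*_\lambda}{\phi\otimes\psi},
\end{equation*}
so that $i^*_\lambda$ is grouplike in $K_T(\Gr_G)$. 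On the $\LL$ side, \eqref{E:tensorWeyl} gives $\Delta(t_\lambda)=t_\lambda\otimes t_\lambda$, so $t_\lambda$ is grouplike in $\LL$. Since $k(i^*_\lambda)=t_\lambda$ and both coproducts are $R(T)$-linear, $k$ intertwines them on the spanning set, hence everywhere. The unit and counit match similarly, as $k(i^*_0)=t_0=1$ is the identity and $\ip{i^*_\lambda}{1}=1=\epsilon(t_\lambda)$. Finally, the antipode of $K_T(\Gr_G)$ sends $i^*_\lambda\mapsto i^*_{-\lambda}$ and that of $\LL$ sends $t_\lambda\mapsto t_{-\lambda}$, so the two antipodes are intertwined as well; this completes the claim that $k$ is a Hopf isomorphism.

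The main obstacle is the coproduct compatibility: one must correctly match the topologically-defined coproduct on $K_T(\Gr_G)$ (dual to the ring structure on $K^T(\Gr_G)$) with the algebraically-defined $\Delta$ on $\LL\subset\K$. The cleanest route is through the grouplike fixed-point classes together with the descent from $Q(T)$ to $R(T)$, and the essential geometric and algebraic inputs for this (Lemmata~\ref{L:imgk} and~\ref{L:Hmult}, and formula~\eqref{E:tensorWeyl}) are already in place, so the remaining work is the organizational assembly just outlined.
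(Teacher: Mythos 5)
Your proposal is correct and follows essentially the same route as the paper: check everything on the fixed-point classes $i^*_\lambda$, use Lemma~\ref{L:Hmult} for multiplicativity, and use the localization argument of Lemma~\ref{L:pairprod} (your grouplike computation) together with \eqref{E:tensorWeyl} for the coproduct. The paper merely states the coproduct step as "an argument similar to Lemma~\ref{L:pairprod} and Proposition~\ref{P:struct}" and gets injectivity directly from the definition of $k$, so your write-up is a more explicit version of the same proof.
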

\begin{proof}
To check that a map is a Hopf-morphism it suffices to check that it
is a bialgebra morphism, since the compatibility with antipodes
follows as a consequence.

It is clear from the definition that $k$ is injective. Since $k$ is
$R(T)$-linear, to check that $k$ is compatible with the
Hopf-structure we check the product and coproduct structures on the
basis $\{t_\lambda \mid \lambda \in Q^\vee\}$.  By Lemma
\ref{L:Hmult} we have $k(i^*_{\lambda+\mu}) = k(i^*_\lambda \;
i^*_\mu) = t_\lambda \; t_\mu = t_{\lambda +\mu}$, so $k$ is an
algebra morphism.  That $k$ is a coalgebra morphism follows from an
argument similar to that of Lemma \ref{L:pairprod} and Proposition
\ref{P:struct}.  Thus $k: K_T(\Gr_G) \to \LL$ is a Hopf-isomorphism.
\end{proof}

\begin{thm}\label{T:LL}
For each $w \in \Wz$, there is a unique element $k_w\in \LL$ of the
form
\begin{equation}\label{E:jform}
k_w = T_w + \sum_{v \in W_\af \setminus \Wz} k^v_w T_v
\end{equation}
for $k^v_w\in R(T)$. Furthermore, $k_w = k(\xi_w)$ and $\LL =
\bigoplus_{w\in \Wz} R(T) \, k_w$.
\end{thm}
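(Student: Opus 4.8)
The plan is to compute $k(\xi_w)$ directly from its defining relation \eqref{E:k}, read off its coordinates in the basis $\{T_v\mid v\in W_\af\}$ of $\K$, and then set $k_w:=k(\xi_w)$. Since $\Img(k)=\LL\subset\K=\bigoplus_{v\in W_\af}R(T)\,T_v$ by Lemma \ref{L:imgk}, the element $k(\xi_w)$ has finite support, so I may write $k(\xi_w)=\sum_v (k_w)_v\,T_v$ with $(k_w)_v\in R(T)$.

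First I would extract the coordinates via the perfect pairing. Using $\psi^v(T_u)=\delta_{u,v}$ and the $R(T)$-linearity of $\ip{\cdot}{\cdot}$, I get $(k_w)_v=\ip{k(\xi_w)}{\psi^v}$, which by \eqref{E:k} equals $\ip{\xi_w}{\jd(\psi^v)}$. Expanding $\jd(\psi^v)\in\Psi_\af^I$ in the Schubert basis $\{\psi^u\mid u\in\Wz\}$ of Theorem \ref{T:GKMPet}(2) and using the duality $\ip{\xi_w}{\psi^u}=\delta_{w,u}$, I conclude that $(k_w)_v$ is precisely the coefficient of $\psi^w$ in $\jd(\psi^v)$.

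The substantive step is the identity $\jd(\psi^v)=\psi^v$ for $v\in\Wz$, which is what forces the triangular shape. By Theorem \ref{T:GKMPet}(2) the class $\psi^v$ with $v\in\Wz$ lies in $\Psi_\af^I$ and is therefore constant on cosets $wW$; hence $\jd(\psi^v)(w)=\psi^v(t_\la)=\psi^v(w)$ whenever $wW=t_\la W$, so $\jd(\psi^v)=\psi^v$. Consequently $(k_w)_v=\delta_{v,w}$ for all $v\in\Wz$, which says that the part of $k(\xi_w)$ supported on $\Wz$ is exactly $T_w$. This establishes both the existence of an element of the prescribed form \eqref{E:jform} and the identification $k_w=k(\xi_w)$. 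I expect this coset-constancy computation to be the only point requiring real care, since the rest is formal.

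It then remains to record the basis statement and uniqueness. The decomposition $\LL=\bigoplus_{w\in\Wz}R(T)\,k_w$ is immediate from Theorem \ref{T:kmap}: $k$ is an $R(T)$-linear isomorphism and $\{\xi_w\mid w\in\Wz\}$ is an $R(T)$-basis of $K_T(\Gr_G)$. For uniqueness, suppose $a\in\LL$ also has the form $T_w+\sum_{v\in W_\af\setminus\Wz}a^v\,T_v$. Writing $a=\sum_{u\in\Wz}b_u\,k_u$ and comparing coefficients of $T_{u'}$ for $u'\in\Wz$---where each $k_u$ contributes exactly $\delta_{u,u'}$ by the previous paragraph---forces $b_{u'}=\delta_{w,u'}$, whence $a=k_w$.
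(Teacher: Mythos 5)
Your proposal is correct and follows essentially the same route as the paper: set $k_w=k(\xi_w)$, use Theorem \ref{T:kmap} to get the basis statement, use \eqref{E:k} together with the identity $\jd(\psi^v)=\psi^v$ for $v\in\Wz$ (coming from the coset-constancy of $\psi^v$ established in Theorem \ref{T:GKMPet}(2)) to obtain the triangular form \eqref{E:jform}, and deduce uniqueness from the linear independence of $\{T_v\mid v\in\Wz\}$. You have merely spelled out the coefficient extraction $\ip{k(\xi_w)}{\psi^v}=(k_w)_v$ in more detail than the paper does.
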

\begin{proof}
Since the Schubert basis $\{\xi_w \mid w \in \Wz\}$ is a
$R(T)$-basis of $K_T(\Gr_G)$, by Theorem \ref{T:kmap} setting $k_w =
k(\xi_w)$ we obtain a $R(T)$-basis of $\LL$.  By \eqref{E:k} and
$\jd(\psi^v) = \psi^v$ for $v \in \Wz$ we obtain \eqref{E:jform}.
Finally, the element $k_w \in \LL$ is unique because the set $\{T_w
\mid w \in \Wz\}$ is linearly independent.
\end{proof}

Define the $T$-equivariant $K$-homological Schubert structure constants
 $d^w_{uv}\in R(T)$ for $K_T(\Gr_G)$ by
\begin{equation}\label{E:KHomstruct}
  k_u k_v = \sum_{w\in\Wz} d^w_{uv} k_w
\end{equation}
where $u,v\in \Wz$. Since $k_u \in Z_\K(R(T))$ we have
\begin{align*}
  k_u k_v = k_u \sum_{y\in W_\af} k_v^y\, T_y
  = \sum_{y\in W_\af} k_v^y \,k_u\, T_y
  = \sum_{x,y\in W_\af} k_v^y \,k_u^x\, T_x\, T_y.
\end{align*}
Applying $\psi^w$ for $w\in \Wz$ and using
\eqref{E:jform} we have
\begin{align*}
  d^w_{uv} &= \sum_{x,y\in W_\af} k_v^y\, k_u^x \,\psi^w(T_xT_y).
\end{align*}
Since $w\in \Wz$, $\psi^w(T_xT_y)=0$ unless $y\in \Wz$. But for
$y\in \Wz$, by \eqref{E:jform}, $k_v^y=\delta_{yv}$. Therefore
\begin{equation}\label{E:dbyg}
  d^w_{uv} = \sum_{\substack{x\in W_\af \\ T_x T_v=\pm T_w}} (-1)^{\ell(w)-\ell(v)-\ell(x)} k^x_u.
\end{equation}

\section{$K$-affine Fomin-Stanley algebra and $K$-homology of affine Grassmannian}
\label{S:FS} In this section we reduce to the non-equivariant
setting.  Our main result (Theorem \ref{T:L0}) describes the
specialization at 0 of $\LL$.  We will rely on the corresponding
known statements from the cohomological setting, in particular
\cite[Proposition 5.3]{Lam:Schub}.

\subsection{$K$-affine Fomin-Stanley algebra}
Define $\phi_0:R(T) \to \Z$ by $\phi_0(e^\lambda) = 1$ and extending by linearity.
 Define $\phi_0:\K\to \K_0$ by $\phi_0(a)=\sum_{w\in W} \phi_0(a_w)
T_w$, where $a=\sum_{w\in W} a_w T_w$ with $a_w\in R(T)$.

Define the {\it $K$-affine Fomin-Stanley algebra} as
\begin{equation*}
\LL_0 = \{b \in \K_0 \mid \phi_0(bq) = \phi_0(q)b \text{ for all $q
\in R(T)$ }\} \subset \K_0.
\end{equation*}
The cohomological analogue of $\LL_0$ was defined in~\cite{Lam:Schub}.

\begin{lem}
Suppose $a \in \LL$.  Then $\phi_0(a) \in \LL_0$.
\end{lem}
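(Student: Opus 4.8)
The plan is to unwind the definitions of $\LL$ and $\LL_0$ and show that the specialization map $\phi_0$ carries the centralizer condition defining $\LL$ to the twisted commutation condition defining $\LL_0$. Recall that $a\in\LL=Z_\K(R(T))$ means $aq=qa$ in $\K$ for all $q\in R(T)$, while $b\in\LL_0$ requires $\phi_0(bq)=\phi_0(q)\,b$ for all $q\in R(T)$. The key point is that $\phi_0$ is \emph{not} a ring homomorphism on $\K$ (it only kills the $R(T)$-part by sending $e^\la\mapsto 1$), so one cannot simply apply $\phi_0$ to the equation $aq=qa$ and expect it to commute with products; instead one must track how $\phi_0$ interacts with left multiplication by a scalar $q\in R(T)$ inside $\K$.

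First I would fix $q\in R(T)$ and write $a=\sum_{w\in W_\af}a_w T_w$ with $a_w\in R(T)$. The centralizer condition $aq=qa$ expands, using the commutation relation \eqref{E:Tcomm} in the form $T_w\,q=\sum(\text{terms})$, into an identity among the coefficients. The cleanest route is to compute $\phi_0(aq)$ two ways. On one hand $aq=qa$ gives $\phi_0(aq)=\phi_0(qa)=\phi_0(q)\,\phi_0(a)$, since $q$ commutes with $\phi_0$ on the left: $qa=\sum_w (q\,a_w)T_w$, so $\phi_0(qa)=\sum_w\phi_0(q\,a_w)T_w=\phi_0(q)\sum_w\phi_0(a_w)T_w=\phi_0(q)\,\phi_0(a)$, using that $\phi_0:R(T)\to\Z$ is multiplicative. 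On the other hand I must show $\phi_0(aq)=\phi_0(\phi_0(a)\,q)$, i.e.\ that specializing $a$ first and then multiplying by $q$ and specializing agrees with multiplying $a$ by $q$ in $\K$ and then specializing. Setting $b=\phi_0(a)\in\K_0$, combining these two computations yields exactly $\phi_0(b\,q)=\phi_0(q)\,b$, which is the defining condition for $b\in\LL_0$.

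The main obstacle, and the step deserving genuine care, is verifying the compatibility $\phi_0(aq)=\phi_0(\phi_0(a)\,q)$ for $q\in R(T)$. This amounts to the claim that $\phi_0(T_w\,q)=\phi_0(T_w\,\phi_0(q))$ term by term, or more precisely that the $\Z$-specialization of the expansion of $T_w\,q$ via \eqref{E:Tcomm} depends on $q$ only through its image under $\phi_0$. Using \eqref{E:Tcomm}, $T_i\,q=(T_i\cdot q)+(r_i\cdot q)\,T_i$; applying $\phi_0$ and noting that the level-zero action of $W_\af$ on weights, together with the explicit formula for $T_i\cdot e^\la$ in Lemma~\ref{L:0Heckeacts}, behaves predictably under $e^\la\mapsto 1$, one checks that the correction terms $T_i\cdot q$ specialize compatibly. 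I would verify this first for $q=e^\mu$ a single character and $a=T_w$ a single generator, then extend by $\Z$-linearity and induction on $\ell(w)$. Once this compatibility lemma is in hand, the two displayed computations close the argument immediately, giving $\phi_0(a)\in\LL_0$.
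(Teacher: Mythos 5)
Your overall strategy is the paper's: the entire published proof is the one line $\phi_0(ae^\la)=\phi_0(e^\la a)=\phi_0(a)$, which is exactly your first computation (the paper checks only $q=e^\la$ and lets $\Z$-linearity in $q$ do the rest, which also spares it the factor $\phi_0(q)$ since $\phi_0(e^\la)=1$). So the first half of your argument is fine and is all the paper records.

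The problem is in the ``compatibility lemma'' you single out as the step deserving care. You reduce it to the term-by-term claim $\phi_0(T_w\,q)=\phi_0(T_w\,\phi_0(q))$, and that claim is \emph{false}. Already for $\ell(w)=1$: by \eqref{E:Tcomm} and Lemma~\ref{L:0Heckeacts}, if $\ip{\al_i^\vee}{\la}=m$ then $\phi_0(T_i\,e^\la)=m+T_i$, whereas $T_i\,\phi_0(e^\la)=T_i$, so the two sides differ by $m$ whenever $\ip{\al_i^\vee}{\la}\neq 0$. The induction on $\ell(w)$ you propose would therefore collapse at its base case. What you actually need is the aggregate identity $\phi_0(xq)=\phi_0(\phi_0(x)\,q)$ for $x\in\K$, and this holds for a much softer reason that has nothing to do with the level-zero action or the explicit formula for $T_i\cdot e^\la$: writing $x=\sum_w a_wT_w$ and $T_w\,q=\sum_v c_w^v T_v$ with $c_w^v\in R(T)$, both sides equal $\sum_{v,w}\phi_0(a_w)\phi_0(c_w^v)T_v$ simply because $\phi_0\colon R(T)\to\Z$ is a ring homomorphism, so $\phi_0(a_wc_w^v)=\phi_0(a_w)\phi_0(c_w^v)=\phi_0(\phi_0(a_w)c_w^v)$. (The correct ``term-by-term'' statement is $\phi_0(a_wT_w q)=\phi_0(\phi_0(a_w)T_w q)$, i.e.\ you may replace the \emph{coefficient} $a_w$ by its specialization before applying $\phi_0$; you may not replace $q$.) With that correction the argument closes exactly as you say: $\phi_0(\phi_0(a)q)=\phi_0(aq)=\phi_0(qa)=\phi_0(q)\phi_0(a)$.
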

\begin{proof}
$\phi_0(a e^\lambda) = \phi_0(e^\lambda a) = \phi_0(a)$.
\end{proof}

In the following we shall use notation (such as $A_w$) for the
cohomological nilHecke ring.  We refer the reader to the appendix
for a review of this notation.  Let $\lessdot$ denote the covering
relation in Bruhat order.

\begin{lem}\label{L:AKagree}
Let $v \lessdot w$ in $W_\af$.   Then for each $\lambda \in P$, we have
$$\phi_0 \ip{T_w \, e^\lambda}{\psi^v} = \phi_0 \ip{A_w \lambda}{\xi^v} = \ip{A_w \la}{\xi^v}.$$
\end{lem}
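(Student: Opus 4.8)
The plan is to compute both sides by expanding $T_w e^\lambda$ and $A_w\lambda$ in their respective nilHecke bases and extracting the coefficient of the length-$(\ell(w)-1)$ term indexed by $v$. Fix a reduced word $w=r_{i_1}\cdots r_{i_N}$, so $T_w=T_{i_1}\cdots T_{i_N}$ with $N=\ell(w)$, and write $T_w e^\lambda=\sum_u c_u T_u$ with $c_u\in R(T)$ by repeatedly applying \eqref{E:Tcomm} to move $e^\lambda$ leftward. Since $\psi^v(T_u)=\delta_{uv}$ and $\psi^v$ is $Q(T)$-linear, $\ip{T_w e^\lambda}{\psi^v}=\psi^v(T_w e^\lambda)=c_v$, so the task is to identify $\phi_0(c_v)$.

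First I would isolate the terms contributing to $c_v$. Each monomial produced by \eqref{E:Tcomm} records, at every letter $T_{i_j}$, a choice of either \emph{passing} the moving exponential (applying $r_{i_j}$ to it, keeping $T_{i_j}$) or \emph{absorbing} it (applying the operator $T_{i_j}\cdot{}$, deleting $T_{i_j}$). If the set of passed positions is $S$, the surviving operator is the left-to-right product of the $T_{i_j}$ for $j\in S$, which by \eqref{E:Tbraid} is $\pm T_u$ with $\ell(u)\le|S|$. As $\ell(v)=N-1$, only terms with exactly one absorption can reach length $N-1$, and among these only those for which the deleted subword $T_{i_1}\cdots\widehat{T_{i_k}}\cdots T_{i_N}$ is \emph{reduced} and equal to $T_v$ (a non-reduced subword gives $\pm T_u$ with $\ell(u)<N-1$). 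Tracking the moving exponential through a single absorption at position $k$ yields
\begin{equation*}
  c_v = \sum_{k}\; r_{i_1}\cdots r_{i_{k-1}}\cdot\bigl(T_{i_k}\cdot e^{r_{i_{k+1}}\cdots r_{i_N}\lambda}\bigr),
\end{equation*}
the sum taken over those $k$ with $T_{i_1}\cdots\widehat{T_{i_k}}\cdots T_{i_N}=T_v$.

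Next I would apply $\phi_0$. Since each $r\cdot{}$ permutes the exponentials and $\phi_0(e^\mu)=1$ for all $\mu$, the prefix $r_{i_1}\cdots r_{i_{k-1}}$ vanishes under $\phi_0$; and from the three cases in Lemma \ref{L:0Heckeacts} one checks directly that $\phi_0(T_{i_k}\cdot e^\mu)=\ip{\al_{i_k}^\vee}{\mu}$. Hence $\phi_0(c_v)=\sum_k \ip{\al_{i_k}^\vee}{r_{i_{k+1}}\cdots r_{i_N}\lambda}$ over the same index set. The final step is to match this with cohomology: expanding $A_w\lambda$ by the analogous rule $A_i\lambda=\ip{\al_i^\vee}{\lambda}+(r_i\lambda)A_i$ and using $A_i^2=0$, deletion of the letter at position $k$ contributes precisely when the remaining subword is reduced, giving $\ip{A_w\lambda}{\xi^v}=\sum_k \ip{\al_{i_k}^\vee}{r_{i_{k+1}}\cdots r_{i_N}\lambda}$ over the identical set of $k$. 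This establishes $\phi_0\ip{T_w e^\lambda}{\psi^v}=\ip{A_w\lambda}{\xi^v}$. Finally, $\ip{A_w\lambda}{\xi^v}$ is the coefficient of $A_v$ in $A_w\lambda$; by the grading ($\lambda$ in degree $1$, $A_u$ in degree $-\ell(u)$, and $\ell(v)=\ell(w)-1$) it lies in $\Z$, so $\phi_0$ fixes it and the middle equality follows.

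I expect the main obstacle to be the bookkeeping in the second paragraph: one must verify that, despite the $0$-Hecke relation $T_i^2=-T_i$ (which, unlike $A_i^2=0$, keeps non-reduced products nonzero), both the non-reduced single-deletion terms and all multiple-absorption terms land in strictly smaller length and therefore cannot contribute to $c_v$. Once this length/support bound is secured, the $K$-theoretic and cohomological sums run over an identical set of positions $k$, and matching them term by term through $\phi_0$ is routine.
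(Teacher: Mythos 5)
Your proof is correct and follows essentially the same route as the paper's: both arguments extract the coefficient of $T_v$ in $T_w e^\lambda$ by observing that only a single, reduced deletion can reach length $\ell(w)-1$, then apply $\phi_0$ to kill the Weyl-group prefix and reduce $T_{i_k}\cdot e^\mu$ to $\ip{\al_{i_k}^\vee}{\mu}$. The only cosmetic difference is that the paper pins down the unique deletion position via a length-additive factorization $w=u_1 r_i u_2$ and quotes Lemma~\ref{L:Acommute} for the cohomological side, whereas you carry the (in fact singleton) sum over deletion positions and re-expand $A_w\lambda$ in parallel.
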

\begin{proof}
Write $v = wr_\alpha$. By \cite{Hum} there exists a length-additive factorization of the form
$w=u_1 r_i u_2$ for some $i\in I_\af$ such that
$v=u_1u_2$ and $\al =u_2^{-1}\al_i$. We have
\begin{equation*}
\phi_0 \ip{T_w\,e^\la}{\psi^v} = \phi_0 \psi^v(T_w e^\lambda)
= \phi_0 (u_1 \cdot T_i \cdot e^{u_2\cdot \la})
= \phi_0  \left(\frac{e^{r_i u_2 \la} - e^{u_2\la}}{1-e^{\alpha_i}}\right)
\end{equation*}
since $\phi_0(wq)=\phi_0(q)$ for all $w\in W_\af$ and $q\in R(T)$.
Therefore
\begin{equation*}
  \phi_0 \psi^v(T_w e^\la) = \ip{\al_i^\vee}{u_2\la} = \ip{\al^\vee}{\la} = \xi^v(A_w\,\la)
\end{equation*}
using \eqref{E:Tdef} acting on an exponential for the first equality, $W_\af$-equivariance
of $\ip{\cdot}{\cdot}$ for the second equality, and Lemma \ref{L:Acommute} for the third.
\end{proof}

\begin{lem}\label{L:Ktop}
Suppose $a = \sum_{w \in W_\af} a_w T_w \in \LL_0$, where $a_w \in
\Z$.  Let $\ell$ be maximal so that $a_w \neq 0$ for some $w$ with
$\ell(w) = \ell$.  Then $a' = \sum_{\ell(w) = \ell} a_w A_w\in\B_0$.
\end{lem}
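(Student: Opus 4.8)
The plan is to verify directly that $a'$ satisfies the defining condition of $\B_0$, namely $\phi_0(a'\la)=\phi_0(\la)a'$ for all $\la\in P$, and to obtain this as the ``top length'' shadow of the hypothesis $a\in\LL_0$. Since $\phi_0(\la)=0$ for a degree-one cohomological scalar $\la\in P$, the target identity is simply $\phi_0(a'\la)=0$, and it suffices to check it on such degree-one $\la$.

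First I would unwind the hypothesis. Membership $a\in\LL_0$ gives $\phi_0(a\,e^\la)=\phi_0(e^\la)a=a$ for every $\la\in P$. Expanding $a\,e^\la=\sum_w a_w\,T_w e^\la$ and commuting $e^\la$ past $T_w$ via \eqref{E:Tcomm}, each $T_w e^\la$ is supported on $\{u\le w\}$ with leading coefficient $\psi^w(T_w e^\la)=e^{w\cdot\la}$. Comparing the coefficient of $T_v$ on both sides of $\phi_0(a\,e^\la)=a$ yields $\sum_w a_w\,\phi_0\,\psi^v(T_w e^\la)=a_v$. I would then specialize to $v$ with $\ell(v)=\ell-1$: the diagonal term $w=v$ contributes $a_v\phi_0(e^{v\cdot\la})=a_v$ (using $W_\af$-invariance of $\phi_0$), terms of length $<\ell-1$ and those of length $\ell-1$ with $w\ne v$ vanish for support reasons, and terms of length $>\ell$ vanish since $a_w=0$. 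After cancelling $a_v$ this leaves, for all such $v$ and all $\la\in P$,
\begin{equation*}
  \sum_{w\,:\,v\lessdot w,\ \ell(w)=\ell} a_w\,\phi_0\,\psi^v(T_w e^\la)=0 .
\end{equation*}

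Next I would translate this into the cohomological ring using Lemma~\ref{L:AKagree}, which identifies $\phi_0\,\psi^v(T_w e^\la)$ with $\ip{A_w\la}{\xi^v}$, the coefficient of $A_v$ in $A_w\la$, whenever $v\lessdot w$. On the cohomology side, the grading ($\deg A_i=-1$, $\deg\la=1$) forces $A_w\la=(w\la)A_w+\sum_{v\lessdot w}\ip{A_w\la}{\xi^v}A_v$, with the length-$(\ell(w)-1)$ coefficients being constants and no lower terms occurring; applying $\phi_0$ kills the degree-one leading coefficient $w\la$ and preserves the constants, so $\phi_0(A_w\la)=\sum_{v\lessdot w}\ip{A_w\la}{\xi^v}A_v$. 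Summing against $a'=\sum_{\ell(w)=\ell}a_w A_w$ gives
\begin{equation*}
  \phi_0(a'\la)=\sum_{v\,:\,\ell(v)=\ell-1}\Bigl(\sum_{w\,:\,v\lessdot w,\ \ell(w)=\ell} a_w\,\ip{A_w\la}{\xi^v}\Bigr)A_v,
\end{equation*}
and each inner sum vanishes by the displayed identity above. Hence $\phi_0(a'\la)=0=\phi_0(\la)a'$, so $a'\in\B_0$.

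The main obstacle is the length bookkeeping that makes the two conditions coincide: one must check that precisely the length-$(\ell-1)$ part of the all-lengths $K$-theoretic identity survives, and that on the cohomological side $\phi_0(A_w\la)$ is supported exactly in length $\ell(w)-1$, so that no terms outside lengths $\ell$ and $\ell-1$ interfere. The linchpin is Lemma~\ref{L:AKagree}, which matches the two coefficient extractions across the $K$-theoretic and cohomological NilHecke rings; everything else is the grading computation for $A_w\la$ together with the reduction of the $\B_0$-membership condition to degree-one scalars.
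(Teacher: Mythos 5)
Your proposal is correct and follows essentially the same route as the paper: extract the coefficient of $T_v$ for $\ell(v)=\ell-1$ from the identity $\phi_0(a(e^\la-1))=0$, observe that only the top-length terms of $a$ survive, and convert via Lemma~\ref{L:AKagree} into the statement $\phi_0\,\xi^v(a'\la)=0$, hence $a'\in\B_0$. The paper phrases this more tersely by working with $e^\la-1$ so the diagonal term drops out immediately, whereas you cancel it by hand, but the substance and the key lemma are identical.
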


\begin{proof}
We note that for $v \in W_\af$ with $\ell(v) = \ell - 1$, we have for each $\lambda \in P$
$$
\phi_0\, \psi^v(a(e^\lambda - 1)) = \phi_0 \,\psi^v(\sum_{\ell(w) = \ell} a_w T_w (e^\lambda - 1)) = \phi_0 \,\xi^v(a' \la)
$$
using Lemma \ref{L:AKagree}.  Since $a \in \LL_0$, we have $\phi_0(a(e^\lambda - 1)) = 0$ for all $\lambda$.  Thus $a' \in \B_0$, as claimed.
\end{proof}

\begin{thm}\label{T:L0}
We have $\LL_0 = \phi_0(\LL)$.  Furthermore, $\LL_0 =
\bigoplus_{w\in\Wz} \Z\, \phi_0(k_w)$
and $\phi_0(k_w)$ is the unique element in $\LL \cap (T_w +
\bigoplus_{v \in W\setminus \Wz} \Z\, T_v)$.
\end{thm}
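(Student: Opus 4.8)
The plan is to deduce all three assertions from a single structural fact: that $\LL_0$ contains no nonzero element whose support lies in $W_\af\setminus\Wz$. First I would record the easy inclusions. Since $\phi_0(a)\in\LL_0$ for every $a\in\LL$ (shown just above), and since Theorem \ref{T:LL} gives $\phi_0(k_w)=T_w+\sum_{v\in W_\af\setminus\Wz}\phi_0(k^v_w)\,T_v$ with $\phi_0(k^v_w)\in\Z$, each $\phi_0(k_w)$ lies in $\LL_0$ and has the asserted triangular shape, with leading term $T_w$ ($w\in\Wz$) and all remaining terms indexed by $W_\af\setminus\Wz$. Reading off the coefficient of $T_u$ for $u\in\Wz$ shows the family $\{\phi_0(k_w)\mid w\in\Wz\}$ is $\Z$-linearly independent, so $\bigoplus_{w\in\Wz}\Z\,\phi_0(k_w)\subseteq\phi_0(\LL)\subseteq\LL_0$.

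The heart of the argument is the claim that any $c\in\LL_0$ supported on $W_\af\setminus\Wz$ is zero. I would prove this by contradiction. If $c\neq0$, let $\ell$ be the largest length occurring in its support and pass to the top-degree part $c'=\sum_{\ell(x)=\ell}c_x A_x$ in the cohomological nilHecke ring. Lemma \ref{L:Ktop} gives $c'\in\B_0$, and clearly the support of $c'$ is contained in that of $c$, hence in $W_\af\setminus\Wz$. Now I would invoke the cohomological structure of $\B_0$ from \cite[Proposition 5.3]{Lam:Schub}: $\B_0$ has a $\Z$-basis indexed by $\Wz$ whose elements have the form $A_w+\sum_{v\in W_\af\setminus\Wz}(\cdot)\,A_v$. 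Reading off the $A_u$-coefficients for $u\in\Wz$ then shows that an element of $\B_0$ with no support on $\Wz$ must vanish; hence $c'=0$. But $c'\neq0$ by the choice of $\ell$, a contradiction. This reduction from $K$-theory to the known homology case is the step I expect to be the main obstacle, since it is where Lemma \ref{L:Ktop} and the input from \cite{Lam:Schub} must be combined correctly.

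Granting this structural fact, the remaining assertions are formal. For the basis statement, given $a=\sum_x a_x T_x\in\LL_0$, set $\tilde a=\sum_{u\in\Wz}a_u\,\phi_0(k_u)$, a finite sum lying in $\bigoplus_{w\in\Wz}\Z\,\phi_0(k_w)$. Because $\phi_0(k_u)=T_u+(\text{terms off }\Wz)$, the coefficient of $T_{u'}$ in $\tilde a$ equals $a_{u'}$ for every $u'\in\Wz$; hence $a-\tilde a\in\LL_0$ is supported on $W_\af\setminus\Wz$ and so vanishes, giving $a=\tilde a$. This proves $\LL_0=\bigoplus_{w\in\Wz}\Z\,\phi_0(k_w)$, and combined with the inclusions above it forces $\phi_0(\LL)=\LL_0$, establishing the first two assertions.

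Finally, for the characterization of $\phi_0(k_w)$, the element $\phi_0(k_w)$ itself furnishes existence, as it lies in $\LL_0$ and has the form $T_w+\sum_{v\in W_\af\setminus\Wz}(\cdot)\,T_v$. For uniqueness, any two elements of $\LL_0$ of this form differ by an element of $\LL_0$ whose $T_w$-terms cancel, so that the difference is supported on $W_\af\setminus\Wz$; by the structural fact this difference is zero, which completes the argument.
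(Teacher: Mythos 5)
Your proposal is correct and follows essentially the same route as the paper: the key step in both is to show that an element of $\LL_0$ with no support on $\Wz$ vanishes, by passing to its top-length part via Lemma \ref{L:Ktop} and invoking \cite[Proposition 5.3]{Lam:Schub} for $\B_0$. The only (immaterial) difference is that you subtract $\sum_{u\in\Wz}a_u\,\phi_0(k_u)$ in one step where the paper strips off Grassmannian support one element at a time.
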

\begin{proof}
For $a \in \LL$, we have $\phi_0(a e^\lambda) = \phi_0(e^\lambda a)
= \phi_0(a)$.  Thus $\phi_0(\LL) \subset \LL_0$. Now suppose that $a
= \sum_{w \in W_\af} a_w T_w \in \LL_0$. Define the support of $a$
to be the $w\in W_\af$ such that $a_w\ne0$. If the support of $a$
contains a Grassmannian element $w \in \Wz$, then $a - a_w
\phi_0(k_w)\in \LL_0$, but by Theorem \ref{T:LL} its support has
fewer Grassmannian elements than $a$.  So we may suppose that $a$
has no Grassmannian element in its support. By Lemma \ref{L:Ktop},
the element $a'$ (as defined in the Lemma) lies in $\B_0$ and has no
Grassmannian support.  By \cite[Proposition 5.3]{Lam:Schub}, we must
have $a' = 0$.  Thus $a = 0$.  We conclude that $\LL_0 =
\phi_0(\LL)$.

Since $\{ \phi_0(k_w) \mid w \in \Wz \}$ is clearly linearly
independent, it follows that they form a basis.  The last statement
follows from Theorem \ref{T:LL}.
\end{proof}

Some examples of the elements $\phi_0(k_w)$, illustrating Theorem
\ref{T:L0}, are presented in Appendix \ref{S:tables_k}.

\begin{cor}\label{C:comm}
The ring $\LL_0$ is commutative.
\end{cor}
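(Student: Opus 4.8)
The plan is to exhibit $\LL_0$ as the image of the commutative ring $\LL$ under a ring homomorphism, so that commutativity descends automatically. The first ingredient is that $\LL$ itself is commutative. This is immediate from Theorem \ref{T:kmap}: the isomorphism $k\colon K_T(\Gr_G)\to \LL$ is in particular an algebra isomorphism, and the product on $K_T(\Gr_G)$ is dual to the coproduct of the co-commutative Hopf algebra $K^T(\Gr_G)\cong K^{\TR}(\Omega K)$, whence $K_T(\Gr_G)$, and therefore $\LL$, is commutative. (Alternatively one can argue directly from Lemma \ref{L:imgk}: writing an element of $\LL$ as $\sum_\la c_\la t_\la$ with $c_\la\in Q(T)$, the level-zero action makes each $t_\la$ commute with $Q(T)$, while $t_\la t_\mu=t_{\la+\mu}=t_\mu t_\la$, so the product is manifestly symmetric.)

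By Theorem \ref{T:L0} we already have $\LL_0=\phi_0(\LL)$. Thus it suffices to prove that the restriction $\phi_0\colon \LL\to\LL_0$ is multiplicative, for then $\LL_0$ is the image of a commutative ring under a ring homomorphism and we are done. This multiplicativity is the crux of the argument, and the step requiring genuine care, because $\phi_0$ is \emph{not} a ring homomorphism on all of $\K$. Indeed, a direct computation gives $\phi_0(T_i\,e^\la)=\ip{\al_i^\vee}{\la}+T_i$, whereas $\phi_0(T_i)\phi_0(e^\la)=T_i$, so the two disagree whenever $\ip{\al_i^\vee}{\la}\neq0$.

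The way around this is to use the defining property of $\LL=Z_\K(R(T))$ to move all $R(T)$-scalars to the left. Concretely, for $a\in\LL$ and $b=\sum_v b_v T_v\in\K$, I would use $a b_v=b_v a$ to write $ab=\sum_v b_v\,a\,T_v$, and then expand $a=\sum_w a_w T_w$ to obtain $ab=\sum_{v,w} b_v a_w\,(T_w T_v)$, where now every factor $T_w T_v$ lies in $\K_0$. Two elementary facts then finish the computation: $\phi_0$ restricts to the identity on $\K_0$, and it satisfies $\phi_0(qc)=\phi_0(q)\,c$ for $q\in R(T)$ and $c\in\K_0$ (since $q$ already stands to the left), while $\phi_0|_{R(T)}\colon R(T)\to\Z$ is the augmentation, hence multiplicative. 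Applying $\phi_0$ term by term yields $\phi_0(ab)=\sum_{v,w}\phi_0(a_w)\phi_0(b_v)(T_w T_v)=\phi_0(a)\phi_0(b)$.

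Combining the two parts, $\phi_0\colon\LL\to\LL_0$ is a surjective ring homomorphism whose source is commutative, so $\LL_0$ is commutative. I expect the only delicate point to be the multiplicativity of $\phi_0$ on $\LL$; the reorganization $ab=\sum_{v,w}b_v a_w\,(T_w T_v)$ that confines the Hecke factors to $\K_0$ is exactly what makes $\phi_0$ behave well there, and it fails without the centralizer hypothesis $a\in\LL$. Everything else is formal once Theorems \ref{T:kmap} and \ref{T:L0} are in hand.
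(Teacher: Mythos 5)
Your proposal is correct and follows essentially the same route as the paper: commutativity of $\LL$ (via Theorem~\ref{T:kmap}, or directly from Lemma~\ref{L:imgk}) is pushed down to $\LL_0=\phi_0(\LL)$ through the multiplicativity of $\phi_0$ on $\LL$. The paper isolates that multiplicativity as Lemma~\ref{L:phi0ring}, whose proof it leaves as ``a direct calculation''; your centralizer rearrangement $ab=\sum_{v,w}b_v a_w\,T_wT_v$ is precisely that calculation, so you have in effect supplied the one detail the paper omits.
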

\begin{proof}
Let $a, b \in \LL_0$.  By Theorem \ref{T:L0}, we have $a + a' \in \LL$ and $b + b' \in \LL$ for some
elements $a', b'$ satisfying $\phi_0(a') = 0 = \phi_0(b')$.   Since $\LL$ is commutative we have
$$
ab = \phi_0((a + a')(b+b')) = \phi_0((b+b')(a+a')) = ba.
$$
\end{proof}

\subsection{Structure constants}

We now consider the structure constants in $\LL_0$.
The next lemma follows from either a direct calculation, or Theorem~\ref{T:k0} below.
\begin{lem} \label{L:phi0ring} For $a,b\in \LL$, we have $\phi_0(ab) = \phi_0(a)\phi_0(b)$.
\end{lem}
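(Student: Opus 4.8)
The plan is to compute $\phi_0(ab)$ directly from the basis expansion \eqref{E:Tbasis}, exploiting that $a$ lies in the centralizer $\LL=Z_\K(R(T))$. Write $a=\sum_x a_x T_x$ and $b=\sum_y b_y T_y$ with $a_x,b_y\in R(T)$. The point to keep in mind is that $\phi_0$ is \emph{not} multiplicative on all of $\K$: for instance $\phi_0(T_i e^{\al_i})=2+T_i$ while $\phi_0(T_i)\phi_0(e^{\al_i})=T_i$, the discrepancy coming from the twisted commutation relation \eqref{E:Tcomm}. So the entire content of the lemma is to arrange that no such twisting intervenes, and centrality of $a$ does exactly this.

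First I would move $a$ past the scalars $b_y$. Since $a\in Z_\K(R(T))$ commutes with each $b_y\in R(T)$, we get $ab=\sum_y b_y\,a\,T_y=\sum_{x,y} a_x b_y\,T_x T_y$, with all the $R(T)$-coefficients now collected commutatively on the left and \eqref{E:Tcomm} never invoked. Next, expanding each $T_x T_y$ in the $0$-Hecke basis via \eqref{E:Tbraid} gives $T_x T_y=\sum_z N^z_{xy}\,T_z$ with \emph{integer} structure constants $N^z_{xy}\in\Z$ (in fact $T_xT_y=\pm T_{x*y}$ for the Demazure product $x*y$, but only integrality is needed). Hence $ab=\sum_z\bigl(\sum_{x,y} a_x b_y N^z_{xy}\bigr)T_z$.

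Finally I would apply $\phi_0$ coefficientwise. Since $\phi_0\colon R(T)=\Z[P]\to\Z$ is the augmentation $e^\lambda\mapsto1$, it is a ring homomorphism, so $\phi_0(a_x b_y)=\phi_0(a_x)\phi_0(b_y)$, and it fixes the integers $N^z_{xy}$. Therefore $\phi_0(ab)=\sum_z\bigl(\sum_{x,y}\phi_0(a_x)\phi_0(b_y)N^z_{xy}\bigr)T_z$, which is precisely the $0$-Hecke product $\bigl(\sum_x\phi_0(a_x)T_x\bigr)\bigl(\sum_y\phi_0(b_y)T_y\bigr)=\phi_0(a)\phi_0(b)$.

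The only genuine step is the first one, and once the role of centrality is identified there is no real obstacle: the argument never uses the Hopf structure, the explicit description of $\LL_0$, or the comparison with cohomology, and it applies verbatim to arbitrary $a,b\in\LL$ rather than only to the basis elements $k_w$. (The alternative route through Theorem~\ref{T:k0} would instead deduce the identity from the explicit identification of the structure constants.)
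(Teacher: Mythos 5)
Your proof is correct and is exactly the ``direct calculation'' that the paper invokes without writing out: the paper's proof of Lemma~\ref{L:phi0ring} consists only of the remark that it follows from a direct calculation or from Theorem~\ref{T:k0}, and you have supplied the former, correctly isolating centrality of $a$ as the reason the twisted commutation relation \eqref{E:Tcomm} never enters. The observation that only $a\in Z_\K(R(T))$ is needed (so $b$ may be arbitrary in $\K$) is a harmless strengthening.
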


Applying $\phi_0$ to \eqref{E:KHomstruct}, by Lemma \ref{L:phi0ring}
for $u,v\in \Wz$ we have
\begin{equation}
  \phi_0(k_u)\phi_0(k_v) = \sum_{w\in \Wz} \phi_0(d^w_{uv}) \phi_0(k_w).
\end{equation}
That is, $\phi_0(d^w_{uv})\in\Z$ are the structure constants for the
basis $\{\phi_0(k_v)\mid v\in \Wz\}$ of $\LL_0$.

\begin{conj}\label{CJ:signKHomStruct}

For $u,v,w\in \Wz$ and $x \in W_\af$,
\begin{align*}
(-1)^{\ell(w)-\ell(u)-\ell(v)} \phi_0(d^w_{uv})& \ge0, \\
(-1)^{\ell(x)-\ell(u)} \phi_0(k^x_{u})&\ge0.
\end{align*}
By \eqref{E:dbyg}, the second statement implies the first.
\end{conj}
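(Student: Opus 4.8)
The plan is to reduce both inequalities to the single coefficient-positivity statement $(-1)^{\ell(x)-\ell(u)}\phi_0(k^x_u)\ge 0$ and then to establish the latter geometrically. The reduction of the first inequality to the second is precisely the bookkeeping recorded after \eqref{E:dbyg}: substituting the conjectured sign of $\phi_0(k^x_u)$ into $\phi_0(d^w_{uv}) = \sum_{x}(-1)^{\ell(w)-\ell(v)-\ell(x)}\phi_0(k^x_u)$ makes every summand carry the common sign $(-1)^{\ell(w)-\ell(v)-\ell(x)}(-1)^{\ell(x)-\ell(u)}=(-1)^{\ell(w)-\ell(u)-\ell(v)}$, independent of $x$, so the whole sum inherits that sign. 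Thus the entire problem becomes the positivity of the expansion coefficients $\phi_0(k^x_u)$ of the $K$-homology Schubert elements $k_u$ in the $T_x$-basis of $\K$.

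The next step is to convert $\phi_0(k^x_u)$ into a geometric quantity. By \eqref{E:jform} and \eqref{E:k} we have $k^x_u = \psi^x(k_u) = \ip{k(\xi_u)}{\psi^x} = \ip{\xi_u}{\jd(\psi^x)}$, where $\jd$ is the wrong-way map of Lemma \ref{L:jd}. Since $\psi^x$ is the localization of the affine-flag Schubert class $[\OO_{X_x}]$ and $\jd$ is identified with the pullback $r^*\colon K^T(X_\af)\to K^T(\Gr_G)$ along $\Omega K\hookrightarrow LK\to LK/T_\R$, the class $\jd(\psi^x)$ is the pullback $r^*[\OO_{X_x}]$, whose Schubert expansion in the basis $\{[\OO_{X^I_w}]\}$ is extracted by pairing with $\xi_u$. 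Hence, non-equivariantly, $\phi_0(k^x_u)$ is exactly the coefficient of $[\OO_{X^I_u}]$ in the Schubert expansion of the pulled-back structure-sheaf class $r^*[\OO_{X_x}]$, whose geometry is described in Theorem \ref{T:ASG}.

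With this interpretation, the conjectural sign $(-1)^{\ell(x)-\ell(u)}$ is exactly the alternating-by-codimension sign predicted by Brion-type positivity in $K$-theory: the structure sheaf of a subvariety expands in the Schubert basis with coefficients whose signs are governed by the codimension difference, and here $\ell(x)$ and $\ell(u)$ are the codimensions of $X_x$ in $X_\af$ and of $X^I_u$ in $\Gr_G$. The plan is therefore to invoke the $K$-theoretic positivity of Brion, and its equivariant refinement due to Anderson--Griffeth--Miller, for the class $r^*[\OO_{X_x}]$. Concretely one would show that $r^*[\OO_{X_x}]$ is represented by the structure sheaf of the preimage $r^{-1}(X_x)\subset\Gr_G$, that this preimage has rational singularities, and that it has the expected codimension $\ell(x)$, so that the alternating-sign theorem applies directly and yields $(-1)^{\ell(x)-\ell(u)}\phi_0(k^x_u)\ge 0$.

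The main obstacle is that Brion's positivity is established for finite-dimensional flag varieties, whereas here one works with Kashiwara's thick affine flag manifold and the ind-scheme affine Grassmannian, and $r$ is a wrong-way pullback rather than a standard flat or proper morphism. Two technical points must be overcome: first, extending alternating positivity to this Kac--Moody setting, most plausibly by restricting to the finite-dimensional Schubert varieties exhausting $\Gr_G$ and checking compatibility of the expansions under the inverse-limit structure used in Theorem \ref{T:GKMPet}; and second, verifying that $r^*$ genuinely carries effective structure-sheaf classes to alternating-sign effective combinations, i.e.\ that $r^{-1}(X_x)$ inherits the mildness (rational singularities and expected codimension) needed to apply the theorem. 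For $G=SL_n$ one could instead attempt a combinatorial proof using Morse's affine set-valued tableau model for $G_w(x)$, but the general-$G$ statement appears to require the geometric input above, which is why we are able to record it here only as a conjecture.
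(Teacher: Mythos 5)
This statement is a conjecture in the paper: the authors prove nothing beyond the closing remark that the second inequality implies the first, and they support the inequalities only with the tables of $\phi_0(k_w)$ in Appendix~\ref{S:tables_k}. Your reduction of the first inequality to the second is exactly the paper's observation: by \eqref{E:dbyg} every summand of $\phi_0(d^w_{uv})$ acquires the common sign $(-1)^{\ell(w)-\ell(u)-\ell(v)}$ once the conjectured sign of $\phi_0(k^x_u)$ is assumed, so this part of your write-up is correct and identical to the paper's. Your geometric reformulation of $\phi_0(k^x_u)$ is also sound and in fact appears in the paper: the proof of Theorem~\ref{T:ASG} records precisely that the coefficient of $T_x$ in $k_u$ equals the coefficient of $[\OO_{X^I_u}]$ in $r^*([\OO_{X_x}])$.

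The gap is the positivity input itself, and it is a genuine one, which you partly acknowledge. The wrong-way map is not induced by any morphism of schemes: $\jd$ is defined purely by localization at fixed points (Lemma~\ref{L:jd}), and its geometric avatar $r^*$ comes from the topological composite $\Omega K \hookrightarrow LK \to LK/T_\R$, which has no algebraic counterpart $\Gr_G \to X_\af$. Consequently ``the preimage $r^{-1}(X_x)$'' is not a subscheme of $\Gr_G$ whose rational singularities or codimension one could verify, and Brion's alternating-sign theorem (or its equivariant refinement) -- which concerns expanding the structure sheaf of a subvariety in the Schubert basis of the \emph{same} finite-dimensional flag variety -- has no statement that applies to a pullback along such a map in the Kac--Moody setting. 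So your proposal correctly isolates what must be proven and gives a plausible shape for an eventual geometric argument, but the central step is missing rather than merely technical; this matches the status of the statement in the paper, where it is recorded as open and tested only against the data in the appendix.
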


The tables of $\phi_0(k_w)$ in Appendix \ref{S:tables_k} support
Conjecture \ref{CJ:signKHomStruct}.

\subsection{Non-equivariant $K$-homology}
One defines the non-equivariant $K$-cohomology $K^*(\Gr_G)$ by
considering non-equivariant coherent sheaves in the natural way. We
have $K^*(\Gr_G) = \bigoplus_{w \in \Wz} \Z \, [\OO_{X^I_w}]_0$ where
$[\OO_{X^I_w}]_0$ denotes a non-equivariant class.  The
non-equivariant $K$-homology $K_*(\Gr_G)$, defined as the continuous
$\Z$-dual to $K^*(\Gr_G)$, has Schubert basis $\{\xi^0_w \mid w \in
\Wz\}$.  We have the commutative diagram
\begin{equation*}
\begin{diagram}
   \node{K_T(\Gr_G)}\arrow{s} \arrow{e,t}{\phi_0} \node{K_*(\Gr_G)}\arrow{s} \\
  \node{\LL} \arrow{n} \arrow{e,b}{\phi_0} \node{\LL_0.} \arrow{n}
\end{diagram}
\end{equation*}

The subalgebra $\LL_0$ is a Hopf-algebra, with coproduct $\phi_0
\circ \Delta$.  The following result generalizes \cite[Theorem
5.5]{Lam:Schub} to $K$-homology.

\begin{thm}\label{T:k0}
There is a Hopf-isomorphism $k_0: K_*(\Gr_G) \longrightarrow \LL_0$
such that $k_0(\xi^0_w) = \phi_0(k_w)$.
\end{thm}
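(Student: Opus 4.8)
The plan is to obtain $k_0$ by specializing the equivariant Hopf-isomorphism $k\colon K_T(\Gr_G)\to\LL$ of Theorem \ref{T:kmap} along the augmentation $\phi_0$. First I would define $k_0$ on the Schubert basis by $k_0(\xi^0_w)=\phi_0(k_w)$ and extend $\Z$-linearly. By Theorem \ref{T:L0} the set $\{\phi_0(k_w)\mid w\in\Wz\}$ is a $\Z$-basis of $\LL_0$, while $\{\xi^0_w\mid w\in\Wz\}$ is a $\Z$-basis of $K_*(\Gr_G)$ by construction, so $k_0$ is a $\Z$-module isomorphism. As in the proof of Theorem \ref{T:kmap}, it then suffices to show $k_0$ is a bialgebra morphism, the antipode compatibility following automatically. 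By construction $k_0$ completes the commutative square in the diagram preceding the theorem, namely $k_0\circ\phi_0=\phi_0\circ k$ where the left-hand $\phi_0$ is the specialization $K_T(\Gr_G)\to K_*(\Gr_G)$; indeed both composites send the Schubert class $\xi_w$ to $\phi_0(k_w)$.

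I would verify the algebra and coalgebra axioms through structure constants. For the product, write $k_uk_v=\sum_{w\in\Wz}d^w_{uv}k_w$ as in \eqref{E:KHomstruct}. Applying $\phi_0$ and using Lemma \ref{L:phi0ring} together with $\phi_0$-semilinearity gives
\[
\phi_0(k_u)\,\phi_0(k_v)=\phi_0(k_uk_v)=\sum_{w\in\Wz}\phi_0(d^w_{uv})\,\phi_0(k_w).
\]
Since the non-equivariant product on $K_*(\Gr_G)$ is the $\phi_0$-specialization of the equivariant one, $\xi^0_u\xi^0_v=\sum_w\phi_0(d^w_{uv})\xi^0_w$, and comparing the two displays yields $k_0(\xi^0_u\xi^0_v)=k_0(\xi^0_u)\,k_0(\xi^0_v)$. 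For the coproduct I would argue dually: as $k$ is a coalgebra map, $\Delta(k_w)=\sum_{u,v}e^{uv}_w\,k_u\otimes k_v$ with the same coefficients $e^{uv}_w\in R(T)$ that govern $\Delta(\xi_w)$ in $K_T(\Gr_G)$. Applying $\phi_0\otimes\phi_0$, and using that the coproduct on $\LL_0$ is $\phi_0\circ\Delta$ while the coproduct on $K_*(\Gr_G)$ specializes that on $K_T(\Gr_G)$, gives $\Delta_{\LL_0}(k_0(\xi^0_w))=(k_0\otimes k_0)\Delta(\xi^0_w)$, so $k_0$ is also a coalgebra map.

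I expect the main point requiring care to be not this formal transfer but the assertions that the non-equivariant (co)products on $K_*(\Gr_G)$ and on $\LL_0$ really are the $\phi_0$-specializations of the equivariant ones, so that the structure constants agree after applying $\phi_0$. On the $\LL$-side multiplicativity of $\phi_0$ is exactly Lemma \ref{L:phi0ring}, and the coproduct is $\phi_0\circ\Delta$ by definition; the only thing to check is that $\phi_0\otimes\phi_0$ descends to a well-defined map $\LL\otimes_{R(T)}\LL\to\LL_0\otimes_\Z\LL_0$, which holds because $\phi_0(q)\in\Z$ is central, so the defining relations $q\,a\otimes b=a\otimes q\,b$ are respected. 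On the $K$-theory side the compatibility is built into the definition of the non-equivariant Hopf structures as continuous $\Z$-duals, as recorded in the commutative diagram. Granting these, the computations above show $k_0$ is a bialgebra morphism, and hence a Hopf-isomorphism.
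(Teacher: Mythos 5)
Your argument is correct and is essentially the proof the paper intends: the paper states Theorem~\ref{T:k0} without proof, treating it as an immediate consequence of Theorem~\ref{T:kmap}, Theorem~\ref{T:L0}, Lemma~\ref{L:phi0ring}, the definition of the coproduct on $\LL_0$ as $\phi_0\circ\Delta$, and the commutative diagram relating equivariant and non-equivariant specialization. Your write-up also correctly isolates the one non-formal input --- that the non-equivariant Hopf structure constants of $K_*(\Gr_G)$ are the $\phi_0$-specializations of the equivariant ones --- which the paper likewise takes as built into the definition of $K_*(\Gr_G)$ as the continuous $\Z$-dual and the topological description of the Hopf structure.
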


\section{Grothendieck polynomials for the affine Grassmannian}
\label{S:G} In this section we specialize to affine type
$A_{n-1}^{(1)}$ and $G=SL_n(\C)$. We first introduce elements
$\k_i\in \LL_0$ which, under a Hopf algebra isomorphism $\LL_0 \cong
\Lambda_{(n)}:=\Z[h_1,\ldots,h_{n-1}]$ between the $K$-affine
Fomin-Stanley algebra and a subspace of symmetric functions,
correspond to the homogeneous symmetric functions $h_i$. For $w\in
\Wz$, the image $\KKS_w$ of $\phi_0(k_w)$ in $\Lambda_{(n)}$, is the
$K$-theoretic $k$-Schur function $\KKS_w$ which contains the
$k$-Schur function~\cite{LLM,LM2} as highest degree term. The
symmetric functions $\KKS_w$ are related to the affine stable
Grothendieck polynomials $\{G_w\mid w\in \Wz\}$ of
\cite{Lam:affStan} by duality.

\subsection{Cyclically decreasing permutations, and the elements $\k_i$}
For $G=SL_n$, we have $I=\{1,2,\dotsc,n-1\}$ and $I_\af=\{0\}\cup
I$. For $i\in I$ we wish to compute the elements
$\phi_0(k_{\sigma_i})\in\LL_0$ where $\sigma_i=r_{i-1}r_{i-2}\dotsm
r_1r_0\in W_\af$.

A {\it cyclically decreasing} element $w\in W_\af$ is one that has a
reduced decomposition $w=r_{i_1} r_{i_2} \dotsm r_{i_N}$ such that
the indices $i_1,\dotsc,i_N\in I_\af$ are all distinct, and a
reflection $r_i$ never occurs somewhere to the left of a reflection
$r_{i+1}$ where $I_\af$ is identified with $\Z/n\Z$ (so indices are
computed mod $n$). One may show that $w$ is cyclically decreasing if
and only if all of its reduced decompositions have the above
property.  Since no noncommuting braid relations can occur, all the
reduced words of $w$ also have the same indices $i_1,\dotsc,i_N$.

For $i\in I$ let $\k_i\in \K_0$ be defined by
\begin{equation} \label{E:ncgdef}
\k_i = \sum_w T_w
\end{equation}
where $w$ runs over the cyclically decreasing elements of $W_\af$ of
length $i$.  We set $\k_0 = 1$.  These elements were considered in
\cite{Lam:affStan}.

We define coordinates for the weight lattice $P$ of
$\mathfrak{sl}_n$. Let $P\subset \Z^n=\bigoplus_{i=1}^n \Z e_i$ with
fundamental weights $\omega_i = e_1+e_2+\dotsm+e_i$ and $\al_i =
e_i-e_{i+1}$ for $i\in I$.  For a subset $J \subset \{1,\ldots,n\}$,
let us write $e_J = \sum_{i \in J} e_i \in P$ for the $01$-vector
with 1's in positions corresponding to $J$.  The $e_J$ with $|J| =
k$ form the set of weights for the $k$-th fundamental representation
of $SL_n(\C)$ with highest weight $\omega_k$, which is
multiplicity-free.  We have $r_i \cdot e_J = e_{r_i \cdot J}$ where
indices are taken mod $n$.

\begin{lem} \label{lem:T}
We have
$$
T_i \cdot e^{e_J} = \begin{cases} 0 & \mbox{both $i, i+1 \in J$ or both $i, i+1 \notin J$,} \\
e^{e_{r_i \cdot J}} & \mbox{$i \in J$ and $i+1 \notin J$,} \\
-e^{e_J} & \mbox{$i \notin J$ and $i+1 \in J$.}
\end{cases}
$$
\end{lem}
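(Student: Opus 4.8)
The plan is to evaluate $T_i\cdot e^{e_J}$ directly from its definition \eqref{E:Tdef}, namely $T_i = (1-e^{\al_i})^{-1}(r_i-1)$, using the two facts recorded just above the statement: $r_i\cdot e_J = e_{r_i\cdot J}$ (so that $r_i\cdot e^{e_J}=e^{e_{r_i\cdot J}}$), and $\al_i = e_i-e_{i+1}$. Thus
\begin{equation*}
  T_i\cdot e^{e_J} = (1-e^{\al_i})^{-1}\bigl(e^{e_{r_i\cdot J}}-e^{e_J}\bigr),
\end{equation*}
and the entire problem reduces to computing how the transposition $r_i$ of the coordinates $i$ and $i+1$ (indices mod $n$) moves the $01$-vector $e_J$. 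I would organize the argument by the four membership cases of $i$ and $i+1$ in $J$.

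First I would handle the two cases in which $i,i+1$ are both in $J$ or both outside $J$. In either situation $r_i$ fixes the set $J$, so $e_{r_i\cdot J}=e_J$, the bracket above vanishes, and $T_i\cdot e^{e_J}=0$, as claimed.

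For the mixed cases I would compute $e_{r_i\cdot J}-e_J$ explicitly. If $i\in J$ and $i+1\notin J$, then $r_i\cdot J=(J\setminus\{i\})\cup\{i+1\}$, and hence $e_{r_i\cdot J}=e_J-e_i+e_{i+1}=e_J-\al_i$; substituting and factoring $e^{-\al_i}-1 = e^{-\al_i}(1-e^{\al_i})$ cancels the denominator and leaves $T_i\cdot e^{e_J}=e^{e_J}e^{-\al_i}=e^{e_{r_i\cdot J}}$. Symmetrically, if $i\notin J$ and $i+1\in J$, then $e_{r_i\cdot J}=e_J+\al_i$, and factoring $e^{\al_i}-1=-(1-e^{\al_i})$ gives $T_i\cdot e^{e_J}=-e^{e_J}$. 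Collecting the three outcomes yields exactly the asserted piecewise formula.

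The computation is routine once the case split is fixed; the only place I expect slips is the sign and power bookkeeping in the two mixed cases. Since $\ip{\al_i^\vee}{e_J}=\pm1$ there, it is cleanest to perform the cancellation $(1-e^{\al_i})^{-1}(e^{\mp\al_i}-1)$ by hand, as above, rather than reading off the general expansion in the proof of Lemma~\ref{L:0Heckeacts}. As a consistency check one can verify the relation $T_i^2=-T_i$ from \eqref{E:Tbraid}: applying $T_i$ to the output $e^{e_{r_i\cdot J}}$ of the second case lands in the third case, since the roles of $i$ and $i+1$ are now reversed, returning $-e^{e_{r_i\cdot J}}=-T_i\cdot e^{e_J}$, and similarly for the remaining cases.
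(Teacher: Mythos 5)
Your computation is correct: the paper states this lemma without proof, treating it as the routine verification you carry out, namely evaluating $T_i\cdot e^{e_J}=(1-e^{\al_i})^{-1}(e^{e_{r_i\cdot J}}-e^{e_J})$ and using $e_{r_i\cdot J}=e_J\mp\al_i$ in the two mixed cases. Your sign bookkeeping and the $T_i^2=-T_i$ consistency check are both sound, and your choice to do the cancellation by hand rather than invoke the displayed formula in the proof of Lemma~\ref{L:0Heckeacts} is the right one.
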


Let $J, K$ be disjoint subsets of $\Z/n\Z$ such that $J \cup K \neq \Z/n\Z$.  We write $S_{J,K} \cdot e^\lambda$ for
the action of $\{r_j \mid j \in J\}$ and $\{T_k \mid k \in K\}$ on $e^\lambda$, where the operators act in the cyclically decreasing order (for example, $r_1$ would act before $T_2$).

\begin{lem} \label{L:SJK} Let $J, K$ be as above.
\begin{enumerate}
\item
If $|[0,k-1] \cap K| \geq 2$ then $S_{J,K} \cdot e^{\omega_k} = 0$.
\item
Suppose $|[0,k-1] \cap K| = 1$.  Let $a \in [0,k-1] \cap K$.  Then $S_{J,K} \cdot e^{\omega_k} = 0$
unless $[0,a-1] \subset J$.
\item
Suppose $|[0,k-1] \cap K| = 1$.  Then $S_{J,K} \cdot e^{\omega_k} = 0$ if $[k,-1] \subset (J \cup K)$.
\item
Suppose that $S_{J,K} \cdot e^{\omega_k} \neq 0$ and $[k,-1] \cap K \neq \emptyset$.
For each $a \in ([k,-1] \cap K)$, we have $[k,a] \subset (J \cup K)$.
\item
Suppose that $S_{J,K} \cdot e^{\omega_k} \neq 0$ and $[k,-1] \subset (J \cup K)$.  Then
$[0,k-1]\cap K = \emptyset$.
\end{enumerate}
\end{lem}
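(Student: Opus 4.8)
The plan is to reduce the computation of $S_{J,K}\cdot e^{\omega_k}$ to a combinatorial dynamics on $01$-sequences on $\Z/n\Z$ and to read off vanishing from a single ``sweep.'' Regard $\omega_k=e_{\{1,\dots,k\}}$ as the $01$-vector with $1$'s exactly at sites $1,\dots,k$. By Lemma~\ref{lem:T} each $T_i$ applied to a signed monomial $\pm e^{e_J}$ returns a single signed monomial or $0$, and each $r_i$ returns $\pm e^{e_{r_iJ}}$; hence $S_{J,K}\cdot e^{\omega_k}$ is always $0$ or $\pm e^{\mu}$, and since the claims concern only (non)vanishing I may ignore all signs. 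The content of Lemma~\ref{lem:T} is that, relative to the current configuration, the operator of index $i$ acts at the edge between sites $i$ and $i+1$: an $r_i$ always survives (it either slides a $1$ across the edge or does nothing), while a $T_i$ \emph{kills the term} precisely when sites $i$ and $i+1$ are both occupied or both empty, i.e.\ when edge $i$ is not a ``domain wall.'' Thus the result is nonzero if and only if every $T_k$ in $K$ sits at a domain wall at the instant it acts.

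First I would pin down the order of application. Since $J\cup K\neq\Z/n\Z$ there is at least one gap, and the cyclically decreasing word decomposes into maximal cyclic runs of $J\cup K$ which commute with one another (indices differing by at least $2$ commute, even a mix of $r$'s and $T$'s, using $r_jr_i=r_ir_j$ and $r_j\cdot e^{\al_i}=e^{\al_i}$ for $|i-j|\ge 2$). Within a run the word is decreasing, so the operators are applied in \emph{increasing} index order, beginning at the first index after a gap. I track the two domain walls of $\omega_k$, at edge $0$ (empty-then-full, the left boundary) and at edge $k$ (full-then-empty, the right boundary). Two elementary facts drive everything: (a) consecutive present operators applied in increasing order from index $k$ upward drag the right-boundary $1$ rightward, since both $r_k$ and $T_k$ move a $1$ from site $k$ to site $k+1$; and (b) consecutive $r$'s applied in increasing order from index $0$ upward drag the ``hole'' at site $0$ rightward, but a $T$ applied to this hole only produces a sign and \emph{halts} it (site $i$ empty, site $i+1$ full gives $-1$ with no motion).

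These facts give (1), (2), (4) directly. For (4): a $T_a$ with $a\ge k$ survives only if a $1$ has reached edge $a$; by (a) this needs the unbroken chain $[k,a]\subset J\cup K$, and if some $g\in[k,a)$ is a gap then, because the run containing $a$ begins just after a gap $\ge g$ and sweeps upward over sites that are still empty, $T_a$ meets two empty sites and dies. For (2): with a unique $T$ in $[0,k-1]$, at $a$, survival needs the hole to reach edge $a$; by (b) only $r$'s carry it, so all of $[0,a-1]$ must be present and lie in $J$. For (1): a first $T$ in $[0,k-1]$ halts the hole by (b), so the single hole can feed at most one $T$, and any further $T$ in $[0,k-1]$ meets a both-occupied interior edge of the block and kills the term.

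The wrap-around cases (3) and (5) are the crux and the place I expect the real work. When $[k,-1]\subset J\cup K$ there is no gap to the right of the block, so by (a) the right-boundary $1$ is carried the whole way around, through $n-1$ and into site $0$, filling it at the step indexed $n-1$, that is \emph{before} the sweep reaches the left region; this destroys the wall at edge $0$ and starves the unique left $T$, which then dies---this is (3). Finally (5) is the combination: if $S_{J,K}\cdot e^{\omega_k}\neq 0$ and $[k,-1]\subset J\cup K$, then (1) forbids two or more $T$'s in $[0,k-1]$ and (3) forbids exactly one, so $[0,k-1]\cap K=\emptyset$. I expect the main obstacle to be the careful bookkeeping of the cyclic wrap: one must verify that, at the instant each $T$ acts, no $1$ or hole has arrived along an unexpected path, which is exactly controlled by the ``increasing order, starting after a gap'' structure of each run. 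I would isolate this as a single lemma describing the configuration produced by one cyclically decreasing sweep, and then deduce (1)--(5) as corollaries.
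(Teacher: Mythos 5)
The paper states Lemma~\ref{L:SJK} with no proof at all (as it does Lemma~\ref{lem:T}), so there is no argument of the authors' to measure yours against; judged on its own terms, your plan is correct and supplies exactly the verification the paper leaves to the reader. The reduction to a single-particle/hole dynamics is right: by Lemma~\ref{lem:T} every intermediate expression is $0$ or $\pm e^{e_J}$, an $r_i$ never kills, and a $T_i$ kills precisely at a non-wall edge, so nonvanishing is equivalent to every $T$ in $K$ meeting a domain wall when it fires. Your two structural observations do all the work: maximal runs of $J\cup K$ are separated by gaps, hence act on \emph{disjoint} sets of sites and genuinely commute (this, more than the braid relations, is what localizes the analysis to one run), and within a run the operators fire in increasing index order starting just past a gap. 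From this, (1), (2), (4) follow as you say: site $j+1$ is untouched before the operator of index $j$ fires, so a $T_a$ with $a\in[1,k-1]$ survives only if the hole at site $0$ has been carried to site $a$ by an unbroken string of $r$'s (giving (2), and (1) since the first $T$ freezes the hole), while a $T_a$ with $a\in[k,-1]$ survives only if the particle at site $k$ has been carried to site $a$ by an unbroken string of operators (giving (4); if there is a gap in $[k,a)$, the run containing $a$ starts above the gap and sweeps over initially empty sites, so $T_a$ sees two empty sites). For (3) the wrap-around computation is the one place needing care, and your account is accurate: with $[k,-1]\subset J\cup K$ and (by (2)) $[0,a-1]\subset J$, the whole of $[k,a]$ lies in one run whose increasing sweep first drags the particle from site $k$ to site $n$, after which edge $0$ is no longer a wall, the hole never launches, and $T_a$ meets two occupied sites; (5) is then immediate from (1) and (3). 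The only points I would insist you spell out in the promised ``sweep lemma'' are (i) that site $k$ cannot be vacated before the operator of index $k$ fires unless the term has already died or will die, and (ii) the degenerate subcases $a=0$ and $0\notin J\cup K$ in (2) and (3) — both of which your framework handles, but which deserve a sentence each.
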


The following Lemma is general (not just type $A_{n-1}^{(1)}$).

\begin{lem}\label{L:L0add}
Suppose $a \in \K_0$.  If $\lambda, \mu \in P$ are such that
$\phi_0(ae^\lambda) = a$ and $\phi_0(ae^\mu) = a$, then
$\phi_0(ae^{\lambda+\mu}) = a$.
\end{lem}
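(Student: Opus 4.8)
The plan is to reduce the additivity statement to a single structural identity describing how $\phi_0$ interacts with right multiplication by a scalar from $R(T)$, and then to combine the two hypotheses through the relation $e^{\la+\mu}=e^\la e^\mu$ in $R(T)$. A naive attempt — expanding $e^{\la+\mu}-1=(e^\la-1)(e^\mu-1)+(e^\la-1)+(e^\mu-1)$ and applying $\phi_0$ — reduces the claim to showing that the mixed term $\phi_0\bigl(a(e^\la-1)(e^\mu-1)\bigr)$ vanishes, and this is not transparent (it genuinely fails for long $T_w$ if one drops the hypotheses). So I would instead route the argument through the following cleaner identity.

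The crux, which I would isolate first, is that for every $b\in\K$ and every $q\in R(T)$ one has
\begin{equation*}
  \phi_0(b\,q)=\phi_0\bigl(\phi_0(b)\,q\bigr).
\end{equation*}
To prove it, write $b=\sum_{v\in W_\af}b_v\,T_v$ with $b_v\in R(T)$, and for each $v$ expand $T_v\,q=\sum_u q_{v,u}\,T_u$ in the basis $\{T_u\}$ using the commutation relation \eqref{E:Tcomm}, so that all $q_{v,u}\in R(T)$. Since left multiplication by the scalar $b_v\in R(T)$ only rescales coefficients, the coefficient of $T_u$ in $b\,q$ is $\sum_v b_v\,q_{v,u}$, and applying $\phi_0$ componentwise gives $\phi_0(b\,q)=\sum_{u}\bigl(\sum_v\phi_0(b_v\,q_{v,u})\bigr)T_u$. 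The essential point is that $\phi_0\colon R(T)\to\Z$ is the augmentation $e^\la\mapsto1$ and hence a ring homomorphism, so $\phi_0(b_v\,q_{v,u})=\phi_0(b_v)\,\phi_0(q_{v,u})$; collecting terms yields exactly $\phi_0(\phi_0(b)\,q)$, since $\phi_0(b)=\sum_v\phi_0(b_v)\,T_v$ and the same expansion of $T_v\,q$ is used on both sides.

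Granting this identity, the lemma follows in one line. Using $e^{\la+\mu}=e^\la e^\mu$ and associativity in $\K$,
\begin{equation*}
  \phi_0\bigl(a\,e^{\la+\mu}\bigr)=\phi_0\bigl((a\,e^\la)\,e^\mu\bigr)
  =\phi_0\bigl(\phi_0(a\,e^\la)\,e^\mu\bigr)
  =\phi_0\bigl(a\,e^\mu\bigr)=a,
\end{equation*}
where the second equality is the key identity applied to $b=a\,e^\la$ and $q=e^\mu$, and the last two equalities invoke the hypotheses $\phi_0(a\,e^\la)=a$ and $\phi_0(a\,e^\mu)=a$ in turn.

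The main obstacle is conceptual rather than computational: one must resist treating $\phi_0$ as a ring homomorphism on $\K$, which it is not (multiplicativity is available only on special elements, as in Lemma~\ref{L:phi0ring}). What rescues the argument is that the key identity requires only right multiplication by a scalar $q\in R(T)$, an operation acting separately within each $T_v$-component, so that the multiplicativity of $\phi_0$ is used solely on $R(T)$, where it genuinely holds. The one point I would check carefully is that \eqref{E:Tcomm} keeps every intermediate coefficient $q_{v,u}$ inside $R(T)$, which is automatic because we stay within $\K=\bigoplus_{w\in W_\af}R(T)\,T_w$ throughout.
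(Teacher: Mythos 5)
Your proposal is correct and rests on the same mechanism as the paper's proof: the paper expands $e^{\la+\mu}-1=(e^\la-1)e^\mu+(e^\mu-1)$ and kills the first term by exactly the computation you package as the identity $\phi_0(bq)=\phi_0(\phi_0(b)\,q)$, namely expanding $T_w e^\mu$ in the $T_v$-basis and using that $\phi_0\colon R(T)\to\Z$ is a ring homomorphism. Your formulation via $(ae^\la)e^\mu$ is a clean repackaging of the same argument, not a different route.
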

\begin{proof}
Write $a (e^\la-1) = \sum_w a_w T_w$ and $T_w e^\mu = \sum_v
b_{w}^{v,\mu} T_v$ for $a_w,b_{w}^{v,\mu} \in R(T)$. We have
$\phi_0(a_w)=0$ for all $w$ and $\phi_0(a(e^\mu-1))=0$. Then
\begin{equation*}
\begin{split}
  \phi_0(a (e^{\la+\mu}-1)) &= \phi_0(a(e^\la-1)e^\mu)+\phi_0(a(e^\mu-1)) \\
  &= \phi_0(\sum_w a_w T_w e^\mu) \\
  &= \sum_v \phi_0(\sum_w a_w b_{w}^{v,\mu}) T_v = 0
\end{split}
\end{equation*}
since $\phi_0:R(T)\to \Z$ is a ring homomorphism.
\end{proof}

\begin{prop}\label{P:ki}
We have $\k_i \in \LL_0$.
\end{prop}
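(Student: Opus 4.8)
The plan is to verify the defining condition of $\LL_0$ directly: since $\phi_0(e^\lambda)=1$ and $\phi_0(\k_i)=\k_i$, membership $\k_i\in\LL_0$ amounts to $\phi_0(\k_i e^\lambda)=\k_i$ for every $\lambda\in P$. First I would cut down the range of $\lambda$. The set $S=\{\lambda\in P\mid \phi_0(\k_i e^\lambda)=\k_i\}$ contains $0$ and, by Lemma~\ref{L:L0add} applied with $a=\k_i$, is closed under addition. As $P=\bigoplus_{k\in I}\Z\omega_k$ is generated as a monoid by the fundamental weights $\omega_k$ together with their negatives, it suffices to prove $\pm\omega_k\in S$ for all $k\in I$. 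The case $-\omega_k$ runs parallel to the case $\omega_k$ (via the analysis of the contragredient representation, equivalently the substitution $e^\mu\mapsto e^{-\mu}$), so I will concentrate on $\lambda=\omega_k$.

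Next I would expand $\k_i e^{\omega_k}$ in the $R(T)$-basis $\{T_v\}$ of $\K$. For a fixed cyclically decreasing $w=r_{j_1}\cdots r_{j_i}$ one commutes $e^{\omega_k}$ leftward through $T_{j_i},\dots,T_{j_1}$ using \eqref{E:Tcomm}: at each letter the operator is either \emph{consumed} into the scalar (the $(T_j\cdot-)$ term) or \emph{survives} on the right while the scalar acquires a reflection (the $(r_j\cdot-)T_j$ term). Because the letters of $w$ are distinct, the surviving letters form, in their original order, the reduced word of a cyclically decreasing $v$ with $\Supp(v)\subset\Supp(w)$, and the accompanying scalar is exactly $S_{\Supp(v),\,\Supp(w)\setminus\Supp(v)}\cdot e^{\omega_k}$ in the notation preceding Lemma~\ref{L:SJK} (here $\Supp$ denotes the set of indices occurring in an element). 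Applying $\phi_0$ gives
\begin{equation*}
  \phi_0(\k_i e^{\omega_k}) = \sum_{v}\Bigl(\sum_{w}\phi_0\bigl(S_{\Supp(v),\,\Supp(w)\setminus\Supp(v)}\cdot e^{\omega_k}\bigr)\Bigr)\,T_v,
\end{equation*}
the inner sum running over cyclically decreasing $w$ of length $i$ containing $v$ as a subword. When $\ell(v)=i$ the only such $w$ is $v$ itself, the scalar is $r_{j_1}\cdots r_{j_i}\cdot e^{\omega_k}=e^{w\cdot\omega_k}$, and $\phi_0$ sends it to $1$; these terms reproduce precisely the coefficients of $\k_i$.

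It therefore remains to show that for every cyclically decreasing $v$ with $\ell(v)<i$ the inner sum vanishes under $\phi_0$, and this is the \emph{main obstacle}. Fixing $\Supp(v)=J_0$, the contributing $w$ are indexed by the added letter set $K=\Supp(w)\setminus J_0$, and by Lemma~\ref{lem:T} each operator $T_j$ or $r_j$ acts on a weight $e^{e_J}$ of the $k$-th fundamental representation by $0$, by $\pm e^{e_{r_j\cdot J}}$, or by $-e^{e_J}$, so that $\phi_0\bigl(S_{J_0,K}\cdot e^{\omega_k}\bigr)\in\{0,\pm1\}$. The claim is thus a signed count $\sum_K \phi_0(S_{J_0,K}\cdot e^{\omega_k})=0$. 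Here I would invoke Lemma~\ref{L:SJK}, whose cases pin down exactly which $(J_0,K)$ give a nonzero value, and then exhibit a sign-reversing pairing on the remaining admissible $K$ so that the surviving $\pm1$ contributions cancel. Organizing the sets $\Supp(w)\setminus\Supp(v)$ and matching the nonzero contributions of Lemma~\ref{L:SJK} against one another is the delicate combinatorial core; the reduction of the first paragraph and the top-degree computation of the second are routine by comparison. Once the cancellation is in place, $\pm\omega_k\in S$ for all $k$, hence $S=P$ and $\k_i\in\LL_0$.
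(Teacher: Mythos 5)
Your reduction and setup coincide exactly with the paper's: Lemma~\ref{L:L0add} reduces the problem to $\lambda=\pm\omega_k$, the commutation \eqref{E:Tcomm} expresses the coefficient of $T_J$ in $\phi_0(\k_i e^{\omega_k})$ as $\sum_K \phi_0(S_{J,K}\cdot e^{\omega_k})$ over the sets $K$ of consumed letters, and the length-$i$ terms visibly return $\k_i$. But the proposition is not proved, because the one step you defer --- ``exhibit a sign-reversing pairing on the remaining admissible $K$'' --- is precisely the substantive content of the result; everything before it is, as you say yourself, routine. There is no a priori reason such a pairing exists, and finding one is where all the work lies.

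Concretely, the paper's proof constructs the involution explicitly. By Lemma~\ref{L:SJK}(1) the good subsets (those with $S_{J,K}\cdot e^{\omega_k}\neq 0$) split as $S_0\sqcup S_1$ according to whether $|K\cap[0,k-1]|$ is $0$ or $1$, and the involution exchanges these two classes: for $K\in S_0$ one removes the maximal element of $K\cap[k,-1]$ and inserts the minimal $b\in[0,k-1]$ with $b\notin J$ and $[0,b-1]\subset J$; for $K\in S_1$ one removes the unique element of $K\cap[0,k-1]$ and inserts the minimal element of $[k,-1]\setminus(J\cup K)$ (which exists by Lemma~\ref{L:SJK}(3)). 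One must then verify, using Lemma~\ref{lem:T} and parts (2) and (4) of Lemma~\ref{L:SJK}, that the image is again good, that the two terms carry opposite signs, and that the map is an involution. Without this construction and these verifications your argument establishes only that the desired identity \emph{would follow} from a cancellation, not that the cancellation occurs. To complete the proof you need to supply the involution (or an equivalent mechanism for the vanishing of $\sum_K\phi_0(S_{J,K}\cdot e^{\omega_k})$ when $|J|<i$) in full.
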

\begin{proof}
By Lemma \ref{L:L0add}, it is enough to prove $\phi_0(\k_i
e^\lambda) = \k_i$ for $\lambda$ either a fundamental weight or
negative of a fundamental weight.  We deal with the case that
$\lambda = \omega_k$, as negative fundamental weights are similar.

When $\phi_0(\k_i (e^{\omega_k}-1))$ is expanded in the $T_w$ basis, only terms
involving cyclically decreasing $w$ occur with non-zero coefficient.
Fix $J$.  Let us show that $[T_J] \phi_0(\k_i (e^{\omega_k}-1)) =
0$, where $T_J$ is the product of $T_j$ with $j\in J$ in cyclically decreasing order, and
$[T_J]a$ denotes the coefficient of $T_J$ in $a \in \K_0$.
This is clear if $|J| = i$, by \eqref{E:Tcomm} and Lemma \ref{lem:T}.  So suppose $|J| < i$.  Then
\begin{equation}\label{E:signsum}
[T_J] \phi_0(\k_i (e^{\omega_k}-1)) = \sum_{K\,:\, |K| = i-|J| \;
\text{and} \; K \cap J = \emptyset} \phi_0(S_{J,K}
e^{\omega_k}).\end{equation} Let us call a subset $K$ in the above
summation {\it good}, if $S_{J,K} e^{\omega_k} \neq 0$.  Let us
define an involution $\iota$ on good subsets so that $\phi_0(S_{J,K}
e^{\omega_k}) = - \phi_0(S_{J,\iota(K)} e^{\omega_k})$.  By Lemma
\ref{L:SJK}(1), we may write the set of good subsets as the disjoint
union $S_0 \sqcup S_1$ where
$$
 S_0 = \{K \mid |K \cap [0,k-1]| = 0\} \quad \text{and} \quad S_1 = \{K \mid |K \cap [0,k-1]| = 1\}.
$$
The involution will satisfy $\iota(S_a) = S_{1-a}$.

Suppose that $K \in S_0$.  Then $K \cap [k,-1] \neq \emptyset$, and
we may set $a$ to be the maximal element of $K \cap [k,-1]$.  We let
$\iota(K) = K \backslash \{a\} \cup \{b\}$ where $b \in [0,k-1]$ is
minimal so that $j \notin J$ and $[0,j-1] \subset J$.  One checks
directly that $\iota(K)$ is also good.  Using Lemma \ref{lem:T}, one
sees that $\iota(K)$ and $K$ contribute different signs in
\eqref{E:signsum}.

Suppose that $K \in S_1$.  Let $a$ be the unique element in $K \cap
[0,k-1]$.  Using Lemma \ref{L:SJK}(3) we pick $b \in [k,-1]
\backslash (J \cup K)$ minimal in $[k,-1]$.  We set $\iota(K) = K
\backslash \{a\} \cup \{b\}$.  Again one checks that $\iota(K)$ is
good and that $K$ and $\iota(K)$ contribute different signs.

Finally, it follows from Lemma \ref{L:SJK}(2,4) that $\iota$ is an
involution.
\end{proof}

\begin{cor}\label{C:gi}
For $1\le i\le n-1$, $\k_i = \phi_0(k_{\sigma_i})$.
\end{cor}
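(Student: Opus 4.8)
The plan is to combine the uniqueness statement of Theorem~\ref{T:L0} with the membership $\k_i\in\LL_0$ already established in Proposition~\ref{P:ki}. By Theorem~\ref{T:L0}, the element $\phi_0(k_{\sigma_i})$ is characterized as the unique element of $\LL_0$ whose expansion in the $T_w$-basis has the shape $T_{\sigma_i}+\sum_{v\in W_\af\setminus\Wz} c_v\,T_v$ with $c_v\in\Z$; that is, coefficient $1$ on $T_{\sigma_i}$ and every other term indexed by a non-Grassmannian element. Thus it suffices to show that $\k_i$ has exactly this shape. Since $\k_i=\sum_w T_w$ with $w$ ranging over the cyclically decreasing elements of length $i$, and since $\sigma_i=r_{i-1}\cdots r_1 r_0$ is itself cyclically decreasing of length $i$, the coefficient of $T_{\sigma_i}$ in $\k_i$ is $1$. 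The remaining task is purely combinatorial: I must show that $\sigma_i$ is the only cyclically decreasing element of length $i$ lying in $\Wz$.

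For this I would first recall (following \cite{Lam:affStan}) that, for $1\le i\le n-1$, the cyclically decreasing elements of $W_\af$ of length $i$ are in bijection with the $i$-element subsets $J\subsetneq\Z/n\Z$: each such $J$ determines a unique cyclically decreasing product $w_J$ built from precisely the generators $\{r_j\mid j\in J\}$. Because $J$ is a proper subset, its cyclic \emph{runs} (maximal blocks of consecutive residues) are genuine intervals separated by gaps, generators from different runs commute, and within a single run $\{a,a+1,\dots,b\}$ the cyclically decreasing order is $r_b r_{b-1}\cdots r_a$. The key step is then to determine the right descents of $w_J$: I claim they are exactly the run-starts $\{r_a\mid a\in J,\ a-1\notin J\}$. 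Within a run the rightmost generator $r_a$ manifestly yields a descent, while any other generator $r_j$ of the run is blocked from sliding to the right end by its neighbor $r_{j-1}$ lying to its right, and generators in distinct runs commute and so do not interfere.

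Granting this description, the conclusion is immediate: an element of $W_\af$ lies in $\Wz=W_\af^I$ exactly when it has no right descent in $I=\{1,\dots,n-1\}$, i.e.\ when its only possible right descent is $r_0$. By the run-start description, $w_J\in\Wz$ forces the set of run-starts to be $\{0\}$, hence $J$ consists of a single run beginning at $0$, so $J=\{0,1,\dots,i-1\}$ and $w_J=\sigma_i$. Therefore every cyclically decreasing $w\neq\sigma_i$ of length $i$ is non-Grassmannian, $\k_i$ has the required form, and $\k_i=\phi_0(k_{\sigma_i})$ by the uniqueness in Theorem~\ref{T:L0}.

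I expect the main obstacle to be the descent computation in the second step, namely justifying carefully that the right descents of $w_J$ are precisely the run-starts. The subtle point is the cyclic nature of $W_\af$: one must verify that the gaps present when $J\neq\Z/n\Z$ genuinely decouple the runs, so that the affine generator $r_0$ is treated on the same footing as any other potential run-start, and that within a run no commutation or braid move can expose a generator other than the smallest. This can be made rigorous through a length computation of $\ell(w_J r_j)$ via the standard action on affine real roots, but it is the point at which the cyclic combinatorics must be pinned down.
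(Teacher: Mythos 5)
Your proposal follows exactly the paper's proof: membership $\k_i\in\LL_0$ from Proposition~\ref{P:ki} plus the uniqueness characterization in Theorem~\ref{T:L0}, once one knows $T_{\sigma_i}$ is the only Grassmannian term of $\k_i$. The paper simply asserts that last combinatorial fact ``by definition,'' whereas you supply a correct justification via the run-start description of right descents of cyclically decreasing elements; the argument is sound.
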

\begin{proof}
By Proposition \ref{P:ki}, $\k_i\in \LL_0$ and by definition has
unique Grassmannian term $T_{\sigma_i}$. The result follows by
Theorem \ref{T:L0}.
\end{proof}

Forgetting equivariance we obtain a $K$-homology Pieri rule for
$K_*(\Gr_{SL_n})$.  See \cite{LLMS, LM1} for the homological
version.
\begin{cor}\label{C:Pieri} For $1\le i\le n-1$,
$\phi_0(d^w_{\sigma_i,v})$ equals $(-1)^{\ell(w)-\ell(v)-i}$ times the number of
cyclically decreasing elements $x\in W_\af$ with $\ell(x)=i$ and $T_xT_v=\pm T_w$.
\end{cor}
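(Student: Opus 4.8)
The plan is to deduce the Pieri rule directly from the equivariant structure-constant formula \eqref{E:dbyg} by applying the map $\phi_0\colon R(T)\to\Z$. Specializing \eqref{E:dbyg} to $u=\sigma_i$ and applying $\phi_0$, which is $\Z$-linear and fixes the integer signs $(-1)^{\ell(w)-\ell(v)-\ell(x)}$, I would obtain
\begin{equation*}
  \phi_0(d^w_{\sigma_i,v}) = \sum_{\substack{x\in W_\af \\ T_xT_v=\pm T_w}} (-1)^{\ell(w)-\ell(v)-\ell(x)}\,\phi_0(k^x_{\sigma_i}),
\end{equation*}
where $k^x_{\sigma_i}\in R(T)$ is the coefficient of $T_x$ in $k_{\sigma_i}$ as in \eqref{E:jform}.

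The key step is to evaluate $\phi_0(k^x_{\sigma_i})$ for each $x\in W_\af$. For this I would invoke Corollary \ref{C:gi}, which gives $\phi_0(k_{\sigma_i})=\k_i$. On the one hand, the definition of $\phi_0$ on $\K$ yields $\phi_0(k_{\sigma_i})=\sum_{x\in W_\af}\phi_0(k^x_{\sigma_i})\,T_x$; on the other, by \eqref{E:ncgdef} we have $\k_i=\sum_x T_x$ with $x$ ranging over the cyclically decreasing elements of $W_\af$ of length $i$. Comparing the coefficients of $T_x$ in these two expressions—legitimate since the $T_x$ are $R(T)$-linearly independent in $\K$—shows that $\phi_0(k^x_{\sigma_i})=1$ when $x$ is cyclically decreasing with $\ell(x)=i$, and $\phi_0(k^x_{\sigma_i})=0$ otherwise.

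Substituting this back, only the terms with $x$ cyclically decreasing of length $i$ survive. For every such $x$ we have $\ell(x)=i$, so the sign $(-1)^{\ell(w)-\ell(v)-\ell(x)}=(-1)^{\ell(w)-\ell(v)-i}$ is constant over the surviving sum and can be pulled out; the remaining sum then simply counts the cyclically decreasing elements $x$ with $\ell(x)=i$ and $T_xT_v=\pm T_w$, which is exactly the asserted formula. I expect no genuine obstacle here: the argument is a direct specialization of \eqref{E:dbyg}, and the only point requiring a little care is the coefficient comparison justifying the evaluation of $\phi_0(k^x_{\sigma_i})$, which rests on the linear independence of $\{T_x\}$ over $R(T)$.
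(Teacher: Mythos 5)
Your proposal is correct and follows exactly the paper's argument: the paper's proof of this corollary is a one-line citation of \eqref{E:dbyg}, \eqref{E:ncgdef}, and Corollary \ref{C:gi}, and your write-up simply fills in the same specialization $u=\sigma_i$, the application of $\phi_0$, and the coefficient comparison $\phi_0(k^x_{\sigma_i})=\chi(x\text{ cyclically decreasing},\ \ell(x)=i)$. No differences worth noting.
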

\begin{proof} This follows from \eqref{E:dbyg}, \eqref{E:ncgdef}, and Corollary \ref{C:gi}.
\end{proof}

The $K$-cohomology Pieri rule is likely to be much more complicated;
see \cite{LLMS} for the cohomological version.

\subsection{Coproduct of the $\k_i$'s}
In this section we determine the coproduct $\phi_0(\Delta(\k_i))$
explicitly.


Let $J$ and $K$ be two subsets of $\Z/n\Z$ with total size less than
$n-1$.   We define a sequence of nonnegative integers $\cd_{J,K} =
(\cd(j): j \in \Z/n\Z)$ by
$$
\cd(j) = \max_{0 \leq t \leq n-1} \{ |J \cap [j-t,j)| + |K \cap [j-t,j)|-t\}.
$$
(The intervals $[j-t,j)$ are to be considered as cyclic intervals.)
 Then it is clear that $\cd(j+1) - \cd(j) \in \{-1,0,1\}$.

We note that $\cd(j)\ge0$ for all $j\in\Z/n\Z$.

\begin{lem}\label{L:cdprop}
Let $J$ and $K$ be two subsets of $\Z/n\Z$ with total size less than $n-1$.
\begin{enumerate}
\item There exists $j$ so that $\cd(j) = 0$ and $j \notin J \cup K$.
\item $\cd$ is the unique sequence such that $\cd(j+1) - \cd(j) = |j \cap J| + |j \cap K| - 1$, except when
$\cd(j) = 0$ and $j \notin (J \cup K)$.
\end{enumerate}
\end{lem}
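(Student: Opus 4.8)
The plan is to encode both statements in a single one-dimensional recurrence and then read them off. Set $g_i = \chi(i\in J)+\chi(i\in K)-1 \in \{-1,0,1\}$ for $i \in \Z/n\Z$, so that $j \notin J\cup K$ is exactly the condition $g_j=-1$, and
\begin{equation*}
  \cd(j) = \max_{0\le t\le n-1} \sum_{i=j-t}^{j-1} g_i
\end{equation*}
is the maximal backward (cyclic) partial sum of the $g_i$. The crucial structural input is that the total drift is negative: $\sum_{i\in\Z/n\Z} g_i = |J|+|K|-n < -1$. First I would use this to drop the bound $t\le n-1$: any window of length $t\ge n$ can be shortened by a multiple of $n$, each removed full cycle changing the sum by $\sum_i g_i<0$, so it is never optimal. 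Hence $\cd(j)=\max_{t\ge0}\sum_{i=j-t}^{j-1}g_i$ is an unconstrained maximal suffix sum, and peeling off the last term yields
\begin{equation*}
  \cd(j+1) = \max\bigl(0,\ \cd(j)+g_j\bigr), \qquad \cd(j)\ge 0.
\end{equation*}
This recurrence is precisely the content of part (2): when $\cd(j)+g_j\ge0$ it gives $\cd(j+1)-\cd(j)=g_j=\chi(j\in J)+\chi(j\in K)-1$, and since $\cd(j)\ge0$ and $g_j\ge-1$, the alternative $\cd(j)+g_j<0$ forces $\cd(j)=0$ and $g_j=-1$, i.e. the exceptional case, where instead $\cd(j+1)=0=\cd(j)$. (This also reproves $\cd(j+1)-\cd(j)\in\{-1,0,1\}$.)

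For part (1) I would sum the recurrence around the cycle. Writing $\max(0,x)=x+(-x)^+$ and telescoping,
\begin{equation*}
  0 = \sum_j\bigl(\cd(j+1)-\cd(j)\bigr) = \sum_j g_j + \sum_j\bigl(-(\cd(j)+g_j)\bigr)^+ ,
\end{equation*}
so that $\sum_j(-(\cd(j)+g_j))^+ = n-|J|-|K| \ge 2 > 0$. A summand is positive exactly when $\cd(j)+g_j<0$, which as above means $\cd(j)=0$ and $g_j=-1$; since the sum is strictly positive, at least one such index exists, producing a $j$ with $\cd(j)=0$ and $j\notin J\cup K$.

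For the uniqueness asserted in part (2) I would exploit that each transition map $F_j(x)=\max(0,x+g_j)$ is nondecreasing and $1$-Lipschitz. Given two nonnegative cyclic solutions $a,b$ of $x_{j+1}=F_j(x_j)$, the quantities $|a_j-b_j|$ are weakly decreasing along the cycle and hence constant, equal to some $M$; monotonicity of the $F_j$ makes the forward-closed set $\{j: a_j\ge b_j\}$ all of $\Z/n\Z$ (after possibly swapping $a,b$), so $a_j=b_j+M$ for all $j$ with $M\ge0$. Applying the telescoping identity above to $b$ supplies a \emph{strict} reset index $j_0$ with $b_{j_0}=0$, $g_{j_0}=-1$, and $b_{j_0+1}=0$; then $a_{j_0+1}=b_{j_0+1}+M=M$, whereas the recurrence gives $a_{j_0+1}=\max(0,\,a_{j_0}+g_{j_0})=\max(0,\,M-1)=M-1$ when $M\ge1$, a contradiction. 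Hence $M=0$ and $a=b$, and since $\cd$ solves the recurrence it is the unique such sequence.

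I expect the only genuine obstacle to be the reduction step that replaces the finite window $0\le t\le n-1$ by the unbounded maximal suffix sum, together with the recognition that uniqueness is really driven by the negative drift $\sum_i g_i<0$: the recurrence together with its exception clause is \emph{not} self-pinning without the nonnegativity constraint and a guaranteed reset point, and the total-drift computation of part (1) is exactly what produces the reset point needed to close the uniqueness argument. The remaining steps are formal.
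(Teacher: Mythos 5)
Your proof is correct and runs on the same engine as the paper's: both parts reduce to telescoping the increments of $\cd$ around the cycle and exploiting the strictly negative total drift $|J|+|K|-n\le -2$. The paper proves (1) as the contrapositive of exactly your identity, and its uniqueness argument (find a reset index $j'$ with $\cd'(j')=\cd'(j'+1)=0$ and $j'\notin J\cup K$, then propagate $\cd\ge\cd'$ and $\cd\le\cd'$ around the circle from $j'$) is an anchored version of your constant-gap argument; your Lipschitz-plus-monotonicity packaging is a clean alternative and is complete. Two things you do that the paper does not: you actually derive the one-step recurrence $\cd(j+1)=\max(0,\cd(j)+g_j)$ from the windowed-maximum definition (the paper dismisses this part as ``clear''), and you make explicit the class of sequences for which uniqueness is being claimed. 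The latter matters: read literally, part (2) places no constraint on $\cd(j+1)$ at an exceptional index, and then uniqueness fails (for $n=3$ and $J=K=\emptyset$ the sequence $1,0,0$ also qualifies). Both the paper's own proof (which silently assumes $\cd'(j'+1)=0$ at the reset index) and the downstream application in Lemma~\ref{L:compatiblebiject} (which adds the clause $\cd(j+1)=0$ to the exception) rely on the stronger reading, which is precisely your recurrence $x_{j+1}=F_j(x_j)$; so your formulation proves the statement in the form in which it is actually true and used.
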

\begin{proof}
We prove (1).  Suppose that no such $j$ exists.  Then $\cd(j+1) - \cd(j) = |j \cap J| + |j \cap K|-1$ for each $j$. But $0 = (\cd(j) - \cd(j-1)) + \cdots + (\cd(j+1)-\cd(j))$, so this is impossible because $|J| + |K| \leq n-1$.
Now we prove (2).  Everything except uniqueness is clear.  Let $\cd'$ be any sequence with these properties.  The same calculation as in (1) shows that there is $j'$ so that $\cd'(j') = 0 = \cd'(j'+1)$ and $j' \notin (J \cup K)$.  It follows from recursively calculating $\cd(j'+1)$ and $\cd'(j'+1)$, then $\cd(j'+2)$ and $\cd'(j'+2)$, and so on, that $\cd(j) \geq \cd'(j)$ for all $j$.  But a symmetric argument shows that $\cd(j) \leq \cd'(j)$ for all $j$.
\end{proof}

Define $t(J,K) = (t_i: i \in \Z/n\Z) \in \{L,R,B,E\}^{n}$ as follows (where $E = $ empty, $L = $ left, $R =$ right, and $B = $ both):
$$
t_j = \begin{cases} E & \mbox{if $\cd(j) = 0$ and $j \notin J \cup K$,} \\
L & \mbox{if $\cd(j) = 0$ and $j \in J \backslash K$,} \\
R & \mbox{if $j \notin J$ and ($\cd(j) > 0$ or $j \in K$),}\\
B & \mbox{otherwise.}
\end{cases}
$$
Say that two sequences $\cd$ and $t$ are {\it compatible} if (a)
$t_j \in \{E,L\}$ implies $\cd(j) = 0$, (b) $\cd(j+1) - \cd(j) = 0$
if $t_j = L$, (c) $\cd(j+1) - \cd(j) \in \{-1,0\}$ if $t_j = R$, (d)
$\cd(j+1) - \cd(j) \in \{0,1\}$ if $t_j = B$, and finally (e) $t_j =
E$ for some $j \in [0,n-1]$.  Define the support of $(\cd,t)$ to be
$\{j \mid t_j \neq E\}$.

\begin{lem}\label{L:compatiblebiject}
The map $(J,K) \mapsto (\cd_{J,K}, t(J,K))$ is a bijection between
pairs of subsets of $\Z/n\Z$ with total size $k < n$, and pairs of
compatible sequences with support of size $k$.
\end{lem}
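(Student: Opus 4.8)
The plan is to construct an explicit inverse $G$ to the forward map $F\colon (J,K)\mapsto(\cd_{J,K},t(J,K))$ and then check that $F$ and $G$ are mutually inverse. Given a compatible pair $(\cd,t)$ I would recover, for each $j\in\Z/n\Z$, the membership of $j$ in $J$ from $t_j$ alone and the membership of $j$ in $K$ from $t_j$ together with the step $\cd(j+1)-\cd(j)$. Concretely I set $J=\{\,j\mid t_j\in\{L,B\}\,\}$, and I put $j\in K$ exactly when $t_j=R$ and $\cd(j+1)=\cd(j)$, or $t_j=B$ and $\cd(j+1)=\cd(j)+1$ (and $j\notin K$ otherwise). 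This prescription is forced by the recursion of Lemma~\ref{L:cdprop}(2): away from the reset positions one has $\cd(j+1)-\cd(j)=\chi(j\in J)+\chi(j\in K)-1$, so that $\chi(j\in K)=\cd(j+1)-\cd(j)+1-\chi(j\in J)$, and the allowed step ranges in conditions (b)--(d) make this assignment unambiguous.

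First I would check that $F$ lands among compatible pairs with support of size $k:=|J|+|K|$. Conditions (a)--(d) follow by substituting the recursion $\cd(j+1)-\cd(j)=\chi(j\in J)+\chi(j\in K)-1$ of Lemma~\ref{L:cdprop}(2) into the four cases of the definition of $t_j$; for instance $t_j=B$ forces $j\in J$, so the step equals $\chi(j\in K)\in\{0,1\}$, giving (d), and the cases $L,R$ are analogous. Condition (e) is exactly Lemma~\ref{L:cdprop}(1). For the support size I would sum $\cd(j+1)-\cd(j)$ around the cycle, which telescopes to $0$; since the step equals $\chi(j\in J)+\chi(j\in K)-1$ except at the $E$ (reset) positions, where the true step is $0$ while $\chi(j\in J)+\chi(j\in K)-1=-1$, this gives $0=(|J|+|K|-n)+|\{\,j\mid t_j=E\,\}|$, whence $|\{\,j\mid t_j=E\,\}|=n-|J|-|K|$ and the support $\{\,j\mid t_j\ne E\,\}$ has size $k$.

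Then I would verify the two composites. The direction $G\circ F=\id$ is immediate: from the step pattern of $\cd_{J,K}$ one reads back $J$ and $K$ exactly as prescribed above. The substantive direction is $F\circ G=\id$, and this is where I expect the main difficulty. Given a compatible $(\cd,t)$ with reconstructed $(J,K)$, I must show $\cd_{J,K}=\cd$ and then $t(J,K)=t$. For the first I would invoke the uniqueness in Lemma~\ref{L:cdprop}(2): by construction the step identity $\cd(j+1)-\cd(j)=\chi(j\in J)+\chi(j\in K)-1$ holds at every non-$E$ position, and one checks these are not reset positions of $(J,K)$, while the $E$ positions are precisely the reset positions, where $\cd(j)=0$ and $j\notin J\cup K$. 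The delicate point is that at an $E$ position the recursion must genuinely reset, i.e. $\cd(j+1)=0$; here I would use $\cd\ge 0$ together with the compatibility constraint on the step leaving an $E$ position to pin the step to $0$, so that $\cd$ satisfies exactly the defining recursion of $\cd_{J,K}$ and the two sequences coincide. Once $\cd_{J,K}=\cd$ is in hand, the equality $t(J,K)=t$ is a routine case check against (a)--(d), completing the verification that $F$ and $G$ are mutually inverse bijections. The crux, and the step I would scrutinize most carefully, is thus the reconstruction of $\cd$ at the reset positions, since it is exactly there that the combinatorial conditions (a)--(e) must force the analytic behaviour of the maximum defining $\cd_{J,K}$.
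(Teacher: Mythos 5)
Your proposal is correct and follows essentially the same route as the paper: the same reconstruction recipe for $(J,K)$ from $(\cd,t)$ (namely $J=\{j \mid t_j\in\{L,B\}\}$ and the step-based rule for membership in $K$), the same appeal to the uniqueness statement of Lemma~\ref{L:cdprop}(2) to identify the reconstructed $\cd_{J,K}$ with $\cd$, and the same telescoping count matching the support size with $|J|+|K|$. If anything you are more explicit than the paper about the one delicate point --- that $\cd(j+1)=0$ must be forced at positions with $t_j=E$ --- which the stated compatibility conditions do not literally address and which both arguments need.
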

\begin{proof}
It is easy to see that $(\cd_{J,K}, t(J,K))$ is compatible with
support of the correct size.  We first check that the pair of
sequences determines $J ,K$.  By itself, $t(J,K)$ completely
determines $J$: we have $j \in J$ if and only if $t_j \in \{L,B\}$.
Also $j \in K$ if and only if either ($t_j = R$ and $\cd(j+1) =
\cd(j)$) or ($t_j = B$ and $\cd(j+1) = \cd(j) + 1$).  Thus
$(\cd_{J,K}, t(J,K))$ determine $J,K$.

Conversely, given compatible $(\cd,t)$, we recursively construct $J,
K$ by starting at some value $j$ such that $t_j = E$.  For such a
value we have $j \notin J \cup K$.  Then we decide whether $j+1 \in
J$ and/or $j +1 \in K$, and so on.  By construction, we obtain two
subsets $J,K$ such that $\cd(j+1) - \cd(j) = |j \cap J| + |j \cap
K|-1$, unless $\cd(j) = 0 = \cd(j+1)$ and $j \notin J \cup K$.  By
Lemma \ref{L:cdprop}, we have $\cd_{J,K} = \cd$.  Using
compatibility, one checks that the size of the support of $(\cd,t)$
is equal to $|J| + |K|$.  But then it follows that $t(J,K) = t$.
\end{proof}

\begin{prop}\label{P:kcoprod}
We have $\phi_0(\Delta(\k_r)) = \sum_{0 \leq j \leq r} \k_j \otimes
\k_{r-j}$.
\end{prop}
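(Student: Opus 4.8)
The plan is to prove the identity coefficient-by-coefficient in the basis $\{T_A\otimes T_C\}$ of $\K_0\otimes_\Z\K_0$. Since $\k_j=\sum_{|A|=j}T_A$ with $A$ ranging over proper subsets of $\Z/n\Z$ (equivalently, cyclically decreasing elements), the right-hand side expands as $\sum_{0\le j\le r}\k_j\otimes\k_{r-j}=\sum_{A,C}T_A\otimes T_C$, summed over all pairs of cyclically decreasing $A,C$ with $|A|+|C|=r$, each with coefficient $1$. So it suffices to show that the coefficient of $T_A\otimes T_C$ in $\phi_0(\Delta(\k_r))$ is $1$ when $A,C$ are cyclically decreasing with $|A|+|C|=r$, and $0$ otherwise. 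Note that $A$ and $C$ are allowed to overlap, so the diagonal terms $T_A\otimes T_A$ must be accounted for; these are the delicate ones.

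First I would expand $\Delta(\k_r)=\sum_{|J'|=r}\Delta(T_{J'})$, applying the comultiplication \eqref{E:DeltaT}, namely $\Delta(T_i)=1\otimes T_i+T_i\otimes1+(1-e^{\al_i})T_i\otimes T_i$, one reflection at a time via the module action of Proposition~\ref{P:coprodaction}. The key point, and the reason the overlapping terms appear at all, is that the scalars $(1-e^{\al_i})$ cannot be treated as inert: when a later generator is applied, \eqref{E:Tcomm} forces these scalars to be commuted past it, and under $\phi_0$ they do \emph{not} vanish. For instance $T_a\cdot(1-e^{\al_b})=e^{\al_b}\mapsto 1$ for adjacent $a,b$, which is precisely what contributes the diagonal $T_b\otimes T_b$ to $\phi_0(\Delta(T_aT_b))$. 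After applying $\phi_0$, each surviving monomial is recorded by specifying, for each node, whether its reflection is routed to the left tensor factor, the right factor, both, or neither, together with the auxiliary reflections produced by the scalar commutations; this is exactly the data of a pair of subsets $(J,K)$ of $\Z/n\Z$ acted on in cyclically decreasing order by the operators $S_{J,K}$ of Lemma~\ref{L:SJK}, the individual actions being computed by Lemma~\ref{lem:T}.

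I would then reindex the resulting signed sum using the bijection of Lemma~\ref{L:compatiblebiject} between pairs $(J,K)$ of total size $k$ and compatible sequences $(\cd,t)$ with support of size $k$. The labels $t_j\in\{L,R,B,E\}$ encode the left/right/both/empty routing, while $\cd$ records the ``carry'' generated by the scalar commutations, and Lemma~\ref{L:cdprop} supplies the existence and uniqueness of $\cd$ for each admissible routing. Fixing the target $T_A\otimes T_C$ and summing the $\phi_0$-images of the contributions over all compatible $(\cd,t)$ producing it, the alternating terms should telescope, leaving coefficient $1$ for each cyclically decreasing pair with $|A|+|C|=r$ and $0$ in all other cases. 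This is the analogue, at the level of the coproduct, of the sign-reversing involution used to prove $\k_i\in\LL_0$ in Proposition~\ref{P:ki}, with the $(\cd,t)$ bookkeeping playing the role of the good/bad subsets there.

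The main obstacle is exactly this final combinatorial collapse. Commuting the $(1-e^{\al_i})$ factors through the product generates a large cascade of terms surviving $\phi_0$, and it is not at all transparent that their signed total is the clean indicator $\chi(|A|+|C|=r)$; making the cancellations visible is the entire purpose of the compatible-sequence encoding. Thus the crux will be to verify that the contribution attached to each compatible sequence matches, term by term, the expansion of $\sum_{0\le j\le r}\k_j\otimes\k_{r-j}$ in the $T_A\otimes T_C$ basis, so that the carry statistic $\cd$ accounts precisely for the overlap between $A$ and $C$.
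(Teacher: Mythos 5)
Your overall strategy coincides with the paper's: expand $\Delta(\k_r)$ one factor of \eqref{E:DeltaT} at a time, track the scalars $(1-e^{\al_i})$ as they are commuted past later generators, and organize the surviving terms by the compatible-sequence data $(\cd,t)$ of Lemma~\ref{L:compatiblebiject}. But the proposal stops exactly where the proof has to happen, and the mechanism you predict for the ``final combinatorial collapse'' is not the one that works. You expect an alternating sum over all compatible $(\cd,t)$ producing a fixed target $T_A\otimes T_C$, to be telescoped or cancelled by a sign-reversing involution in the style of Proposition~\ref{P:ki}. In fact there is no residual signed cancellation at that stage: the point of the argument is that after one proves (by induction along the cyclically decreasing word) that every term surviving $\phi_0$ has the normal form $T_v\otimes\prod_{i\in S}(1-e^{\alpha_{i,i_k+1}})\,q\,T_w$ with $\phi_0(q)=1$, and that each application of a generator splits according to the identity $T_{i_k+1}\prod_{i\in S}(1-e^{\alpha_{i,i_k+1}})q=\prod_{i\in S}(1-e^{\alpha_{i,i_k+2}})qT_{i_k+1}+\prod_{i\in S'}(1-e^{\alpha_{i,i_k+2}})+q'$ with $\phi_0(q')=0$, the nonzero contributions to $\phi_0(\Delta(T_{J'}))$ are in bijection with compatible pairs $(\cd,t)$ supported on $J'$, and \emph{each such pair contributes exactly one monomial $T_J\otimes T_K$ with coefficient $+1$}, where $(J,K)$ is the preimage of $(\cd,t)$ under Lemma~\ref{L:compatiblebiject}. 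Everything else dies because either $\phi_0(q')=0$ or the factor $\prod_{i\in S}(1-e^{\alpha})$ never closes up to $1$ and is killed by $\phi_0$. Summing over cyclically decreasing $J'$ of size $r$ then gives $\sum_{|J|+|K|=r}T_J\otimes T_K=\sum_{j}\k_j\otimes\k_{r-j}$ by the bijection, with no telescoping needed.

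Concretely, then, the gap is twofold: (i) you have not established the inductive normal form for the partial products --- in particular that the scalar accumulated to the left of $T_w$ in the right tensor factor always has $\phi_0$-value $1$ and commutes with the generators yet to be applied, which is what guarantees each surviving term has coefficient exactly $+1$ after $\phi_0$; and (ii) you have not verified that the $(\cd,t)$-encoding is a bijection onto the surviving terms rather than merely a bookkeeping device for a signed sum. Without (i) the claim that the coefficient of $T_A\otimes T_C$ is the indicator $\chi(|A|+|C|=r)$ is unsupported, and the analogy with the involution of Proposition~\ref{P:ki} would lead you to look for cancellations that are not there.
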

\begin{proof}  Our proof follows the strategy in \cite[Section 7.2]{Lam:Schub}.
Let $J = \{i_1,\ldots, i_r\} \subset \Z/n\Z$.  We use \eqref{E:DeltaT}.
We calculate $\phi_0(\Delta(T_J))$ by expanding
$$
D = \phi_0(\Delta(T_{i_1}) \cdot \Delta(T_{i_2}) \cdots
\Delta(T_{i_\ell}))
$$
where $r_{i_1} \cdots r_{i_\ell}$ is a cyclically decreasing reduced
expression and $\cdot$ means the ``componentwise" product on
$\Delta(\K)$ of \eqref{E:Deltamult}.

Let us expand this product, by picking one of the three terms in
each parentheses of \eqref{E:DeltaT}.  As usual, we write $\alpha_{ij} = \alpha_i +
\cdots + \alpha_{j-1}$ for any cyclic interval $[i,j]$.  We first
note that
\begin{align*}
T_j(1 - e^{\alpha_{i,j}}) &= (1-e^{\alpha_{i,j+1}})T_j + \frac{1 - e^{\alpha_{i,j+1} }- (1 - e^{\alpha_{i,j}})}{1-e^{\alpha_j}} \\
&=(1-e^{\alpha_{i,j+1}})T_j  + e^{\alpha_{i,j}}.
\end{align*}
Because of the cyclically decreasing condition, whenever the above
calculation is encountered, the coefficient $e^{\alpha_{i,j}}$ will
always commute with any $T_i$ which occur to the left.

We shall now show by induction that the only terms in the expansion
of $\Delta(T_{i_k}) \cdots \Delta(T_{i_\ell})$ which contribute to
$D$ look like
\begin{equation}\label{E:form}
T_v \otimes \prod_{i \in S} (1 - e^{\alpha_{i,i_k+1}})q T_w
\end{equation}
where
\begin{enumerate}
\item[(i)]
either $S$ is empty and $\prod_{i \in S} (1 - e^{\alpha_{i,i_k+1}})
= 1$, or $S \subset \{i_k,i_{k+1},\ldots,i_\ell\}$;
\item[(ii)]
$q \in R(T)$ commutes with $r_{i_1}, \ldots, r_{i_{k-1}}$ and satisfies $\phi_0(q) = 1$.
\end{enumerate}
Such a term contributes nothing to $D$ if $|S| > 0$ and $i_k \notin
\{i_1,\ldots,i_{k-1}\}$.  To prove the inductive step we may assume
that $i_{k-1} = i_k+1$ and make the calculation (using
\eqref{E:deriv})
\begin{equation} \label{E:split}
T_{i_k+1} \prod_{i \in S} (1 - e^{\alpha_{i,i_k+1}})q  = \prod_{i
\in S} (1 - e^{\alpha_{i,i_k+2}})q T_{i_k+1} + \prod_{i \in S'}(1 -
e^{\alpha_{i,i_k+2}}) + q'
\end{equation}
where $S' \subset S$, and $q' \in R(T)$ commutes with $r_{i_1},
\ldots, r_{i_{k-2}}$ and satisfies $\phi_0(q')= 0$.  Clearly the
term involving $q'$ contributes nothing to $D$, and the first two
terms lead to expressions of the form \eqref{E:form}.

Given a choice of one of the three terms in each parentheses, we
define a sequence $t_j$ by (a) $t_j = E$ if $j \notin J$, (b) $t_j =
L$ if we pick $T_j \otimes 1$, (c) $t_j = R$ if we pick $1 \otimes
T_j$, and (d) $t_j = B$ if we pick $T_j \otimes
(1-e^{\alpha_j})T_j$.  Furthermore, let us make a choice of one of
the two terms in \eqref{E:split}, whenever we have such a choice.
At each step of our calculation we are looking at a term of the form
$\eqref{E:form}$.  We set $\cd(j)$ to be the size of $S$ in the term
just before $\Delta(T_j)$ is applied.  If $j \notin J$ then $\cd(j)
= 0$.  If this entire process produces a non-zero term of $D$, then
$(\cd,t)$ is a compatible sequence: the sequence $\cd$ ``wraps
around'' properly because eventually the coefficient $\prod_{i \in
S} (1 - e^{\alpha_{i,i_k+1}})$ has to equal 1, otherwise it will
vanish when $\phi_0$ is applied.  Conversely, a compatible pair
$(\cd,t)$ with support equal to $J$ always arises in this fashion.

By Lemma \ref{L:compatiblebiject}, there is a bijection between
compatible pairs $(\cd, t)$ with support of size $r$ and pairs of
subsets $J, K$ with total size equal to $r$.  It is easy to check
that the term in $D$ corresponding to $(\cd,t)$ is exactly $T_J
\otimes T_K$.
\end{proof}

\subsection{Symmetric function realizations}
Let $\La=\bigoplus_\la \Z m_\la$ be the ring of symmetric functions
over $\Z$, where $m_\la$ is the monomial symmetric function
\cite{Mac} and $\la = (\la_1 \geq \la_2 \geq \cdots \geq \la_\ell >
0)$ runs over all partitions. Let $\hat{\La}= \prod_\la \Z m_\la$ be
the graded completion of $\La$.  Let $|\la| = \lambda_1 + \cdots +
\la_\ell$ denote the size of a partition.

Let $\La^{(n)} = \La/\langle m_\la \mid \la_1 \geq n \rangle$ denote
the quotient by the ideal generated by monomial symmetric functions
labeled by partitions with first part greater than $n$. We write
$\hat{\La}^{(n)}$ for the graded completion of $\La^{(n)}$. Now let
$\La_{(n)} = \Z[h_1,h_2,\ldots,h_{n-1}] \subset \La$ denote the
subalgebra generated by the first $(n-1)$ homogeneous symmetric
functions.  Both $\La_{(n)}$ and $\hat{\La}^{(n)}$ are Hopf
algebras.

The {\it Hall inner product} $\ip{.}{.}: \La \times_\Z \La \to \Z$
extends by linearity with respect to infinite graded linear
combinations to a pairing $\ip{.}{.}:\La \times_\Z \hat{\La} \to
\Z$, which in turn descends to a pairing $\La_{(n)} \times_\Z
\La^{(n)} \to \Z$.  This pairing expresses $\La_{(n)}$ as the
continuous (Hopf)-dual of $\hat{\La}^{(n)}$ and expresses
$\hat{\La}^{(n)}$ as the graded completion of the graded (Hopf)-dual
of $\La_{(n)}$.  For short we will just say that $\La_{(n)}$ and
$\hat{\La}^{(n)}$ are dual.  The basis $\{h_\lambda \mid \lambda_1 <
n\} \subset \La_{(n)}$ and ``basis'' $\{m_\lambda \mid \lambda_1 <
n\} \subset \hat{\La}^{(n)}$ are dual under the Hall inner product.

\subsection{Affine stable Grothendieck polynomials}
The {\it affine stable Grothendieck polynomials}
$G_v(x_1,x_2,\ldots)$ for $v \in W_\af$ are the formal power series
defined by the identity \cite{Lam:affStan}
\begin{align}\label{E:Gdef}
  \prod_{i\ge 1} \sum_{j=0}^{n-1} (x_i^j \k_j) = \sum_{v\in W_\af} G_v(x_1,x_2,\ldots) T_v
\end{align}
where the $x_i$ are indeterminates that commute with the elements of
$\K_0$ and $\k_j\in \K_0$ are the elements defined in
\eqref{E:ncgdef}. Alternatively, for a composition $\alpha =
(\alpha_1,\alpha_2,\ldots, \alpha_\ell)$, the coefficient of
$x^\alpha = x_1^{\alpha_1} x_2^{\alpha_2} \cdots
x_\ell^{\alpha_\ell}$ in $G_v(x)$ is equal to the coefficient of
$T_v$ in $\k_{\alpha_1}\k_{\alpha_2} \cdots \k_{\alpha_\ell}$.  It
is clear that $G_v(x)$ is a sum of monomials such that no variable
occurs with degree more than $n-1$ in any monomial.  Examples of the
$G_v(x)$ are given in Appendix \ref{S:table G}.

The following result was proved directly in \cite[Theorem
44]{Lam:affStan}.

\begin{prop}
For each $v \in W_\af$, we have $G_v(x) \in \hat{\La}^{(n)}$.
\end{prop}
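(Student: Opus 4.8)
The plan is to verify directly that $G_v(x)$ is a symmetric formal power series in which no variable occurs to a power $\ge n$; these two properties together place $G_v(x)$ in $\hat{\La}^{(n)}$. The second property is immediate from the defining identity \eqref{E:Gdef}: each factor $\sum_{j=0}^{n-1} x_i^j \k_j$ only involves the powers $x_i^0, x_i^1, \dots, x_i^{n-1}$, so every monomial of $G_v(x)$ has each variable raised to an exponent at most $n-1$, as already noted before the statement. Granting symmetry, this bounds the largest part of any partition indexing a monomial symmetric function that appears: distinct $m_\la$ have disjoint monomial supports, so if some $m_\la$ occurred with nonzero coefficient and $\la_1 \ge n$, then $G_v(x)$ would contain a monomial with a variable raised to the power $\la_1 \ge n$, a contradiction. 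Hence only $m_\la$ with $\la_1 < n$ can appear, which is exactly the condition for membership in $\hat{\La}^{(n)}$.

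It remains to establish symmetry. Abbreviate $A(x) = \sum_{j=0}^{n-1} x^j \k_j \in \K_0[x]$, so that \eqref{E:Gdef} reads $\prod_{i\ge 1} A(x_i) = \sum_{v\in W_\af} G_v(x)\,T_v$; the infinite product is well defined as a formal power series because $\k_0 = 1$ forces each factor to have constant term $1$, so any fixed monomial $x^\alpha$ receives contributions from only finitely many factors. The symmetric group acting on the variables is generated by the adjacent transpositions $x_i \leftrightarrow x_{i+1}$, and such a transposition simply interchanges the two neighboring factors $A(x_i)$ and $A(x_{i+1})$ in the product. Thus it suffices to show that $A(x)$ and $A(y)$ commute in $\K_0[x,y]$, for then reordering these factors leaves $\prod_{i\ge 1} A(x_i)$, and hence every coefficient $G_v(x)$, unchanged.

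Finally, $A(x)A(y) = \sum_{j,k} x^j y^k\,\k_j\k_k$ and $A(y)A(x) = \sum_{j,k} x^j y^k\,\k_k\k_j$, so the two products agree precisely when $\k_j\k_k = \k_k\k_j$ for all $j,k$. This is where the substantive content lies, and it is supplied by results already in hand: by Proposition \ref{P:ki} each $\k_j$ lies in the $K$-affine Fomin-Stanley algebra $\LL_0$, and by Corollary \ref{C:comm} the ring $\LL_0$ is commutative. Hence the $\k_j$ commute, $A(x)$ and $A(y)$ commute, and $G_v(x)$ is symmetric. Given these inputs there is no serious obstacle; the only point requiring a little care is the bookkeeping that links the bound on variable degrees to the largest part of the indexing partitions, and the formal justification of the infinite product, both of which are routine. (In \cite[Theorem 44]{Lam:affStan} the commutativity of the $\k_j$ is instead obtained by a direct computation in $\K_0$; in our setting it comes for free from the commutativity of $\LL_0$.)
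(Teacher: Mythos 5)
Your proof is correct and follows essentially the same route as the paper: it invokes Proposition \ref{P:ki} and Corollary \ref{C:comm} to get commutativity of the $\k_j$, deduces symmetry of $G_v(x)$, and then uses the degree bound $\le n-1$ in each variable to conclude that only $m_\la$ with $\la_1<n$ appear. Your write-up merely fills in the routine details (adjacent transpositions swapping factors, well-definedness of the infinite product) that the paper leaves implicit.
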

\begin{proof}
By Proposition \ref{P:ki} and Corollary \ref{C:comm}, the $\k_i$
commute.  This implies that $G_v(x)$ is a symmetric function.  In addition,
all monomial symmetric functions $m_\la$ which occur with non-zero coefficient in $G_v(x)$
satisfy $\la_1 < n$, so that $G_v(x)$ can be naturally identified with its image in $\hat{\La}^{(n)}$.
\end{proof}

The next result follows from \eqref{E:Tbraid}.
\begin{lem}
The graded components of $G_v(x)$ are alternating.  That is, the
coefficient of $m_\lambda$ in $G_v(x)$ has sign equal to that of
$(-1)^{|\lambda| - \ell(v)}$.
\end{lem}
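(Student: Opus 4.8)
The plan is to track how the length grading interacts with sign under products of the generators $T_i$. First I would establish the key sign rule: any product $T_{i_1}T_{i_2}\cdots T_{i_N}$ of $N$ generators equals $(-1)^{N-\ell(u)}\,T_u$ for a single element $u\in W_\af$ (and in particular is never zero). This follows from \eqref{E:Tbraid} together with the multiplication rule $T_iT_w=T_{r_iw}$ when $r_iw>w$ and $T_iT_w=-T_w$ when $r_iw<w$. Reducing the product from the right, one builds up $T_u$ by absorbing generators one at a time: each absorption either raises the length of the accumulated element by one with no sign, or fixes its length while contributing a factor $-1$. Since there are $N-1$ absorption steps starting from a length-one element, and the final length is $\ell(u)$, exactly $N-\ell(u)$ of the steps are sign-producing, which gives the stated power of $-1$.

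Next I would feed this into the definition of $G_v$. By \eqref{E:ncgdef} each $\k_j$ is a sum of $T_w$ with $\ell(w)=j$, so writing each $T_w$ as a reduced product of $\ell(w)$ generators shows that $\k_{\alpha_1}\k_{\alpha_2}\cdots\k_{\alpha_\ell}$ is a $\Z$-linear combination of products of exactly $|\alpha|=\alpha_1+\cdots+\alpha_\ell$ generators. By the sign rule each such product equals $(-1)^{|\alpha|-\ell(u)}\,T_u$ for some $u\in W_\af$. Collecting the terms that produce a fixed $T_v$, every one of them carries the sign $(-1)^{|\alpha|-\ell(v)}$, so the coefficient of $T_v$ in $\k_{\alpha_1}\cdots\k_{\alpha_\ell}$ equals $(-1)^{|\alpha|-\ell(v)}$ times a nonnegative integer.

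Finally I would translate this back to symmetric functions. As recorded after \eqref{E:Gdef}, the coefficient of the monomial $x^\alpha$ in $G_v(x)$ is exactly the coefficient of $T_v$ in $\k_{\alpha_1}\cdots\k_{\alpha_\ell}$, hence has sign $(-1)^{|\alpha|-\ell(v)}$. Since $G_v(x)$ is symmetric, the coefficient of $m_\lambda$ equals the coefficient of the monomial $x^\lambda=x_1^{\lambda_1}\cdots x_\ell^{\lambda_\ell}$, and with $|\lambda|=|\alpha|$ this coefficient has sign $(-1)^{|\lambda|-\ell(v)}$, as required.

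I expect the only real content to be the sign rule of the first paragraph; once that bookkeeping is in place the remaining steps are formal. The point to be careful about is that all terms contributing to a given $T_v$ share one and the same sign, so that no cancellation can flip the sign of the coefficient. This is automatic here, because the sign $(-1)^{|\alpha|-\ell(v)}$ depends only on the total degree $|\alpha|$ and on $\ell(v)$, not on the particular factorization.
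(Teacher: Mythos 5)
Your argument is correct and is precisely the intended justification behind the paper's one-line remark that the lemma ``follows from \eqref{E:Tbraid}'': the relations $T_i^2=-T_i$ together with $T_iT_w=T_{r_iw}$ or $-T_w$ give the sign rule $T_{i_1}\dotsm T_{i_N}=(-1)^{N-\ell(u)}T_u$, and the rest is the bookkeeping you describe. No issues.
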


In \cite{Lam:affStan}, for each $v \in W_\af$, a homogeneous
symmetric function $F_v(x)$, the {\it affine Stanley symmetric
function}, is defined.  The next result follows by inspection.

\begin{lem}\label{L:Gdeg}
Let $v \in \Wz$.  The lowest degree component (degree $\ell(v)$) of
$G_v(x)$ is equal to $F_v(x)$.
\end{lem}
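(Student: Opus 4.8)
The plan is to read off the lowest-degree part of $G_v$ directly from its defining product \eqref{E:Gdef}, and to match the resulting count against the definition of the affine Stanley symmetric function $F_v$ from \cite{Lam:affStan}, which is governed by the \emph{same} cyclically decreasing data but in the affine nilCoxeter ring $\A$ (where $A_i^2 = 0$) rather than in the $0$-Hecke ring $\K_0$ (where $T_i^2 = -T_i$). By the remark following \eqref{E:Gdef}, the coefficient of $x^\alpha$ in $G_v(x)$, for a composition $\alpha = (\alpha_1,\dotsc,\alpha_\ell)$, is the coefficient of $T_v$ in $\k_{\alpha_1}\k_{\alpha_2}\dotsm\k_{\alpha_\ell}$; by \eqref{E:ncgdef} this is a sum over tuples $(w_1,\dotsc,w_\ell)$ of cyclically decreasing elements with $\ell(w_i) = \alpha_i$ of the coefficient of $T_v$ in $T_{w_1}\dotsm T_{w_\ell}$.

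The key step is the product rule in $\K_0$. Using $T_i T_w = T_{r_i w}$ when $r_i w > w$ and $T_i T_w = -T_w$ when $r_i w < w$, an induction on $\ell(u)$ shows that $T_u T_w = \pm T_x$ is a single signed basis element with $\ell(x) \le \ell(u) + \ell(w)$, with equality exactly when the product is length-additive, in which case $x = uw$ and the sign is $+1$. Iterating, each product $T_{w_1}\dotsm T_{w_\ell}$ equals $\pm T_x$ for a single $x$ with $\ell(x) \le |\alpha| := \alpha_1 + \dotsm + \alpha_\ell$, equality holding if and only if the factorization $w_1\dotsm w_\ell$ is length-additive (and then $x = w_1\dotsm w_\ell$ and the sign is $+1$).

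From this I would draw two conclusions. First, if $|\alpha| < \ell(v)$ then $\ell(x) < \ell(v)$ for every tuple, so no term contributes to the coefficient of $T_v$; hence $G_v$ has no homogeneous component of degree less than $\ell(v)$. Second, when $|\alpha| = \ell(v)$ a tuple contributes to the coefficient of $T_v$ precisely when $T_{w_1}\dotsm T_{w_\ell} = +T_v$ with the factorization length-additive, each such tuple contributing $+1$. Thus the degree-$\ell(v)$ component of $G_v$ has $x^\alpha$-coefficient equal to the number of length-additive factorizations $v = w_1 w_2 \dotsm w_\ell$ into cyclically decreasing elements $w_i$ of length $\alpha_i$. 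In the affine nilCoxeter ring $\A$ one has $A_{w_1}\dotsm A_{w_\ell} = A_v$ for a length-additive factorization and $0$ otherwise, and $F_v$ is defined in \cite{Lam:affStan} by the analogue of \eqref{E:Gdef} with each $\k_j$ replaced by the sum of $A_w$ over cyclically decreasing $w$ of length $j$; hence the coefficient of $x^\alpha$ in $F_v$ equals exactly the same count. Comparing, the degree-$\ell(v)$ component of $G_v$ equals $F_v$.

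I expect no serious obstacle beyond the bookkeeping in the product-rule step, which is in fact the whole content: the quadratic relation $T_i^2 = -T_i$ in $\K_0$, as opposed to $A_i^2 = 0$ in $\A$, forces every non-length-additive product to lower length \emph{strictly}, so such products feed only the higher-degree components of $G_v$ and never the bottom one, leaving exactly the length-additive cyclically decreasing factorizations that define $F_v$. The point requiring care is that the iterated left-multiplication really yields a single signed basis element at each stage and that the length drops strictly whenever a reflection is repeated. The same argument applies verbatim to any $v \in W_\af$, not only to $v \in \Wz$.
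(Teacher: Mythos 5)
Your argument is correct and is precisely the ``inspection'' the paper appeals to (the paper offers no written proof beyond that remark): one compares the defining identity \eqref{E:Gdef} with its nilCoxeter analogue for $F_v$, and observes that the relation $T_i^2=-T_i$ forces every non-length-additive product $T_{w_1}\dotsm T_{w_\ell}$ to land on a strictly shorter element, so the bottom degree $\ell(v)$ picks out exactly the length-additive cyclically decreasing factorizations counted by $F_v$. Your closing observation that the statement holds for all $v\in W_\af$ is also consistent with the paper, which invokes the lemma only for $v\in\Wz$.
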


(Readers not familiar with affine Stanley symmetric functions may
take this as the definition of $F_v(x)$.)  The set $\{F_v(x) \mid v \in
\Wz\}$ were called {\it affine Schur functions} in
\cite{Lam:affStan}, and are equal to the dual $k$-Schur functions of
\cite{LM2}.

\begin{prop}\label{P:Gbasis}
The set $\{G_v(x) \mid v \in \Wz\}$ is a ``basis'' of
$\hat{\La}^{(n)}$.  In other words, $\hat{\La}^{(n)} = \prod_{v \in
\Wz} \Z\, G_v(x)$.
\end{prop}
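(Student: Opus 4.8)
The plan is to establish this by a standard grading-triangularity argument, using as the key external input that the affine Schur functions $\{F_v(x)\mid v\in\Wz\}$ form a $\Z$-basis of $\La^{(n)}$. Since each $F_v(x)$ is homogeneous of degree $\ell(v)$, this basis is compatible with the grading: for each $d\ge 0$ the set $\{F_v(x)\mid v\in\Wz,\ \ell(v)=d\}$ is a $\Z$-basis of the degree-$d$ homogeneous component $\La^{(n)}_d$. This is exactly the statement that the dual $k$-Schur functions of \cite{LM2}, which equal the $F_v$ by the discussion preceding Lemma~\ref{L:Gdeg}, form a basis, and it is what makes the whole argument run.

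First I would check that $\sum_{v\in\Wz} c_v G_v(x)$ is a well-defined element of $\hat{\La}^{(n)}$ for arbitrary coefficients $c_v\in\Z$. This holds because $W_\af$, hence $\Wz$, contains only finitely many elements of any bounded length, while the degree-$d$ component of $G_v(x)$ vanishes unless $\ell(v)\le d$ by Lemma~\ref{L:Gdeg}. Thus in each fixed degree only finitely many summands contribute, so the sum converges in the graded completion, giving $\prod_{v\in\Wz}\Z\,G_v(x)\subseteq\hat{\La}^{(n)}$.

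Next I would prove linear independence in the topological sense. Suppose $\sum_{v} c_v G_v(x)=0$ with not all $c_v=0$, and let $d_0=\min\{\ell(v)\mid c_v\ne 0\}$. Extracting the degree-$d_0$ homogeneous component and using Lemma~\ref{L:Gdeg}, namely that the lowest-degree term of $G_v(x)$ is $F_v(x)$ in degree $\ell(v)$, the only contributions in degree $d_0$ come from the $v$ with $\ell(v)=d_0$, yielding $\sum_{\ell(v)=d_0} c_v F_v(x)=0$. Since the $F_v(x)$ with $\ell(v)=d_0$ are linearly independent in $\La^{(n)}_{d_0}$, all such $c_v$ vanish, contradicting the minimality of $d_0$.

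Finally, for spanning I would argue degree by degree. Given $f\in\hat{\La}^{(n)}$, set $g^{(0)}=f$ and, inductively, at stage $d$ expand the degree-$d$ homogeneous part of $g^{(d)}$ in the basis $\{F_v(x)\mid \ell(v)=d\}$ of $\La^{(n)}_d$ as $\sum_{\ell(v)=d} c_v F_v(x)$, then set $g^{(d+1)}=g^{(d)}-\sum_{\ell(v)=d} c_v G_v(x)$. Because each subtracted $G_v(x)$ agrees with $F_v(x)$ in degree $d$ and is supported in degrees $\ge \ell(v)=d$, the element $g^{(d+1)}$ has vanishing components in all degrees $\le d$; iterating produces coefficients $c_v$ with $f=\sum_{v}c_v G_v(x)$ in $\hat{\La}^{(n)}$. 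The main obstacle is not in this formal bookkeeping but in the cited input that $\{F_v\}$ is a basis of $\La^{(n)}$ graded by length; granting that together with Lemma~\ref{L:Gdeg}, the ``basis'' claim for $\{G_v\}$ follows by unitriangularity with respect to the degree filtration.
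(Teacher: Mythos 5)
Your argument is correct and is essentially the paper's own proof: the paper simply asserts that the claim ``follows from the fact that $\{F_v(x) \mid v \in \Wz\}$ is a basis of $\La^{(n)}$,'' leaving implicit exactly the unitriangularity-with-respect-to-degree bookkeeping (local finiteness, independence via lowest-degree terms, and degree-by-degree spanning) that you spell out using Lemma~\ref{L:Gdeg}. No discrepancy in substance.
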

\begin{proof}
This follows from the fact that $\{F_v(x) \mid v \in \Wz\}$ is a basis of
$\La^{(n)}$ \cite{Lam:affStan, LM2}.
\end{proof}

\begin{rem}\label{R:stable}
Suppose $w \in W_\af$ is such that some (equivalently, every)
reduced expression for $w$ does not involve all the simple
generators $r_0,r_1,\ldots,r_{n-1}$.  Then it follows from comparing
the definitions that the stable affine Grothendieck polynomial
$G_w(x)$ is equal to the usual stable Grothendieck polynomial
\cite{FK} labeled by $u \in W = S_n$, where $u$ is obtained from $w$
by cyclically rotating the indices until $r_0$ is not present.
\end{rem}

\begin{rem}\label{R:relabel}
There is a bijection between $v \in \Wz$ and $(n-1)$-bounded
partitions $\{\la \mid \la_1 < n\}$ \cite{LM1, Lam:affStan}.  The partition $\la$ associated
to $v$ can be obtained from the exponents of the dominant monomial term
$x_1^{\la_1}x_2^{\la_2} \cdots$ in $F_v(x_1,x_2,\ldots)$. We may
thus relabel $\{G_v(x) \mid v \in \Wz\}$ as $\{G_\lambda^{(k)}(x)
\mid \la_1 < n\}$, where $k = n-1$ (due to historical reasons).
A table with this correspondence is given in Appendix~\ref{S:grass-kbounded}.
Remark \ref{R:stable} implies that $G_\lambda^{(k)}(x) =
G_\lambda(x)$ whenever the largest hook length of $\lambda$ is less
than or equal to $k$, where $G_\lambda(x)$ is the stable
Grothendieck polynomial labeled by partitions, studied by Buch~\cite{B}.
\end{rem}

\subsection{$K$-theoretic $k$-Schur functions}
Since $\{G_v(x) \mid v \in \Wz\}$ is a ``basis'' of
$\hat{\La}^{(n)}$, there is a dual basis $\{g_v(x) \mid v \in \Wz\}$
of $\La_{(n)}$.  This definition of $g_v(x)$ was made previously by
Lam. We call the symmetric functions $g_v(x)$ {\it affine dual
stable Grothendieck polynomials}, or {$K$-theoretic $k$-Schur
functions}.  Examples of the $g_v(x)$ are given in Appendix~\ref{S:tables g}.

The proof of the following result is standard (see for example
\cite[Lemma 7.9.2]{EC2}).

\begin{lem}\label{L:Cauchy}
We have
$$
\sum_{\lambda_1 < n} h_\la(x)\,m_\la(y) = \sum_{v \in \Wz} g_v(x) \,
G_v(y).
$$
\end{lem}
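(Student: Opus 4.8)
The plan is to recognize both sides of the asserted identity as the \emph{reproducing kernel} of the perfect pairing $\ip{\cdot}{\cdot}\colon \La_{(n)}\times\hat{\La}^{(n)}\to\Z$, written out in two different pairs of dual bases; since such a kernel is unique, the two expressions must coincide. This is the symmetric-function content of \cite[Lemma 7.9.2]{EC2}, and the only work is to set up the completion carefully so that all the sums make sense.

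First I would check that both sides are well-defined elements of the tensor product $\La_{(n)}\mathbin{\hat\otimes}\hat{\La}^{(n)}$, completed with respect to the degree in the $y$-variables, and that in each fixed $y$-degree $e$ only finitely many terms contribute. For the left side this is immediate, since its degree-$e$ part is the finite sum $\sum_{|\la|=e,\,\la_1<n} h_\la(x)\,m_\la(y)$. For the right side, $G_v(y)$ has lowest degree $\ell(v)$ by Lemma~\ref{L:Gdeg}, so its degree-$e$ component vanishes unless $\ell(v)\le e$; by the bijection of Remark~\ref{R:relabel} there are only finitely many $v\in\Wz$ with $\ell(v)\le e$, whence the degree-$e$ part of $\sum_v g_v(x)G_v(y)$ is again a finite sum (with each $g_v(x)\in\La_{(n)}$). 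Thus both sides lie in the same completed tensor product. This finiteness of $\{v\in\Wz\mid\ell(v)\le e\}$ is the single genuinely delicate point; once it is in hand the rest is formal.

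Next I would isolate the reproducing property that pins the kernel down. Writing $\ip{\cdot}{\cdot}_y$ for the pairing applied to the $\hat{\La}^{(n)}$-factor, I claim an element $\Theta(x,y)\in\La_{(n)}\mathbin{\hat\otimes}\hat{\La}^{(n)}$ satisfies $\ip{f}{\Theta(x,\cdot)}_y=f(x)$ for every $f\in\La_{(n)}$ if and only if $\Theta=\sum_\la h_\la(x)m_\la(y)$. Indeed, expanding the $y$-factor uniquely as $\Theta=\sum_\la P_\la(x)m_\la(y)$ with $P_\la\in\La_{(n)}$, and using the duality $\ip{h_\mu}{m_\la}=\delta_{\mu\la}$ of the bases $\{h_\la\mid\la_1<n\}$ and $\{m_\la\mid\la_1<n\}$, pairing against $f=\sum_\mu c_\mu h_\mu$ gives $\sum_\la c_\la P_\la(x)$, which equals $f(x)=\sum_\mu c_\mu h_\mu(x)$ for all choices of the $c_\mu$ exactly when $P_\la=h_\la$. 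Since $f$ has bounded degree, the pairing $\ip{f}{\cdot}_y$ selects only finitely many $y$-degrees, so it extends continuously to the completion and the computation is valid term by term.

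Finally I would verify that the right side has the same reproducing property. Because $\{g_v\mid v\in\Wz\}$ and $\{G_v\mid v\in\Wz\}$ are dual bases by definition (Proposition~\ref{P:Gbasis}), for $f=\sum_w e_w g_w\in\La_{(n)}$ we obtain
\begin{equation*}
  \ip{f}{\textstyle\sum_v g_v(x)G_v(\cdot)}_y = \sum_v g_v(x)\,\ip{f}{G_v}_y = \sum_v e_v\,g_v(x) = f(x),
\end{equation*}
using $\ip{g_w}{G_v}=\delta_{wv}$. By the uniqueness established above, $\sum_v g_v(x)G_v(y)=\sum_{\la_1<n} h_\la(x)m_\la(y)$, which is precisely the claim.
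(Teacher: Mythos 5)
Your proof is correct and is exactly the standard dual-bases/Cauchy-kernel argument that the paper invokes by citing \cite[Lemma 7.9.2]{EC2} rather than writing out: both sides are the unique reproducing kernel for the pairing $\La_{(n)}\times\hat{\La}^{(n)}\to\Z$, expanded in the two pairs of dual bases $\{h_\la\},\{m_\la\}$ and $\{g_v\},\{G_v\}$. Your additional check that only finitely many $v\in\Wz$ contribute in each $y$-degree (via Lemma~\ref{L:Gdeg}) is the right point to verify for convergence, so nothing is missing.
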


Let $k = n-1$.  The {\it $k$-Schur functions} $\{s^{(k)}_v(x) \mid v
\in \Wz\}$ \cite{LLM,LM2} are the basis of $\La_{(n)}$ dual to
$\{F_v(x) \mid v \in \Wz\}$, and are usually labeled by the
$k$-bounded partitions $\{\la \mid \la_1 < n\}$.  (The $k$-Schur
functions originally defined in \cite{LLM} depend on an additional
parameter $t$, and setting $t = 1$ conjecturally gives the $k$-Schur
functions of \cite{LM2}, which are the ones used here.)

\begin{lem} \label{L:ghighestComp}
Let $v \in \Wz$.  Then the highest degree homogeneous component of
$g_v(x)$ is equal to the $k$-Schur function $s^{(k)}_v(x)$.
\end{lem}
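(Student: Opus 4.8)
The plan is to run the standard dual-basis triangularity argument. The key structural input is Lemma~\ref{L:Gdeg}, which says that the lowest-degree component of $G_v$ (in degree $\ell(v)$) is the affine Stanley symmetric function $F_v$; thus the ``bases'' $\{G_v\}$ and $\{F_v\}$ of $\hat{\La}^{(n)}$ are related by a change of basis that is unitriangular with respect to the length grading. Since $\{s^{(k)}_v\}$ is by definition the basis of $\La_{(n)}$ dual to $\{F_v\}$, while $\{g_v\}$ is dual to $\{G_v\}$, transporting the triangularity through the duality will pin down the top component of $g_v$.

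First I would expand each $G_v$ in the homogeneous basis $\{F_u \mid u \in \Wz\}$ of $\La^{(n)}$, writing $G_v = \sum_u c_{vu} F_u$; this is legitimate because each graded piece of $\La^{(n)}$ is finite-dimensional and the $F_u$ with $\ell(u)=d$ form a basis of its degree-$d$ part. By Lemma~\ref{L:Gdeg} we have $c_{vu}=0$ whenever $\ell(u)<\ell(v)$, and $c_{vu}=\delta_{vu}$ whenever $\ell(u)=\ell(v)$, so the matrix $C=(c_{vu})$ has the form $I+N$ with $N$ strictly degree-raising; hence $C$ is invertible with $C^{-1}=I+N'$, where $N'$ is again strictly degree-raising. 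Next I would expand $g_v=\sum_w d_{vw}\, s^{(k)}_w$ in the $k$-Schur basis of $\La_{(n)}$. Combining the two duality relations $\ip{g_v}{G_u}=\delta_{vu}$ and $\ip{s^{(k)}_w}{F_{w'}}=\delta_{ww'}$, and substituting $G_u=\sum_{w'} c_{uw'} F_{w'}$, yields $\sum_w d_{vw}\, c_{uw}=\delta_{vu}$, that is, $D C^{T}=I$ and so $D=(C^{-1})^{T}$. Reading off entries gives $d_{vw}=(C^{-1})_{wv}$, which vanishes unless $\ell(w)\le \ell(v)$ and equals $\delta_{vw}$ when $\ell(w)=\ell(v)$. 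Since $s^{(k)}_w$ is homogeneous of degree $\ell(w)$, this shows $g_v = s^{(k)}_v + (\text{terms of degree} < \ell(v))$, so the highest-degree homogeneous component of $g_v$ is $s^{(k)}_v$.

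The only thing to be careful about is the bookkeeping in the graded-completed setting: the $G_v$ genuinely live in the completion $\hat{\La}^{(n)}$ and may involve infinitely many $F_u$, so I must justify that $C$, $C^{-1}$, and the products $DC^T$ are well-defined. This is exactly where the grading saves us: $C$ is unitriangular for the length grading, which is bounded below, and each graded component of $\La^{(n)}$ and of $\La_{(n)}$ is finite-dimensional, so the Neumann series for $C^{-1}$ converges degreewise and every sum appearing in the matrix identities is finite in each fixed degree. Once this is in place the argument is purely formal linear algebra, and I expect no combinatorial obstacle beyond this convergence check.
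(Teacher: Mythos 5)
Your argument is correct, and it rests on exactly the same key input as the paper's proof, namely Lemma~\ref{L:Gdeg} (the transition matrix from $\{G_v\}$ to $\{F_u\}$ is unitriangular for the length grading) together with the two duality definitions. The difference is only in packaging: the paper runs an induction on $\ell(v)$, exhibiting the explicit candidate
\begin{equation*}
g_v(x) = s^{(k)}_v(x) - \sum_{w:\ \ell(w) < \ell(v)} \ip{s^{(k)}_v(x)}{G_w(x)}\, g_{w}(x)
\end{equation*}
and checking that it satisfies $\ip{g_v}{G_u}=\delta_{vu}$ (the verification for $\ell(u)\ge\ell(v)$ again uses Lemma~\ref{L:Gdeg}), whereas you invert the transition matrix globally and transport the unitriangularity through the transpose. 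The two computations are equivalent; your inductive hypothesis is replaced by the degreewise convergence of the Neumann series, which you correctly flag and justify. A small bonus of your formulation is that it makes explicit that $d_{vw}=(C^{-1})_{wv}$ is supported on $\ell(w)\le\ell(v)$, so each $g_v$ is visibly a finite integer combination of $k$-Schur functions indexed by shorter or equal-length elements, which sharpens slightly the statement that $g_v\in\La_{(n)}$. No gap.
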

\begin{proof}
We prove this by induction on $\ell(v)$.  The base case is clear:
$G_{\id}(x) = F_{\id}(x) = 1 = g_{\id}(x) = s^{(k)}_\id(x)$. Suppose
the claim has been proven for all $w$ satisfying $\ell(w) < \ell$,
and let $\ell(v) = \ell$.  One then checks that the symmetric
function
$$
g_v(x) = s^{(k)}_v(x) - \sum_{w:\; \ell(w) < \ell}
\ip{s^{(k)}_v(x)}{G_w(x)}g_{w}(x)
$$
is a solution to the system of equations
$$
\ip{g_v(x)}{G_u(x)} = \delta_{vu} \ \ \ \mbox{for all $u \in \Wz$.}
$$
\end{proof}

\begin{rem}
Relabel $\{g_v(x)\mid v \in \Wz\}$ as $\{g^{(k)}_\la(x) \mid \la_1
\leq k\}$ as in Remark \ref{R:relabel}.  Since $\lim_{k \to \infty}
G^{(k)}_\la(x) = G_\la(x)$, it follows that $\lim_{k \to \infty}
g^{(k)}_\la(x) = g_\la(x)$, where $g_\la(x)$ are the dual affine
stable Grothendieck polynomials studied in \cite{L,LP}.
\end{rem}

\subsection{Noncommutative $K$-theoretic $k$-Schur functions}
Define $\varphi: \La_{(n)} \to \LL_0$ by $h_i \mapsto \k_i$. This
map is well-defined since the $h_i$ are algebraically independent,
and $\LL_0$ is commutative.  The {\it noncommutative $K$-theoretic
$k$-Schur functions} are the elements $\{\varphi(g_v) \mid v\in
\Wz\} \subset \LL_0$.

\begin{prop}\label{P:noncomm}
Let $w \in W_\af$ and $v \in \Wz$.  The coefficient of $T_w$ in
$\varphi(g_v)$ is equal to the coefficient of $G_v(x)$ in $G_w(x)$,
when the latter is expanded in terms of $\{G_u(x) \mid u \in \Wz\}$.
\end{prop}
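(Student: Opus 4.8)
The plan is to prove the identity by showing that the coefficient $[T_w]\varphi(g_v)$ coincides with the Hall pairing $\ip{g_v}{G_w}$ between $\La_{(n)}$ and $\hat{\La}^{(n)}$, and then expanding $G_w$ in the ``basis'' $\{G_u \mid u \in \Wz\}$ and invoking the duality $\ip{g_v}{G_u} = \delta_{vu}$. The whole argument rests on matching two bases under two dualities, so the substance is almost entirely bookkeeping once the coefficient-extraction maps are lined up.

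First I would reduce everything to monomial coefficients. Since $g_v \in \La_{(n)} = \Z[h_1,\dots,h_{n-1}]$, I write $g_v = \sum_\la c_\la h_\la$ as a finite $\Z$-linear combination over partitions $\la$ with $\la_1 < n$. Because $\varphi$ is an algebra homomorphism with $\varphi(h_i) = \k_i$, we have $\varphi(h_\la) = \k_{\la_1}\k_{\la_2}\cdots$, and $\Z$-linearity of $[T_w]$ gives $[T_w]\varphi(g_v) = \sum_\la c_\la\,[T_w](\k_{\la_1}\k_{\la_2}\cdots)$. By the coefficient form of the defining identity~\eqref{E:Gdef} recorded just after it, $[T_w](\k_{\la_1}\k_{\la_2}\cdots)$ is the coefficient of the monomial $x^\la$ in $G_w(x)$; since $G_w$ is symmetric this equals $[m_\la]G_w$. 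Thus $[T_w]\varphi(g_v) = \sum_\la c_\la\,[m_\la]G_w$. Because $\{h_\la\}$ and $\{m_\la\}$ are dual bases of $\La_{(n)}$ and $\hat{\La}^{(n)}$, we have $c_\la = \ip{g_v}{m_\la}$, and substituting into the expansion $G_w = \sum_\la ([m_\la]G_w)\,m_\la$ yields $[T_w]\varphi(g_v) = \sum_\la \ip{g_v}{m_\la}\,[m_\la]G_w = \ip{g_v}{G_w}$.

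Finally, by Proposition~\ref{P:Gbasis} I expand $G_w = \sum_{u \in \Wz} a_u\,G_u$ in $\hat{\La}^{(n)}$, with $a_u = [G_u]G_w$ the coefficients the proposition concerns. Since $\{g_u\}$ is by definition dual to $\{G_u\}$, this gives $\ip{g_v}{G_w} = \sum_{u\in\Wz} a_u\,\ip{g_v}{G_u} = a_v = [G_v]G_w$, and combining with the previous paragraph produces $[T_w]\varphi(g_v) = [G_v]G_w$, exactly the claim. I expect the only real point needing care to be convergence, since $G_w \in \hat{\La}^{(n)}$ may have an infinite expansion in both the $m_\la$ and the $G_u$; however $g_v$ has bounded degree as an element of the polynomial ring $\La_{(n)}$, so in each pairing only finitely many graded pieces contribute and everything is well-defined, which is precisely the sense in which $\La_{(n)}$ is the continuous dual of $\hat{\La}^{(n)}$. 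Alternatively, the two reductions of the second paragraph can be packaged into a single appeal to the Cauchy identity of Lemma~\ref{L:Cauchy}.
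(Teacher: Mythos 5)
Your proposal is correct and is essentially the paper's own argument in unpacked form: the paper applies $\varphi$ to the Cauchy identity of Lemma~\ref{L:Cauchy}, compares with~\eqref{E:Gdef}, and extracts the coefficient of $T_w$, which is exactly the repackaging you note at the end. The intermediate identification $[T_w]\varphi(g_v)=\ip{g_v}{G_w}$ and the duality step are the same computation carried out coefficient by coefficient.
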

\begin{proof}
Applying $\varphi$ to Lemma \ref{L:Cauchy}, and comparing with
\eqref{E:Gdef}, we have $$\sum_{v \in \Wz} G_v(y) \varphi(g_v) =
\sum_{w \in W_\af} G_w(y) \, T_w.$$  Now take the coefficient of
$T_w$ on both sides.
\end{proof}

\subsection{Grothendieck polynomials for the affine Grassmannian}
This is our main theorem.
\begin{thm} \label{T:symmfunc} \mbox{}
\begin{enumerate}
\item The map $\varphi: \La_{(n)} \to \LL_0$ is a Hopf isomorphism,
sending $g_v$ to $\phi_0(k_v)$ for $v \in \Wz$.
\item We have a Hopf algebra isomorphism $k_0^{-1} \circ \varphi:
\La_{(n)} \to K_*(\Gr_{SL_n})$, sending $g_v$ to $\xi^0_v$ for $v
\in \Wz$.
\item There is a dual Hopf algebra isomorphism
$K^*(\Gr_{SL_n}) \cong \hat{\La}^{(n)}$, sending $[\OO_{X^I_v}]_0$
to $G_v(x)$ for $v \in \Wz$.
\item The following diagram commutes:
\begin{equation*}
\begin{diagram}
\node{K_*(\Gr_{SL_n}) \times K^*(\Gr_{SL_n})} \arrow{e}\arrow{s,<>} \node{\Z} \arrow{s,b}{\text{{\rm id}}} \\
\node{\La_{(n)} \times \hat{\La}^{(n)}} \arrow{e} \node{\Z\;.}
\end{diagram}
\end{equation*}
\end{enumerate}
\end{thm}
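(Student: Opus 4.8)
The plan is to establish the four parts in order, with part (1) doing all the real work and parts (2)--(4) following by transport of structure and duality. For part (1), I would first check that $\varphi\colon\La_{(n)}\to\LL_0$ is a well-defined algebra homomorphism: $\La_{(n)}=\Z[h_1,\dots,h_{n-1}]$ is free commutative on the $h_i$, each $\k_i$ lies in $\LL_0$ by Proposition~\ref{P:ki}, and $\LL_0$ is commutative by Corollary~\ref{C:comm}, so sending $h_i\mapsto\k_i$ extends uniquely to an algebra map. To see that $\varphi$ respects coproducts, recall that in $\La_{(n)}$ one has $\Delta(h_r)=\sum_{0\le j\le r}h_j\otimes h_{r-j}$, while Proposition~\ref{P:kcoprod} gives exactly $\phi_0(\Delta(\k_r))=\sum_{0\le j\le r}\k_j\otimes\k_{r-j}$, which is the coproduct of $\k_r$ in $\LL_0$. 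Since $\varphi$ and both coproducts are algebra maps, agreement on the generators $h_i$ propagates to all of $\La_{(n)}$, so $\varphi$ is a bialgebra map and hence a Hopf map.

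The crux of part (1) is the identification $\varphi(g_v)=\phi_0(k_v)$ for $v\in\Wz$. I would prove this from the characterization in Theorem~\ref{T:L0}, by which $\phi_0(k_v)$ is the \emph{unique} element of $\LL_0$ whose Grassmannian part is exactly $T_v$, i.e.\ whose coefficient of $T_u$ is $\delta_{uv}$ for all $u\in\Wz$. Since $\varphi(g_v)\in\LL_0$ automatically, it suffices to compute this Grassmannian part. By Proposition~\ref{P:noncomm}, the coefficient of $T_u$ in $\varphi(g_v)$ equals the coefficient of $G_v(x)$ in the expansion of $G_u(x)$ in the ``basis'' $\{G_w\mid w\in\Wz\}$; but for $u\in\Wz$ this expansion is trivial, so the coefficient is $\delta_{uv}$. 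Uniqueness in Theorem~\ref{T:L0} then forces $\varphi(g_v)=\phi_0(k_v)$. As $\{g_v\}$ is a basis of $\La_{(n)}$ (dual to the ``basis'' $\{G_v\}$ of $\hat{\La}^{(n)}$, using Proposition~\ref{P:Gbasis}) and $\{\phi_0(k_v)\}$ is a basis of $\LL_0$ by Theorem~\ref{T:L0}, the map $\varphi$ carries a basis bijectively to a basis and is therefore an isomorphism, which completes (1).

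Part (2) is then immediate: Theorem~\ref{T:k0} provides a Hopf isomorphism $k_0\colon K_*(\Gr_{SL_n})\to\LL_0$ with $k_0(\xi^0_v)=\phi_0(k_v)$, so $k_0^{-1}\circ\varphi$ is a composite of Hopf isomorphisms sending $g_v\mapsto k_0^{-1}(\phi_0(k_v))=\xi^0_v$. For parts (3) and (4) I would argue by transposing this map. On the combinatorial side $\{g_v\}$ and $\{G_v\}$ are dual bases and $\La_{(n)}$, $\hat{\La}^{(n)}$ are dual Hopf algebras under the Hall pairing; on the geometric side $K_*(\Gr_{SL_n})$ is the continuous dual of $K^*(\Gr_{SL_n})$ with dual Schubert bases $\{\xi^0_v\}$ and $\{[\OO_{X^I_v}]_0\}$. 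Writing $\Phi:=k_0^{-1}\circ\varphi$, the transpose $\Phi^*\colon K^*(\Gr_{SL_n})\to\hat{\La}^{(n)}$ satisfies $\ip{\Phi^*([\OO_{X^I_v}]_0)}{g_u}=\ip{[\OO_{X^I_v}]_0}{\xi^0_u}=\delta_{vu}=\ip{G_v}{g_u}$, so $\Phi^*([\OO_{X^I_v}]_0)=G_v(x)$; since the transpose of a Hopf isomorphism is a Hopf isomorphism of the dual Hopf algebras, this is precisely (3). Finally (4) is just the defining transpose relation $\ip{\Phi(g)}{(\Phi^*)^{-1}(\eta)}=\ip{g}{\eta}$, which says exactly that the two pairings agree under the identifications, so the diagram commutes by construction.

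The algebra and coalgebra bookkeeping in (1)--(2) is routine once Propositions~\ref{P:kcoprod} and~\ref{P:noncomm} and Theorem~\ref{T:L0} are invoked, and these substantive inputs are already established. The one point genuinely requiring care is the duality argument in (3)--(4): one must track the completions so that transposition is legitimate and Hopf-structure-preserving, matching the continuous/graded duality of $K_*(\Gr_{SL_n})$ with $K^*(\Gr_{SL_n})$ to that of $\La_{(n)}$ with $\hat{\La}^{(n)}$. With that matching in place there is no serious obstacle; the theorem is essentially the correct assembly of the preceding results.
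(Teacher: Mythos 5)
Your proposal is correct and follows essentially the same route as the paper: the identity $\varphi(g_v)=\phi_0(k_v)$ is deduced from the uniqueness statement of Theorem~\ref{T:L0} together with Propositions~\ref{P:Gbasis} and~\ref{P:noncomm}, the Hopf property from Proposition~\ref{P:kcoprod} and $\Delta(h_i)=\sum_j h_j\otimes h_{i-j}$, and parts (2)--(4) by transport through Theorem~\ref{T:k0} and duality. Your write-up merely spells out the duality bookkeeping in (3)--(4) that the paper leaves implicit.
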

\begin{proof}
Given the definitions and Theorem \ref{T:k0}, all the statements
follow from the first one.  By Theorem \ref{T:L0}, and Propositions
\ref{P:Gbasis} and \ref{P:noncomm} we deduce that $\varphi(g_v) =
\phi_0(k_v)$.  It follows that $\varphi$ is an isomorphism.  Since
$\Delta(h_i) = \sum_{0 \leq j \leq i} h_j \otimes h_{i-j}$ in
$\La_{(n)}$, it follows from Proposition \ref{P:kcoprod} that
$\varphi$ is a Hopf-morphism.
\end{proof}

\begin{cor}
For $1 \leq r \leq n -1$, we have $g_{\sigma_r}(x) = h_r(x)$.
\end{cor}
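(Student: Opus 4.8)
The plan is to deduce this corollary purely formally from the Hopf isomorphism $\varphi$ of Theorem~\ref{T:symmfunc}(1) together with the explicit computation of the Grassmannian elements $\phi_0(k_{\sigma_i})$ already carried out in Corollary~\ref{C:gi}. No new computation with symmetric functions or with the NilHecke ring should be needed; the statement is a matter of tracing a single element through maps whose behaviour has already been pinned down.

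Concretely, I would argue as follows. By Theorem~\ref{T:symmfunc}(1), the map $\varphi: \La_{(n)} \to \LL_0$ is an isomorphism satisfying $\varphi(g_v) = \phi_0(k_v)$ for every $v \in \Wz$; in particular, applying this to $v = \sigma_r$ gives $\varphi(g_{\sigma_r}) = \phi_0(k_{\sigma_r})$. On the other hand, Corollary~\ref{C:gi} identifies $\phi_0(k_{\sigma_r}) = \k_r$ for $1 \le r \le n-1$. Finally, the map $\varphi$ was \emph{defined} by $h_i \mapsto \k_i$, so $\varphi(h_r) = \k_r$. Chaining these three equalities yields $\varphi(g_{\sigma_r}) = \varphi(h_r)$, and since $\varphi$ is an isomorphism (hence injective) we may cancel it to conclude $g_{\sigma_r} = h_r$.

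The only point requiring care is to confirm that $\sigma_r \in \Wz$ so that Theorem~\ref{T:symmfunc}(1) and Corollary~\ref{C:gi} both genuinely apply to it; this holds because $\sigma_r = r_{r-1} r_{r-2} \cdots r_1 r_0$ is a minimal length coset representative, exactly the element appearing in Corollary~\ref{C:gi}. There is no real obstacle here: once the machinery of Theorem~\ref{T:symmfunc} is granted, the corollary is a one-line consequence, and the ``hard part'' — namely establishing that $\varphi$ is a well-defined Hopf isomorphism carrying $g_v$ to $\phi_0(k_v)$, and computing $\phi_0(k_{\sigma_i}) = \k_i$ — has already been completed in Proposition~\ref{P:ki}, Corollary~\ref{C:gi}, and Theorem~\ref{T:symmfunc}.
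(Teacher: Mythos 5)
Your argument is correct and is precisely the (implicit) reasoning the paper intends: chaining $\varphi(g_{\sigma_r}) = \phi_0(k_{\sigma_r}) = \k_r = \varphi(h_r)$ from Theorem~\ref{T:symmfunc}(1), Corollary~\ref{C:gi}, and the definition of $\varphi$, then cancelling the injective map $\varphi$. Nothing is missing.
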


Recall the map $r^*:K^T(X_\af) \to K^T(\Gr_G)$ defined in Subsection
\ref{SS:Hopfstructure}.  We use $r^*_0: K^*(X_\af) \to
K^*(\Gr_{SL_n})$ to denote the evaluation of $r^*$ at 0.

\begin{thm}\label{T:ASG}
The image of $G_w(x)$ under the isomorphism $\hat{\La}^{(n)} \cong
K^*(\Gr_{SL_n})$ is equal to the $r_0^*([\OO_{X_w}]_0)$.
\end{thm}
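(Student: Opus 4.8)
The plan is to prove the statement by first establishing its $T$-equivariant analogue and then specializing at $0$. Throughout I identify $K^T(X_\af)\cong \Psi_\af$ and $K^T(\Gr_G)\cong \Psi_\af^I$ as in Theorem~\ref{T:GKMPet}, under which $[\OO_{X_w}]$ corresponds to $\psi^w$ and, for $u\in\Wz$, $[\OO_{X^I_u}]$ corresponds to $\psi^u$. As observed in Subsection~\ref{SS:Hopfstructure}, a fixed-point calculation identifies the topological wrong-way map $r^*$ with the map $\jd$ of Lemma~\ref{L:jd} under these isomorphisms. Hence it suffices to compute $\jd(\psi^w)$ in the Schubert ``basis'' $\{\psi^u \mid u \in \Wz\}$ of $\Psi_\af^I$.

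First I would compute this expansion using the duality \eqref{E:k} defining $k$. Write $k_u=\sum_{w\in W_\af} k^w_u\, T_w$ with $k^w_u\in R(T)$, extending the notation of Theorem~\ref{T:LL} so that $k^w_u=\delta_{w,u}$ for $w\in \Wz$. Pairing $k_u=k(\xi_u)$ against $\psi^w$ and using $\psi^w(T_v)=\delta_{w,v}$ gives
\begin{equation*}
  \ip{\xi_u}{\jd(\psi^w)}=\ip{k(\xi_u)}{\psi^w}=\psi^w(k_u)=k^w_u .
\end{equation*}
Since $\{\psi^u \mid u \in \Wz\}$ is dual to $\{\xi_u\}$, writing $\jd(\psi^w)=\sum_{u\in\Wz} c_u\,\psi^u$ and pairing with $\xi_u$ yields $c_u=k^w_u$, so that $\jd(\psi^w)=\sum_{u\in\Wz} k^w_u\,\psi^u$. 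Transporting back through the isomorphisms, this is the equivariant identity $r^*([\OO_{X_w}])=\sum_{u\in\Wz} k^w_u\,[\OO_{X^I_u}]$ in $K^T(\Gr_G)$.

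Next I would specialize at $0$. Applying $\phi_0$ to the coefficients, naturality gives $r_0^*([\OO_{X_w}]_0)=\sum_{u\in\Wz}\phi_0(k^w_u)\,[\OO_{X^I_u}]_0$. Under the isomorphism $K^*(\Gr_{SL_n})\cong\hat{\La}^{(n)}$ of Theorem~\ref{T:symmfunc}(3), which sends $[\OO_{X^I_u}]_0\mapsto G_u(x)$, the image of $r_0^*([\OO_{X_w}]_0)$ is therefore $\sum_{u\in\Wz}\phi_0(k^w_u)\,G_u(x)$. Finally, by Theorem~\ref{T:symmfunc}(1) we have $\varphi(g_u)=\phi_0(k_u)$, so $\phi_0(k^w_u)$ is the coefficient of $T_w$ in $\varphi(g_u)$; by Proposition~\ref{P:noncomm} this equals the coefficient of $G_u(x)$ in the expansion of $G_w(x)$ in the ``basis'' $\{G_u(x)\mid u\in\Wz\}$ of $\hat{\La}^{(n)}$ guaranteed by Proposition~\ref{P:Gbasis}. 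Hence $\sum_{u\in\Wz}\phi_0(k^w_u)\,G_u(x)=G_w(x)$, which is the desired conclusion.

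The one step requiring genuine care is the identification of $r^*$ with $\jd$ at the level of fixed-point localization, which underlies the whole argument; the excerpt asserts this via ``a fixed point calculation'' in Subsection~\ref{SS:Hopfstructure}, and I would want to check that this identification, together with the naturality of passing from the $T$-equivariant theory to the non-equivariant one via $\phi_0$, is compatible with the isomorphisms of Theorem~\ref{T:GKMPet} for \emph{all} $w\in W_\af$, and not merely for Grassmannian $w$. Once that compatibility is secured, the remainder is a formal assembly of the duality computation with Propositions~\ref{P:noncomm} and~\ref{P:Gbasis}.
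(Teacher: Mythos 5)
Your proposal is correct and follows essentially the same route as the paper's own proof: it identifies $r^*$ with $\jd$, uses the defining duality \eqref{E:k} to show that the coefficient of $T_w$ in $k_u$ equals the coefficient of $[\OO_{X^I_u}]$ in $r^*([\OO_{X_w}])$, and then applies $\phi_0$ and Proposition~\ref{P:noncomm}. The extra detail you supply (the explicit expansion $\jd(\psi^w)=\sum_{u\in\Wz}k^w_u\,\psi^u$ and the final assembly via Proposition~\ref{P:Gbasis}) is just an unpacking of the steps the paper leaves implicit.
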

\begin{proof}
As observed previously, the map $\jd$ of Lemma \ref{L:jd} is related
to $r^*$ via the isomorphisms of Theorem \ref{T:GKMPet}.  By
\eqref{E:k}, $\ip{k(\xi_v)}{\psi^w} = \ip{\xi_v}{\jd(\psi^w)}$. It
follows that the coefficient of $T_w$ in $k_v$ is equal to the
coefficient of $[\OO_{X^I_v}]$ in $r^*([\OO_{X_w}])$. Applying
$\phi_0$ to these coefficients and comparing with Proposition
\ref{P:noncomm} gives the result.
\end{proof}

\subsection{Conjectural properties}
In this section we list conjectural properties of the symmetric
functions $g_w(x)$ and $G_w(x)$.  When $w \in \Wz$, we will use
partitions to label these symmetric functions, see Remark
\ref{R:relabel}.  Recall also that $k = n-1$.


\begin{conj} \label{C:g}
The basis $g^{(k)}_\lambda$ of $\La_{(n)}$ satisfies
\begin{enumerate}
\item
$g^{(k)}_\lambda$ is a positive integer (necessarily finite) sum of $k$-Schur functions
(by Lemma~\ref{L:ghighestComp} the top homogeneous component of $g^{(k)}_\lambda$
is the $k$-Schur function $s^{(k)}_\lambda$).
\item
The coproduct structure constants $c^{\mu\nu}_\la$ in
$\Delta(g^{(k)}_\la)= \sum_{\mu,\nu} c^{\mu\nu}_\la
g^{(k)}_\mu\otimes g^{(k)}_\nu$ are alternating integers, that is
$(-1)^{|\la|-|\nu|-|\mu|} c^{\mu\nu}_\la \in \Z_{\ge 0}$.
Furthermore $c_\la^{\mu\nu}=0$ unless $|\mu|+|\nu|\le |\la|$.
\item
The coefficients in the expansion $g^{(k)}_\lambda=\sum_\mu a_\la^\mu \;g^{(k+1)}_\mu$
are alternating integers, that is $(-1)^{|\la|-|\mu|} a_\la^\mu\in \Z_{\ge 0}$.
\end{enumerate}
\end{conj}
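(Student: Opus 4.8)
The plan is to deduce all three statements from geometric positivity of Schubert structure constants on $\Gr_{SL_n}$, via the Hopf dictionary of Theorem~\ref{T:symmfunc}. Under the isomorphisms $\La_{(n)}\cong K_*(\Gr_{SL_n})$ and $\hat{\La}^{(n)}\cong K^*(\Gr_{SL_n})$, which send $g^{(k)}_\la\mapsto\xi^0_\la$ and $G_w\mapsto[\OO_{X^I_w}]_0$ and are dual as Hopf isomorphisms, the coproduct of $g^{(k)}_\la$ expanded in the $g$-basis is dual to the cup product of the classes $[\OO_{X^I_w}]_0$. Concretely, writing $[\OO_{X^I_\mu}]_0\cdot[\OO_{X^I_\nu}]_0=\sum_\la c^{\mu\nu}_\la[\OO_{X^I_\la}]_0$ in $K^*(\Gr_{SL_n})$, the duality identifies these $c^{\mu\nu}_\la$ with the coproduct constants of part~(2). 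Since $|\la|$ equals $\ell$ of the corresponding element, which is the codimension of the finite-codimensional Schubert variety $X^I_\la$ in the thick model of Subsection~\ref{SS:flags}, the desired sign $(-1)^{|\la|-|\mu|-|\nu|}$ is precisely the codimension sign, and the vanishing unless $|\mu|+|\nu|\le|\la|$ is the expected dimension bound.

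Thus part~(2) is a Brion-type positivity theorem for $\Gr_{SL_n}$. I would first record that the Schubert varieties $X^I_w$ are normal and Cohen--Macaulay with rational singularities --- standard for Kac--Moody Schubert varieties, and checkable on the finite-codimensional opens $\Omega_S^I$ --- and that a generic translate of one meets another properly. One then transplants Brion's argument: the $\mathrm{Tor}$-sheaf computation of the product, combined with a Kawamata--Viehweg-type vanishing, forces the structure constants to alternate in sign. The principal obstacle is exactly making the general-position and vanishing inputs available in this infinite-type setting; this is the crux, and the reason the statement is only conjectural here.

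Part~(1) compares the two Schubert bases $\{g^{(k)}_\la\}$ and $\{s^{(k)}_\mu\}$ of $\La_{(n)}$. By Lemma~\ref{L:ghighestComp} the top component of $g^{(k)}_\la$ is $s^{(k)}_\la$, so $g^{(k)}_\la=\sum_\mu b^\mu_\la s^{(k)}_\mu$ with $b^\la_\la=1$ and $b^\mu_\la=0$ for $|\mu|>|\la|$; the coefficient is $b^\mu_\la=\ip{g^{(k)}_\la}{F_\mu}$, where $F_\mu$ is the affine Schur (dual $k$-Schur) function. I would seek a manifestly nonnegative combinatorial formula for $b^\mu_\la$ by dualizing, through the pairing of Lemma~\ref{L:Cauchy}, a positive tableau expansion of the affine stable Grothendieck polynomials $G_w$ such as Morse's affine set-valued tableaux, in parallel with the known Schur-positivity of ordinary dual stable Grothendieck polynomials. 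Here the obstacle is that part~(1) is entangled with the positivity of the $k$-Schur functions themselves, which is open; a complete proof would need either new $k$-theoretic tableau combinatorics or a geometric degeneration of $\OO_{X^I_\la}$ with $k$-Schur-positive associated graded.

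Finally, part~(3) records the transition matrix from $\{g^{(k)}_\la\}$ to $\{g^{(k+1)}_\mu\}$, that is, the inclusion $\La_{(n)}\hookrightarrow\La_{(n+1)}$. Under the loop-group model of Subsection~\ref{SS:Hopfstructure} this is the $K$-homology pushforward induced by $\Omega SU(n)\hookrightarrow\Omega SU(n+1)$, a closed embedding compatible with the Schubert stratifications. The coefficients $a^\mu_\la$ are then entries of a pushforward of structure sheaves along a closed embedding of (Schubert subvarieties of) affine Grassmannians, so the alternating sign $(-1)^{|\la|-|\mu|}$ is again the $K$-theoretic codimension sign, and its positivity falls under the same Brion-type geometry as part~(2), once the compatibility of the embedding with the two Schubert filtrations is verified. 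In summary, (2) and (3) should both follow from a single geometric positivity theorem for the thick affine Grassmannian, whose proof in infinite type is the main difficulty, while (1) additionally requires bridging to the combinatorics of $k$-Schur functions.
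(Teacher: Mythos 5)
You have not given a proof, and neither does the paper: the statement you are addressing is Conjecture~\ref{C:g}, which the authors support only with Sage computations (for $n=2,3,4,5$ and $|\la|\le 8$) and the tables in Appendices~\ref{S:tables g} and~\ref{S:coproduct g}. There is therefore no argument in the paper to compare yours against, and your own text concedes at each stage that the decisive input is missing. Concretely: for part~(2) your reduction of the coproduct constants of $g^{(k)}_\la$ to the product constants of the classes $[\OO_{X^I_w}]_0$ in $K^*(\Gr_{SL_n})$ is consistent with the paper's own remark that Conjecture~\ref{C:G}(3) and Conjecture~\ref{C:g}(2) involve the identical structure constants, but the Brion-type alternation theorem you then invoke is a statement about finite-dimensional $G/P$; no analogue is known for the thick affine Grassmannian, whose Schubert varieties are finite-codimensional subschemes of a scheme of infinite type, and the general-position and vanishing arguments do not transplant as stated. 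For part~(1) you correctly observe that $b^\mu_\la=\ip{g^{(k)}_\la}{F_\mu}$ with $b^\la_\la=1$ by Lemma~\ref{L:ghighestComp}, but a nonnegative combinatorial or geometric formula for these coefficients is exactly what is open. For part~(3) the identification of the transition matrix with a pushforward along $\Omega SU(n)\hookrightarrow\Omega SU(n+1)$ compatible with both Schubert stratifications is itself unverified, and even granting it you would again need the unavailable positivity theorem.

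So the gap is not a local error but the absence of the load-bearing steps: every one of your three arguments terminates in ``this is the crux and is not established.'' What you have written is a reasonable research program --- and parts of it (the duality between the $g$-coproduct and the $G$-product, the codimension interpretation of $|\la|=\ell(w)$) faithfully reflect the framework of Theorem~\ref{T:symmfunc} and Subsection~\ref{SS:Hopfstructure} --- but it should be presented as a strategy for attacking the conjecture, not as a proof. If you want to contribute something checkable now, the most tractable piece is the vanishing claim $c^{\mu\nu}_\la=0$ unless $|\mu|+|\nu|\le|\la|$ in part~(2), which should follow from the lowest-degree statement of Lemma~\ref{L:Gdeg} together with the known (cohomological) degree bound for the product of affine Schur functions, without any new positivity input.
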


Conjecture~\ref{C:g} (1) has been checked for $n=2,3,4,5$ for
$|\la|\le 8$ using Sage~\cite{Sage}, see also the tables in
Appendix~\ref{S:tables g}. Data confirming Conjecture~\ref{C:g} (2)
can be found in Appendix~\ref{S:coproduct g}. Conjecture~\ref{C:g}
(3) has been checked for $n=2,3,4$ for $|\la|\le 8$ using Sage.
According to Conjecture~\ref{CJ:signKHomStruct}, the product
structure constants for $\{g^{(k)}_\lambda\}$ should be alternating.

\begin{conj}\label{C:G}\mbox{}
\begin{enumerate}
\item
Every affine stable Grothendieck polynomial $G_w$ for $w \in W_\af$ is a {\it finite}
alternating linear combination of $\{G^{(k)}_\lambda\}$.
\item
Every $G^{(k)}_\lambda$ is an {\it alternating} integer linear combination of the
affine Schur functions $\{F^{(k)}_\mu\}$.
\item
The structure constants in the product $G^{(k)}_\mu \, G^{(k)}_\nu =
\sum_\la c^{\mu\nu}_\la \; G^{(k)}_\la$ are alternating integers,
that is $(-1)^{|\la|-|\mu|-|\nu|} c^{\mu\nu}_\la \in \Z_{\ge 0}$.
Furthermore $c_\la^{\mu\nu}=0$ unless $|\mu|+|\nu|\le |\la|$.
\item
The coefficients in the expansion $G^{(k+1)}_\mu = \sum_\lambda
a^\mu_\la \; G_\la^{(k)}$ are alternating integers, that is
$(-1)^{|\la|-|\mu|} a_\la^\mu\in \Z_{\ge 0}$.
\end{enumerate}
\end{conj}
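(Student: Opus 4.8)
The plan is to deduce all four parts of Conjecture~\ref{C:G} from geometric positivity of $K$-theoretic Schubert structure constants, using the dictionary supplied by Theorems~\ref{T:symmfunc} and~\ref{T:ASG}. First I would fix that dictionary: by Theorem~\ref{T:symmfunc}(3) the polynomial $G^{(k)}_\la$ corresponds to the $K$-cohomology Schubert class $[\OO_{X^I_\la}]_0\in K^*(\Gr_{SL_n})$, while $g^{(k)}_\la$ corresponds to the dual $K$-homology Schubert class $\xi^0_\la$. Under this correspondence the constants $c^{\mu\nu}_\la$ of part~(3) are exactly the structure constants of the cup product on $K^*(\Gr_{SL_n})$ in the Schubert basis, and the coproduct constants of Conjecture~\ref{C:g}(2) are dual to them; so part~(3) is the assertion that the product of two Schubert structure sheaves on $\Gr_{SL_n}$ has alternating $K$-cohomology. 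To establish $(-1)^{|\la|-|\mu|-|\nu|}c^{\mu\nu}_\la\ge0$, which is the affine-Grassmannian analogue of Brion's theorem for flag varieties, I would exhaust $\Gr_{SL_n}$ by finite-dimensional projective Schubert varieties, realize each structure constant inside such a finite piece, and apply the Brion--Buch sign rule there, using that Schubert varieties have rational singularities to control the higher $\mathrm{Tor}$-terms. The codimension bound $|\mu|+|\nu|\le|\la|$ should come from observing that the top graded term (where $|\la|=|\mu|+|\nu|$) is the cohomological cup product and every correction term strictly raises codimension, hence increases $|\la|$.

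For part~(1) I would invoke Theorem~\ref{T:ASG}, which identifies $G_w$ with $r_0^*([\OO_{X_w}]_0)$, the pullback of an affine-flag Schubert class along the wrong-way map $r_0^*:K^*(X_\af)\to K^*(\Gr_{SL_n})$. Expanding $G_w$ in the ``basis'' $\{G^{(k)}_\la\}$ is then the expansion of this pullback in the Schubert basis of $K^*(\Gr_{SL_n})$. Finiteness should follow from the algebraic description of the wrong-way map $\jd$ in Lemma~\ref{L:jd}, since on finite-codimensional classes $\psi^v$ it produces a class supported on only finitely many translation elements $t_\la$; the alternating sign would follow from a Graham--Kumar-type positivity statement for the pullback $r_0^*$ of effective $K$-classes.

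Parts~(2) and~(4) I would treat by comparison arguments. For part~(2), the affine Schur functions $F^{(k)}_\mu$ are the homogeneous Schubert classes in ordinary cohomology (Lemma~\ref{L:Gdeg}), so I would filter $K^*(\Gr_{SL_n})$ by codimension, identify the associated graded with $H^*(\Gr_{SL_n})$, and show each graded piece of $G^{(k)}_\la$ expands with alternating signs in the $F^{(k)}_\mu$; the cleanest route is probably to read this off Morse's affine set-valued tableau model~\cite{M}, where the sign is manifestly controlled by the number of extra boxes. Part~(4) concerns the natural embedding $\Gr_{SL_n}\hookrightarrow\Gr_{SL_{n+1}}$ and the induced ring map on $K$-cohomology; positivity of the $k\to k+1$ expansion would again follow from geometric positivity of this map, combined with the stability/limit statement $\lim_{k\to\infty}G^{(k)}_\la=G_\la$ used elsewhere in the paper.

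The main obstacle is establishing the requisite \emph{infinite-dimensional} geometric positivity: an analogue of Brion's theorem, and of Graham--Kumar positivity of pullbacks, for the ind-scheme affine Grassmannian (or Kashiwara's thick flag manifold). Brion's argument rests on vanishing theorems and rational singularities that are available only for finite-dimensional varieties, so the real work is to carry these through the inductive limit and control the higher $\mathrm{Tor}$ and $\mathrm{Ext}$ terms uniformly, while also handling the wrong-way map $r_0^*$, which is not the pullback of a proper or flat morphism of schemes but arises topologically from $\Omega K\hookrightarrow LK\to LK/T_\R$. A purely algebraic alternative would be to prove Conjecture~\ref{CJ:signKHomStruct} directly inside the $K$-NilHecke ring, namely $(-1)^{\ell(x)-\ell(u)}\phi_0(k^x_u)\ge0$, from which several of these sign statements follow by the duality above; but the coefficients $k^x_u$ are defined only implicitly through the centralizer condition $\LL=Z_\K(R(T))$, and controlling their signs appears no easier than the geometry.
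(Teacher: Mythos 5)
The statement you are trying to prove is Conjecture~\ref{C:G}: the paper does not prove it, and offers only numerical evidence (the tables in Appendix~\ref{S:table G}, Sage computations) together with a web of logical reductions --- the ``alternating'' half of part~(1) is implied by Conjecture~\ref{CJ:signKHomStruct} via Proposition~\ref{P:noncomm}, part~(3) is equivalent to Conjecture~\ref{C:g}(2), part~(4) to Conjecture~\ref{C:g}(3), and part~(2) is related to Conjecture~\ref{C:g}(1) by a matrix inverse. Your proposal is therefore not being measured against a proof in the paper; it has to stand on its own, and it does not: every one of its four branches bottoms out in an unproven positivity statement. For part~(3) you appeal to ``the affine-Grassmannian analogue of Brion's theorem''; Brion's argument is genuinely finite-dimensional (it uses vanishing theorems and rational singularities on a projective variety), and the structure constants of $K^*(\Gr_{SL_n})$ are \emph{not} computed inside a single finite-dimensional Schubert variety --- in the thick/ind-scheme picture the relevant intersections involve finite-codimensional subvarieties of an infinite-dimensional space, so ``exhaust by finite pieces and apply Brion--Buch there'' is not a reduction, it is a restatement of the open problem. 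For part~(1) you need a Graham--Kumar-type positivity for $r_0^*$, but as you note $r^*$ is not the pullback of a morphism of schemes, so no such theorem is available to cite. For part~(2), Morse's set-valued tableau model~\cite{M} gives a monomial expansion of $G_w$, not an expansion in the affine Schur functions $F^{(k)}_\mu$; passing from monomial positivity to Schur-type positivity is exactly the hard step and the sign control by ``extra boxes'' does not transfer. You candidly identify all of this in your final paragraph, which is the correct self-assessment: what you have written is a plausible research program, not a proof, and the key inputs it requires were open at the time of the paper (and the purely algebraic fallback you mention is precisely the paper's own Conjecture~\ref{CJ:signKHomStruct}, also open).

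One smaller point worth preserving from your write-up: the identification of $c^{\mu\nu}_\la$ with the Schubert structure constants of $K^*(\Gr_{SL_n})$ via Theorem~\ref{T:symmfunc}(3), and the duality with the coproduct constants of the $g^{(k)}_\la$, is correct and is exactly how the paper relates Conjecture~\ref{C:G}(3) to Conjecture~\ref{C:g}(2). Likewise your heuristic for the support condition $|\mu|+|\nu|\le|\la|$ (the lowest graded piece is the cohomological cup product, by Lemma~\ref{L:Gdeg}, and $K$-theoretic corrections only increase codimension) matches the paper's picture. But these observations only translate the conjecture between equivalent forms; they do not prove any of them.
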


By Proposition~\ref{P:noncomm}, the ``alternating'' part of
Conjecture~\ref{C:G} (1) is implied by
Conjecture~\ref{CJ:signKHomStruct}. Evidence for
Conjecture~\ref{C:G} (2) is provided in the table of
Appendix~\ref{S:table G}. Conjecture~\ref{C:G} (2) is related to
Conjecture~\ref{C:g} (1) by a matrix inverse. Conjecture~\ref{C:G}
(3) is equivalent to Conjecture~\ref{C:g} (2); indeed, the two sets
of structure constants are identical. Conjecture~\ref{C:G} (4) is
equivalent to Conjecture~\ref{C:g} (3).

\begin{rem}
The factorization of affine Grassmannian homology Schubert classes as described in \cite{Mag} (see also \cite{Lam:Schub,LM2}) also appears to hold in some form in $K$-homology.  Suppose $w \in W_\af^I$ has a length additive factorization $w = vu$ where $u \in W_\af^I$ is equal modulo length 0 elements, to the translation $t_{-\omega_i^\vee}$ by a negative fundamental coweight in the extended affine Weyl group (\cite{Mag}), or equivalently the partition $\la$ corresponding to $u$ is a rectangle of the form $\ell \times (k-\ell)$.  Then it appears that $g_w$ is a multiple of $g_u$ in $\La_{(n)}$.
\end{rem}

\appendix
\section{Affine NilHecke ring and Tables}
\label{S:A}

\subsection{(Cohomological) Affine NilHecke ring}
A summary of the (notational) correspondence between (co)homology
and $K$-(co)homology is given in Table~\ref{tab:term}.  Some of our notation differs with
that from~\cite{Lam:Schub}.

\begin{table}
\begin{tabular}{|l|l|l|}
    \hline
    (co)homology & $K$-(co)homology & terminology\\ \hline
    $A_i$ & $T_i$ & ($K$-)NilHecke generators\\[1mm]
    $\A = \bigoplus_w S A_w$ & $\K = \bigoplus_w R(T) T_w$ & ($K$-)NilHecke ring\\[1mm]
    $\B = Z_\A(S)$ & $\LL = Z_\K(R(T)) $ & Peterson's subalgebra\\[1mm]
    $\B_0 = \phi_0(\B)$ & $\LL_0 = \phi_0(\LL)$ & affine Fomin-Stanley subalgebra\\[1mm]
    $\{j_w\} \subset \B$ & $\{k_w\} \subset \LL$
     &Schubert basis\\[2mm]
    $s_w^{(k)}(x)$ & $g_w(x)$
     & ($K$-theoretic) $k$-Schur functions\\[1mm]
     $F_w(x)$ & $G_w(x)$ & affine Stanley symmetric functions/ \\
      &&stable affine Grothendieck polynomials\\[1mm]
    \hline
\end{tabular}
\vspace{1mm} \caption{Terminology \label{tab:term}}
\end{table}

We now recall the affine NilHecke ring $\A$. Let $S=\Sym(P)$ where
$P$ is the weight lattice of the finite-dimensional group $G$.
$W_\af$ acts on $P$ (and therefore on $S\cong H^T(\pnt)$) by the
level zero action.
The affine NilCoxeter algebra $\A_0$ is the ring with generators $\{A_i\mid i\in I_\af \}$ and
relations
\begin{equation*}
  A_i^2 = 0 \ \ \ \  \text{and} \ \ \ \
 \underbrace{A_iA_j \dotsm}_{\text{$m_{ij}$ times}} = \underbrace{A_j A_i\dotsm}_{\text{$m_{ij}$ times}.}
\end{equation*}
Define $A_w$ in the obvious way and define the nilCoxeter algebra
$\A_0 = \bigoplus_{w\in W_\af} \Z A_w$.  Then $\A_0$ acts on $S$ by
\begin{equation*}
\begin{split}
  A_i \cdot \la &= \ip{\al_i^\vee}{\la} \\
  A_i \cdot (ss') &= (r_i\cdot s) A_i\cdot s' + (A_i\cdot s) s'
\end{split}
\end{equation*}
for $i\in I_\af$, $\la\in P$, and $s,s'\in S$.

The affine Kostant-Kumar NilHecke ring $\A$  \cite{P} is the smash product of
$\A_0$ and $S$. It has relations
\begin{equation*}
  A_i s = (r_i \cdot s) A_i + (A_i\cdot s)
\end{equation*}
for $i\in I_\af$ and $s\in S$. Then
\begin{equation*}
  \A = \bigoplus_{w\in W_\af} S A_w.
\end{equation*}
In $\A$ we have $r_i=1-\al_i A_i$.

$\A$ acts on $\F(W,S)$ by
\begin{equation*}
  (a\cdot \xi)(w) = \xi(wa)
\end{equation*}
viewing $\xi\in \F(W,S)$ as an element of $\Hom_Q(\A_Q,Q)$ (left $Q$-module homomorphisms),
where  $Q=\Frac(S)$.
\begin{lem}\label{L:Acommute} \cite{KK:H}
In $\A$, we have
$$
A_w \lambda = (w \cdot \lambda) A_w + \sum_{v = wr_\alpha \lessdot w} \ip{\alpha^\vee}{\lambda} A_v.
$$
\end{lem}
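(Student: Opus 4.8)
The plan is to prove the identity by induction on $\ell(w)$. Choose a simple reflection $r_i$ with $wr_i<w$, set $w'=wr_i$, so that the factorization is length-additive and $A_w=A_{w'}A_i$ in $\A$. The base case $w=\id$ is trivial: both sides equal $\la$ and $\id$ has no Bruhat covers. For the inductive step I would first move $\la$ past $A_i$ using the smash-product relation $A_i s=(r_i\cdot s)A_i+(A_i\cdot s)$ from the appendix, together with $A_i\cdot\la=\ip{\al_i^\vee}{\la}$, obtaining
$$A_w\la = A_{w'}A_i\la = A_{w'}(r_i\cdot\la)A_i + \ip{\al_i^\vee}{\la}\,A_{w'}.$$
Then I would apply the induction hypothesis to $A_{w'}(r_i\cdot\la)$, using that $w'\cdot(r_i\cdot\la)=w\cdot\la$, to expand the first summand.

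Next I would collect terms. Multiplying the expanded first summand by $A_i$ on the right turns its leading term into $(w\cdot\la)A_{w'}A_i=(w\cdot\la)A_w$, while each lower term $A_{v'}$ becomes $A_{v'}A_i$, which equals $A_{v'r_i}$ when $v'r_i>v'$ and $0$ otherwise (as $A_i^2=0$), by the analogue for the $A_w$ of the product rule recorded for the $T_w$. Writing a cover as $v'=w'r_\gamma\lessdot w'$, the inductive coefficient $\ip{\gamma^\vee}{r_i\cdot\la}$ rewrites as $\ip{(r_i\gamma)^\vee}{\la}$ by the $W_\af$-equivariance of the pairing (as used in the proof of Lemma \ref{L:AKagree}), and $v'r_i=w\,r_{r_i\gamma}$. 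Together with the extra term $\ip{\al_i^\vee}{\la}\,A_{w'}$, which accounts for the cover $v=w'=w\,r_{\al_i}$, this yields an expression of the shape $(w\cdot\la)A_w+\sum\ip{\beta^\vee}{\la}\,A_{wr_\beta}$; a grading check, in which $\la$ raises and each $A_i$ lowers degree by one step, already guarantees that apart from $A_w$ only terms $A_v$ with $\ell(v)=\ell(w)-1$ can appear, the latter with constant (degree-zero) coefficients.

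The crux is the Bruhat-combinatorial identity that the index set produced above is \emph{exactly} the set of covers $v\lessdot w$, each occurring once, with the correct root. Concretely I must show
$$\{v : v\lessdot w\} = \{w'\}\ \sqcup\ \{v'r_i : v'\lessdot w',\ v'r_i>v'\},$$
and that under $v'=w'r_\gamma\mapsto v=v'r_i$ the associated root transforms as $\beta=r_i\gamma$ while $w'=w\,r_{\al_i}$. I expect this to be the main obstacle, and would establish it from the lifting property of Bruhat order (see \cite{Hum}): since $r_i$ is a right descent of $w$, any cover $v\lessdot w$ with $v\neq w'$ must satisfy $vr_i<v$ and $vr_i\lessdot w'$, while conversely each cover $v'\lessdot w'$ with $v'r_i>v'$ lifts to a cover $v'r_i\lessdot w$. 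Disjointness follows since $w'$ is not of the form $v'r_i$ with $v'r_i>v'$, and the degenerate case $\gamma=\al_i$ is excluded because it would force $\ell(v')=\ell(w)$; this also guarantees $r_i\gamma>0$, so the root bookkeeping is exact and the coefficient is precisely $\ip{(r_i\gamma)^\vee}{\la}$. Matching this with the claimed formula completes the induction.
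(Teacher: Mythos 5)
The paper offers no proof of this lemma: it is quoted directly from Kostant--Kumar \cite{KK:H}, so there is nothing in the text to compare your argument against line by line. Your proposal is a correct, self-contained proof, and it is essentially the standard inductive argument for this commutation formula. The key steps all check out: the relation $A_i\la=(r_i\cdot\la)A_i+\ip{\al_i^\vee}{\la}$ is exactly the smash-product relation recorded in the appendix; the identity $w'\cdot(r_i\cdot\la)=w\cdot\la$ and the multiplication rule $A_{v'}A_i=\chi(v'r_i>v')A_{v'r_i}$ are immediate; and the root bookkeeping $v'r_i=w'r_\gamma r_i=w\,r_{r_i\gamma}$ with $\ip{\gamma^\vee}{r_i\cdot\la}=\ip{(r_i\gamma)^\vee}{\la}$ is right, including the observation that $\gamma=\al_i$ cannot occur (it would force $v'=w$), so $r_i\gamma$ stays positive and the coefficients are attached to the correct positive roots. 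The decomposition of the covers of $w$ as $\{w'\}\sqcup\{v'r_i : v'\lessdot w',\ v'r_i>v'\}$ is precisely the lifting property of Bruhat order, applied as you describe, and the extra term $\ip{\al_i^\vee}{\la}A_{w'}$ accounts exactly for the cover $w'=wr_{\al_i}$. The grading remark is not needed for the proof but is harmless. In short: the argument is complete and correct; you have simply reproved the cited result of \cite{KK:H} rather than found a gap or an alternative to anything in this paper.
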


Let $\phi_0: S \to \Z$ be defined by evaluation at 0.  
Let $\phi_0:\A\to\A_0$ be the map defined by $\phi_0(\sum_w a_w A_w)
= \sum_w \phi_0(a_w) A_w$ for $a_w\in S$. Let $\B=Z_\A(S)$ be the
Peterson subalgebra \cite{P}, the centralizer subalgebra of $S$ in
$\A$ and let $$\B_0 = \{b\in \B\mid \phi_0(b s)=\phi_0(s) b\text{
for all $s\in S$} \}$$ be the Fomin-Stanley subalgebra~\cite{Lam:Schub}.

\begin{thm} \label{T:H} \
\begin{enumerate}
\item \cite{P}
For each $w\in \Wz$ there is a unique element $\jh_w\in \B$ such that
\begin{eqnarray*}
  \jh_w &\in& A_w + \bigoplus_{v\in W\setminus \Wz} S A_v. \\
  \B &=& \bigoplus_{w\in \Wz} S \, \jh_w.
\end{eqnarray*}
\item \cite{Lam:Schub}
\begin{equation*}
  \B_0 = \phi_0(\B).
\end{equation*}
\item For each $w\in \Wz$, $\phi_0(\jh_w)$ is the unique element of $\B_0$ such that
\begin{eqnarray*}
  \phi_0(\jh_w) &\in& A_w + \bigoplus_{v\in W\setminus \Wz} \Z
  A_v.\\
  \B_0 &=& \bigoplus_{w\in \Wz} \Z \, \phi_0(\jh_w).
\end{eqnarray*}
\end{enumerate}
\end{thm}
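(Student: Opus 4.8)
The plan is to handle the three parts in order, importing the first two from the literature and deriving the third formally from them. Part~(1) is Peterson's structure theorem for $\B = Z_\A(S)$ \cite{P}, the cohomological prototype of Theorem~\ref{T:LL}, so I would simply cite it. Part~(2), the identity $\B_0 = \phi_0(\B)$, is due to Lam \cite{Lam:Schub} and is the cohomological prototype of the first assertion of Theorem~\ref{T:L0}; I would cite it as well, noting that the closely related vanishing statement \cite[Proposition 5.3]{Lam:Schub} was already used in the proof of Theorem~\ref{T:L0}. All of the actual work is then in part~(3), which I would deduce from (1) and (2) by the same bookkeeping as in the proof of Theorem~\ref{T:L0}, but now without re-proving any vanishing result, since that is precisely the content of part~(2).

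The observation driving part~(3) is that $\A = \bigoplus_{w\in W_\af} S\,A_w$ is free as a left $S$-module on $\{A_w\}$, so left multiplication by $s \in S$ acts coefficient-wise; since $\phi_0\colon S \to \Z$ is a ring homomorphism, it follows that $\phi_0(s\,a) = \phi_0(s)\,\phi_0(a)$ for every $s \in S$ and $a \in \A$. First I would apply $\phi_0$ to the expression $\jh_w \in A_w + \bigoplus_{v \in W \setminus \Wz} S A_v$ supplied by part~(1): because the coefficient of $A_w$ equals $1$ and $\phi_0(1)=1$, this gives $\phi_0(\jh_w) \in A_w + \bigoplus_{v\in W\setminus\Wz} \Z\,A_v$, which establishes the claimed form and, via the distinct leading Grassmannian terms, the $\Z$-linear independence of $\{\phi_0(\jh_w) \mid w \in \Wz\}$. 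Membership $\phi_0(\jh_w) \in \B_0$ is immediate from part~(2).

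For spanning I would take an arbitrary $b \in \B_0 = \phi_0(\B)$ and write $b = \phi_0(a)$ with $a = \sum_{w \in \Wz} s_w \jh_w$ and $s_w \in S$, using the $S$-basis of $\B$ from part~(1); the coefficient-wise identity above then yields $b = \sum_{w} \phi_0(s_w)\,\phi_0(\jh_w)$, an integer combination, so $\B_0 = \bigoplus_{w \in \Wz} \Z\,\phi_0(\jh_w)$. Uniqueness follows by comparing Grassmannian coefficients: any element of $\B_0$ lying in $A_w + \bigoplus_{v \in W\setminus\Wz}\Z\,A_v$, when expanded in $\{\phi_0(\jh_u)\}$, must have every Grassmannian coefficient other than that of $A_w$ equal to zero, forcing it to equal $\phi_0(\jh_w)$. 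The only point needing care is the coefficient-wise behavior of $\phi_0$ under the left $S$-action, and this is minor; there is no genuine obstacle, because the substantive input — Peterson's basis and the equality $\B_0 = \phi_0(\B)$ — has been imported from parts (1) and (2).
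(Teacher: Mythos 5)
Your proposal is correct and matches the paper's own treatment: the paper states Theorem~\ref{T:H} in the appendix as a review, citing \cite{P} for part (1) and \cite{Lam:Schub} for part (2), and offers no further proof of part (3), which indeed follows formally from (1) and (2) exactly as you describe (via the coefficient-wise multiplicativity $\phi_0(sa)=\phi_0(s)\phi_0(a)$ on $\A=\bigoplus_w S\,A_w$). Your deduction of (3) parallels the structure of the paper's proof of the $K$-theoretic analogue, Theorem~\ref{T:L0}, with the hard vanishing input already absorbed into the cited equality $\B_0=\phi_0(\B)$.
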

Compare these results with the $K$-theoretic analogues of Theorems~\ref{T:LL}
and~\ref{T:L0}.

For $G=SL_{k+1}$ the element $\phi_0(\jh_w)$ is called a
noncommutative $k$-Schur function \cite{Lam:Schub}.

\subsection{Comparison with the fixed point functions of \cite{KK:K}}
\label{SS:KK}
\subsubsection{M\"obius inversion for Bruhat order}
The M\"obius function for the Bruhat order on $W$ is \cite{D}
$$(v,w) \mapsto (-1)^{\ell(w)-\ell(v)}\chi(v\le w)$$
where $\chi(P)=1$ if $P$ is true and $\chi(P)=0$ if $P$ is false.
In other words, let $M$ be the $W\times W$ incidence matrix $M_{vw} = \chi(v\le w)$ of the Bruhat order,
and $N$ the M\"obius matrix $N_{vw} = \chi(v\le w) (-1)^{\ell(w)-\ell(v)}$. Then
$M$ and $N$ are inverse:
\begin{equation} \label{E:mobius}
  \sum_{\substack{v \\ u\le v\le w}} (-1)^{\ell(w)-\ell(v)} = \delta_{uw} =   \sum_{\substack{v\\ u\le v\le w}} (-1)^{\ell(v)-\ell(u)}.
\end{equation}

\subsubsection{Kostant and Kumar functions}
We now return to the ($K$-theoretic) notations of Section \ref{S:KK}. 
The following Lemma is standard.
\begin{lem}
\begin{equation} \label{E:DtoT}
  y_w = \sum_{v\le w} T_v.
\end{equation}
\end{lem}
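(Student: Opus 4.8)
The plan is to prove \eqref{E:DtoT} by induction on $\ell(w)$, combining the multiplication rule for the $T_i$ with one standard fact about Bruhat order. First I would observe that, because the $y_i$ satisfy the braid relations, the element $y_w=y_{i_1}\dotsm y_{i_N}$ is independent of the choice of reduced decomposition $w=r_{i_1}\dotsm r_{i_N}$, so $y_w$ is well-defined; the base case $w=\id$ then reads $y_\id = 1 = T_\id = \sum_{v\le\id} T_v$.

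For the inductive step I would choose $i\in I$ with $r_iw>w$, so that prepending $r_i$ to a reduced word for $w$ stays reduced and hence $y_{r_iw}=y_i\,y_w=(1+T_i)\sum_{v\le w} T_v$. Expanding and using the given rule $T_iT_v=T_{r_iv}$ when $r_iv>v$ and $T_iT_v=-T_v$ when $r_iv<v$ gives
\[
  y_{r_iw} = \sum_{v\le w} T_v + \sum_{\substack{v\le w\\ r_iv>v}} T_{r_iv} - \sum_{\substack{v\le w\\ r_iv<v}} T_v
  = \sum_{\substack{v\le w\\ r_iv>v}} \bigl(T_v + T_{r_iv}\bigr),
\]
where the final equality cancels the terms $v$ with $r_iv<v$ against the corresponding terms in the first sum.

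It then remains to match the right-hand side with $\sum_{u\le r_iw} T_u$, i.e.\ to establish the disjoint set identity
\[
  \{\,u\mid u\le r_iw\,\} = \{\,v\mid v\le w,\ r_iv>v\,\}\ \sqcup\ \{\,r_iv\mid v\le w,\ r_iv>v\,\}.
\]
This is the one genuinely combinatorial point, and I expect it to be the main obstacle. I would settle it with the lifting property of Bruhat order (see \cite{Hum}): since $r_i(r_iw)=w<r_iw$, left multiplication by $r_i$ is an involution of the order ideal $\{\,u\le r_iw\,\}$ interchanging the elements with $r_i\cdot>\cdot$ and those with $r_i\cdot<\cdot$. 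The first set above equals $\{\,u\le r_iw\mid r_iu>u\,\}$: the inclusion $\subseteq$ is immediate from $w<r_iw$, while for $\supseteq$ one uses the lifting property to descend $u\le r_iw$ (with $r_iu>u$, so $u\ne r_iw$) to $u\le w$. Applying $r_i$ then produces the second set as $\{\,u\le r_iw\mid r_iu<u\,\}$, the two pieces are disjoint because $v$ and $r_iv$ lie on opposite sides of $r_i$, and together they exhaust $\{\,u\le r_iw\,\}$; summing the $T_u$ closes the induction.

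An alternative would be to expand $y_w=\prod_k(1+T_{i_k})$ directly and evaluate each subword product $T_{i_{s_1}}\dotsm T_{i_{s_m}}$ as $\pm T_u$ via the Demazure product, but tracking the signs and multiplicities there is less transparent than the single-step lifting argument above, so I would favor the inductive route.
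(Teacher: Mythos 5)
Your proof is correct. The paper itself offers no argument here—it simply declares the lemma ``standard'' and moves on—so there is nothing to compare against; your induction on $\ell(w)$, using $y_i=1+T_i$, the left multiplication rule for $T_iT_v$, and the lifting property of Bruhat order to identify $\{u\le r_iw\}$ with the disjoint union $\{v\le w \mid r_iv>v\}\sqcup\{r_iv \mid v\le w,\ r_iv>v\}$, is exactly the standard argument the authors are implicitly invoking.
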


For $v\in W$ define $\pKK^v\in \F(W,Q(T))$\footnote{The functions denoted $\psi^v(w)$ in \cite{KK:K}
are equal to the functions we denote by $\pKK^{v^{-1}}(w^{-1})$.} by
\begin{equation*}
  \pKK^v(y_w)=\delta_{vw}.
\end{equation*}
This is equivalent to
\begin{align} \label{E:psiKKdef}
  w = \sum_v \psi^v_{KK}(w) y_v.
\end{align}
By \eqref{E:psiKKdef} and \eqref{E:DtoT} we have we have
\begin{equation*}
  w = \sum_v \psi^v_{KK}(w) \sum_{u\le v} T_u = \sum_u T_u \sum_{v \ge u} \psi^v_{KK}(w).
\end{equation*}
By Proposition \ref{P:Kloc}, \eqref{E:Tbasis} and \eqref{E:mobius}
we have
\begin{align*}
  \psi^u &= \sum_{v\ge u} \psi^v_{KK}, \\
  \psi^u_{KK} &= \sum_{v\ge u} (-1)^{\ell(v)-\ell(u)} \psi^v.
\end{align*}
Recall the definition of $\eta$ from Remark \ref{R:eta}.  One may
show the following. Let $\rho$ be the sum of fundamental weights.

\begin{lem} For all $v,w\in W$,
\begin{equation*}
  \pKK^v(w) = (-1)^{\ell(v)} e^{\rho-w\rho} \eta(\psi^v(w)).
\end{equation*}
\end{lem}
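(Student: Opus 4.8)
The plan is to strip the statement down to an identity involving only the functions $\psi^v$ and then prove that identity by induction using the recurrence \eqref{E:psirec}. The Möbius relation $\pKK^v=\sum_{u\ge v}(-1)^{\ell(u)-\ell(v)}\psi^u$ derived just above gives, for fixed $w$,
\begin{equation*}
  \pKK^v(w)=(-1)^{\ell(v)}\,\Theta^v(w),\qquad \Theta^v(w):=\sum_{u\ge v}(-1)^{\ell(u)}\psi^u(w),
\end{equation*}
where by Lemma~\ref{L:psisupp} the sum runs over $v\le u\le w$. Hence the lemma is equivalent to the purely $\psi$-theoretic identity
\begin{equation*}
  \Theta^v(w)=\Xi^v(w),\qquad \Xi^v(w):=e^{\rho-w\rho}\,\eta(\psi^v(w)).
\end{equation*}
I would prove this by induction on $\ell(w)$, and within that on $\ell(v)$, mirroring the double induction behind \eqref{E:psirec}.

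For the base case $w=\id$ both sides reduce to $\delta_{v,\id}$ by \eqref{E:psiid}. For the inductive step fix $i$ with $wr_i<w$. The right-hand side $\Xi^v$ inherits a recurrence from \eqref{E:psirec}: since $\eta$ is a ring homomorphism commuting with the $W$-action, and since $\rho$ is the sum of fundamental weights so that $r_i\rho=\rho-\al_i$ and therefore $wr_i\rho-w\rho=-w\al_i$, applying $\eta$ and the prefactor $e^{\rho-w\rho}$ to \eqref{E:psirec} gives
\begin{align*}
  \Xi^v(w) &= e^{-w\al_i}\,\Xi^v(wr_i) && \text{if } v<vr_i,\\
  \Xi^v(w) &= (1-e^{w\al_i})\,\Xi^{vr_i}(w)+\Xi^v(wr_i) && \text{if } vr_i<v.
\end{align*}
So it remains to check that $\Theta^v$ obeys these very same two recurrences.

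The heart of the matter is Bruhat-order bookkeeping governing how the index set $\{u\ge v\}$ meets the right cosets $\{u,ur_i\}$. When $v<vr_i$ the lifting property shows $\{u\ge v\}$ is a union of full cosets; grouping $\Theta^v(w)$ by cosets and applying \eqref{E:psirec} to each pair (the ascent member unchanged, the descent member splitting), the cross terms telescope and yield exactly $e^{-w\al_i}\Theta^v(wr_i)$. When $vr_i<v$ I would work instead with the ascent element $vr_i$: by the lifting property $\{u\ge vr_i\}$ is coset-closed, and the implication ``$x\le y$ and $yr_i<y$ imply $xr_i\le y$'' shows the longer member of each such coset always lies in $\{u\ge v\}$, so $\{u\ge vr_i\}=\{u\ge v\}\sqcup D$ with $D$ the leftover ascents. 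Substituting \eqref{E:psirec} into $\Theta^v(w)$, cancelling the resulting $e^{-w\al_i}\psi^{u}(wr_i)$ contributions against $e^{-w\al_i}\Theta^v(wr_i)$, and rewriting the remaining sum over $D$ as $\Theta^{vr_i}(w)-\Theta^v(w)$, one solves for $\Theta^v(w)$ and obtains precisely $\Theta^v(w)=(1-e^{w\al_i})\Theta^{vr_i}(w)+\Theta^v(wr_i)$. As $\Theta^v$ and $\Xi^v$ satisfy the same recurrences and agree at $w=\id$, they coincide.

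The main obstacle is this combinatorial reorganization of the alternating sum in the descent case: matching $\Theta^v(w)$ to $\Theta^{vr_i}(w)$ needs the lifting property in the stated form together with a careful coset-by-coset analysis, and it is here that \eqref{E:mobius} and the sign $(-1)^{\ell(u)}$ do the real work. A more conceptual alternative would be to recast $\Xi^v(w)=e^\rho\,w\cdot\bigl(e^{-\rho}\psi^{v^{-1}}(w^{-1})\bigr)$ via Remark~\ref{R:eta} and look for an involution of $\K_{Q(T)}$ interchanging the bases $\{T_w\}$ and $\{y_w\}$ up to the $\rho$-twist; but verifying that such an involution respects the scalar commutation relations \eqref{E:Tcomm} appears at least as delicate as the induction above, so I would carry out the inductive argument.
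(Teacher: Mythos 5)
Your argument is correct. Note that the paper itself offers no proof of this lemma (it is prefaced only by ``One may show the following''), so there is no argument of the authors' to compare against; what you have written supplies the missing proof, and it does so using exactly the tools the paper has already set up, namely the M\"obius relation $\pKK^v=\sum_{u\ge v}(-1)^{\ell(u)-\ell(v)}\psi^u$ derived in the preceding subsubsection and the recurrence \eqref{E:psirec}. The reduction to $\Theta^v=\Xi^v$, the computation $wr_i\rho-w\rho=-w\al_i$, and the two recurrences for $\Xi^v$ are all right, and uniqueness follows by lexicographic induction on $(\ell(w),\ell(v))$ since each case reduces to pairs with smaller $\ell(w)$, or the same $w$ and smaller $\ell(v)$. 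The ascent case for $\Theta^v$ works exactly as you say: coset-closure of $\{u\ge v\}$ plus \eqref{E:psirec} makes each coset $\{u,ur_i\}$ contribute $e^{-w\al_i}\bigl((-1)^{\ell(u)}\psi^u(wr_i)+(-1)^{\ell(ur_i)}\psi^{ur_i}(wr_i)\bigr)$. Your description of the descent case is the one loose spot --- ``cancelling against $e^{-w\al_i}\Theta^v(wr_i)$ and solving'' does not quite parse as written, since the ascents in $\{u\ge v\}$ do not acquire the factor $e^{-w\al_i}$ --- but the ingredients you list do close it up cleanly: because $D=\{u\ge vr_i\}\setminus\{u\ge v\}$ consists only of ascents, \eqref{E:psirec} gives $\Theta^{vr_i}(w)-\Theta^v(w)=\Theta^{vr_i}(wr_i)-\Theta^v(wr_i)$, and combining this with the already-proved ascent recurrence $\Theta^{vr_i}(wr_i)=e^{w\al_i}\Theta^{vr_i}(w)$ yields $\Theta^v(w)=(1-e^{w\al_i})\Theta^{vr_i}(w)+\Theta^v(wr_i)$ directly, with no solving required. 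I would recommend writing the descent step in this form; with that adjustment the proof is complete.
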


\begin{rem} \label{R:KKfuncgeom}
Let $\partial X_v = X_v\setminus X_v^o$ be the boundary of the
Schubert cell $X_v^o$ in the Schubert variety $X_v$. Then there is
an exact sequence
\begin{equation*}
  0 \to I_{\partial X_v\subset X_v} \to \OO_{X_v} \to \OO_{\partial X_v} \to 0.
\end{equation*}
Since $[\OO_{X_u}] \mapsto \psi^u$ under the isomorphism
$K^T(X)\to\Psi$, $[I_{\partial X_v\subset X_v}] \mapsto
\psi^v_{KK}$.
\end{rem}

\subsection{Tables}

\subsubsection{Table on Grassmannians versus $k$-bounded partitions}
\label{S:grass-kbounded}
We list the correspondence between reduced words for Grassmannian elements and $k$-bounded
partitions, where $k=n-1$.

\begin{equation*}
\begin{array}[t]{|l|l|l|}
\hline n & \text{$k$-bounded partition} & w\in W_\af^I\\ \hline
2 & 1 & 0\\
& 11 & 10\\
&111 & 010\\
& 1111 & 1010\\
& 11111 & 01010\\ \hline
3 & 1 & 0\\
& 2 & 10\\
& 11 & 20\\
& 21 & 210\\
& 111 & 120\\
& 22 & 0210\\
& 211 & 2120\\
& 1111 & 0120\\
& 221 & 10210\\
& 2111 & 02120\\
& 11111 & 20120\\ \hline
\end{array}
\qquad
\begin{array}[t]{|l|l|l|}
\hline n & \text{$k$-bounded partition} & w\in W_\af^I\\ \hline
4 & 1 & 0\\
& 2 & 10\\
& 11 & 30\\
& 3 & 210\\
& 21 & 130\\
& 111 & 230\\
& 31 & 3210\\
& 22 & 0130\\
& 211 & 2130\\
& 1111 & 1230\\
& 32 & 03210\\
& 311 & 32130\\
& 221 & 20130\\
& 2111 & 21230\\
& 11111& 01230\\ \hline
\end{array}
\end{equation*}

\subsubsection{$\hat{SL}_2$}
Set $\alpha = \alpha_1  = -\alpha_0$.  We have $t_\alpha = r_0 r_1$
and $t_{-\al}=r_1r_0$. Indexing $T_w$ and $k_w$ by reduced words, we
have
\begin{align*}
t_\alpha &= (1-e^{-\alpha})^2 T_{01} + (1-e^{-\alpha})(T_0 + T_1) + 1 \\
t_{-\alpha} &= (1-e^{\alpha})^2 T_{10} + (1-e^{\alpha})(T_0 + T_1) + 1
\end{align*}
\begin{align*}
k_{\emptyset} &= 1 \\
k_0 &= T_0  + T_1 + (1-e^{-\alpha})T_{01} \\
k_{10} &= T_{10} + e^{-\alpha} T_{01}
\end{align*}
So
\begin{align*}
\phi_0(k_{\emptyset}) &= 1 \\
\phi_0(k_0) &= T_0 + T_1 \\
\phi_0(k_{10}) &= T_{10} + T_{01}
\end{align*}
In general,
\begin{equation*}
  \phi_0(k_{\sigma_r}) = T_{\sigma_r} + T_{\sigma_{-r}},
\end{equation*}
where $\sigma_r$ are the elements in~\eqref{E:sl2gens}.

\newpage
\begin{landscape}
\subsubsection{Table of $\phi_0(k_w)$}
\label{S:tables_k} We index $T_w$ by reduced words.

\begin{equation*}
\begin{array}[t]{|l||l|l|}
\hline n & \text{$w$} & \phi_0(k_w) \\ \hline
3& \emptyset & 1 \\
& 0 & T_0 + T_1 + T_2\\
& 10 & T_{10} + T_{02} +  T_{21}\\
& 20 & T_{20}+T_{01}+T_{12}\\
& 210 & T_{210}+T_{020}+T_{021}+T_{101}+T_{102}+T_{212}\\
& 120 & T_{120}+T_{201}+T_{202}+T_{010}+T_{012}+T_{121}\\
& 0210 & T_{0210}+T_{1021}+T_{2102}\\
& 1210& T_{1210}+T_{0201}+T_{0212}+T_{1020}+T_{1012}+T_{2101}-T_{020}-T_{101}-T_{212}\\
& 0120 & T_{0120}+T_{2012}+T_{1201}\\
\hline
4 & \emptyset & 1 \\
& 0 & T_0 + T_1 + T_2 + T_3\\
& 10 & T_{10} + T_{21} + T_{32} +T_{03} + T_{02} + T_{13}\\
& 30 & T_{30}+T_{01}+T_{12}+T_{23}+T_{02}+T_{13}\\
& 210 & T_{210}+T_{103}+T_{032}+T_{321}\\
& 310 & T_{130}+T_{132}+T_{021}+T_{023}+T_{030}+T_{031}+T_{320}+T_{323}+T_{213}+T_{212}+T_{101}+T_{102}-T_{02}-T_{13}\\
& 230 & T_{230}+T_{301}+T_{012}+T_{123}\\
& 3210 & T_{3210}+T_{3212}+T_{3213}+T_{2101}+T_{2102}+T_{2103}+T_{1030}+T_{1031}+T_{1032}+T_{0320}+T_{0321}+T_{0323} \\
& 0310 & T_{0310}+T_{0213}+T_{1302}+T_{1021}+T_{2132}+T_{3203}\\
& 2310 & T_{2310}+T_{3201}+T_{3230}+T_{3231}+T_{0301}+T_{0302}+T_{0312}+T_{0210}+T_{0212}+T_{0230}+T_{0231}+T_{0232}\\
& & +T_{1301}+T_{1303}+T_{1320}+T_{1321}+T_{1323}+T_{1012}+T_{1013}+T_{1023}+T_{2120}+T_{2123}\\
& 1230 & T_{1230}+T_{1232}+T_{1231}+T_{2303}+T_{2302}+T_{2301}+T_{3010}+T_{3013}+T_{3012}+T_{0120}+T_{0123}+T_{0121}\\
\hline
\end{array}
\end{equation*}
\end{landscape}

\newpage
\begin{landscape}
\subsubsection{Table of $g^{(k)}_\la$}
\label{S:tables g}
We index $g_\la^{(k)}$ by $k$-bounded partitions.
\begin{equation*}
\scalebox{.8}{
\begin{array}[t]{|l||l|l|l|}
\hline n & \la & \text{$g^{(n-1)}_\la$ in terms of $s_\la$} & \text{$g_w$ in terms of $s^{(n-1)}_\la$}\\ \hline
2
&1 & s_{1} & s^{(1)}_{1}\\
&11 & s_{1} + s_{1 1} + s_{2} & s^{(1)}_{1} + s^{(1)}_{11} \\
&111 & s_{1} + 2 s_{1 1} + s_{1 1 1} + 2 s_{2} + s_{3} + 2 s_{2 1}
           & s^{(1)}_{1} + 2 s^{(1)}_{11} + s^{(1)}_{111}\\
&1111 & s_{1} + 3 s_{1 1} + 3 s_{1 1 1} + s_{1 1 1 1} + 3 s_{2} +
                 3 s_{3} + 6 s_{2 1} + 3 s_{3 1} + 2 s_{2 2} + 3 s_{2 1 1} + s_{4}
             & s^{(1)}_{1} + 3 s^{(1)}_{11} + 3 s^{(1)}_{111} + s^{(1)}_{1111} \\
&11111 & s_{1} + 4 s_{1 1} + 6 s_{1 1 1} + 4 s_{1 1 1 1} +
   s_{1 1 1 1 1} + 4 s_{2} + 6 s_{3} + 12 s_{2 1} + 12 s_{3 1} + 8 s_{2 2}
    & s^{(1)}_{1} + 4 s^{(1)}_{11}+ 6 s^{(1)}_{111}+ 4 s^{(1)}_{1111} + s^{(1)}_{11111}\\
    &&+ 12 s_{2 1 1} + 4 s_{4} + s_{5} + 4 s_{4 1} + 6 s_{3 1 1} + 5 s_{2 2 1} + 4 s_{2 1 1 1} + 5 s_{3 2} &
\\ \hline
3
&1 & s_{1} & s^{(2)}_{1}\\
&2 & s_{2} & s^{(2)}_{2}\\\
&11 & s_{1} + s_{11} &   s^{(2)}_{1} + s^{(2)}_{11} \\
&21 & s_{2} + s_{2 1} + s_{3} & s^{(2)}_{2} + s^{(2)}_{21}\\
&111 & s_{1} + s_{2} + 2 s_{1 1} + s_{2 1} + s_{1 1 1}
           & s^{(2)}_{1} + 2 s^{(2)}_{11} + s^{(2)}_{111}+s^{(2)}_{2}\\
&22 & s_{2} + s_{2 1} + s_{2 2} + s_{3} + s_{4} + s_{3 1}
          & s^{(2)}_{2} + s^{(2)}_{21} + s^{(2)}_{22} \\
&211 & s_{2 1} + s_{2 1 1} + s_{3} + s_{3 1}
          & s^{(2)}_{21} + s^{(2)}_{211}\\
&1111 & s_{1} + 2 s_{2} + 3 s_{1 1} + 3 s_{2 1} + 3 s_{1 1 1} + s_{2 2} + s_{2 1 1} + s_{1 1 1 1}
         &   s^{(2)}_{1} + 3 s^{(2)}_{11} + 3 s^{(2)}_{111} + s^{(2)}_{1111} + 2 s^{(2)}_2 \\
&221 & s_{2} + 2 s_{2 1} + 2 s_{2 2} + s_{2 1 1} + s_{2 2 1} + 2 s_{3} + 2 s_{4} + 3 s_{3 1}
        &  s^{(2)}_2 + 2 s^{(2)}_{21} + s^{(2)}_{211} + 2 s^{(2)}_{22} + s^{(2)}_{221}\\
        && + s_{3 1 1} + 2 s_{3 2} + s_{5} + 2 s_{4 1} &\\
&2111 & 2 s_{2 1} + s_{2 2} + 3 s_{2 1 1} + s_{2 2 1} + s_{2 1 1 1} + 2 s_{3} + s_{4} + 4 s_{3 1}
        & 2 s^{(2)}_{21} + 3 s^{(2)}_{211} + s^{(2)}_{2111} + s^{(2)}_{22}\\
        && + 2 s_{3 1 1} + s_{3 2} + s_{4 1} &\\
&11111 & s_{1} + 3 s_{2} + 4 s_{1 1} + 8 s_{2 1} + 6 s_{1 1 1} + 4 s_{2 2} + 7 s_{2 1 1} + 4 s_{1 1 1 1}
        & s^{(2)}_1 + 4 s^{(2)}_{11} + 6 s^{(2)}_{111} + 4 s^{(2)}_{1111} + s^{(2)}_{11111} + 3 s^{(2)}_2\\
        && + 2 s_{2 2 1} + 2 s_{2 1 1 1} + s_{1 1 1 1 1} + 2 s_{3} + 3 s_{3 1} + s_{3 1 1} +s_{3 2}
        &   + 2 s^{(2)}_{21} + 3 s^{(2)}_{211}
\\ \hline
4
&1 & s_{1} & s^{(3)}_1\\
&2 & s_2 & s^{(3)}_2\\
&11 & s_1+s_{11} & s^{(3)}_1+s^{(3)}_{11}\\
&3 & s_3 & s^{(3)}_3\\
&21 & s_2+s_{21} & s^{(3)}_2 + s^{(3)}_{21}\\
&111 &  s_1 + 2s_{11} + s_{111} & s^{(3)}_3 + 2s^{(3)}_{11} + s^{(3)}_{111}\\
&31 & s_3 + s_{31} + s_4 & s^{(3)}_3+s^{(3)}_{31}\\
&22 & s_2+s_{21}+s_{22} & s^{(3)}_2 + s^{(3)}_{21} + s^{(3)}_{22}\\
&211 & s_2 + 2s_{21}+ s_{211} + s_{3}+ s_{31} & s^{(3)}_2 + 2 s^{(3)}_{21} + s^{(3)}_{211}
     + s^{(3)}_3\\
&1111 & s_1+ 3s_{11}+ 3s_{111} + s_{1111} + s_2 + 2s_{21} + s_{211} &
    s^{(3)}_1 + 3 s^{(3)}_{11} + 3 s^{(3)}_{111} + s^{(3)}_{1111} + s^{(3)}_2 + 2 s^{(3)}_{21}
\\ \hline
\end{array}
}
\end{equation*}
\end{landscape}

\newpage

\begin{landscape}
\subsubsection{Table for the coproduct of $g^{(k)}_\la$}
\label{S:coproduct g}

The following table gives $\Delta(g^{(k)}_\la) = \sum_{\nu,\mu} c_\la^{\nu\mu}
g_\nu^{(k)}\otimes g_\mu^{(k)}$,
where we suppress the superscript $(k)$ in the table and write $g_\nu\otimes g_\mu$ instead of
$g_\nu^{(k)}\otimes g_\mu^{(k)}$ with $\nu$ and $\mu$ being $k$-bounded partitions.
\begin{equation*}
\begin{array}[t]{|l|l|l|}
\hline n & \la & \Delta(g^{(n-1)}_\la) \\ \hline
2
&1 & g_{1} \otimes g_\emptyset + g_\emptyset \otimes g_1\\
&11 & g_{11} \otimes g_\emptyset +2 g_1 \otimes g_1 +
     g_\emptyset \otimes g_{11}\\
&111 & g_{111} \otimes g_\emptyset + 3 g_{11} \otimes g_1
   +3 g_1 \otimes g_{11} +  g_\emptyset \otimes g_{111}
   -2 g_1 \otimes g_1\\
&1111& g_{1111} \otimes g_\emptyset + 4 g_{111} \otimes g_1
    +6 g_{11} \otimes g_{11} + 4 g_{1} \otimes g_{111}
    + g_\emptyset \otimes g_{1111} - 5 g_{11} \otimes g_1
    -5 g_1 \otimes g_{11} + 2 g_1 \otimes g_1\\
&11111 & g_{11111} \otimes g_\emptyset + 5 g_{1111} \otimes g_1
    +10 g_{111} \otimes g_{11} + 10 g_{11} \otimes g_{111}
    +5 g_1 \otimes g_{1111} + g_{\emptyset} \otimes g_{11111}\\
&&    -9 g_{111} \otimes g_1 - 16 g_{11} \otimes g_{11}
    -9 g_1 \otimes g_{111} + 7 g_{11} \otimes g_1
    +7 g_1 \otimes g_{11} -2 g_1 \otimes g_1\\ \hline
3
&1& g_1 \otimes g_\emptyset + g_\emptyset \otimes g_1 \\
&22 & g_2 \otimes g_\emptyset + g_1 \otimes g_1 + g_\emptyset \otimes g_2\\
&11 & g_{11} \otimes g_\emptyset + g_1 \otimes g_1 + g_\emptyset \otimes g_{11}\\
&21 & g_{21} \otimes g_\emptyset + g_{11} \otimes g_1 +2g_2 \otimes g_1
    + g_1\otimes g_{11} + 2g_1\otimes g_2 + g_\emptyset \otimes g_{21}
    - g_1 \otimes g_1\\ \
&111& g_{111} \otimes g_\emptyset + 2g_{11} \otimes g_1 + g_2\otimes g_1
    +2 g_1 \otimes g_{11} + g_\emptyset \otimes g_{111} + g_1 \otimes g_2
    - g_1 \otimes g_1\\
&22 & g_{22} \otimes g_\emptyset +2 g_{21} \otimes g_1 + g_{11}\otimes g_{11}
    + g_2\otimes g_{11} + g_{11} \otimes g_2 + 3 g_2 \otimes g_2
    +2 g_1\otimes g_{21} + g_\emptyset \otimes g_{22}
    - g_2 \otimes g_1 - g_1\otimes g_2\\
&211& g_{211}\otimes g_\emptyset + g_{111}\otimes g_1+g_{21}\otimes g_1
    + g_{11}\otimes g_{11} +2g_2\otimes g_{11} + g_1\otimes g_{111}
    +2g_{11}\otimes g_2 + g_2\otimes g_2 + g_1\otimes g_{21}
    +g_\emptyset\otimes g_{211}\\
    && -2 g_{11}\otimes g_1 -2 g_2\otimes g_1 -2 g_1\otimes g_{11}
    -2g_1\otimes g_2 + g_1\otimes g_1\\
&1111& g_{1111}\otimes g_\emptyset + 2 g_{111}\otimes g_1
    +3g_{11}\otimes g_{11} + g_2\otimes g_{11} + 2g_1\otimes g_{111}
    +g_\emptyset\otimes g_{1111}+g_{11}\otimes g_2 + g_2\otimes g_2
    -g_{11}\otimes g_1 - g_1\otimes g_{11}\\ \hline
4
&1&g_1\otimes g_\emptyset + g_\emptyset\otimes g_1\\
&2& g_2\otimes g_\emptyset + g_1\otimes g_1 + g_\emptyset\otimes g_2\\
&11& g_{11}\otimes g_\emptyset +g_1\otimes g_1 + g_\emptyset\otimes g_{11}\\
&3& g_3\otimes g_\emptyset + g_2\otimes g_1 + g_2\otimes g_1
    +g_\emptyset \otimes g_3\\
&21& g_{21}\otimes g_\emptyset + g_{11}\otimes g_1 +g_2\otimes g_1
    +g_1\otimes g_{11} + g_1\otimes g_2 + g_\emptyset\otimes g_{21}
    -g_1\otimes g_1 \\
&111& g_{111}\otimes g_\emptyset + g_{11}\otimes g_1 + g_1\otimes g_{11}
    +g_\emptyset \otimes g_{111}\\
&31& g_{31}\otimes g_\emptyset + g_{21}\otimes g_1 + 2g_3\otimes g_1
    +g_2\otimes g_{11} + g_{11}\otimes g_2 + 2g_2\otimes g_2
    +g_1\otimes g_{21} + 2 g_1\otimes g_3 + g_\emptyset\otimes g_{31}\\
    &&-g_2\otimes g_1 - g_1\otimes g_2\\
&22& g_{22}\otimes g_\emptyset + g_{21}\otimes g_1 + g_{11}\otimes g_{11}
    +g_2\otimes g_2 + g_1 \otimes g_{21} + g_\emptyset\otimes g_{22}\\
&211& g_{211}\otimes g_\emptyset + g_{111}\otimes g_1+2g_{21}\otimes g_1
    +g_3\otimes g_1+g_{11}\otimes g_{11} + 2g_2\otimes g_{11}
    +g_1\otimes g_{111} + 2g_{11}\otimes g_2 + g_2\otimes g_2\\
    &&+2g_1\otimes g_{21} + g_\emptyset\otimes g_{211} +g_1\otimes g_3
    -g_{11}\otimes g_1 -g_2\otimes g_1 - g_1\otimes g_2 -g_1\otimes g_{11}\\
&1111& g_{1111}\otimes g_\emptyset + 2g_{111}\otimes g_1
    +g_{21}\otimes g_1 +2g_{11}\otimes g_{11} + g_2\otimes g_{11}
    +2g_1\otimes g_{111} + g_\emptyset\otimes g_{1111} + g_{11}\otimes g_2\\
    &&+g_1\otimes g_{21} - g_{11}\otimes g_1 - g_1\otimes g_{11}\\ \hline
 \end{array}
\end{equation*}
\end{landscape}

\newpage

\begin{landscape}
\subsubsection{Table of $G^{(k)}_\la$}
\label{S:table G}
We have suppressed the superscript $(k)$ on $F_\mu^{(k)}$ in the following table.
\begin{equation*}
\scalebox{0.9}{
\begin{array}[t]{|l||l|l|l|}
\hline n & \la & \text{$G^{(n-1)}_\la$ in terms of $s_\la$} & \text{$G^{(n-1)}_\la$ in terms of
$F^{(n-1)}_\la$} \\ \hline
2
&1 &  s_1 - s_{1^2} + s_{1^3} - s_{1^4} + s_{ 1^5} -  s_{1^6} \pm \cdots
    &F_1 - F_{1^2} + F_{1^3}-F_{1^4}+F_{1^5} - F_{1^6} \pm \cdots\\
&11 & s_{1^2} -2 s_{1^3} + 3 s_{1^4} - 4 s_{1^5} + 5 s_{1^6} - 6 s_{1^7} +\pm \cdots
    &F_{1^2}-2F_{1^3}+3F_{1^4}-4F_{1^5} +5F_{1^6} - 6F_{1^7} \pm \cdots\\
&111 &  s_{1^3} - 3 s_{1^4} + 6 s_{1^5} - 10 s_{1^6} + 15 s_{1^7} - 21 s_{1^8}
       \pm \cdots
       & F_{1^3} - 3 F_{1^4} + 6 F_{1^5} - 10 F_{1^6} + 15 F_{1^7} - 21 F_{1^8} \pm \cdots\\
&1111 &  s_{1^4} - 4 s_{1^5} + 10 s_{1^6} - 20 s_{1^7} + 35 s_{1^8} -56 s_{1^9} \pm \cdots
    & F_{1^4} - 4 F_{1^5} + 10 F_{1^6} - 20 F_{1^7} + 35 F_{1^8} -56 F_{1^9} \pm \cdots\\
&11111 & s_{1^5} - 5 s_{1^6}  + 15 s_{1^7} -35 s_{1^8} + 70 s_{1^9} -126 s_{1^{10}} \pm \cdots
    & F_{1^5} - 5 F_{1^6}  + 15 F_{1^7} -35 F_{1^8} + 70 F_{1^9} -126 F_{1^{10}} \pm \cdots \\
\hline
3
&1 &  s_1 - s_{1^2} + s_{1^3} - s_{1^4} + s_{1^5} - s_{1^6} \pm \cdots
    & F_1 - F_{1^2} + F_{1^3} - F_{1^4} + F_{1^5} - F_{1^6} \pm \cdots\\
&2 & s_2 - s_{21} + s_{211} - s_{2111} + s_{21111} - s_{211111} \pm \cdots
    & F_2-F_{21}-F_{111}+F_{211}+F_{1^4}-F_{21^3}-2F_{1^5} +F_{21^4}+2F_{1^6}\pm \cdots\\
&11 & s_{1^2} - 2 s_{1^3} + 3 s_{1^4} - 4 s_{1^5} + 5 s_{1^6} - 6 s_{1^7} \pm \cdots
    & F_{1^2} - 2 F_{1^3} + 3 F_{1^4} - 4 F_{1^5} + 5 F_{1^6} - 6 F_{1^7} \pm \cdots\\
&21 & - s_{1^3} + 2 s_{1^4} - 3 s_{1^5} + s_{21} - s_{2^2} - s_{21^2} + s_{2^21} + s_{21^3} +
   4 s_{1^6}
   &F_{21}-F_{22}-F_{211}+F_{221}+2F_{21^3}+F_{1^5}\\
   &&- s_{2^21^2}- s_{21^4} - 5 s_{1^7} + s_{21^5} + s_{2^2 1^3} - s_{21^6}
   + 6 s_{1^8} - s_{2^21^4} \pm \cdots
   &-F_{2211}-2F_{21^4}-F_{1^6}+F_{221^3}+3F_{21^5}+3F_{1^7}\pm \cdots \\
&111 & s_{1^3}- 3 s_{1^4} + 6 s_{1^5} - 10 s_{1^6} + 15 s_{1^7} - 21 s_{1^8} \pm \cdots
    & F_{1^3} -3F_{1^4} + 6 F_{1^5} - 10 F_{1^6} + 15 F_{1^7} -21 F_{1^8} \pm \cdots \\
&22 & - s_{1^4} + 2 s_{1^5} + s_{2^2} - 2 s_{2^21} + s_{21^3} -3 s_{1^6} + s_{2^3}
   2 s_{2^2+1^2} - 2 s_{21^4}
   &F_{22}-2F_{221}-F_{21^3}+F_{2^3}+2F_{2211}+F_{21^4}\\
  && + 4 s_{1^7} + 3 s_{21^5}- s_{2^31} - 2 s_{2^21^3} -
   4 s_{21^6} - 5 s_{1^8} + 2 s_{2^21^4} + s_{2^31^2}
   &-F_{2^31}-3F_{221^3}-3F_{21^5}-F_{1^7} \pm \cdots\\
   &&- 2 s_{2^2 1^5} + 5 s_{21^7} + 6 s_{1^9} - s_{2^3 1^3} \pm \cdots
   &\\
&211 & - s_{1^4} + 3 s_{1^5} + s_{21^2} -
   s_{2^21} - 2 s_{21^3} - 6 s_{1^6} + s_{2^3} +
   2 s_{2^21^2} + 3 s_{21^4}
   &F_{211}-F_{221}-3F_{21^3}-3F_{1^5}+F_{2^3}+2F_{2211}+6F_{21^4}\\
   && + 10 s_{1^7} - 4 s_{21^5} - 2 s_{2^3 1} - 3 s_{2^2 1^3}
   + 5 s_{21^6} - 15 s_{1^8} + 4 s_{2^2 1^4} + s_{2^4}
   &+7F_{1^6}-2F_{2^31}-5F_{221^3}-14F_{21^5}-25F_{1^7}\pm \cdots\\
   &&+ 3 s_{2^31^2} - 5 s_{2^21^5} - 6 s_{21^7} + 21 s_{1^9} - 4 s_{2^3 1^3} -2 s_{2^4 1} \pm \cdots
   &\\
&1111 & s_{1^4} - 4 s_{1^5} +10 s_{1^6} - 20 s_{1^7} + 35 s_{1^8} - 56 s_{1^9} \pm \cdots
   &F_{1^4} - 4 F_{1^5} +10 F_{1^6} - 20 F_{1^7} + 35 F_{1^8} - 56 F_{1^9} \pm \cdots \\
&221 & s_{2^21} - s_{21^3} - s_{1^6} - 2 s_{2^3} - s_{2^21^2} + 3 s_{21^4} +
   3 s_{1^7} - 5 s_{21^5} + 3 s_{2^31}
   &F_{221}-F_{2^3}-F_{2211}+3F_{2^31}+3F_{221^3}+F_{21^5}\pm \cdots\\
   && + 7 s_{21^6} - 6 s_{1^8} + s_{2^2 1^4} - s_{2^4} - 3 s_{2^31^2}
   - 2 s_{2^2 1^5}- 9 s_{21^7} + 10 s_{1^9}
   &\\
   && + 3 s_{2^31^3} + s_{2^41} - 15 s_{1^{10}} - s_{2^41^2} - 3 s_{2^31^4} +
   3 s_{2^2 1^6} + 11 s_{21^8} \pm \cdots
   &\\
&2111 & - 2 s_{1^5} + s_{21^3} + 7 s_{1^6} - s_{2^21^2} - 2 s_{21^4} -
   16 s_{1^7} + 3 s_{21^5} + s_{2^31}
   &F_{21^3}-F_{2211}-3F_{21^4}+F_{2^31}+3F_{221^3}\\
  &&+ 2 s_{2^21^3} - 4 s_{21^6} + 30 s_{1^8} - 3 s_{2^2 1^4} - s_{2^4} - 2 s_{2^3 1^2}
  + 4 s_{2^2 1^5}
  &+9F_{21^5}+7F_{1^7}\pm \cdots\\
  && + 5 s_{2 1^7} - 50 s_{1^9} + 3 s_{2^3 1^3} + 2 s_{2^41} + 77 s_{1^{10}} - s_{2^5} -
   3 s_{2^4 1^2}
   &\\
   && - 4 s_{2^3 1^4} - 5 s_{2^2 1^6} - 6 s_{2 1^8} \pm \cdots
   &\\
 &11111 &
   s_{1^5} - 5 s_{1^6} + 15 s_{1^7} - 35 s_{1^8} + 70 s_{1^9} - 126 s_{1^{10}} \pm \cdots
   & F_{1^5} - 5 F_{1^6} + 15 F_{1^7} - 35 F_{1^8}\pm\cdots\\
\hline
\end{array}
}
\end{equation*}
\end{landscape}


\begin{thebibliography}{XXX}

\bibitem[AJS]{AJS}
H.~H.~Andersen, J.~C.~Jantzen, W.~Soergel,
\textit{Representations of quantum groups at a $p$th root of unity and of
semisimple groups in characteristic $p$: independence of $p$},
Ast\'erisque No. \textbf{220} (1994), 321 pp.

\bibitem[BFM]{BFM}
R.~Bezrukavnikov, M.~Finkelberg, I.~Mirkovi\'{c},
\textit{Equivariant homology and $K$-theory of affine Grassmannians and Toda lattices},
Compos. Math. \textbf{141} (2005), no. 3, 746--768.

\bibitem[Bil]{Billey}
S.~Billey, \textit{Kostant Polynomials and the Cohomology Ring for
$G/B$}, Duke Math. J. \textbf{96} (1999), 205--224.

\bibitem[Buc]{B} A.~Buch,
\textit{A Littlewood-Richardson rule for the K-theory of
Grassmannians}, Acta Math. {\textbf{189}} (2002), 37--78.

\bibitem[BM]{BM} J.~Bandlow, J.~Morse,
work in progress.

\bibitem[CG]{CG}
N.~Chriss, V.~Ginzburg,
\textit{Representation theory and complex geometry},
Birkh\"auser Boston, Inc., Boston, MA, 1997. x+495 pp.

\bibitem[Dem]{Dem} M. Demazure,
\textit{D\'esingularisation des vari\'et\'es de Schubert g\'en\'eralis\'ees},
Collection of articles dedicated to Henri Cartan on the occasion of his 70th birthday,
I. Ann. Sci. \'Ecole Norm. Sup. (4) \textbf{7} (1974), 53--88.

\bibitem[Deo]{D}
V.~V.~Deodhar,
\textit{Some characterizations of Bruhat ordering on a Coxeter group and determination of
the relative M\"obius function},
Invent. Math. \textbf{39} (1977), no. 2, 187--198.

\bibitem[FK]{FK} S.~Fomin, A.~Kirillov,
\textit{Grothendieck polynomials and the Yang-Baxter equation},
Proc. 6th Intern. Conf. on Formal Power Series and Algebraic
Combinatorics, DIMACS, (1994), 183--190.

\bibitem[GKM98]{GKM}
M.~Goresky, R.~Kottwitz, R.~MacPherson,
\textit{Equivariant cohomology, Koszul duality, and the localization theorem},
Invent. Math.  \textbf{131}  (1998),  no. 1, 25--83.

\bibitem[GKM04]{GKM:2004}
M.~Goresky, R.~Kottwitz, R.~MacPherson,
\textit{Homology of affine Springer fibers in the unramified case},
Duke Math. J. \textbf{121} (2004) 509--561.

\bibitem[Gra]{Gr} W. Graham,
\textit{Equivariant $K$-theory and Schubert varieties}, preprint, 2002.

\bibitem[HHH]{HHH}
M.~Harada, A.~Henriques, T.~Holm,
\textit{Computation of generalized equivariant cohomologies of Kac-Moody flag varieties},
Adv. Math.  \textit{197}  (2005),  no. 1, 198--221.

\bibitem[Hum]{Hum}
J.~E.~Humphreys,
\textit{Reflection groups and Coxeter groups},
Cambridge Studies in Advanced Mathematics, \textbf{29}.
Cambridge University Press, Cambridge, 1990. xii+204 pp. ISBN: 0-521-37510-X.

\bibitem[HT]{HT}
F.~Hivert, N.~M.~Thi\'ery,
\textit{MuPAD-Combinat, an Open-Source Package for Research in Algebraic Combinatorics},
S\'eminaire Lotharingien de Combinatoire \textbf{51} (2003) [B51z] (70 pp).\newline
{\tt http://mupad-combinat.sourceforge.net/}

\bibitem[Kac]{Kac}
V.~G.~Kac,
\textit{Infinite-dimensional Lie algebras},
Third edition. Cambridge University Press, Cambridge, 1990. xxii+400 pp. ISBN: 0-521-37215-1.

\bibitem[Kas]{Kash} M. Kashiwara,
\textit{The flag manifold of Kac-Moody Lie algebra},
Algebraic analysis, geometry, and number theory (Baltimore, MD, 1988),
161--190, Johns Hopkins Univ. Press, Baltimore, MD, 1989.

\bibitem[KK86]{KK:H}
B.~Kostant, S.~Kumar,
\textit{The nil Hecke ring and cohomology of $G/P$ for a Kac--Moody group $G$},
Adv. in Math.  \textbf{62}  (1986),  no. 3, 187--237.

\bibitem[KK90]{KK:K}
B.~Kostant, S.~Kumar,
\textit{$T$-equivariant $K$-theory of generalized flag varieties},
J. Differential Geom. \textbf{32} (1990), no. 2, 549--603.

\bibitem[KS]{KS}
M.~Kashiwara, M.~Shimozono, \textit{Equivariant K-theory of affine
flag manifolds and affine Grothendieck polynomials}, 
preprint \texttt{arXiv:math.AG/0601563}.

\bibitem[Kum]{Ku}
S.~Kumar,
\textit{Kac-Moody groups, their flag varieties and representation theory},
Progress in Mathematics, \textbf{204}, Birkh\"auser Boston, Inc., Boston, MA, 2002. xvi+606 pp.
ISBN: 0-8176-4227-7.

\bibitem[Lam06]{Lam:affStan}
T.~Lam,
\textit{Affine Stanley symmetric functions},
Amer. J. Math.  \textbf{128}  (2006),  no. 6, 1553--1586.

\bibitem[Lam08]{Lam:Schub}
T.~Lam,
\textit{Schubert polynomials for the affine Grassmannian},
J. Amer. Math. Soc.  \textbf{21}  (2008),  no. 1, 259--281.

\bibitem[LLMS]{LLMS} T. Lam, L. Lapointe, J. Morse, M. Shimozono,
  \textit{Affine insertion and Pieri rules for the affine Grassmannian},
  to appear in Memoirs of the AMS. \texttt{arXiv:math.CO/0609110}.

\bibitem[LP]{LP}
T.~Lam, P.~Pylyavskyy, \textit{Combinatorial Hopf algebras and
K-homology of Grassmannians}, International Mathematics Research
Notices \textbf{2007} (2007), rnm 125, 48 pages.

\bibitem[LLM]{LLM}
L.~Lapointe, A.~Lascoux, J.~Morse,
\textit{Tableau atoms and a new Macdonald positivity conjecture},
Duke Math. J.  \textbf{116}  (2003),  no. 1, 103--146.

\bibitem[LM05]{LM1} L.~Lapointe, J.~Morse,
\textit{Tableaux on $k+1$-cores, reduced words for affine
permutations, and $k$-Schur expansions}, J. Combin. Theory Ser. A
\textbf{112} (2005), no. 1, 44--81.


\bibitem[LM07]{LM2}
L.~Lapointe, J.~Morse,
\textit{A $k$-tableau characterization of $k$-Schur functions},
Adv. Math.  \textbf{213}  (2007),  no. 1, 183--204.

\bibitem[Len]{L} C.~Lenart, \textit{Combinatorial aspects of the K-theory of Grassmannians},
Ann. Combin. \textbf{4} (2000), 67--82.

\bibitem[Mac]{Mac}
I.~G.~Macdonald,
\textit{Symmetric functions and Hall polynomials},
Second edition. Oxford Mathematical Monographs. Oxford Science Publications.
The Clarendon Press, Oxford University Press, New York, 1995. x+475 pp. ISBN: 0-19-853489-2.

\bibitem[Mag]{Mag} P.~Magyar, \textit{Notes on Schubert classes of a loop group}, preprint
\texttt{arXiv:0705.3826}.

\bibitem[M]{M}
J.~Morse,
\textit{Combinatorics of the $K$-theory of affine Grassmannians},
preprint \texttt{arXiv:0907.0044}.

\bibitem[Pet]{P}
D.~Peterson,
Lecture Notes at MIT, 1997.

\bibitem[PS]{PS} A. Pressley, G. Segal, \textit{Loop groups.} Oxford Mathematical Monographs, Oxford University
Press, 1986.

\bibitem[Sage]{Sage}
Sage, Open Source Mathematics Software {\tt http://www.sagemath.org/},
and $*$-Combinat {\tt http://wiki.sagemath.org/combinat}.

\bibitem[EC2]{EC2} R.~Stanley, \textit{Enumerative
Combinatorics, Vol 2}, Cambridge Studies in Advanced Mathematics,
Cambridge University Press, 2001.

\bibitem[Wil]{W} M. Willems,
\textit{Cohomologie et $K$-th\'eorie \'equivariantes des vari\'et\'es de Bott-Samelson et des vari\'et\'es de drapeaux},
Bull. Soc. Math. France  \textbf{132}  (2004),  no. 4, 569--589.

\end{thebibliography}
\end{document}